\author{Sebastian Pancratz, Jan Tuitman}
\title{Improvements to the deformation method for counting points 
on smooth projective hypersurfaces}
\newcommand\org@hypertarget{}
\let\org@hypertarget\hypertarget
\renewcommand\hypertarget[2]{%
    \Hy@raisedlink{\org@hypertarget{#1}{}}#2%
} 
\def\url@leostyle{%
  \@ifundefined{selectfont}{\def\UrlFont{\sf}}{\def\UrlFont{\small\ttfamily}}}
\numberwithin{equation}{section}
\providecommand{\card}[1]{\lvert#1\rvert}                
\providecommand{\abs}[1]{\lvert#1\rvert}                 
\providecommand{\floor}[1]{\left\lfloor#1\right\rfloor}   
\providecommand{\floorbig}[1]{\bigl\lfloor#1\bigr\rfloor} 
\providecommand{\ceil}[1]{\left\lceil#1\right\rceil}   
\newcommand{\NN}{\mathbf{N}} 
\newcommand{\ZZ}{\mathbf{Z}} 
\newcommand{\QQ}{\mathbf{Q}} 
\newcommand{\RR}{\mathbf{R}} 
\newcommand{\CC}{\mathbf{C}} 
\newcommand{\FF}{\mathbf{F}} 
\renewcommand{\to}{\rightarrow}        
\DeclareMathOperator{\fIm}{im}       
\DeclareMathOperator{\ord}{ord}          
\DeclareMathOperator{\Frob}{F}           
\DeclareMathOperator{\Spec}{Spec}        
\providecommand{\HdR}{H_{\text{dR}}}    
\providecommand{\Hrig}{H_{\text{rig}}}  
\providecommand{\cB}{\mathcal{B}} 
\providecommand{\BigOh}{O}          
\providecommand{\SoftOh}{\tilde{O}} 
\theoremstyle{definition}
\newtheorem{thm}{Theorem}[section]
\newtheorem{lem}[thm]{Lemma}
\newtheorem{prop}[thm]{Proposition}
\newtheorem{cor}[thm]{Corollary}
\newtheorem{defn}[thm]{Definition}
\newtheorem{rem}[thm]{Remark}
\newtheorem{assump}[thm]{Assumption}
\newcommand{\Rmnum}[1]{\expandafter\@slowromancap\romannumeral #1@}
\begin{document}

\maketitle

\abstract{We present various improvements to the deformation method
for computing the zeta function of smooth projective hypersurfaces 
over finite fields using $p$-adic cohomology.  This includes new bounds 
for the $p$-adic and $t$-adic precisions required to obtain provably 
correct results and gains in the efficiency of the individual steps of 
the method.  The algorithm that we thus obtain has lower time and space 
complexities than existing methods.  Moreover, our implementation is 
more practical and can be applied more generally, which we illustrate 
with examples of generic quintic curves and quartic surfaces.}

\bigskip 

\setcounter{tocdepth}{2}
\tableofcontents

\bigskip


\section{Introduction}
\label{sec:Introduction}

Let $\FF_q$ denote a finite field with $q$ elements, where $q$ is a power of 
the prime number~$p$, and let $X$ denote an algebraic variety over~$\FF_q$. 

\begin{defn}
The zeta function of $X$ is the formal power series
\[
Z(X,T) = \exp \Bigl(\sum_{i=1}^{\infty} \card{X(\FF_{q^i})} \frac{T^i}{i} \Bigr).
\]
\end{defn}

As we will see in the next section, $Z(X,T)$ 
is a rational function, i.e.\ it is contained in $\QQ(T)$, and 
hence can be given by a finite amount of data. Therefore, it is natural 
to ask whether it can be computed effectively and, in fact, it is not 
hard to provide an algorithm as follows. Using well known bounds 
by Bombieri~\citep{Bombieri1966} for the degrees of the numerator and 
denominator of $Z(X,T)$, one can reduce the computation of $Z(X,T)$ to that 
of a finite number of the $\card{X(\FF_{q^i})}$, which can be determined 
by naive counting.

A more interesting problem is whether, and if so how, $Z(X,T)$ can be 
computed efficiently, where `efficiently' can mean with low time complexity 
or just fast in practice. When $X$ is a (hyper-)elliptic curve, this problem
is important in cryptographic applications and has been the subject of 
much attention, resulting in very efficient algorithms.  For example, when 
$X$ is an elliptic curve, Schoof's algorithm~\citep{Schoof1995}, which uses 
$\ell$-adic \'etale cohomology, has runtime polynomial in~$\log(q)$, and is 
also very fast in practice using improvements due to Atkins and Elkies.

For more general algebraic varieties~$X$, the only available option is 
usually to compute the rigid cohomology spaces of~$X$, with their natural 
action of the Frobenius map, and then use a Lefschetz formula to deduce the 
zeta function. This method was introduced by Kedlaya in the case of 
hyperelliptic curves in odd characteristic~\citep{Kedlaya2001}.  The same 
idea has been shown to work in much greater generality, for example for 
smooth projective hypersurfaces \citep{AbbottKedlayaRoe2006}. 

Lauder \citep{Lauder2004a,Lauder2004b} showed that instead of computing the 
action of the Frobenius map on the rigid cohomology spaces of a smooth 
projective hypersurface~$X$ directly, it is better, at least in terms of 
time complexity, to embed $X$ in a family of smooth projective hypersurfaces 
containing a diagonal hypersurface.  Following his deformation method, 
one first computes the action of the Frobenius map on the rigid cohomology 
of the diagonal hypersurface, and then solves a $p$-adic differential equation 
to obtain the Frobenius map on the rigid cohomology of the original 
hypersurface. 

To be more precise, in~\citep{Lauder2004a,Lauder2004b} Lauder did not directly 
work with rigid cohomology but with Dwork cohomology. While the two 
cohomology theories are equivalent, various comparison and finiteness results 
are more easily stated and proved in the context of rigid cohomology.  In~\citep{Gerkmann2007} 
Gerkmann reformulated Lauder's deformation method in terms of rigid 
cohomology.  Moreover, he improved various precision bounds, making 
the algorithm more practical, which he demonstrated with many examples. Kedlaya 
introduced new ideas and results to further lower the precision bounds for 
the deformation method in \citep{Kedlaya2012}.

The aim of this paper is to continue where Lauder, Gerkmann and Kedlaya left 
off. We make improvements to almost every step of the algorithm. This results 
in an algorithm with both lower time and space complexity than Lauder's 
original algorithm, but perhaps more importantly, which is a lot more efficient
in practice. The first author has written a (publicly available) 
implementation of our algorithm using the library FLINT~\citep{FLINT}. 
This implementation lowers the runtimes of the examples in~\citep{Gerkmann2007} 
by factors of $50$ to $5,000$. Moreover, it can be used to compute the zeta 
function in many cases where this was not possible before, e.g.\ for generic 
quartic surfaces.

We now briefly describe the contents of the remaining sections,
necessarily relying on some terminology that is introduced only 
in Section~\ref{sec:Background}.  The reader who is not familiar 
with this terminology might prefer to start reading there.

In Section~\ref{sec:Background}, we recall the main theoretical results that 
underpin the remaining sections of the paper.  We limit ourselves to the bare 
minimum as there are already good references available for the relevant theory, 
see e.g.~\citep{Kedlaya2012}. We also introduce the required terminology and 
notation. We conclude the section with an overview of the different steps of 
the deformation method, which are then treated individually in the next four 
sections.

In Section~\ref{sec:Connection}, we explain how to compute the Gauss--Manin 
connection on the cohomology of a family of smooth projective hypersurfaces. 
We define an explicit monomial basis for the cohomology that we will use 
throughout the paper. Our most important result in this section
is Theorem~\ref{thm:Isomorphism}, which allows us to compute very efficiently 
in the cohomology. We formalise the computation of the Gauss--Manin connection 
matrix in Algorithm~\ref{alg:Connection}. We also prove some lower bounds for 
the valuation of the matrix of Frobenius and its inverse that are essential 
for controlling the $p$-adic precision loss in the algorithm.

In Section~\ref{sec:Diagonal}, we show how to compute the Frobenius matrix of
a diagonal hypersurface over a prime field. Our method is essentially based 
on a computation of Dwork, but by rewriting and slightly generalising his formulas 
we obtain Algorithm~\ref{alg:Diagfrob}, which is a significant improvement to the 
corresponding algorithms of Lauder and Gerkmann, both in terms of time complexity 
and in practice.

In Section~\ref{sec:DifferentialSystem}, we explain how to solve the 
differential equation for the Frobenius matrix. We use the same method as 
Lauder but incorporate improved convergence bounds for $p$-adic differential 
equations by Kedlaya. We collect the precision bounds that follow from our 
analysis in Theorem~\ref{thm:Ni} and formalise the computation of the power 
series expansion of the Frobenius matrix in Algorithm~\ref{alg:expansion}.

In Section~\ref{sec:ZetaFunctions}, we describe how to evaluate the Frobenius 
matrix at some fibre and compute its zeta function. We combine various bounds 
from different sources to lower the required $p$-adic and $t$-adic precisions. 
This finally results in Algorithm~\ref{alg:complete}, which combines all of 
our previous algorithms, and is the main result of the paper.

In Section~\ref{sec:Complexity}, we analyse the time and space complexity of 
our algorithm and compare these to Lauder's work~\citep{Lauder2004a}. 
In Section~\ref{sec:Examples}, we compute various numerical examples, and 
compare our runtimes to those provided by Gerkmann~\citep{Gerkmann2007}. 

Both authors were supported by the European Research Council (grant 204083)
and additionally the second author was supported by FWO - Vlaanderen. We would 
like to thank Alan Lauder for all his help and in particular for his comments and
suggestions on earlier versions of this paper. Finally, we thank the anonymous
referees for their comments and suggestions.

\section{Theoretical background}
\label{sec:Background}

We start by recalling the main result about the zeta function of algebraic
varieties over finite fields.

\begin{thm}[Weil conjectures] If $X/\FF_q$ is a smooth projective variety of 
dimension~$m$, then \label{thm:weildeligne}
\[
Z(X,T)=\frac{p_1 p_3 \dotsm p_{2m-1}}{p_0 p_2 p_4 \dotsm p_{2m}},
\]
where for all $i$:
\begin{enumerate}
\item $p_i = \prod_j (1-\alpha_{i,j}T) \in \ZZ[T]$, 
\item the transformation $t \rightarrow q^m/t$ maps the $\alpha_{i,j}$ 
      bijectively to the $\alpha_{2m-i,k}$, preserving multiplicities,
\item $\abs{\alpha_{i,j}} = q^{i/2}$ for all $j$, and every embedding 
      $\bar{\QQ} \hookrightarrow \CC$. 
\end{enumerate}
\end{thm}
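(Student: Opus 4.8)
The plan is to derive all three assertions from the formalism of $\ell$-adic \'etale cohomology (for an auxiliary prime $\ell \neq p$), together with Deligne's theorem. First I would apply the Grothendieck--Lefschetz trace formula: for every $i \geq 1$,
\[
\card{X(\FF_{q^i})} = \sum_{k=0}^{2m} (-1)^k \Tr\bigl(\Frob^i \mid \Het^k(X_{\bar\FF_q},\QQ_\ell)\bigr),
\]
where $\Frob$ denotes the geometric Frobenius. Feeding this into the definition of $Z(X,T)$ and using the elementary identity $\exp\bigl(\sum_{i \geq 1}\Tr(\phi^i)\,T^i/i\bigr) = \det(1-\phi T)^{-1}$ for an endomorphism $\phi$ of a finite-dimensional vector space, one obtains
\[
Z(X,T) = \prod_{k=0}^{2m} \det\bigl(1 - \Frob\,T \mid \Het^k(X_{\bar\FF_q},\QQ_\ell)\bigr)^{(-1)^{k+1}},
\]
so that, at least over $\QQ_\ell$, the polynomial $p_i$ is the characteristic polynomial of $\Frob$ acting on $\Het^i(X_{\bar\FF_q},\QQ_\ell)$. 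Finiteness of these cohomology groups makes the product finite, which already yields rationality and the claimed shape.

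For the functional equation (ii), I would invoke Poincar\'e duality: the cup-product pairing $\Het^i(X_{\bar\FF_q},\QQ_\ell) \times \Het^{2m-i}(X_{\bar\FF_q},\QQ_\ell) \to \Het^{2m}(X_{\bar\FF_q},\QQ_\ell) \cong \QQ_\ell(-m)$ is perfect and Frobenius-equivariant, and $\Frob$ acts on $\QQ_\ell(-m)$ by $q^m$. Hence the eigenvalues of $\Frob$ on $\Het^{2m-i}$ are precisely the numbers $q^m/\alpha$ as $\alpha$ ranges over the eigenvalues on $\Het^i$, counted with multiplicity, which is the stated bijection $t \mapsto q^m/t$. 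For the integrality $p_i \in \ZZ[T]$, one uses that $\Frob$ stabilises the lattice $\Het^i(X_{\bar\FF_q},\ZZ_\ell)$ modulo torsion, so the $\alpha_{i,j}$ are algebraic integers and $p_i \in \ZZ_\ell[T] \cap \bar\QQ[T]$; combined with (iii) below this forces the coefficients into $\ZZ$, and the uniqueness of the factorisation of the rational function $Z(X,T)$ according to the archimedean absolute values of its zeros and poles shows that each $p_i$ is in fact independent of the choice of $\ell$.

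The hard part will be (iii), the Riemann hypothesis $\abs{\alpha_{i,j}} = q^{i/2}$: this is exactly Deligne's theorem (Weil~I), and it is by far the deepest ingredient, its proof resting on Lefschetz pencils, monodromy and hard-Lefschetz-type results, and Deligne's Rankin-style ``squaring'' trick to bootstrap the weight estimate. One could instead obtain the rationality and functional equation by Dwork's $p$-adic method, or run the whole argument in rigid cohomology, which is the cohomology theory actually used in the remainder of this paper; but the archimedean bound (iii) remains the essential input however one proceeds, so I would simply cite it rather than reprove it here.
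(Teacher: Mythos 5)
Your sketch is a correct outline of the standard $\ell$-adic \'etale proof, culminating --- just as the paper itself does --- in a citation of Deligne for the Riemann hypothesis part~(iii). The paper's own ``proof'' consists solely of that citation, so the two are essentially the same approach; you have merely spelled out the trace formula, Poincar\'e duality, and integrality-plus-independence-of-$\ell$ steps that the cited work (together with the earlier results of Grothendieck, Artin, and Verdier on rationality and the functional equation) carries out.
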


\begin{proof}
The proof of this theorem was completed by Deligne in \citep{Deligne1974}.
\end{proof}

We let $\QQ_q$ denote the unique unramified extension of $\QQ_p$ with 
residue field $\FF_q$ and $\ZZ_q$ its ring of integers. We denote
the $p$-adic valuation on $\QQ_q$ by $\ord_p(-)$. 

\begin{defn}
Let $\Hrig^{i}(X)$ denote the rigid
cohomology spaces of $X$. These are finite dimensional vector spaces 
over $\QQ_q$ that are contravariantly functorial in $X$, and they are 
equipped with an action of the $p$-th and $q$-th power Frobenius map 
on $X$ that we denote by $\Frob_p$ and $\Frob_q$, respectively. For the 
construction and basic properties of these spaces we refer 
to~\citep{Berthelot1986}.
\end{defn}

The relation between the zeta function and the rigid cohomology spaces 
is given by the so called Lefschetz formula.

\begin{thm}[Lefschetz formula] \label{thm:Lefschetz}
If $X$ is a smooth proper algebraic variety over~$\FF_q$ of dimension~$m$, 
then 
\[
Z(X,T) = \prod_{i=0}^{2m} \det \bigl(1- T \Frob_q | \Hrig^i(X) \bigr)^{(-1)^{i+1}}.
\]
\end{thm}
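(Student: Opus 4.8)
The plan is to reduce the statement to the Lefschetz trace formula in rigid cohomology followed by a purely formal manipulation of the zeta function. First I would invoke the trace formula of Berthelot: for $X$ smooth and proper over $\FF_q$ of dimension $m$ and every $i \geq 1$,
\[
\card{X(\FF_{q^i})} = \sum_{j=0}^{2m} (-1)^j \Tr\bigl(\Frob_q^i \mid \Hrig^j(X)\bigr),
\]
where one identifies $X(\FF_{q^i})$ with the fixed locus of $\Frob_q^i$ acting on $X(\overline{\FF_q})$. This is the one substantive input and I would simply cite it from the literature on rigid cohomology (see~\citep{Berthelot1986}), as reproving it is well outside the scope of the paper; it rests on the finiteness of the spaces $\Hrig^j(X)$ and a comparison with a cohomology theory in which the trace formula is already known.

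Next I would substitute this expression into the definition of $Z(X,T)$. Since the sum over the cohomological degree~$j$ is finite, it commutes with the formal sum over~$i$ and with the exponential, giving
\[
Z(X,T) = \exp\Bigl(\sum_{i=1}^{\infty} \sum_{j=0}^{2m} (-1)^j \Tr\bigl(\Frob_q^i \mid \Hrig^j(X)\bigr) \frac{T^i}{i}\Bigr) = \prod_{j=0}^{2m} \exp\Bigl(\sum_{i=1}^{\infty} \Tr\bigl(\Frob_q^i \mid \Hrig^j(X)\bigr) \frac{T^i}{i}\Bigr)^{(-1)^j}.
\]
Then I would apply the elementary identity that for any endomorphism $\phi$ of a finite-dimensional vector space over a field of characteristic zero (here $\QQ_q$) one has $\exp\bigl(\sum_{i \geq 1} \Tr(\phi^i) T^i/i\bigr) = \det(1 - T\phi)^{-1}$ in the power series ring. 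This is proved by extending scalars to the algebraic closure, putting $\phi$ in upper triangular form with eigenvalues $\alpha_\ell$, and reducing to the scalar case, where $\sum_{i \geq 1} \alpha_\ell^i T^i/i = -\log(1 - \alpha_\ell T)$. Applying it with $\phi = \Frob_q$ on each $\Hrig^j(X)$ and absorbing the sign, the product becomes $\prod_{j=0}^{2m} \det\bigl(1 - T\Frob_q \mid \Hrig^j(X)\bigr)^{(-1)^{j+1}}$, which is the claimed formula.

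The main obstacle is entirely localised in the first step: all the genuine mathematical content — the finiteness of rigid cohomology, the Frobenius action, and the trace formula relating fixed point counts to alternating traces — is packaged into the cited trace formula of Berthelot. Once that is granted, the passage to the stated product expansion is a formal computation with power series and determinants, exactly parallel to the classical derivation of the analogous statement in $\ell$-adic étale cohomology, and involves no further difficulty.
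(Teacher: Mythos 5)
Your derivation is correct and is the standard one, but it is worth noting that the paper does not actually give a proof: it simply cites \citep[Theorem~6.3]{EtesseLeStum1993}. What you have written out is essentially the content behind that citation. The formal manipulation in your second and third paragraphs (commuting the finite sum over cohomological degree past the exponential, then applying the identity $\exp\bigl(\sum_{i\geq 1}\Tr(\phi^i)T^i/i\bigr)=\det(1-T\phi)^{-1}$ after triangularising over $\overline{\QQ_q}$) is exactly right and is the classical argument, identical in form to the $\ell$-adic case.

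One small correction on attribution: you credit the trace formula $\card{X(\FF_{q^i})}=\sum_j(-1)^j\Tr\bigl(\Frob_q^i\mid\Hrig^j(X)\bigr)$ to Berthelot and cite~\citep{Berthelot1986}. Berthelot constructed rigid cohomology and established its finiteness, but the Lefschetz trace formula in this setting is due to \'Etesse and Le~Stum, which is precisely the reference the paper gives. This does not affect the validity of your argument, only the citation; the mathematical structure you describe (trace formula as the single substantive input, the rest being formal power-series bookkeeping) is accurate.
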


\begin{proof}
See for example \citep[Theorem 6.3]{EtesseLeStum1993}.
\end{proof}

Let $\pi \colon \mathfrak{X} \rightarrow \mathfrak{S}$ be a smooth family of 
algebraic varieties defined over~$\QQ_q$.

\begin{defn}
Let $\HdR^i(\mathfrak{X}/\mathfrak{S})$ denote the $i$-th relative algebraic 
de Rham cohomology sheaf on $\mathfrak{S}$. 
If $\mathfrak{X}/\mathfrak{S}$ admits a relative normal crossing 
compactification, then the $\HdR^i(\mathfrak{X}/\mathfrak{S})$ are vector bundles.
\end{defn}

The $\HdR^i(\mathfrak{X}/\mathfrak{S})$ come equipped with an integrable 
connection, which is called the Gauss--Manin connection. Let us first recall 
the notion of a connection on a vector bundle.

\begin{defn}
Let $\mathfrak{E}$ be a vector bundle on $\mathfrak{S}$. A connection on 
$\mathfrak{E}$ is a map of vector bundles 
$\nabla \colon \mathfrak{E} \rightarrow \Omega^1_{\mathfrak{S}} \otimes \mathfrak{E}$
which satisfies the Leibniz rule
\begin{align*}
\nabla(f e)=f\nabla(e)+df \otimes e
\end{align*} 
for all local sections~$f$ of $\mathcal{O}_{\mathfrak{S}}$ and $e$ 
of $\mathfrak{E}$.
\end{defn}

The Gauss--Manin connection on $\HdR^i(\mathfrak{X}/\mathfrak{S})$ can 
be defined as follows.

\begin{defn}
The de Rham complex $\Omega^{\bullet}_{\mathfrak{X}}$ can be equipped 
with the decreasing filtration
\[
F^i=\fIm(\Omega^{\bullet-i}_{\mathfrak{X}} \otimes \pi^* \Omega^i_{\mathfrak{S}} \rightarrow \Omega^{\bullet}_{\mathfrak{X}}). 
\]
The spectral sequence associated to this filtration has as its first sheet 
\[
E_1^{p,q}=\Omega^p_{\mathfrak{S}} \otimes \HdR^q(\mathfrak{X}/\mathfrak{S}).
\]
The Gauss--Manin connection 
$\nabla:H^i(\mathfrak{X}/\mathfrak{S}) \rightarrow \Omega^1_{\mathfrak{S}} \otimes H^i(\mathfrak{X}/\mathfrak{S})$ 
is now defined as the differential $d_1 \colon E_1^{0,i} \rightarrow E_1^{1,i}$ 
in this spectral sequence.
\end{defn}

\begin{rem}
We can give a more explicit description of $\nabla$ when 
$\mathfrak{X}/\mathfrak{S}$ is affine. If we lift a relative $i$-cocycle 
$\omega \in \Omega^i_{\mathfrak{X}/\mathfrak{S}}$ to an absolute $i$-form 
$\omega' \in \Omega^i_{\mathfrak{X}}$ and apply the absolute differential~$d$, 
we get an element of 
$\Omega^1_{\mathfrak{S}} \wedge \Omega^i_{\mathfrak{X}/\mathfrak{S}}$. 
Projecting onto 
$\Omega^1_{\mathfrak{S}} \otimes \HdR^i(\mathfrak{X}/\mathfrak{S})$, 
we obtain $\nabla(\omega)$. 
\end{rem}

\begin{defn} \label{defn:sigma}
We write $\sigma$ for the standard $p$-th power Frobenius lift on 
$\mathbf{P}^1_{\QQ_q}$, i.e.\ the semilinear map that lifts the $p$-th power 
Frobenius map on $\mathbf{P}^1_{\FF_q}$ and satisfies $\sigma(t)=t^p$. 
\end{defn}

Now suppose that $\mathfrak{E}$ is a vector bundle with connection on 
some Zariski open subset~$\mathfrak{S}$ of $\mathbf{P}^1_{\QQ_q}$ with 
complement~$\mathfrak{Z}$. Let~$V$ denote the rigid analytic subspace 
of~$\mathbf{P}^1_{\QQ_q}$ which is the complement of the union of the 
open disks of radius~$1$ around the points of~$\mathfrak{Z}$.

\begin{defn}
A Frobenius structure on $\mathfrak{E}$ is an isomorphism of vector bundles 
with connection $F \colon \sigma^* \mathcal{E} \rightarrow \mathcal{E}$ 
defined on some strict neighbourhood of $V$. 
\end{defn}

\begin{thm} \label{thm:frobstruc}
Let $\mathcal{S}$ be a Zariski open subset of $\mathbf{P}^1_{\ZZ_q}$ and 
suppose that $\mathcal{X}/\mathcal{S}$ is a smooth family of algebraic 
varieties that admits a relative normal crossing compactification. Denote 
the generic fibres of $\mathcal{S}$, $\mathcal{X}$ by 
$\mathfrak{S}=\mathcal{S} \otimes \QQ_q$, $\mathfrak{X}=\mathcal{X} \otimes \QQ_q$ 
and the special fibres by $S=\mathcal{S} \otimes \FF_q$, 
$X=\mathcal{X} \otimes \FF_q$, respectively. The vector bundle 
$\HdR^i(\mathfrak{X}/\mathfrak{S})$ with the Gauss--Manin connection $\nabla$ 
admits a Frobenius structure $F$ with the following property. 
For any finite field extension $\FF_{\mathfrak{q}}/\FF_q$ and
all $\tau \in S(\FF_{\mathfrak{q}})$,
\[
(\Hrig^i(X_{\tau}),\Frob_p) \cong (\HdR^i(\mathfrak{X}/\mathfrak{S}),F)_{\hat{\tau}}
\] 
as $\QQ_{\mathfrak{q}}$-vector spaces with a $\sigma$-semilinear 
endomorphism, where $\hat{\tau} \in \mathcal{S}(\ZZ_{\mathfrak{q}})$ denotes 
the Teichm\"uller lift of $\tau$. We will therefore denote this Frobenius 
structure on $\HdR^i(\mathfrak{X}/\mathfrak{S})$ by $\Frob_p$ as well.
\end{thm}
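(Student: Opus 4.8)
The plan is to recognise $\HdR^{i}(\mathfrak{X}/\mathfrak{S})$ with its Gauss--Manin connection $\nabla$ as the algebraic de Rham incarnation of the relative rigid cohomology of $X/S$, and then to invoke the standard structural results about the latter: that it is an overconvergent $F$-isocrystal (this produces $F$) and that it is compatible with base change to fibres (this yields the stated comparison with $\Hrig^{i}(X_{\tau})$). Essentially all of the real content is in the literature; what the proof has to do is assemble these statements and keep track of the compatibilities, above all the bookkeeping around the Frobenius lift $\sigma$ and the Teichm\"uller points.

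First I would construct $F$. Since $\mathcal{X}/\mathcal{S}$ is smooth and admits a relative normal crossing compactification, the $i$-th relative rigid cohomology $\mathcal{H}^{i} := \Hrig^{i}(X/S)$ is an overconvergent $F$-isocrystal on $S/\QQ_{q}$; this is due to Berthelot, with overconvergence and the Frobenius structure as established by \'Etesse--Le Stum, Tsuzuki and Shiho (see also the references in \citep{Kedlaya2012}). On the Zariski open $\mathfrak{S} \subseteq \mathbf{P}^{1}_{\QQ_{q}}$, an overconvergent isocrystal is the same datum as a vector bundle with an overconvergent connection, and the comparison theorem between relative rigid and relative algebraic de Rham cohomology identifies the bundle-with-connection underlying $\mathcal{H}^{i}$ with $(\HdR^{i}(\mathfrak{X}/\mathfrak{S}), \nabla)$, the canonical connection of the isocrystal matching the Gauss--Manin connection. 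Transporting the $F$-structure through this comparison yields an isomorphism of bundles with connection $F \colon \sigma^{*}\mathcal{E} \to \mathcal{E}$ on a strict neighbourhood of $V$ --- i.e.\ a Frobenius structure in the sense defined above --- which by construction is the map induced by functoriality of rigid cohomology from the relative $p$-power Frobenius of $X/S$.

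Next I would verify the property. Fix $\FF_{\mathfrak{q}}/\FF_{q}$ and $\tau \in S(\FF_{\mathfrak{q}})$. The Teichm\"uller lift $\hat{\tau} \in \mathcal{S}(\ZZ_{\mathfrak{q}})$ is a Teichm\"uller representative (a root of unity of order dividing $\mathfrak{q}-1$, or $0$, or the point at infinity), so the Frobenius substitution of $\QQ_{\mathfrak{q}}/\QQ_{p}$ sends $\hat{\tau}$ to $\hat{\tau}^{\,p}$; this is precisely the statement that the section $\hat{\tau} \colon \Spec\ZZ_{\mathfrak{q}} \to \mathbf{P}^{1}_{\ZZ_{q}}$ intertwines the Frobenius lift on the source with the lift $\sigma$ (which satisfies $\sigma(t)=t^{p}$) on the target. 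Hence we may pull the $F$-isocrystal $(\HdR^{i}(\mathfrak{X}/\mathfrak{S}),\nabla,F)$ back along $\hat{\tau}$, obtaining the $\QQ_{\mathfrak{q}}$-vector space $(\HdR^{i}(\mathfrak{X}/\mathfrak{S}),F)_{\hat{\tau}}$ together with a $\sigma$-semilinear endomorphism. On the other hand, pulling $\mathcal{X}/\mathcal{S}$ back along $\hat{\tau}$ gives a smooth $\ZZ_{\mathfrak{q}}$-scheme admitting a relative normal crossing compactification, with special fibre $X_{\tau}$, and the base change theorem for relative rigid cohomology provides a canonical isomorphism $(\mathcal{H}^{i})_{\hat{\tau}} \cong \Hrig^{i}(X_{\tau})$. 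This isomorphism intertwines $F_{\hat{\tau}}$ with $\Frob_{p}$ on $\Hrig^{i}(X_{\tau})$, because both arise by applying the functoriality of rigid cohomology to one and the same commutative square of Frobenius lifts --- the relative $p$-power Frobenius of $X/S$ restricting, over $\tau$, to the $p$-power Frobenius of $X_{\tau}$ (consistently with $\sigma(\hat{\tau})=\hat{\tau}^{\,p}$). Composing the identifications gives the asserted isomorphism, and the relabelling of $F$ as $\Frob_{p}$ is then legitimate.

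The main obstacle is not any one computation but the reliance on the full machinery of relative rigid cohomology for \emph{non-proper} fibres: the existence of the overconvergent $F$-isocrystal structure and, especially, the base change theorem. This is exactly where the relative normal crossing compactification hypothesis is used, and it is genuinely deep. One must additionally be careful that the base change isomorphism is canonical enough to carry the $\sigma$-semilinear Frobenius and that the normalisations of $\sigma$, of the Teichm\"uller lift, and of the relative Frobenius are mutually consistent; granted these, the passage to the algebraic de Rham picture --- and hence the entry of the Gauss--Manin connection, via Katz--Oda --- is formal.
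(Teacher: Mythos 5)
Your outline is correct and is exactly the standard argument that the paper defers to: the paper's ``proof'' consists only of the citation to \citep[Theorem~6.1.3]{Kedlaya2012} (with the remark that a complete reference is missing from the literature), and your assembly --- overconvergent $F$-isocrystal structure on relative rigid cohomology, comparison with algebraic de Rham cohomology identifying the connection with Gauss--Manin, base change at the Teichm\"uller point together with the compatibility $\sigma(\hat{\tau})=\hat{\tau}^{\,p}$ --- is precisely the route that reference takes. You also correctly locate the genuinely deep inputs (the $F$-isocrystal structure for non-proper fibres and the base change theorem), which is where the paper itself concedes the literature is incomplete.
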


\begin{proof}
This result is well known, see for example \citep[Theorem 6.1.3]{Kedlaya2012}.
Although it is usually attributed to Berthelot, a complete reference 
seems to be missing from the literature.
\end{proof}

\begin{defn}
Let $\Hrig^i(X/S)$ denote the vector bundle $\HdR^i(\mathfrak{X}/\mathfrak{S})$ 
with its Frobenius structure $\Frob_p$ from Theorem~\ref{thm:frobstruc}.
\end{defn}

\begin{rem}
One can show that $\Hrig^i(X/S)$ is again functorial in $X/S$ and so does 
not depend on the lift $\mathcal{X}/\mathcal{S}$. Moreover, one can still 
define $\Hrig^i(X/S)$ when $X/S$ cannot be lifted to characteristic zero, 
see~\citep{Berthelot1986}.  However, for our purposes the above definition 
will be sufficient.
\end{rem}

In this paper we restrict our attention to one-parameter families of smooth 
projective hypersurfaces. So we let $P \in \ZZ_q[t][x_0,\dotsc,x_n]$ denote 
a homogeneous polynomial of degree $d$ and let 
$\mathcal{S} \subset \mathbf{P}^1_{\ZZ_q}$ be a Zariski open subset such that 
$P$ defines a family $\mathcal{X}/\mathcal{S}$ of smooth hypersurfaces contained 
in $\mathbf{P}^n_{\mathcal{S}}$. 
We let $\mathcal{U}/\mathcal{S}$ 
denote the complement of $\mathcal{X}/\mathcal{S}$ in $\mathbf{P}^n_{\mathcal{S}}$,
and write $\mathfrak{X}=\mathcal{X} \otimes \QQ_q$, 
$\mathfrak{U}=\mathcal{U} \otimes \QQ_q$, $\mathfrak{S}=\mathcal{S} \otimes \QQ_q$
for the generic fibres, and $X=\mathcal{X} \otimes \FF_q$, 
$U=\mathcal{U} \otimes \QQ_q$, $S=\mathcal{S} \otimes \FF_q$ for the special 
fibres of $\mathcal{X},\mathcal{U}$, $\mathcal{S}$, respectively. Moreover, 
we let $\FF_{\mathfrak{q}}/\FF_q$ denote a finite field extension and 
denote $a=\log_p(\mathfrak{q})$.

\begin{thm} \label{thm:hypersurface} 
For all $\tau \in S(\FF_{\mathfrak{q}})$, we have
\begin{equation} \label{eq:formulazeta}
Z(X_{\tau},T) = \frac{\chi(T)^{(-1)^n}}{(1 - T) (1 - \mathfrak{q}T) \dotsm (1 - \mathfrak{q}^{n-1}T)},
\end{equation}
where 
$\chi(T) = \det \bigl( 1 - T \mathfrak{q}^{-1} \Frob_{\mathfrak{q}} | \Hrig^n(U_{\tau}) \bigr) \in \ZZ[T]$
denotes the reverse characteristic polynomial of the action of 
$\mathfrak{q}^{-1} \Frob_{\mathfrak{q}}$ on $\Hrig^n(U_{\tau})$. 
Moreover, the polynomial $\chi(T)$ has degree 
\begin{equation} \label{eq:formulab}
\frac{1}{d} \bigl((d-1)^{n+1} + (-1)^{n+1}(d-1) \bigr).
\end{equation}
\end{thm}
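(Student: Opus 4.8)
The plan is to obtain \eqref{eq:formulazeta} by applying the Lefschetz formula directly to the smooth projective variety $X_\tau$ and then matching cohomology. Since $X_\tau$ is smooth and proper, Theorem~\ref{thm:Lefschetz} gives $Z(X_\tau,T) = \prod_{i=0}^{2n-2} \det\bigl(1 - T\Frob_{\mathfrak{q}} \mid \Hrig^i(X_\tau)\bigr)^{(-1)^{i+1}}$. By weak Lefschetz the restriction $\Hrig^i(\mathbf{P}^n) \to \Hrig^i(X_\tau)$ is an isomorphism for $i < n-1$, and by Poincar\'e duality on $X_\tau$ together with hard Lefschetz the analogous statement holds for $i > n-1$; hence for $i \ne n-1$ the space $\Hrig^i(X_\tau)$ is spanned by powers of the hyperplane class, on which $\Frob_{\mathfrak{q}}$ acts through the corresponding power of $\mathfrak{q}$ because the cycle class map is Frobenius-equivariant up to Tate twist. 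Therefore $\det(1-T\Frob_{\mathfrak{q}}\mid\Hrig^i(X_\tau))$ equals $1-\mathfrak{q}^{i/2}T$ for even $i \ne n-1$ and $1$ for odd $i \ne n-1$, while $\det(1-T\Frob_{\mathfrak{q}}\mid\Hrig^{n-1}(X_\tau))$ is $\det(1-T\Frob_{\mathfrak{q}}\mid\Hrig^{n-1}_{\mathrm{prim}}(X_\tau))$ times an extra factor $1-\mathfrak{q}^{(n-1)/2}T$ exactly when $n-1$ is even. Assembling the product, the even-degree contributions combine into the denominator $\prod_{i=0}^{n-1}(1-\mathfrak{q}^iT)$ (regardless of the parity of $n$, since the middle factor $1-\mathfrak{q}^{(n-1)/2}T$ carries the sign $(-1)^{(n-1)+1}$), and the middle term contributes $\chi(T)^{(-1)^n}$, so that \eqref{eq:formulazeta} follows once we identify $\det(1-T\Frob_{\mathfrak{q}}\mid\Hrig^{n-1}_{\mathrm{prim}}(X_\tau)) = \chi(T)$. (Alternatively one can argue from $Z(\mathbf{P}^n,T) = Z(X_\tau,T)\,Z(U_\tau,T)$ and the Lefschetz trace formula with compact supports for $U_\tau$, but that route needs an additional functional-equation argument.)

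For the identification, I would use the Gysin (residue) exact sequence $\cdots \to \Hrig^{i-2}(X_\tau)(-1) \to \Hrig^i(\mathbf{P}^n) \to \Hrig^i(U_\tau) \to \Hrig^{i-1}(X_\tau)(-1) \to \Hrig^{i+1}(\mathbf{P}^n) \to \cdots$ for $U_\tau = \mathbf{P}^n \setminus X_\tau$. In degree $i = n$, weak Lefschetz lets one compute the outer terms, and the maps into $\Hrig^n(\mathbf{P}^n)$ and $\Hrig^{n+1}(\mathbf{P}^n)$ are, up to a nonzero scalar, cup product with the hyperplane class and hence isomorphisms on the relevant pieces; a short diagram chase then produces a Frobenius-equivariant isomorphism $\Hrig^n(U_\tau) \cong \Hrig^{n-1}_{\mathrm{prim}}(X_\tau)(-1)$, under which $\mathfrak{q}^{-1}\Frob_{\mathfrak{q}}$ on the left corresponds to $\Frob_{\mathfrak{q}}$ on the right. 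This yields the identity above and also shows $\deg\chi(T) = \dim_{\QQ_{\mathfrak{q}}}\Hrig^{n-1}_{\mathrm{prim}}(X_\tau)$. That $\chi(T) \in \ZZ[T]$ then follows from Theorem~\ref{thm:weildeligne}: $\chi(T)$ divides $\det(1-T\Frob_{\mathfrak{q}}\mid\Hrig^{n-1}(X_\tau)) \in \ZZ[T]$ with quotient $1$ or $1-\mathfrak{q}^{(n-1)/2}T$, and dividing a polynomial in $\ZZ[T]$ with constant term $1$ by such a factor leaves the coefficients integral, by an immediate induction.

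It remains to evaluate $\dim\Hrig^{n-1}_{\mathrm{prim}}(X_\tau)$ for a smooth degree-$d$ hypersurface in $\mathbf{P}^n$. The quickest approach is via Euler characteristics: the primitive cohomology of a hypersurface lies only in the middle degree and all other Betti numbers of $X_\tau$ coincide with those of $\mathbf{P}^{n-1}$, so $\dim\Hrig^{n-1}_{\mathrm{prim}}(X_\tau) = (-1)^{n-1}\bigl(\chi_{\mathrm{top}}(X_\tau) - n\bigr)$; and $\chi_{\mathrm{top}}(X_\tau) = \deg c_{n-1}(T_{X_\tau})$ is the classical value $\tfrac1d\bigl((1-d)^{n+1}-1\bigr) + n + 1$, which comes from the normal bundle sequence $c(T_{X_\tau}) = (1+h)^{n+1}/(1+dh)$ (or from induction on iterated smooth hyperplane sections). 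Substituting and simplifying the signs gives exactly \eqref{eq:formulab}. An alternative, closer to the machinery of the following sections, is the Griffiths--Dwork description $\Hrig^n(U_\tau) \cong \bigoplus_{\ell=1}^{n} R_{\ell d - n - 1}$, where $R$ is the Jacobian ring of a defining polynomial of $X_\tau$, an Artinian Gorenstein ring with Hilbert series $\bigl((1-t^{d-1})/(1-t)\bigr)^{n+1}$, from whose Hilbert function one extracts the same closed form.

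The main difficulty is bookkeeping rather than substance: keeping the Tate twists and the exponents $(-1)^{i+1}$ straight throughout the Lefschetz product, checking that the Gysin sequence and weak Lefschetz really do combine into the clean isomorphism $\Hrig^n(U_\tau) \cong \Hrig^{n-1}_{\mathrm{prim}}(X_\tau)(-1)$ uniformly in $n$ (the roles of $\Hrig^n(\mathbf{P}^n)$ and $\Hrig^{n+1}(\mathbf{P}^n)$ swap according to the parity of $n$), and the binomial-coefficient computation that turns the Euler-characteristic expression into the stated closed form.
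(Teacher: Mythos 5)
Your derivation of \eqref{eq:formulazeta} follows the same route as the paper: the Lefschetz trace formula for $X_\tau$, then weak Lefschetz and Poincar\'e duality (the paper does not invoke hard Lefschetz, but your use of it is harmless) to dispose of the degrees $i \neq n-1$, and then the excision/Gysin sequence \eqref{eqn:excision} to realise $\Hrig^n(U_\tau)$ as $\Hrig^{n-1}_{\mathrm{prim}}(X_\tau)(-1)$, which your diagram chase identifies correctly. Where you diverge is in the degree computation \eqref{eq:formulab}: you propose to evaluate $\dim \Hrig^{n-1}_{\mathrm{prim}}(X_\tau)$ via the topological Euler characteristic together with the normal-bundle Chern-class identity $c(T_{X_\tau}) = (1+h)^{n+1}/(1+dh)$, whereas the paper instead computes the rank of $\HdR^n(\mathfrak{U}/\mathfrak{S})$ directly in Proposition~\ref{prop:rankcoho} by specialising a generating-function identity from SGA~7, and separately matches this to the cardinality of the explicit monomial basis $\cB$ in Proposition~\ref{prop:BasisSize} by inclusion-exclusion. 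The paper's approach is designed to do double duty, since the cardinality count of $\cB$ is reused in Theorem~\ref{thm:Basis} to show $\cB$ is a basis; your Euler-characteristic route is more classical but standalone. Your alternative via the Jacobian-ring Hilbert series $\bigl((1-t^{d-1})/(1-t)\bigr)^{n+1}$ is essentially the content of Proposition~\ref{prop:BasisSize}. Your closing observation on integrality of $\chi(T)$ (dividing $\det(1 - T\Frob_{\mathfrak{q}} \mid \Hrig^{n-1}(X_\tau)) \in \ZZ[T]$ by the factor $1 - \mathfrak{q}^{(n-1)/2}T$ when $n-1$ is even) fills in a detail that the paper delegates to its reference.
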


\begin{proof}
This theorem is well known, see for example~\citep{AbbottKedlayaRoe2006}. 
Since we need some intermediate results from the proof later on, we will 
give a brief sketch here. First, by Theorem~\ref{thm:Lefschetz}, we have 
\[
Z(X_{\tau},T) = \prod_{i=0}^{2(n-1)} 
    \det\bigl(1- T \Frob_{\mathfrak{q}} | \Hrig^i(X_{\tau})\bigr)^{(-1)^{i+1}}.
\]
Then, by the Lefschetz hyperplane theorem and Poincar\'e duality, we see 
that $\Hrig^i(X_{\tau}) \cong \Hrig^i(\mathbf{P}^n_{\FF_{\mathfrak{q}}})$ 
for all $i \neq (n-1)$. Next, one shows by a computation that
\begin{equation} \label{eqn:projcoho}
\Hrig^i(\mathbf{P}^n_{\FF_{\mathfrak{q}}}) 
\cong 
\begin{cases}
\QQ_{\mathfrak{q}}(i) &\mbox{if $i$ even,} \\
$0$ &\mbox{if $i$ odd,} 
\end{cases} 
\end{equation}
where $(i)$ denotes the $i$-th Tate twist, for which $\Frob_p$ is multiplied 
by $p^{-i}$. It remains to determine $\Hrig^{n-1}(X_{\tau})$. One uses the 
excision short exact sequence
\begin{equation} \label{eqn:excision}
\begin{CD}
0 @>>> \Hrig^{n}(U_{\tau}) @>>> \Hrig^{n-1}(X_{\tau})(-1) @>>> \Hrig^{n+1}(\mathbf{P}^n_{\FF_{\mathfrak{q}}}) @>>> 0
\end{CD} 
\end{equation}
to relate $\Hrig^{n-1}(X_{\tau})$ to $\Hrig^{n}(U_{\tau})$ and complete 
the proof of~\eqref{eq:formulazeta}. We will show in Proposition~\ref{prop:rankcoho}
that the dimension of $\Hrig^{n-1}(U_{\tau})$ is given by~\eqref{eq:formulab}.
\end{proof}

Let $[e_1, \dotsc, e_b]$ be some basis of sections of 
$\HdR^n(\mathfrak{U}/\mathfrak{S})$, and let $M \in M_{b \times b}(\QQ_q(t))$ 
denote the matrix of the Gauss--Manin connection $\nabla$ with respect 
to this basis, i.e.
\[
\nabla (e_j) = \sum_{i=1}^b M_{i,j} e_i.
\]
Let $r \in \ZZ_q[t]$ with $\ord_p(r)=0$ be a denominator for $M$, i.e.\ such that we can write 
$M = G/r$ with $G \in M_{b \times b}(\QQ_q[t])$, and let $\Phi$ denote the 
matrix of $p^{-1}\Frob_p$ with respect to the basis $[e_1, \dotsc, e_b]$, i.e.\
\[
p^{-1} \Frob_p (e_j) = \sum_{i=1}^b \Phi_{i,j} e_i.
\]

\begin{defn}
We define the ring of overconvergent functions
\begin{align*}
\QQ_q \left\langle t, 1/r \right\rangle^{\dag} = 
\Biggl\{\sum_{i,j=0}^{\infty} a_{i,j} \frac{t^i}{r^j} \; : \; 
a_{i,j} \in \QQ_q, \; \exists c > 0 \text{ s.t.}  
\lim_{i+j \rightarrow \infty} \bigl(\ord_p(a_{i,j}) - c(i+j)\bigr) \geq 0
\Biggr\},
\end{align*}
as the $p$-adically meromorphic functions on $\mathbf{P}^1_{\QQ_q}$ that are 
analytic outside of the open disks of radius $\rho$ around the zeros of $r$ 
and the point at infinity for some $\rho<1$. 
\end{defn}

\begin{defn}
We extend the $p$-adic valuation 
to $\QQ_q \langle t, 1/r \rangle^{\dag}$ in the standard way, i.e.\ 
$\ord_p(f)$ is defined as the maximum, over all ways of writing the element $f$ as
$\sum_{i,j=0}^{\infty} a_{i,j} t^i / r^j$,
of the minimum, over $i,j \geq 0$, of $\ord_p(a_{i,j})$. Note that the  
norm on $\QQ_q \langle t, 1/r \rangle^{\dag}$ corresponding to this valuation
is the Gauss norm. We also extend $\ord_p(-)$ to polynomials and matrices over 
$\QQ_q \langle t, 1/r \rangle^{\dag}$, by taking the 
minimum over the coefficients and entries, respectively.
\end{defn}

\begin{assump} \label{assump:S}
From now on we will always assume that $0 \in \mathcal{S}$. 
Note that if this is not the case, then it can be achieved by 
applying a translation.
\end{assump}

\begin{thm} \label{thm:eqphi} 
The matrix $\Phi$ is an element of 
$M_{b \times b}(\QQ_q \langle t, 1/r \rangle^{\dag})$ 
and satisfies the differential equation
\begin{align*}
\left(\frac{d}{dt} + M\right) \Phi &= p t^{p-1} \Phi \sigma(M), &\Phi(0)& = \Phi_0,
\end{align*}
where $\Phi_0$ is the 
matrix of $p^{-1}\Frob_p$ on $\Hrig^n(U_0)$ with respect to the 
basis $[e_0,\dotsc,e_b]$.
\end{thm}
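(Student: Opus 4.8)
The plan is to derive the differential equation from the two defining properties of the Frobenius structure $\Frob_p$: that $F \colon \sigma^* \HdR^n \to \HdR^n$ is \emph{horizontal} for the Gauss--Manin connection, and that $\Phi$ has entries in $\QQ_q \langle t, 1/r \rangle^{\dag}$, the latter being exactly the overconvergence part of Theorem~\ref{thm:frobstruc} (or rather the construction underlying it). First I would record the initial condition: since $0 \in \mathcal{S}$ by Assumption~\ref{assump:S} and the Teichm\"uller lift of the image of $0$ is again $0$, specialising the isomorphism in Theorem~\ref{thm:frobstruc} at $\tau = 0$ identifies the fibre $(\HdR^n(\mathfrak{X}/\mathfrak{S}), F)_0$ with $(\Hrig^n(U_0), \Frob_p)$, so $\Phi(0) = \Phi_0$ by definition of $\Phi_0$.

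Next I would unwind horizontality in coordinates. Writing $\nabla = \frac{d}{dt} + M$ with respect to the chosen basis $[e_1,\dotsc,e_b]$, the pullback connection $\sigma^*\nabla$ on $\sigma^*\HdR^n$ has connection matrix obtained by applying $\sigma$ to $M$ and multiplying by $\frac{d\sigma(t)}{dt} = p t^{p-1}$, i.e.\ $\sigma^*\nabla = \frac{d}{dt} + p t^{p-1}\sigma(M)$ on the $\sigma$-pulled-back basis. The statement that $F$ is a morphism of connections, $\nabla \circ F = (\mathrm{id}\otimes F)\circ \sigma^*\nabla$, becomes the matrix identity $\frac{d\Phi}{dt} + M\Phi = \Phi\bigl(p t^{p-1}\sigma(M)\bigr)$, where one must be careful that $\Phi$ is the matrix of $p^{-1}\Frob_p$ and the scalar $p^{-1}$ is $\sigma$-invariant and hence drops out of the horizontality relation. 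Rearranging gives $\bigl(\frac{d}{dt} + M\bigr)\Phi = p t^{p-1}\Phi\,\sigma(M)$, which is precisely the asserted equation.

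The one genuinely substantive point — and the place I expect to have to be most careful — is the overconvergence claim that $\Phi \in M_{b\times b}(\QQ_q\langle t,1/r\rangle^{\dag})$. This does not follow from the differential equation alone (that equation has many non-overconvergent formal solutions); it is an input about how $\Frob_p$ is actually built. I would argue as follows: the Frobenius structure is defined on a strict neighbourhood of the region $V \subset \mathbf{P}^1_{\QQ_q}$ obtained by deleting open unit disks around the points of $\mathfrak{Z} = \mathbf{P}^1 \setminus \mathfrak{S}$, which for the matrix $\Phi$ expressed in our basis means exactly convergence on a region of the form ``outside disks of radius $\rho < 1$ around the zeros of $r$ and around $\infty$,'' i.e.\ membership in $\QQ_q\langle t,1/r\rangle^{\dag}$ by the definition of that ring; one also needs that $r$, chosen as a denominator for $M$, has its zeros among the points of $\mathfrak{Z}$ together with possibly the locus where the basis degenerates, so that these radii-of-convergence statements are compatible. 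Having established this, the entries of $\Phi$ lie in a ring on which $\frac{d}{dt}$ and $\sigma$ act, so the formal manipulations of the previous paragraph are legitimate, and the proof is complete.
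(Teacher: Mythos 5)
Your derivation of the differential equation and of the initial condition is correct and is exactly the paper's argument (the paper simply declares both to be immediate consequences of horizontality and of Theorem~\ref{thm:frobstruc}; your coordinate computation of $\sigma^*\nabla$ and the observation that the scalar $p^{-1}$ drops out are the right way to spell it out).

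The gap is in the overconvergence claim, which you correctly identify as the substantive point but then do not actually close. Analyticity of the Frobenius structure on a strict neighbourhood of $V$ only gives analyticity of $\Phi$ outside disks of radius $\rho<1$ around the points of $\mathfrak{Z}=\mathbf{P}^1\setminus\mathfrak{S}$, whereas membership in $\QQ_q\langle t,1/r\rangle^{\dag}$ demands analyticity outside disks around the zeros of $r$ and $\infty$ only. Your compatibility condition --- that the zeros of $r$ lie among the points of $\mathfrak{Z}$ --- is the containment that Assumption~\ref{assump:R} already provides, but it points the wrong way: it makes the region of required analyticity \emph{larger} than the region where the Frobenius structure is a priori defined. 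The residue disks that need an argument are those around points of $\mathfrak{Z}$ that are \emph{not} zeros of $r$ (e.g.\ because $\mathcal{S}$ was chosen smaller than the complement of the zero locus of $R$); there the Frobenius structure gives you nothing a priori, yet $\QQ_q\langle t,1/r\rangle^{\dag}$ requires $\Phi$ to be analytic on the whole disk. The paper closes this by invoking Theorem~\ref{thm:KedlayaTuitman}: since $M$ has no pole in such a residue disk, $\Phi$ has no pole there either, so it extends analytically across those disks. Some input of this kind (a transfer-type theorem controlling $\Phi$ on residue disks where $M$ is regular) is genuinely needed and is missing from your argument.
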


\begin{proof}
That the differential equation is satisfied is an immediate consequence of the 
fact that $\Frob_p$ is a horizontal map of vector bundles with connection, and 
that $\Phi(0)=\Phi_0$ is also clear from Theorem~\ref{thm:frobstruc}. Note that
by a residue disk on $\mathcal{S}$ we mean all points on $\mathcal{S}(\bar{\QQ}_q)$ 
that reduce modulo $p$ to a given point of $S(\bar{\FF}_q)$.
If $M$ does not have any poles in a given residue disk, then $\Phi$ cannot have 
any poles in that residue disk either, by Theorem~\ref{thm:KedlayaTuitman} 
below. Hence the entries of $\Phi$ are contained in 
$\QQ_q \langle t, 1/r \rangle^{\dag}$.
\end{proof}

The deformation method can now be sketched as follows:
\begin{enumerate}[\it Step 1.]
\item Compute the matrix $M$ of the Gauss--Manin connection $\nabla$.
\item Compute the matrix $\Phi_0$ of the action of $p^{-1}\Frob_p$ on 
      $\Hrig^n(U_0)$. If the family is chosen such that $X_0$ is a diagonal 
      hypersurface over a prime field, this can be done as
      explained in Chapter~\ref{sec:Diagonal}.
\item Solve the differential equation from Theorem~\ref{thm:eqphi} for $\Phi$.
\item Substitute the Teichm\"uller lift $\hat{\tau}$ of an element 
      $\tau \in S(\FF_{\mathfrak{q}})$ into $\Phi$ to obtain the 
      matrix $\Phi_{\tau}$ of the action of $p^{-1}\Frob_p$ on 
      $\Hrig^n(U_{\tau})$. Compute the matrix $\Phi_{\tau}^{(a)}$ 
      of the action of $\mathfrak{q}^{-1} \Frob_{\mathfrak{q}}$ 
      on~$\Hrig^n(U_{\tau})$, which is also equal to $(p^{-1}\Frob_p)^a$.
      Use Theorem~\ref{thm:hypersurface} to compute the zeta function 
      $Z(X_{\tau},T)$ of the fibre $X_{\tau}$.
\end{enumerate}

Note that we can only carry out these computations to finite $p$-adic 
precision. Therefore, we need to recall some bounds on the loss of $p$-adic 
precision when multiplying $p$-adic numbers and matrices. 

\begin{prop} \label{prop:productval}
Let $v_1,\dotsc,v_{\ell} \in \ZZ$ and $x_1, \dotsc, x_{\ell} \in \mathbf{Q}_q$,  
$\ell \geq 2$, be such that $\ord_p(x_i) \geq v_i$ for all $i$. Suppose that 
$N \in \ZZ$ satisfies $N \geq \sum_{j=1}^{\ell} v_j$. Let 
$\tilde{x}_1, \dotsc, \tilde{x}_{\ell}$ denote $p$-adic approximations to 
$x_1, \dotsc, x_{\ell}$ such that 
\[
\ord_p\left(x_i - \tilde{x}_i\right) \geq N - \sum_{j \neq i} v_j
\] 
for all $i$.  Then 
\begin{equation*}
\ord_p(x_1 \dotsm x_{\ell} - \tilde{x}_1 \dotsm \tilde{x}_{\ell}) \geq N.
\end{equation*}
\end{prop}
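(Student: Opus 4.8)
The plan is to reduce everything to the telescoping identity
\[
x_1 \cdots x_{\ell} - \tilde{x}_1 \cdots \tilde{x}_{\ell} = \sum_{i=1}^{\ell} \tilde{x}_1 \cdots \tilde{x}_{i-1}\,(x_i - \tilde{x}_i)\,x_{i+1} \cdots x_{\ell},
\]
which one verifies by writing the $i$-th summand as $A_i - A_{i+1}$ with $A_i = \tilde{x}_1 \cdots \tilde{x}_{i-1} x_i \cdots x_{\ell}$, so that the sum collapses to $A_1 - A_{\ell+1} = x_1 \cdots x_{\ell} - \tilde{x}_1 \cdots \tilde{x}_{\ell}$. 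Then I would bound each summand separately and conclude with the ultrametric inequality.

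First I would record the auxiliary estimate $\ord_p(\tilde{x}_j) \geq v_j$ for every $j$. Indeed, from $N \geq \sum_{k} v_k$ we get $N - \sum_{k \neq j} v_k \geq v_j$, so the hypothesis on the approximations gives $\ord_p(x_j - \tilde{x}_j) \geq v_j$; combining this with $\ord_p(x_j) \geq v_j$ and the non-archimedean triangle inequality yields $\ord_p(\tilde{x}_j) \geq v_j$. This point is worth stressing, since the bound $\ord_p(\tilde{x}_j) \geq v_j$ is used but not assumed directly.

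Next I would estimate the $i$-th summand of the telescoping sum. Its valuation is at least
\[
\sum_{j < i} \ord_p(\tilde{x}_j) + \ord_p(x_i - \tilde{x}_i) + \sum_{j > i} \ord_p(x_j) \;\geq\; \sum_{j < i} v_j + \Bigl(N - \sum_{j \neq i} v_j\Bigr) + \sum_{j > i} v_j \;=\; N,
\]
where the left block uses the auxiliary estimate, the middle factor uses the hypothesis, and the right block uses $\ord_p(x_j) \geq v_j$; the final equality is just $\sum_{j < i} v_j + \sum_{j > i} v_j = \sum_{j \neq i} v_j$. Since this lower bound $N$ holds for each $i$, the ultrametric inequality applied to the telescoping sum gives $\ord_p(x_1 \cdots x_{\ell} - \tilde{x}_1 \cdots \tilde{x}_{\ell}) \geq N$.

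There is no genuinely hard step here; the only thing requiring care is the index bookkeeping, namely that the "deficit" $\sum_{j \neq i} v_j$ incurred by relaxing the $i$-th factor to its approximation is exactly absorbed by the a priori valuations of the other $\ell - 1$ factors. One could equally well argue by induction on $\ell$, the case $\ell = 2$ being the single identity $x_1 x_2 - \tilde{x}_1 \tilde{x}_2 = \tilde{x}_1(x_2 - \tilde{x}_2) + (x_1 - \tilde{x}_1) x_2$ together with the same two-term estimate.
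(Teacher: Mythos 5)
Your proof is correct and follows essentially the same route as the paper's: both establish $\ord_p(\tilde{x}_j) \geq v_j$ from the ultrametric triangle inequality and then bound the summands in the telescoping decomposition $\sum_i \tilde{x}_1 \dotsm \tilde{x}_{i-1}(x_i - \tilde{x}_i) x_{i+1} \dotsm x_\ell$. You merely spell out the telescoping verification and the final ultrametric step more explicitly than the paper, which compresses the conclusion into the phrase ``adding these $\ell$ inequalities.''
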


\begin{proof}
For all $i$,
\begin{align*}
\ord_p\left(\tilde{x}_i \right) 
    & \geq \min \{ \ord_p\left(x_i-\tilde{x}_i\right), \ord_p(x_i) \} \\
    & \geq \min \biggl\{ N- \sum_{j \neq i} v_j, \ord_p(x_i)\biggr\} \geq v_i.
\end{align*}
Therefore, we also have that
\begin{equation*}
\ord_p \bigl( (x_{i}-\tilde{x}_{i})
    (\tilde{x}_1 \dotsm \tilde{x}_{i-1} x_{i+1} \dotsm x_{\ell}) \bigr) \geq N,
\end{equation*}
for all $i$. The result now follows by adding these $\ell$ inequalities.
\end{proof}

\begin{prop} \label{prop:matrixproductval}
Let $v_1,\dotsc,v_{\ell} \in \ZZ$ and 
$A_1, \dotsc, A_{\ell} \in M_{b \times b}(\QQ_q)$, $\ell \geq 2$, be 
such that $\ord_p(A_i) \geq v_i$ for all $i$. Suppose that $N \in \ZZ$ 
satisfies $N \geq \sum_{j=1}^{\ell} v_j$. 
Let $\tilde{A}_1, \dotsc, \tilde{A}_{\ell}$ denote $p$-adic approximations 
to $A_1, \dotsc A_{\ell}$ such that
\[
\ord_p\bigl(A_i - \tilde{A}_i\bigr) \geq N - \sum_{j \neq i} v_j
\]
for all $i$.  Then 
\begin{equation}
\ord_p\bigl(A_1 \dotsm A_{\ell} - \tilde{A}_1 \dotsm \tilde{A}_{\ell}\bigr) \geq N.
\end{equation}
\end{prop}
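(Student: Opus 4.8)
The plan is to reduce the matrix statement to the scalar statement (Proposition~\ref{prop:productval}) by examining the computation entrywise. The key observation is that $\ord_p$ on a matrix is the minimum of $\ord_p$ over its entries, so it suffices to bound $\ord_p$ of each entry of the difference $A_1 \dotsm A_{\ell} - \tilde{A}_1 \dotsm \tilde{A}_{\ell}$. Writing out the $(i,j)$-entry of a product of $\ell$ matrices as a sum over all length-$(\ell{-}1)$ index paths $k_1,\dotsc,k_{\ell-1}$ of terms $(A_1)_{i,k_1}(A_2)_{k_1,k_2}\dotsm(A_{\ell})_{k_{\ell-1},j}$, the difference of the two products is the sum over all such paths of the differences of these scalar products. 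So the whole statement follows once we bound each such path difference by $N$.

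For a fixed path, I would apply Proposition~\ref{prop:productval} directly with $\ell$ scalars $x_1 = (A_1)_{i,k_1}$, $x_2 = (A_2)_{k_1,k_2}$, \dots, $x_{\ell} = (A_{\ell})_{k_{\ell-1},j}$ and their approximations $\tilde x_m = (\tilde A_m)_{\ldots}$. The hypotheses transfer cleanly: since $\ord_p(A_m) \geq v_m$ means every entry of $A_m$ has valuation $\geq v_m$, we get $\ord_p(x_m) \geq v_m$; since $\ord_p(A_m - \tilde A_m) \geq N - \sum_{j \neq m} v_j$ means every entry of the difference has that valuation, we get $\ord_p(x_m - \tilde x_m) \geq N - \sum_{j \neq m} v_j$; and the hypothesis $N \geq \sum_j v_j$ is identical. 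Proposition~\ref{prop:productval} then yields $\ord_p(x_1 \dotsm x_{\ell} - \tilde x_1 \dotsm \tilde x_{\ell}) \geq N$ for this path.

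Finally, since $\ord_p$ is a non-archimedean valuation, the valuation of a sum is at least the minimum of the valuations of the summands; summing the per-path bounds gives $\ord_p$ of each entry of $A_1 \dotsm A_{\ell} - \tilde A_1 \dotsm \tilde A_{\ell}$ is $\geq N$, and hence $\ord_p(A_1 \dotsm A_{\ell} - \tilde A_1 \dotsm \tilde A_{\ell}) \geq N$ as claimed.

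I do not expect a serious obstacle here; the statement is genuinely a bookkeeping corollary of the scalar case. The only mild subtlety is being careful that the index $j$ in the exponent $\sum_{j \neq i} v_j$ (a dummy ranging over the factors $1,\dotsc,\ell$) is not confused with the matrix column index $j$, and that the bound $N - \sum_{j \neq m} v_j$ really is uniform over all entries of $A_m - \tilde A_m$ so the hypothesis of Proposition~\ref{prop:productval} is met for every path simultaneously; both are immediate from the definition of $\ord_p$ on matrices as the entrywise minimum. One could alternatively give a direct telescoping proof mimicking the scalar one, but routing through Proposition~\ref{prop:productval} is cleaner and avoids repeating the argument.
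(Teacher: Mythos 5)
Your proof is correct but takes a different route from the paper. The paper's proof does not reduce to the scalar case entrywise; instead it simply remarks that the telescoping argument from the proof of Proposition~\ref{prop:productval} carries over verbatim to matrices, because the two properties that argument actually uses, namely $\ord_p(A+B) \geq \min\{\ord_p(A),\ord_p(B)\}$ and $\ord_p(AB) \geq \ord_p(A)+\ord_p(B)$, hold for the matrix-valued $\ord_p$ just as they do for scalars. You reduce instead by expanding the $(i,j)$-entry of the product as a sum over index paths, applying the scalar proposition to each path, and then invoking the non-archimedean inequality for the outer sum. Both are valid: your version turns the matrix statement into a genuine corollary of the scalar one, at the cost of introducing the combinatorics of index paths, whereas the paper's version avoids that bookkeeping entirely by observing that the scalar proof never used anything beyond the two inequalities above, so the argument (not just the conclusion) is reusable. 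You actually anticipate the paper's approach in your closing remark; the authors chose that route precisely because it is shorter, but your reduction is equally rigorous.
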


\begin{proof}
We can follow the proof of Proposition~\ref{prop:productval}, 
observing that for matrices $A,B \in M_{b \times b}(\QQ_q)$, we still 
have that $\ord_p(A + B) \geq \min \{\ord_p(A), \ord_p(B)\}$ and 
$\ord_p(AB) \geq \ord_p(A)+\ord_p(B)$.
\end{proof}

\section{Computing the connection matrix}
\label{sec:Connection}

In this section we compute the action of the Gauss--Manin connection $\nabla$ 
on the algebraic de~Rham cohomology $\HdR^{n}(\mathfrak{U}/\mathfrak{S})$ 
of the generic fiber 
$\mathfrak{U}/\mathfrak{S}=\mathcal{U}/\mathcal{S} \otimes \QQ_q$ of the 
complement $\mathcal{U}/\mathcal{S}$ of a family of smooth hypersurfaces 
$\mathcal{X}/\mathcal{S}$ contained in $\mathbf{P}^n_{\mathcal{S}}$ over
some Zariski open subset $\mathcal{S} \subset \mathbf{P}^1_{\ZZ_q}$. Let 
$\mathcal{X}/\mathcal{S}$ be defined by a homogeneous polynomial 
$P \in \ZZ_q[t][x_0,\dotsc,x_n]$ of degree $d$. First we recall how to 
compute in $\HdR^{n}(\mathfrak{U}/\mathfrak{S})$ following the method 
of Griffiths and Dwork.  

\begin{prop} \label{prop:Omega}
Let $\Omega$ denote the $n$-form on $\mathfrak{U}/\mathfrak{S}$ defined by 
\begin{align*}
\Omega = \sum_{i=0}^n (-1)^i x_i d x_0 \wedge \dotsb \wedge \widehat{d x_i} \wedge \dotsb \wedge d x_n.
\end{align*}
The algebraic de~Rham cohomology space $\HdR^{n}(\mathfrak{U}/\mathfrak{S})$ 
is isomorphic to the quotient of the space of closed $n$-forms 
$Q \Omega / P^k$ with $k \in \NN$ and 
$Q \in H^0(\mathfrak{S},\mathcal{O}_{\mathfrak{S}})[x_0, x_1, \dotsc, x_n]$ 
homogeneous of degree $k d - (n + 1)$, by the subspace of exact $n$-forms 
generated by
\begin{equation*} 
\frac{(\partial_i Q) \Omega}{P^k} - k \frac{Q (\partial_i P) \Omega}{P^{k+1}},
\end{equation*}
for all $0 \leq i \leq n$ with $k \in \NN$ and 
$Q \in H^0(\mathfrak{S}, \mathcal{O}_{\mathfrak{S}})[x_0, x_1, \dotsc, x_n]$ 
homogeneous of degree $kd-n$, where $\partial_i$ denotes the partial 
derivative operator with respect to~$x_i$.
\end{prop}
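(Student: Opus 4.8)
The plan is to recall the classical Griffiths--Dwork description of the de Rham cohomology of the complement of a smooth hypersurface, and then note that it works verbatim in the relative setting over $\mathfrak{S}$ because all the constructions are $\mathcal{O}_{\mathfrak{S}}$-linear. First I would set up the residue/pole-order filtration: every class in $\HdR^n(\mathfrak{U}/\mathfrak{S})$ can be represented by a form $Q\Omega/P^k$, where the numerator degree constraint $\deg Q = kd-(n+1)$ is forced by homogeneity so that $Q\Omega/P^k$ is a well-defined $n$-form on the projective complement (the form $\Omega$ has degree $n+1$). This is the content of Griffiths' theorem that $\HdR^n(\mathbf{P}^n\setminus X)$ is computed by such rational forms; here one works over the base and uses the standard fact that, since $P$ cuts out a \emph{smooth} family, the partial derivatives $\partial_0 P,\dotsc,\partial_n P$ form a regular sequence in $H^0(\mathfrak{S},\mathcal{O}_{\mathfrak{S}})[x_0,\dotsc,x_n]$ fibrewise, so the Jacobian ideal argument goes through.

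Next I would identify the exact forms. The key computation is that for a homogeneous $Q$ of degree $kd-n$ and each $i$, the $(n-1)$-form
\[
\omega_i = \frac{Q\,\bigl(x_i\,\widehat{dx_i}\text{-type terms}\bigr)}{P^k}
\]
(more precisely the contraction of $Q\Omega/P^k$ appropriately) has exterior derivative equal to
\[
d\omega_i = \frac{(\partial_i Q)\Omega}{P^k} - k\,\frac{Q(\partial_i P)\Omega}{P^{k+1}},
\]
up to sign and up to the Euler-relation bookkeeping. This is a direct Leibniz-rule calculation, and it shows that the displayed forms are exact, hence zero in cohomology. Conversely, one shows these generate \emph{all} relations: using the regular sequence property one writes any numerator that raises the pole order unnecessarily as an $H^0(\mathfrak{S},\mathcal{O}_{\mathfrak{S}})[x]$-combination of the $\partial_i P$, and then the displayed relations let one trade a pole of order $k+1$ for one of order $k$ modulo exact forms, i.e.\ the reduction-of-pole-order process of the Griffiths--Dwork method. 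Iterating, every class has a representative of bounded pole order and the quotient is exactly as claimed.

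The main obstacle, and the only place where the relative setting requires care beyond citing Griffiths, is verifying that the regular-sequence hypothesis holds with coefficients in $H^0(\mathfrak{S},\mathcal{O}_{\mathfrak{S}})$ rather than in a field: one needs that smoothness of $\mathcal{X}/\mathcal{S}$ guarantees the ideal generated by the $\partial_i P$ contains a power of the irrelevant ideal \emph{as an ideal over the base ring}, so that the Koszul complex on $\partial_0 P,\dotsc,\partial_n P$ is exact in the relevant degrees and the syzygy argument is valid. This follows from the fibrewise statement together with flatness, but it is the step I would spend the most time on; everything else is the standard Griffiths--Dwork bookkeeping (comparing the de Rham complex of $\mathfrak{U}/\mathfrak{S}$ to rational forms with poles along $\mathfrak{X}$, and matching degrees via the Euler relation $\sum_i x_i\partial_i P = dP$).
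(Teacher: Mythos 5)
Your proposal is correct and takes essentially the same route as the paper, whose proof simply defers to the Griffiths--Dwork computation in \citep[\S 4]{Griffiths1969}: identify all relative $n$-forms on $\mathfrak{U}$ as $Q\Omega/P^k$ by homogeneity, and compute $d$ of the natural $(n-1)$-form generators to see that the exact forms are spanned by the displayed expressions. One small remark: the regular-sequence/Macaulay input that you single out as the delicate step is not actually needed for this proposition, which allows arbitrary pole order $k$ and only identifies the forms and the exact forms; it enters only in the subsequent assertion that $\mathrm{Fil}^{n}\,\HdR^{n}(\mathfrak{U}/\mathfrak{S}) = \HdR^{n}(\mathfrak{U}/\mathfrak{S})$, i.e.\ in the pole-order reduction, not in the presentation of the cohomology itself.
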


\begin{proof}
The proof is straightforward, for details see \citep[\S 4]{Griffiths1969}.
\end{proof}

The cohomology space $\HdR^{n}(\mathfrak{U}/\mathfrak{S})$ is 
equipped with an increasing filtration by the pole order, for which 
$\mbox{Fil}^k \HdR^{n}(\mathfrak{U}/\mathfrak{S})$ consists of all elements 
that can be represented by $n$-forms $Q \Omega / P^k$ with 
$Q \in H^0(\mathfrak{S},\mathcal{O}_{\mathfrak{S}})[x_0, x_1, \dotsc, x_n]$ 
homogeneous of degree $kd - (n + 1)$.  It follows from a theorem of 
Macaulay~\citep[(4.11)]{Griffiths1969} that 
$\mbox{Fil}^n \HdR^{n}(\mathfrak{U}/\mathfrak{S}) = \HdR^{n}(\mathfrak{U}/\mathfrak{S})$. 
Actually, the reverse filtration $H_i=\mbox{Fil}^{n-i} \HdR^{n}(\mathfrak{U}/\mathfrak{S})$
corresponds to the restriction of the Hodge filtration on 
$\HdR^{n-1}(\mathfrak{X}/\mathfrak{S})$ to $\HdR^{n}(\mathfrak{U}/\mathfrak{S})$ 
by~\citep[(8.6.)]{Griffiths1969}.

As we prefer to perform linear 
algebra operations over a field, we will actually work with the de~Rham 
cohomology vector space $\HdR^{n}\bigl(\mathfrak{U}_{\QQ_q(t)}\bigr)$ of 
the generic fibre 
\[
\mathfrak{U}_{\QQ_q(t)} = \mathfrak{U}/\mathfrak{S} \times_{\mathfrak{S}} \Spec \QQ_q(t).
\] 
We now define an explicit basis of a simple form for 
$\HdR^{n}\bigl(\mathfrak{U}_{\QQ_q(t)}\bigr)$ 
for the families that we are interested in.

\begin{defn} \label{defn:MonBasis}
For $k \in \NN$, we define the following sets of monomials: 
\begin{align*}
F_k & = \{ x^u : u \in \mathbf{N}_{0}^{n+1}, \abs{u} = k d - (n+1) \}, \\
B_k & = \{ x^u : u \in \mathbf{N}_{0}^{n+1}, \abs{u} = k d - (n+1) \text{ and $u_i < d-1$ for all $i$}\}, \\
R_k & = F_k - B_k,
\end{align*}
where $x^u = x_0^{u_0} \dotsm x_n^{u_n}$ and $\abs{u}=\sum_{i=0}^n u_i$. 
We also define 
\begin{equation*}
\cB_k = \{Q \Omega / P^k : Q \in B_k\}, 
\end{equation*}
and write $B = B_1 \cup \dotsb \cup B_n$ and $\cB = \cB_1 \cup \dotsb \cup \cB_n$.
\end{defn}

We will show below that if the family $\mathcal{X}/{\mathcal{S}}$ contains 
a diagonal fibre, then the set $\cB$ forms a basis for 
$\HdR^n\bigl(\mathfrak{U}_{\QQ_q(t)}\bigr)$.

\begin{defn} \label{defn:IndexSets}
For $k \in \NN$, let $C_k^{(0)}$ be the set of monomials of total 
degree $(k-1)d - n$ and then inductively, for $1 \leq j \leq n$, define 
$C_k^{(j)}$ to be the set of monomials in $C_k^{(j-1)}$ except for those 
divisible by $x_{j-1}^{d-1}$.  Moreover, we define the multi-set $C_k$ as 
the disjoint union of $C_k^{(0)}, \dotsc, C_k^{(n)}$.  We shall write an 
element of this multi-set as $(j, g)$, when referring to a monomial~$g$ 
in~$C_k^{(j)}$.
\end{defn}

\begin{lem} \label{lem:bijection}
For all $k \in \NN$, the multi-sets $R_k$ and $C_k$ 
have the same cardinality.
\end{lem}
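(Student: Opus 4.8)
The plan is to exhibit an explicit bijection between $R_k$ and the multi-set $C_k$. Recall that $R_k = F_k - B_k$ consists of those monomials $x^u$ of degree $kd-(n+1)$ for which $u_i \geq d-1$ for at least one index~$i$, whereas an element of $C_k$ is a pair $(j,g)$ with $0 \leq j \leq n$ and $g$ a monomial of degree $(k-1)d-n$ that is not divisible by any of $x_0^{d-1},\dotsc,x_{j-1}^{d-1}$ (for $j=0$ there is no constraint beyond the degree).

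First I would define a map $\alpha \colon R_k \to C_k$. Given $x^u \in R_k$, let $j=j(u)$ be the \emph{smallest} index with $u_j \geq d-1$, and set $\alpha(x^u) = \bigl(j,\, x^u / x_j^{d-1}\bigr)$. Here $x^u/x_j^{d-1}$ is a genuine monomial since $u_j \geq d-1$, and it has degree $kd-(n+1)-(d-1) = (k-1)d-n$. By minimality of~$j$ we have $u_i \leq d-2$ for all $i<j$, so this quotient monomial is not divisible by $x_0^{d-1},\dotsc,x_{j-1}^{d-1}$; hence $\alpha(x^u) \in C_k^{(j)} \subseteq C_k$. In the other direction I would define $\beta \colon C_k \to R_k$ by $\beta(j,g) = x_j^{d-1} g$, which has degree $(k-1)d-n+(d-1) = kd-(n+1)$ and is divisible by $x_j^{d-1}$, hence lies in $R_k$.

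The main point to check — and the only place where the defining condition of $C_k^{(j)}$ is used — is that $\alpha$ and $\beta$ are mutually inverse. The composite $\alpha\circ\beta$ is identity because for $(j,g)\in C_k$ the exponent of $x_j$ in $x_j^{d-1}g$ is $\geq d-1$, while for $i<j$ the exponent of $x_i$ in $g$, and hence in $x_j^{d-1}g$, is $\leq d-2$ since $g$ is not divisible by $x_i^{d-1}$; thus the smallest bad index of $x_j^{d-1}g$ is exactly~$j$, and $\alpha(\beta(j,g)) = \bigl(j,\, x_j^{d-1}g / x_j^{d-1}\bigr) = (j,g)$. The composite $\beta\circ\alpha$ is clearly identity since $x_j^{d-1}\cdot(x^u/x_j^{d-1}) = x^u$. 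This yields the bijection and hence $\card{R_k} = \card{C_k}$.

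I do not anticipate any real obstacle; the only things to be careful about are tracking the degree shift by $d-1$ consistently and verifying the divisibility constraints cutting out the $C_k^{(j)}$. As a sanity check, one can alternatively confirm the identity by generating functions: both $\card{R_k}$ and $\card{C_k}$ equal the coefficient of $z^{kd-(n+1)}$ in $\bigl(1-(1-z^{d-1})^{n+1}\bigr)/(1-z)^{n+1}$, where for the $C_k$ side one uses the finite geometric sum $\sum_{j=0}^{n}(1-z^{d-1})^j = \bigl(1-(1-z^{d-1})^{n+1}\bigr)/z^{d-1}$ together with the fact that the $j$-th summand is the generating function for $C_k^{(j)}$ after the degree shift.
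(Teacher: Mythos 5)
Your proposal is correct and uses essentially the same bijection as the paper: send $x^u \in R_k$ with smallest offending index $j$ (i.e.\ $u_j \geq d-1$) to $(j,\, x^u/x_j^{d-1}) \in C_k^{(j)}$. You simply spell out the verification that the map and its inverse are well defined and mutually inverse, which the paper leaves as an easy check.
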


\begin{proof}
We construct a bijection $R_k \to C_k$, representing the 
monomials by their exponent tuples.  Let $u = (u_0, \dotsc, u_n)$ be an
element of $R_k$.  If $u_0 \geq d-1$, we define the image as
$(u_0-d-1, u_1, \dotsc, u_n) \in C_k^{(0)}$.  More generally, if 
$u_0 < d-1, \dotsc, u_{j-1} < d-1$ and $u_j \geq d-1$, we define the image as 
$(u_0, \dotsc, u_{j-1}, u_j-(d-1), u_{j+1}, \dotsc, u_n) \in C_k^{(j)}$.  
It is easy to verify that this map is indeed a bijection.
\end{proof}

\begin{defn} \label{defn:Deltak}
We define a square matrix $\Delta_k$ with 
row and column index sets $R_k$ and $C_k$ as follows.  
Given $f \in R_k$ and $(j,g) \in C_k$, we set the corresponding entry in 
$\Delta_k$ to be the coefficient of the monomial $f/g$ in $\partial_j P$ if 
$g$ divides $f$ and $0$ otherwise.
\end{defn}

\begin{thm} \label{thm:Isomorphism}
Suppose that the family $\mathcal{X}/\mathcal{S}$ of smooth projective
hypersurfaces given by the polynomial~$P$ in $\ZZ_q[t][x_0, \dotsc, x_n]$ contains 
a diagonal fibre.  For $k \in \NN$ and $0 \leq j \leq n$, let $U_k^{(j)}$ be 
the $\QQ_q(t)$-vector space of polynomials with basis $C_k^{(j)}$, and let $U_k$ 
denote the cartesian product $U_k = U_k^{(0)} \times \dotsb \times U_k^{(n)}$. 
Moreover, let $V_k$ and $W_k$ be the $\QQ_q(t)$-vector spaces of polynomials with 
bases $F_k$ and $R_k$, respectively, and let $\pi \colon V_k \rightarrow W_k$ 
denote the linear map that sends the elements of $B_k$ to zero and the 
elements of $R_k$ to themselves. 
Then the map 
\begin{align}
\phi_k \colon U_k &\to W_k,
&(Q_0, \dotsc, Q_n) &\mapsto \pi \bigl( Q_0 \partial_0 P + \dotsb + Q_n \partial_n P \bigr)
\end{align}
is an isomorphism of $\QQ_q(t)$-vector spaces.
\end{thm}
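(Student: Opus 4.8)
The plan is to show that $\phi_k$ is an isomorphism by a dimension count together with injectivity; since $U_k$ and $W_k$ are finite-dimensional $\QQ_q(t)$-vector spaces, it suffices to prove that $\dim U_k = \dim W_k$ and that $\phi_k$ is injective (or, dually, surjective). The equality of dimensions is already essentially Lemma~\ref{lem:bijection}: $\dim W_k = \card{R_k} = \card{C_k} = \sum_{j=0}^n \card{C_k^{(j)}} = \dim U_k$. So the whole content is the injectivity (equivalently surjectivity) of $\phi_k$, and this is where the hypothesis that the family contains a diagonal fibre must be used.

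First I would reduce to a statement about a single fibre. The map $\phi_k$ is given by a square matrix over $\QQ_q(t)$ whose entries are polynomials in $t$ (coming from the coefficients of the $\partial_j P$); call it the \emph{Jacobian-type} matrix. It is an isomorphism over $\QQ_q(t)$ if and only if its determinant is a nonzero element of $\QQ_q(t)$, i.e.\ not identically zero as a function of $t$. Hence it is enough to exhibit one value $t = t_0$ (in $\QQ_q$ or a finite extension) at which the specialised map is an isomorphism of $\QQ_q$-vector spaces — and here I would take $t_0$ to be the point of the diagonal fibre. So the problem becomes: for a diagonal hypersurface $P_0 = \sum_{i=0}^n c_i x_i^d$ with all $c_i \neq 0$ (smoothness), the map $(Q_0, \dotsc, Q_n) \mapsto \pi(\sum_j Q_j \partial_j P_0)$ is an isomorphism.

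Second, for the diagonal fibre the partial derivatives are $\partial_j P_0 = d\, c_j x_j^{d-1}$, so the image of $(Q_0, \dotsc, Q_n)$ is (up to the nonzero scalars $d c_j$) $\pi\bigl(\sum_j x_j^{d-1} Q_j\bigr)$. Now I would argue that $V_k = F_k\text{-span}$ decomposes as the direct sum of the span of $B_k$ (monomials with every exponent $< d-1$) and a complement spanned by $R_k$, and that the subspaces $x_j^{d-1} U_k^{(j)}$ are arranged — by the very definition of the $C_k^{(j)}$ as the monomials of degree $(k-1)d - n$ that survive the successive removal of those divisible by $x_0^{d-1}, \dotsc, x_{j-1}^{d-1}$ — so that multiplying $C_k^{(j)}$ by $x_j^{d-1}$ produces exactly the monomials in $R_k$ whose \emph{first} exponent that reaches $d-1$ occurs in slot $j$. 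This is precisely the partition of $R_k$ underlying the bijection of Lemma~\ref{lem:bijection}. Consequently $\bigoplus_j x_j^{d-1} U_k^{(j)}$ maps under $\pi$ bijectively onto $W_k$: each monomial of $R_k$ is hit exactly once, with a triangular (indeed diagonal, after reindexing by the bijection) coefficient matrix having the nonzero entries $d c_j$ on the diagonal. Hence $\phi_k$ specialised at the diagonal fibre is an isomorphism, so $\det$ of the Jacobian-type matrix is a nonzero element of $\QQ_q[t]$, and therefore $\phi_k$ is an isomorphism over $\QQ_q(t)$.

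The main obstacle I anticipate is bookkeeping rather than conceptual: one must check carefully that the inductive "remove multiples of $x_{j-1}^{d-1}$" construction of $C_k^{(j)}$ in Definition~\ref{defn:IndexSets} matches, after multiplication by $x_j^{d-1}$, exactly the image of the bijection $R_k \to C_k$ of Lemma~\ref{lem:bijection} — i.e.\ that for $u \in R_k$ with $u_0 < d-1, \dotsc, u_{j-1} < d-1$ and $u_j \geq d-1$, the monomial $x^u / x_j^{d-1}$ indeed lies in $C_k^{(j)}$ and not in any $C_k^{(j')}$ with $j' < j$, and conversely. Once this identification is in place, the diagonal-fibre computation and the resulting non-vanishing of the determinant are immediate, and the theorem follows.
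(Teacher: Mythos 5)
Your proposal is correct and takes essentially the same route as the paper: specialise $\phi_k$ (equivalently, the matrix $\Delta_k$) at the diagonal fibre $t=t_0$, observe that $\partial_j P_{t_0}=d\,a_j x_j^{d-1}$ makes the specialised matrix a scaled permutation matrix via the bijection $R_k\leftrightarrow C_k$ of Lemma~\ref{lem:bijection}, and conclude that $\det\Delta_k$ is a nonzero element of $\QQ_q[t]$. The only difference is expository: the paper phrases everything directly in terms of $\Delta_k$ having exactly one nonzero entry $d a_j$ in each row and column, which also lets it record the useful extra fact that $\det(\Delta_k)|_{t=t_0}\in\ZZ_q^{\times}$.
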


\begin{proof}
Recall that $R_k$ and $C_k$ have the same cardinality by Lemma~\ref{lem:bijection}.
If $R_k$ and $C_k$ are empty, then $U_k$ and $W_k$ are the zero vector spaces, 
and the theorem holds trivially. So suppose that $R_k$ and $C_k$ are nonempty. 
It is immediate that $\Delta_k$ is the matrix 
representing $\phi_k$ with respect to the bases $C_k$ and $R_k$ of $U_k$ and $W_k$, 
respectively.

The assumption that the family~$\mathcal{X}/\mathcal{S}$ contains a diagonal 
hypersurface means that for some~$t_0 \in \mathcal{S}(\ZZ_q)$, 
the fibre $\mathcal{X}_{t_0}$ is defined by a polynomial of the form 
\begin{equation*}
P_{t_0}(x_0, \dotsc, x_n) = a_0 x_0^d + \dotsb + a_n x_n^d
\end{equation*}
with $a_0, \dotsc, a_n \in \ZZ_q^{\times}$.

We now show that the determinant of $\Delta_k$ is nonzero.  Since 
evaluation of the matrix at \mbox{$t = t_0$} commutes with computing the 
determinant, it suffices to show that the determinant of 
$(\Delta_k) \big |_{t=t_0}$ is nonzero. Since, for $0 \leq j \leq n$, 
we have $\partial_j P_{t_0} (x_0, \dotsc, x_n) = d a_j x_j^{d-1}$, there is 
precisely one nonzero entry in each column and each row of~$\Delta_k$.  
Namely, in column $(j, g) \in C_k$ and row $g x_j^{d-1} \in R_k$ there is 
the nonzero entry $d a_j$. Note that this also implies that 
$(\Delta_k) \big |_{t=t_0} \in \ZZ_{q}^{\times}$.
\end{proof}

We can use Theorem~\ref{thm:Isomorphism} to give a routine {\sc Decompose}, 
formalised in Algorithm~\ref{alg:Decompose}, which given 
$Q \in \QQ_q(t)[x_0, \dotsc, x_n]$ homogeneous of degree $kd - (n+1)$
returns an expression 
\begin{equation*}
Q = Q_0 \partial_0 P + \dotsb + Q_n \partial_n P + \gamma_k
\end{equation*} 
with $Q_0, \dotsc, Q_n \in \QQ_q(t)[x_0, \dotsc, x_n]$ homogeneous of 
degree $kd-n$ and $\gamma_k$ in the $\QQ_q(t)$-span of~$B_k$. We can in turn 
use {\sc Decompose} to furnish another routine {\sc Reduce}, formalised 
in Algorithm~\ref{alg:PoleRed}, which given a closed $n$-form $Q\Omega/P^k$ 
with $Q \in \QQ_q(t)[x_0, \dotsc, x_n]$ homogeneous of degree $kd - (n+1)$ returns 
an expression
\begin{equation*}
\frac{Q \Omega}{P^k} \equiv \frac{\gamma_{1} \Omega}{P^{1}} 
                            + \dotsb + \frac{\gamma_n \Omega}{P^n},
\end{equation*}
with $\gamma_i$ in the $\QQ_q(t)$-span of $B_i$ for $1 \leq i \leq n$ and 
where $\equiv$ denotes equality in cohomology.

\begin{algorithm}
\caption{Obtain coordinates in the Jacobian ideal modulo basis elements}
\label{alg:Decompose}
\begin{algorithmic}
\Require $P$ in $\ZZ_q[t][x_0, \dotsc, x_n]$ homogeneous of degree~$d$, 
         defining a family $\mathcal{X}/\mathcal{S}$ of smooth projective 
         hypersurfaces that contains a diagonal fibre, 
         $Q \in \QQ_q(t)[x_0, \dotsc, x_n]$ homogeneous of degree $kd - (n+1)$.
\Ensure  $Q_0, \dotsc, Q_n \in \QQ_q(t)[x_0, \dotsc, x_n]$ homogeneous of degree 
         $k(d-1)-n$, and $\gamma_k$ in the $\QQ_q(t)$-span of $B_k$, such that 
         $Q = Q_0 \partial P_0 + \dotsb + Q_n \partial_n P +\gamma_k$.
\Procedure{Decompose}{$P,Q$}
\State \begin{compactenum}[{\hspace{1em}} 1.] \vspace{-1.24em}
\item Let $w$ be the vector of length $\abs{R_k}$ such that the entry 
      corresponding to $x^u \in R_k$ is the coefficient of 
      $x^u$ in $Q$.
\item Solve for the unique vector $v$ of length $\abs{C_k}$ satisfying 
      $\Delta_k v = w$.  We write $v$ 
      as $\bigl(v^{(0)}, \dotsc, v^{(n)}\bigr)$, where $v^{(j)}$ is 
      a vector of length $\abs{C_k^{(j)}}$ for $0 \leq j \leq n$,
      and let $v_g^{(j)}$ be the entry in $v^{(j)}$ corresponding 
      to $g \in C_k^{(j)}$.
\item For $0 \leq j \leq n$, compute $Q_j \gets \sum_{g \in C_k^{(j)}} v_g^{(j)} g$.
\item Set $\gamma_k \gets Q-(Q_0 \partial P_0 + \dotsb + Q_n \partial_n P)$.
\item \textbf{return} $Q_0, \dotsc, Q_n,\gamma_k$      
\EndProcedure
\end{compactenum}
\end{algorithmic}
\end{algorithm}

\begin{algorithm}
\caption{Reduce $Q \Omega / P^k$ in $\HdR^n\bigl(\mathfrak{U}_{\QQ_q(t)}\bigr)$}
\label{alg:PoleRed}
\begin{algorithmic}
\vspace{1mm}
\Require $P$ in $\ZZ_q[t][x_0, \dotsc, x_n]$ homogeneous of degree~$d$, 
         defining a family $\mathcal{X}/\mathcal{S}$ of smooth projective 
         hypersurfaces that contains a diagonal fibre, $Q \in \QQ_q(t)[x_0, \dotsc, x_n]$ 
         homogeneous of degree $kd - (n+1)$.
\Ensure  $\gamma_i$ in the $\QQ_q(t)$-span of $B_i$ for $1 \leq i \leq n$, with  
         $Q \Omega / P^k \equiv \gamma_{1} \Omega / P^{1} + \dotsb + \gamma_n \Omega / P^n$.
\Procedure{Reduce}{$P,Q$}
\While{$k \geq n+1$}
\State $Q_0, \dotsc, Q_n, 0 \gets \Call{Decompose}{Q}$
\State $k \gets k-1$
\State $Q \gets k^{-1} \sum_{i=0}^n \partial_i Q_i$
\EndWhile
\While{$Q \not \in \QQ_q(t)$-span of $B_k$}
\State $Q_0, \dotsc, Q_n, \gamma_k \gets \Call{Decompose}{Q}$
\State $k \gets k-1$
\State $Q \gets k^{-1} \sum_{i=0}^n \partial_i Q_i$
\EndWhile
\If{$Q \neq 0$}
\State $\gamma_{k} \gets Q$
\State $k \gets k-1$
\EndIf
\State $\gamma_{1}, \dotsc, \gamma_{k} \gets 0$
\State \textbf{return} $\gamma_{1}, \dotsc, \gamma_n$
\EndProcedure
\end{algorithmic}
\end{algorithm}

We now establish that the set~$\cB$ indeed forms a basis for 
$\HdR^n\bigl(\mathfrak{U}_{\QQ_q(t)}\bigr)$, as announced before.  
We start with an auxiliary result describing the cardinality of 
the set~$\cB$.

\begin{prop} \label{prop:BasisSize}
The set $\cB$ has cardinality
\begin{equation*}
\frac{1}{d} \left((d-1)^{n+1} + (-1)^{n+1}(d-1) \right).
\end{equation*}
\end{prop}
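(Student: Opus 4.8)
The plan is to evaluate $\card{\cB}$ as a purely combinatorial count and then extract the closed form via a roots-of-unity filter. Since the degrees $kd-(n+1)$ are pairwise distinct for distinct~$k$, the union $\cB=\cB_1\cup\dotsb\cup\cB_n$ is disjoint, and for each $k$ the assignment $Q\mapsto Q\Omega/P^k$ is a bijection $B_k\to\cB_k$; hence $\card{\cB}=\sum_{k=1}^{n}\card{B_k}$, where $\card{B_k}$ is the number of exponent vectors $u\in\mathbf{N}_0^{n+1}$ with $u_i\le d-2$ for all~$i$ and $\abs{u}=kd-(n+1)$.

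First I would argue that the range $1\le k\le n$ loses nothing: if $u\in\{0,\dotsc,d-2\}^{n+1}$ and $d\mid\abs{u}+(n+1)$, then writing $\abs{u}+(n+1)=kd$ forces $k\ge 1$ (since $\abs{u}+(n+1)\ge 1$ gives $kd\ge d$) and $k\le n$ (since $\abs{u}\le(d-2)(n+1)<(n+1)d-(n+1)$ gives $kd<(n+1)d$). Therefore
\[
\card{\cB}=\#\bigl\{u\in\{0,1,\dotsc,d-2\}^{n+1}\;:\;\abs{u}\equiv-(n+1)\pmod d\bigr\}.
\]

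Next I would apply the roots-of-unity filter. Letting $\zeta$ range over the $d$-th roots of unity in $\CC$, the quantity $\tfrac1d\sum_{\zeta^d=1}\zeta^{m+n+1}$ equals $1$ when $d\mid m+n+1$ and $0$ otherwise; taking $m=\abs{u}$, summing over~$u$, and exchanging the order of summation gives
\[
\card{\cB}=\frac1d\sum_{\zeta^d=1}\zeta^{\,n+1}\Bigl(\sum_{j=0}^{d-2}\zeta^{\,j}\Bigr)^{n+1}.
\]
For $\zeta=1$ the inner sum equals $d-1$, contributing $(d-1)^{n+1}$. For each of the remaining $d-1$ roots $\zeta\ne 1$ we have $\sum_{j=0}^{d-1}\zeta^j=0$, so $\sum_{j=0}^{d-2}\zeta^j=-\zeta^{d-1}=-\zeta^{-1}$, and the term equals $\zeta^{n+1}(-\zeta^{-1})^{n+1}=(-1)^{n+1}$. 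Summing the contributions yields $\card{\cB}=\frac1d\bigl((d-1)^{n+1}+(-1)^{n+1}(d-1)\bigr)$, as claimed.

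The argument is short and the computation routine; the only step that needs genuine care is the first one, namely checking that passing from the truncated sum $\sum_{k=1}^{n}$ to the single congruence condition neither drops nor repeats a monomial — in particular that, under the constraint $u_i\le d-2$, the only values of~$k$ producing a nonempty $B_k$ satisfy $1\le k\le n$. A generating-function and partial-fraction computation starting from $\bigl((1-z^{d-1})/(1-z)\bigr)^{n+1}$ is a possible alternative, but the roots-of-unity filter seems cleanest.
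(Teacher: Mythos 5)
Your argument is correct. Both you and the paper begin by rewriting $\card{\cB}$ as the number of tuples $u\in\{0,\dotsc,d-2\}^{n+1}$ with $\abs{u}\equiv-(n+1)\pmod d$ (the paper phrases this as $V-(W_0\cup\dotsb\cup W_n)$ inside $(\ZZ/d\ZZ)^{n+1}$), so the key reformulation is shared. From there the routes diverge: the paper runs an inclusion--exclusion over the events $u_j\equiv-1$, computing $\abs{V\cap W_{j_1}\cap\dotsb\cap W_{j_\ell}}=d^{n-\ell}$ and summing the alternating binomial series, whereas you use a $d$-th roots-of-unity filter, which localizes the whole count to a single evaluation of $\bigl(\sum_{j=0}^{d-2}\zeta^j\bigr)^{n+1}$ at each $\zeta$. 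The filter is arguably cleaner here: the contribution from $\zeta=1$ gives $(d-1)^{n+1}$ immediately and each $\zeta\neq1$ contributes $(-1)^{n+1}$ with no telescoping to track. One very minor wording slip: in the lower bound for $k$, ``$\abs{u}+(n+1)\ge 1$ gives $kd\ge d$'' skips the observation that $kd$ is a \emph{positive multiple of $d$}, which is what actually forces $kd\ge d$; the conclusion $k\ge1$ is still correct. Everything else, including the careful check that no $k$ outside $1\le k\le n$ can occur, is exactly right.
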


\begin{proof}
First note that if we denote
\begin{align*}
V   &= \Bigl\{(u_0,\dotsc,u_n) \in (\ZZ/d\ZZ)^{n+1} : \sum_{j=0}^n u_j = -(n+1) \Bigr\}, \\
W_j &= \Bigl\{(u_0,\dotsc,u_n) \in (\ZZ/d\ZZ)^{n+1} : u_j = -1 \Bigr\},
\end{align*}
then  $\cB$ is in one-to-one correspondence with the set $V-(W_0 \cup \dotsb \cup W_n)$. 
Now by the inclusion-exclusion principle, 
\begin{align*}
\abs{V \cap (W_0 \cup \dotsb \cup W_n)} 
& = \sum_{j=0}^n \abs{V \cap W_j} 
    - \sum_{0 \leq j < k \leq n} \abs{V \cap W_j \cap W_k} \\
& \quad + \dotsb + (-1)^{n} \abs{V \cap W_0 \cap \dotsb \cap W_n} \\
& = {n+1 \choose 1} d^{n-1} -{n+1 \choose 2} d^{n-2} 
    + \dotsb + (-1)^{n-1} {n+1 \choose n} + (-1)^{n} \\
& = \frac{1}{d} \left(d^{n+1}+(-1)^{n+1} - (d-1)^{n+1}\right)+(-1)^n,
\end{align*}
so that
\begin{align*}
\abs{V-(W_0 \cup \dotsb \cup W_n)}&=\abs{V}-\abs{V \cap (W_0 \cup \dotsb \cup W_n)} \\
&= d^n - \frac{1}{d} \bigl(d^{n+1}+(-1)^{n+1} - (d-1)^{n+1}+d (-1)^n \bigr) \\
&= \frac{1}{d} \left((d-1)^{n+1} + (-1)^{n+1}(d-1) \right),
\end{align*}
and the proof is complete.
\end{proof}

\begin{prop} \label{prop:rankcoho}
The rank of $\HdR^n(\mathfrak{U}/\mathfrak{S})$ is
\[
\frac{1}{d} \left((d-1)^{n+1} + (-1)^{n+1}(d-1) \right).
\]
\end{prop}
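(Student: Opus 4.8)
The plan is to reduce the statement to a Hilbert--function computation via the Griffiths description of $\HdR^n$. Since $\HdR^n(\mathfrak{U}/\mathfrak{S})$ is a vector bundle on $\mathfrak{S}$, its rank equals the dimension of its fibre at the generic point, which is $\HdR^n(\mathfrak{U}_{\QQ_q(t)})$ because relative algebraic de~Rham cohomology commutes with the flat base change $\Spec\QQ_q(t)\to\mathfrak{S}$. So it suffices to prove $\dim_{\QQ_q(t)}\HdR^n(\mathfrak{U}_{\QQ_q(t)})=\card{\cB}$ and then invoke Proposition~\ref{prop:BasisSize}.

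To compute this dimension I would use the pole order filtration $\mathrm{Fil}^{\bullet}$ on $\HdR^n(\mathfrak{U}_{\QQ_q(t)})$. By the theorem of Macaulay recalled above, $\mathrm{Fil}^n\HdR^n(\mathfrak{U}_{\QQ_q(t)})=\HdR^n(\mathfrak{U}_{\QQ_q(t)})$, so the dimension is the sum over $1\le k\le n$ of the dimensions of the graded pieces $\mathrm{Gr}^k_{\mathrm{Fil}}$. The reduction of pole order in Proposition~\ref{prop:Omega} shows that a form $Q\Omega/P^k$ whose numerator $Q$ lies in the Jacobian ideal $(\partial_0 P,\dotsc,\partial_n P)$ is cohomologous to a form of pole order $k-1$; Griffiths' computation~\citep[\S 4, \S 8]{Griffiths1969} supplies the converse, and hence an isomorphism of $\mathrm{Gr}^k_{\mathrm{Fil}}\HdR^n(\mathfrak{U}_{\QQ_q(t)})$ with the degree $kd-(n+1)$ part of the Jacobian ring $R=\QQ_q(t)[x_0,\dotsc,x_n]/(\partial_0 P,\dotsc,\partial_n P)$. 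Thus $\dim_{\QQ_q(t)}\HdR^n(\mathfrak{U}_{\QQ_q(t)})=\sum_{k=1}^n\dim_{\QQ_q(t)}R_{kd-(n+1)}$.

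It then remains to identify these dimensions with the $\card{B_k}$. Because $P$ defines a family of smooth hypersurfaces, the forms $\partial_0 P,\dotsc,\partial_n P$ have no common zero in $\mathbf{P}^n$ over $\overline{\QQ_q(t)}$; being $n+1$ forms of degree $d-1$ in $n+1$ variables with no common projective zero, they form a regular sequence, so $R$ is a graded Artinian complete intersection with Hilbert series $\bigl((1-s^{d-1})/(1-s)\bigr)^{n+1}=(1+s+\dotsb+s^{d-2})^{n+1}$, independently of the particular~$P$. Consequently $\dim_{\QQ_q(t)}R_e$ is the number of $(u_0,\dotsc,u_n)\in\NN_0^{n+1}$ with $0\le u_i\le d-2$ and $\sum_i u_i=e$, which for $e=kd-(n+1)$ is precisely $\card{B_k}$ (and is $0$ for $k>n$, the socle degree being $(n+1)(d-2)<(n+1)(d-1)$). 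Summing over $k$ gives $\dim_{\QQ_q(t)}\HdR^n(\mathfrak{U}_{\QQ_q(t)})=\card{\cB}$, and Proposition~\ref{prop:BasisSize} yields the asserted value. The one nontrivial ingredient is Griffiths' converse --- that a pole-order-$k$ form which becomes exact only after passing to lower pole order has numerator in the Jacobian ideal --- which I would cite rather than reprove; the complete-intersection Hilbert series and the combinatorics are routine. Alternatively one could feed the excision sequence from the proof of Theorem~\ref{thm:hypersurface} into the classical formula for the primitive middle Betti number of a smooth hypersurface, but that formula rests on the same Griffiths computation, so nothing is really gained.
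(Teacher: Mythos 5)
Your proof is correct, but it follows a genuinely different route from the paper's. The paper simply quotes the generating function $\sum_{m\ge 1} b(d,m)x^{m-1} = (d-1)/\bigl((1+x)(1-(d-1)x)\bigr)$ from SGA7 (after noting that the rank depends only on $d$ and $m$) and extracts the coefficient. You instead go through the Griffiths picture: you identify the graded pieces of the pole-order filtration on $\HdR^n(\mathfrak{U}_{\QQ_q(t)})$ with the graded pieces $R_{kd-(n+1)}$ of the Jacobian ring, use smoothness (plus the Euler relation in characteristic zero) to see that the partials form a regular sequence, read off the Hilbert series $(1+s+\dotsb+s^{d-2})^{n+1}$ of the Artinian complete intersection, and observe that its coefficient in degree $kd-(n+1)$ is exactly $\card{B_k}$, with vanishing for $k>n$ by the socle bound. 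Your approach buys more: it directly yields $\dim \mathrm{Gr}^k_{\mathrm{Fil}} = \card{B_k}$, i.e.\ the Hodge numbers that the paper only recovers later in Remark~\ref{rem:hnumbers} by a counting argument, and it makes the equality $\dim = \card{\cB}$ transparent rather than a coincidence of two formulas. The price is that you must invoke the injectivity half of Griffiths' theorem (a pole-order-$k$ class that dies in $\mathrm{Gr}^k$ has numerator in the Jacobian ideal), which is a strictly stronger input than the paper uses: the paper's Algorithm~\ref{alg:PoleRed} only provides the surjection $R_{kd-(n+1)} \onto \mathrm{Gr}^k$, and linear independence of $\cB$ is then deduced from the independently known dimension. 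Both citations (SGA7 versus Griffiths \S 8) are legitimate and consistent with how the paper treats Griffiths' results over $\QQ_q(t)$, so your argument stands as a valid alternative proof.
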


\begin{proof}
Let $\mathfrak{U}_m/\mathfrak{S}$ denote the complement of a family
$\mathfrak{X}_m/\mathfrak{S}$ of smooth projective hypersurfaces of 
degree~$d$ in $\mathbf{P}^m_{\mathfrak{S}}$ and let $b(d,m)$ denote 
the rank of $\HdR^m(\mathfrak{U}_m/\mathfrak{S})$. It is known that 
$b(d,m)$ only depends on $d$, $m$. Moreover, taking $x=y=z$ 
in~\citep[Corollaire~2.4~(i)]{sga7}, we find that
\[
\sum_{m=1}^{\infty} b(d,m) x^{m-1} 
  = \frac{(d-1)}{(1+x)(1-(d-1)x)} 
  = \frac{1}{xd} \left( \frac{d-1}{1-(d-1)x} - \frac{d-1}{1+x} \right)
\]
as formal power series. From this the result follows easily. 
\end{proof}

\begin{thm} \label{thm:Basis}
Suppose that the family of smooth projective hypersurfaces 
$\mathcal{X}/\mathcal{S}$ contains a diagonal fibre.  Then the 
set~$\cB$ from Definition~\ref{defn:MonBasis} is a basis for 
the $\QQ_q(t)$-vector space $\HdR^n\bigl(\mathfrak{U}_{\QQ_q(t)}\bigr)$.
\end{thm}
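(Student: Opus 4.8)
The plan is to establish that $\cB$ is a basis by showing it is both a spanning set and linearly independent, using the pole-order filtration together with the isomorphism from Theorem~\ref{thm:Isomorphism} as the key computational tool. First I would observe that the pole-order reduction described in Algorithm~\ref{alg:PoleRed} (the routine {\sc Reduce}), whose correctness rests on Proposition~\ref{prop:Omega} and Theorem~\ref{thm:Isomorphism}, shows that every class in $\HdR^n\bigl(\mathfrak{U}_{\QQ_q(t)}\bigr)$ can be written as $\sum_{i=1}^n \gamma_i \Omega / P^i$ with each $\gamma_i$ in the $\QQ_q(t)$-span of $B_i$; indeed, Macaulay's theorem gives that $\mathrm{Fil}^n$ is everything, and at each pole order the isomorphism $\phi_k$ lets us subtract off a combination of Jacobian-ideal elements so that the numerator lands in the span of $B_k$. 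This proves that $\cB$ is a spanning set.

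Next I would prove linear independence. Here the natural approach is a dimension count: by Proposition~\ref{prop:rankcoho}, the rank of $\HdR^n(\mathfrak{U}/\mathfrak{S})$ — equivalently the dimension of $\HdR^n\bigl(\mathfrak{U}_{\QQ_q(t)}\bigr)$ over $\QQ_q(t)$ — equals $\tfrac{1}{d}\bigl((d-1)^{n+1} + (-1)^{n+1}(d-1)\bigr)$, and by Proposition~\ref{prop:BasisSize} the set $\cB$ has exactly this cardinality. Since a spanning set whose size equals the dimension of the space must be a basis, we are done. Alternatively, if one prefers a direct argument, one can note that the grading by pole order is respected and that within each pole order $k$ the surjectivity of $\phi_k$ (hence bijectivity, as $\Delta_k$ is square with nonzero determinant) forces the images of the $B_k$ to be linearly independent in the appropriate graded piece; but the dimension count is cleaner and self-contained given the results already available.

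The main obstacle I anticipate is making the spanning argument fully rigorous, specifically the claim that {\sc Reduce} terminates and outputs a genuinely equivalent cohomology class with numerators in the spans of the $B_i$. One has to check that the pole-order lowering step — replacing $Q\Omega/P^k$ by $k^{-1}\sum_i \partial_i Q_i \cdot \Omega / P^{k-1}$ after writing $Q = \sum_i Q_i \partial_i P + \gamma_k$ via {\sc Decompose} — is exactly the exact-form relation from Proposition~\ref{prop:Omega}, and that the process stabilises once $k \leq n$ with $Q$ already in the span of $B_k$. This is where Theorem~\ref{thm:Isomorphism} does the real work: it guarantees that the decomposition $Q = \sum_i Q_i\partial_i P + \gamma_k$ exists and is unique with $\gamma_k$ in the span of $B_k$, so the reduction is well-defined at every step. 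Once that is in hand, combining spanning with the equality of cardinalities from Propositions~\ref{prop:BasisSize} and~\ref{prop:rankcoho} completes the proof.
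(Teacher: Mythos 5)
Your proposal is correct and follows essentially the same route as the paper: spanning via the pole-order reduction of Algorithm~\ref{alg:PoleRed} (justified by Proposition~\ref{prop:Omega}, Theorem~\ref{thm:Isomorphism}, and Macaulay's theorem), and linear independence by the dimension count from Propositions~\ref{prop:BasisSize} and~\ref{prop:rankcoho}. The extra discussion of why {\sc Reduce} terminates and the optional graded-piece argument are elaborations, not deviations.
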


\begin{proof}
We already know that $\HdR^n\bigl(\mathfrak{U}_{\QQ_q(t)}\bigr)$ is 
spanned by the classes of the $n$-forms $Q \Omega / P^k$ with 
$Q \in \QQ_q(t)[x_0, \dotsc, x_n]$ homogeneous of degree 
$kd - (n+1)$ for $k \in \NN$. Applying Algorithm~\ref{alg:PoleRed}, we 
obtain an expression for the class of $Q \Omega / P^k$ as a 
$\QQ_q(t)$-linear combination of elements in $\cB$.  This shows that 
$\cB$ spans the vector space $\HdR^n\bigl(\mathfrak{U}_{\QQ_q(t)}\bigr)$. 
However, by the two propositions above, the dimension of 
$\HdR^n\bigl(\mathfrak{U}_{\QQ_q(t)}\bigr)$ is equal to the cardinality of
$\cB$, so that $\cB$ is linearly independent as well.
\end{proof}

\begin{rem} \label{rem:hnumbers}
Let $H_i$ denote the restriction of the Hodge filtration on 
$\HdR^{n-1}(\mathfrak{X}/\mathfrak{S})$ to $\HdR^{n}(\mathfrak{U}/\mathfrak{S})$
and write 
\[
h^{i,n-1-i}=\mbox{rank}(H_i/H_{i+1})
\] 
for the corresponding Hodge numbers.
Recall that $H_i=\mbox{Fil}^{n-i} \HdR^{n}(\mathfrak{U}/\mathfrak{S})$
for the filtration by the pole order defined above. Applying 
the loop in Algorithm~\ref{alg:PoleRed} just once, to lower the pole 
order from $k$ to $k-1$, we see that $\cB_k$ spans 
\[
\mbox{Fil}^{k} \HdR^{n}\bigl(\mathfrak{U}_{\QQ_q(t)}\bigr) / \mbox{Fil}^{k-1} \HdR^{n}\bigl(\mathfrak{U}_{\QQ_q(t)}\bigr)
\]
for all $1 \leq k \leq n$. Hence it follows that $\card{B_k} \geq h^{n-k,k-1}$ , but since
\[
\sum_{k=1}^n \card{B_k} = \dim \HdR^n\bigl(\mathfrak{U}_{\QQ_q(t)}\bigr) = \sum_{k=1}^n h^{n-k,k-1},
\] 
this implies that $\card{B_k} = h^{n-k,k-1}$ for all $1 \leq k \leq n$. So the Hodge numbers
can be read off from the basis $\cB$.
\end{rem}

We now describe the action of the Gauss--Manin connection~$\nabla$ on 
$\HdR^n\bigl(\mathfrak{U}_{\QQ_q(t)}\bigr)$.  Suppose that we are given a basis element 
$x^u \Omega / P^k \in \cB_k$.  Following the description in 
Section~\ref{sec:Background}, we compute
\begin{equation} \label{eqn:nabla}
\nabla \left(\frac{x^u \Omega}{P^k}\right) \equiv 
    dt \otimes \frac{- k x^u (\partial P / \partial t) \Omega}{P^{k+1}},
\end{equation}
where $\equiv$ denotes equality in 
$\Omega_{\QQ_q(t)} \otimes \HdR^n\bigl(\mathfrak{U}_{\QQ_q(t)}\bigr)$. 
We apply Algorithm~\ref{alg:PoleRed} in order to write
\begin{equation}
dt \otimes \frac{- k x^u (\partial P / \partial t) \Omega}{P^{k+1}} \equiv 
dt \otimes \left( \frac{\gamma_{1}}{P} + \dotsb + \frac{\gamma_n}{P^n} \right) \Omega,
\end{equation}
where $\gamma_i$ is an element in the $\QQ_q(t)$-span of~$B_i$ for $1 \leq i \leq n$. 

\begin{rem} \label{rem:precgm}
For the matrix $M$ of $\nabla$ to be correct to $p$-adic precision $N_M$, we have to 
carry out this computation to a somewhat higher working precision $N'_M$ due to the 
precision loss in Algorithm~\ref{alg:PoleRed}. 
For all $k \in \NN$, by definition $\ord_p(\Delta_k) \geq 0$, and from the proof
of Theorem~\ref{thm:Isomorphism} we know that $\ord_p(\det(\Delta_k))=0$. 
Therefore, there is no precision loss 
in Algorithm~\ref{alg:Decompose}, and the only precision loss in Algorithm~\ref{alg:PoleRed} 
comes from dividing by $k-1$ for $k=2,\dotsc,n+1$. Hence it is sufficient to take 
\begin{equation*}
N_M'=N_M + \ord_p(n!).
\end{equation*}
\end{rem}
This is formalised in Algorithm~\ref{alg:Connection}.

\begin{algorithm}
\caption{Compute the Gauss--Manin connection matrix}
\label{alg:Connection}
\begin{algorithmic}
\Require $P$ in $\ZZ_q[t][x_0, \dotsc, x_n]$ homogeneous of degree~$d$, 
         defining a family $\mathfrak{X}/\mathfrak{S}$ of smooth projective 
         hypersurfaces that contains a diagonal fibre, $p$-adic precision $N_M$.
\Ensure  The matrix~$M$ of $\nabla$ with respect to $\cB$ to $p$-adic precision $N_M$.
\Procedure{GMConnection}{$P,N_M$}
\State $N'_M \gets N_M + \ord_p(n!)$
\State \textit{Execute the following steps with $p$-adic working precision $N'_M$}:
\For{$g \in B$} 
\State $k \gets  \bigl(\deg(g)+(n+1)\bigr)/d$
\State $Q \gets  - k g (\partial P / \partial t)$
\State $\gamma_{1}, \dotsc, \gamma_n \gets$ {\sc Reduce($P,Q$)} 
\For{$f \in B$}
\State $l \gets \bigl(\deg(f)+(n+1)\bigr)/d$
\State $M_{f,g} \gets$ coefficient of $f$ in $\gamma_l$
\EndFor
\EndFor
\Return $M$
\EndProcedure
\end{algorithmic}
\end{algorithm}

\begin{defn} \label{defn:resultant}
We define the polynomial $R \in \ZZ_q[t]$ by
\[
R = \prod_{k=2}^{n+1}  \det(\Delta_k).
\]
\end{defn}

\begin{prop} \label{thm:denom}
The matrix $M$ of $\nabla$ with respect to $\cB$ is of the form
$H/R$, with $H \in M_{b \times b}(\QQ_q[t])$.
\end{prop}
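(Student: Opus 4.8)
The plan is to trace through the pole-order reduction procedure that Algorithm~\ref{alg:Connection} uses to assemble $M$ and to keep careful track of which denominators can occur. Recall that the column of $M$ indexed by a basis element $g = x^u \Omega / P^k \in \cB_k$, with $1 \le k \le n$, is computed by applying {\sc Reduce} (Algorithm~\ref{alg:PoleRed}) to the $n$-form $Q \Omega / P^{k+1}$, where $Q = -k x^u (\partial P / \partial t)$ is homogeneous of degree $(k+1)d - (n+1)$; in particular the initial pole order is $k+1 \le n+1$. The routine {\sc Reduce} consists of a sequence of elementary steps, each lowering the pole order by one from $k+1$ down to (at worst) $1$, together with the recording at each level $l$ of the ``stable part'' $\gamma_l$ lying in the $\QQ_q(t)$-span of $B_l$. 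The claimed shape of $M$ will follow once we check that each such $\gamma_l$ lies in $\frac{1}{R}\,\QQ_q[t][x_0, \dotsc, x_n]$.

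First I would analyse a single reduction step, from pole order $l$ to $l-1$. Such a step consists of one call to {\sc Decompose} (Algorithm~\ref{alg:Decompose}) followed by a differentiation and a division by a positive integer. The call to {\sc Decompose} solves the linear system $\Delta_l v = w$, where $w$ is the coordinate vector on $R_l$ of the current numerator. By Definition~\ref{defn:Deltak} the entries of $\Delta_l$ are coefficients of the $\partial_j P$, hence lie in $\ZZ_q[t]$, and by Theorem~\ref{thm:Isomorphism} we have $\det(\Delta_l) \ne 0$ (when $R_l = C_l = \emptyset$ the matrix $\Delta_l$ is empty and $\det(\Delta_l) = 1$). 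Hence, writing $\Delta_l^{-1} = \det(\Delta_l)^{-1}\,\mathrm{adj}(\Delta_l)$ with $\mathrm{adj}(\Delta_l)$ a matrix over $\ZZ_q[t]$, solving the system enlarges the denominator by at most the single factor $\det(\Delta_l)$, with all numerators remaining in $\QQ_q[t]$; the subsequent differentiation preserves this, and the division by a positive integer is invisible since every positive integer is a unit in the field $\QQ_q$. Iterating over the steps, an easy induction shows that once the numerator has been reduced through pole orders $k+1, k, \dotsc, l+1$, all of its coefficients, and hence those of $\gamma_l$, lie in $\bigl(\prod_{j=l+1}^{k+1} \det(\Delta_j)\bigr)^{-1} \QQ_q[t]$.

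Finally, since $l+1 \ge 2$ and $k+1 \le n+1$ throughout, the denominator $\prod_{j=l+1}^{k+1} \det(\Delta_j)$ divides $R = \prod_{j=2}^{n+1} \det(\Delta_j)$ for every level $l$ that occurs; thus $R\,\gamma_l \in \QQ_q[t][x_0, \dotsc, x_n]$, and therefore $R\,M_{f,g} \in \QQ_q[t]$ for all $f, g$. As $R$ is a nonzero element of $\ZZ_q[t]$, this yields $M = H/R$ with $H = R\,M \in M_{b \times b}(\QQ_q[t])$. The main obstacle is really just the bookkeeping: stating the inductive denominator claim so that it passes cleanly through both while-loops of {\sc Reduce}, including the final step from pole order $2$ to $1$ (where $R_1 = \emptyset$, so the numerator automatically lands in the span of $B_1$ and no further $\Delta$ is needed), and verifying that the only denominators that can enter the computation are the $\det(\Delta_l)$ produced by Cramer's rule.
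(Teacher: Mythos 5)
Your proposal is correct and follows the same route as the paper's proof: both observe that the only source of nonconstant denominators in Algorithm~\ref{alg:Connection} is solving the linear systems $\Delta_k v = w$ inside {\sc Decompose}, which occurs at most once for each $k = 2, \dotsc, n+1$, so the accumulated denominator divides $R = \prod_{k=2}^{n+1}\det(\Delta_k)$. Your write-up is simply more explicit about the Cramer's-rule bookkeeping and the pole-order bounds, whereas the paper states the same observation in two sentences and calls the result clear.
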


\begin{proof}
The only time in Algorithm~\ref{alg:Connection} that nonconstant denominators 
are introduced is when the subroutine {\sc{Reduce}} calls its subroutine 
{\sc{Decompose}} and a linear system $\Delta_k v = w$ is solved. Since this 
happens only for $k=2, \dotsc, n+1$ and at most once for each 
such~$k$, the result is clear.
\end{proof}

\begin{rem}
Recall that $\cB$ is a basis for $\HdR^n\bigl(\mathfrak{U}_{\QQ_q(t)}\bigr)$ 
but not necessarily for $\HdR^n(\mathfrak{U}/\mathfrak{S})$. However, if the 
zero locus of $R$ in $\mathbf{P}^1_{\ZZ_q}$ does not intersect $\mathcal{S}$, 
then $\cB$ is a basis for $\HdR^n(\mathfrak{U}/\mathfrak{S})$. That it 
spans the cohomology can be seen by applying Algorithm~\ref{alg:PoleRed}, and that it 
is linearly independent follows because it is so over $\QQ_q(t)$. Therefore, it will 
be convenient to choose $\mathcal{S}$ smaller, so that this condition is satisfied. 
\end{rem}

\begin{assump} \label{assump:R}
From now on we assume that the zero locus of~$R$ in~$\mathbf{P}^1_{\ZZ_q}$ 
does not intersect $\mathcal{S}$. In particular, this implies that $\cB$ is 
a basis for $\HdR^n(\mathfrak{U}/\mathfrak{S})$.
\end{assump}

\begin{rem}
Note that in Algorithm~\ref{alg:Connection}, we solve $\BigOh(\card{\cB})$~linear 
systems given by the matrices $\Delta_k$ for $k = 2, \dotsc, n+1$.  In practice, 
it is important to take advantage of this, e.g.\ by computing the decomposition 
$\Delta_k = LUP$, where $L$ is lower triangular, $U$ is upper triangular, and 
$P$ is a permutation matrix, which is guaranteed to exist for any square matrix. 
Since, moreover, the matrices~$\Delta_k$ contain many zeros, methods from sparse 
linear algebra can be used to compute such a decomposition.  Then, every call to 
{\sc{Decompose}} reduces to solving a lower and an upper diagonal linear system.
\end{rem}

In the rest of this section we will freely use the definitions and notation 
from Section~\ref{sec:Background}. We let all of our matrices be defined with 
respect to the basis $\cB$. We have seen that there exists a Frobenius 
structure $\Frob_p$ on $\HdR^n(\mathfrak{U}/\mathfrak{S})$ and let 
$\Phi \in M_{b \times b}(\QQ_q \langle t, 1/r \rangle^{\dag})$ 
denote the matrix of the action of $p^{-1}\Frob_p$, where $r$ is the 
denominator of the connection matrix $M$. Recall that 
$\FF_{\mathfrak{q}}/\FF_q$ 
denotes a finite field extension. 

We need a priori lower bounds on the $p$-adic valuations of the matrices 
$\Phi$ and $\Phi^{-1}$ to bound the loss of $p$-adic precision in our 
computations. We will now recall how such bounds can be obtained 
following~\citep{AbbottKedlayaRoe2006}.

\begin{thm} \label{thm:deltabound}
For any finite field extension $\FF_{\mathfrak{q}}/\FF_q$ and
all $\tau \in S(\FF_{\mathfrak{q}})$, 
there exists a matrix $W_{\tau} \in M_{b \times b}(\QQ_{\mathfrak{q}})$ such that 
\begin{align*}
\ord_p(W_{\tau}) &\geq -\sum_{i=1}^{n-1} \floor{\log_p(i)},
&\ord_p\bigl(W_{\tau} \Phi_{\tau} \sigma(W_{\tau}^{-1})\bigr) &\geq  0, \\
\ord_p(W_{\tau}^{-1}) &\geq -\ord_p \bigl((n-1)! \bigr),
&\ord_p\bigl(W_{\tau} \Phi_{\tau}^{-1} \sigma(W_{\tau}^{-1})\bigr) &\geq  -(n-1).
\end{align*}
\end{thm}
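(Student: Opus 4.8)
The plan is to reduce the statement, which concerns the Frobenius matrix $\Phi_\tau$ on a single fibre $U_\tau$, to a statement about the integral structure of rigid cohomology, and then to quote the relevant bounds from \citep{AbbottKedlayaRoe2006}. First I would recall that $\Phi_\tau$ is the matrix of $p^{-1}\Frob_p$ on $\Hrig^n(U_\tau)$ with respect to the basis $\cB$, and that $\cB$ is the monomial basis $\{x^u\Omega/P^k\}$ of Definition~\ref{defn:MonBasis}. The key point is that there is a \emph{second} natural lattice in $\Hrig^n(U_\tau)$, namely the one coming from the integral de~Rham / Dwork complex (or equivalently the one spanned by forms $Q\Omega/P^k$ with $Q$ having integral coefficients and without the extra normalising factor $1/k!$), and that $\Frob_p$ behaves well — that is, has nonnegative valuation, up to a controlled denominator depending on pole order — with respect to \emph{that} lattice. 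The matrix $W_\tau$ will be (a version of) the change-of-basis matrix between $\cB$ and this second lattice; the asymmetry between the four bounds in the statement reflects the fact that passing from one lattice to the other costs $\floor{\log_p i}$ in one direction (pole-order reduction, dividing by integers up to $n-1$) and $\ord_p((n-1)!)$ in the other.

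The key steps, in order, are as follows. Step~1: set up the comparison isomorphism of Theorem~\ref{thm:frobstruc} specialised at $\hat\tau$, so that $(\Hrig^n(U_\tau),\Frob_p)\cong(\HdR^n(\mathfrak U/\mathfrak S),\Frob_p)_{\hat\tau}$, reducing everything to the de~Rham side. Step~2: recall from the proof of Theorem~\ref{thm:hypersurface} and from \citep{AbbottKedlayaRoe2006} the explicit integral lattice $L_\tau\subset \Hrig^n(U_\tau)$ on which $\Frob_p$ is integral and $\Frob_p^{-1}$ has denominator bounded by $p^{n-1}$ — this is the Frobenius-on-$U$ estimate coming from Dwork's trace formula / the Abbott--Kedlaya--Roe cohomological construction, where the factor $q^{n-1}=\mathfrak q^{n-1}$ in $\chi(T)$ already signals the bound $-(n-1)$. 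Step~3: identify the change of basis from $\cB$ to a basis of $L_\tau$ (adapted to the pole-order filtration). Pole-order reduction from level $k$ down to level $1$ divides successively by $2,3,\dots$, so the $k$-th graded piece $\cB_k$ contributes a factor whose valuation is controlled by $\sum_{i\le k-1}\floor{\log_p i}$, giving the bound $\ord_p(W_\tau)\ge -\sum_{i=1}^{n-1}\floor{\log_p i}$; running the comparison the other way introduces the normalising factors $1/k!$, and since $k$ ranges up to $n$ one picks up at worst $\ord_p((n-1)!)$ in $W_\tau^{-1}$. Step~4: combine: $W_\tau\Phi_\tau\sigma(W_\tau^{-1})$ is the matrix of $p^{-1}\Frob_p$ expressed from the $L_\tau$-basis to the $L_\tau$-basis, hence integral ($\ge 0$); and $W_\tau\Phi_\tau^{-1}\sigma(W_\tau^{-1})$ is the matrix of $p\,\Frob_p^{-1}$, whose valuation is $\ge -(n-1)$ by the denominator bound in Step~2, after accounting for the shift by $p^{\pm1}$ coming from the $p^{-1}$-normalisation (one has to check the arithmetic of the Tate twists: $\Frob_p^{-1}$ on $\Hrig^n(U_\tau)$ has denominator $p^{n}$, and the extra $p$ brings this to $p^{n-1}$, consistently with the stated $-(n-1)$).

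The main obstacle I expect is Step~3: pinning down exactly which integral lattice makes $\Frob_p$ integral, and tracking the precise valuations of the change-of-basis matrix between that lattice and the monomial basis $\cB$ through the pole-order reduction. The bookkeeping with the factors $1/k!$ versus the factors $\floor{\log_p i}$ is where the two different-looking bounds on $W_\tau$ and $W_\tau^{-1}$ come from, and getting the sharp constants $-\sum_{i=1}^{n-1}\floor{\log_p i}$ and $-\ord_p((n-1)!)$ rather than cruder ones requires care about the order in which divisions by integers occur during reduction. Everything else — the comparison isomorphism, the integrality of Frobenius on $U$, the denominator bound $p^{n-1}$ for its inverse — I would simply cite from \citep{AbbottKedlayaRoe2006}, since the excerpt explicitly says we are recalling how such bounds are obtained following that reference. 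I would also note that $W_\tau$ depends on $\tau$ only through $\hat\tau$ and that the bounds are uniform in $\tau$, which is all that is needed for the precision analysis later.
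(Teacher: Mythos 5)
Your overall strategy matches the paper's: compare the lattice spanned by $\cB$ to an integral crystalline lattice, take $W_\tau$ to be the change of basis between them, and deduce the bounds on $W_\tau\Phi_\tau\sigma(W_\tau^{-1})$ from the fact that (a twist of) Frobenius preserves the crystalline lattice, with the $-(n-1)$ coming from Poincar\'e duality on $X_\tau$ (dimension $n-1$) via the excision sequence. So far so good.

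However, Step~3 contains a genuine error that would derail the argument. You attribute the bound $\ord_p(W_\tau) \geq -\sum_{i=1}^{n-1}\floor{\log_p(i)}$ to the divisions by $2,3,\dotsc,n-1$ occurring in pole-order reduction. But those divisions accumulate $\ord_p\bigl((n-1)!\bigr) = \sum_{i=1}^{n-1}\ord_p(i)$, \emph{not} $\sum_{i=1}^{n-1}\floor{\log_p(i)}$; the two quantities are different (the second dominates the first, with strict inequality whenever some $i \le n-1$ is not a pure $p$-power). So your mechanism produces the wrong constant. The correct proof in the paper separates these two contributions by introducing a third, intermediate lattice $\Lambda_{\tau,mon}$ — the $\ZZ_{\mathfrak q}$-module spanned by all monomial forms $x^u\Omega/P^n$ of pole order exactly $n$ — and establishes two different two-sided inclusions: $\Lambda_{\tau,crys}\subset\Lambda_{\tau,mon}\subset p^{-\sum\floor{\log_p(i)}}\Lambda_{\tau,crys}$, coming from \citep[Lemma 3.4.3, Prop.~3.4.6]{AbbottKedlayaRoe2006} and not from any reduction algorithm, together with $(n-1)!\,\Lambda_{\tau,mon}\subset\Lambda_{\tau,\cB}\subset\Lambda_{\tau,mon}$, where \emph{this} is where the pole-order divisions enter. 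Only by composing these do you get the asymmetric pair of bounds on $W_\tau$ and $W_\tau^{-1}$ with the two genuinely different constants. Without the intermediate lattice you cannot explain why the two bounds look different, and if you trace your own argument carefully you would get the factorial constant on both sides.

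One smaller point: your Step~4 explanation of the $-(n-1)$ bound is heuristic (the appearance of $\mathfrak q^{n-1}$ in $\chi$, a Tate-twist check). The paper's actual mechanism is that Poincar\'e duality makes $p^{n-1}\Frob_p^{-1}$ preserve $H^{n-1}_{crys}(X_\tau)$, and then the excision sequence~\eqref{eqn:excision} transfers this to the lattice $\Lambda_{\tau,crys}\subset\Hrig^n(U_\tau)$, giving stability under $p^{n-1}(p\Frob_p^{-1})$. That chain — Poincar\'e duality on $X_\tau$, then excision to pass to $U_\tau$ — needs to be stated explicitly rather than gestured at.
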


\begin{proof}
Let $\Lambda_{\tau, crys}$ be the image of the integral logarithmic de Rham 
cohomology space 
$\HdR^n\bigl(\mathbf{P}_{\mathbf{Z}_{\mathfrak{q}}}^n,\mathcal{X}_{\hat{\tau}}\bigr)$ 
in the rigid cohomology space $\Hrig^n(U_{\tau})$. It is known that
$\Lambda_{\tau, crys} \otimes \QQ_{\mathfrak{q}} \cong \Hrig^n(U_{\tau})$, 
so that $\Lambda_{\tau, crys}$ is a lattice in $\Hrig^n(U_{\tau})$.  Let 
$\Lambda_{\tau, mon}$ be the $\ZZ_{\mathfrak{q}}$-module in $\Hrig^n(U_{\tau})$ 
generated by the classes of $x^u \Omega/P^n$ with $u \in \NN_0^{n+1}$ such that 
$\sum_{i=0}^n u_i = nd-(n+1)$. Since these classes generate $\Hrig^n(U_{\tau})$, we have 
that $\Lambda_{\tau,mon}$ is a lattice in $\Hrig^n(U_{\tau})$ as well. We know 
that
\[
\Lambda_{\tau,crys} \subset \Lambda_{\tau,mon} \subset p^{-\sum_{i=1}^{n-1} \floor{\log_p(i)}} \Lambda_{\tau,crys}.
\]
The inclusion on the left is \citep[Lemma 3.4.3]{AbbottKedlayaRoe2006}, and the 
one on the right is \citep[Proposition 3.4.6]{AbbottKedlayaRoe2006}.

Let $\Lambda_{\tau,\cB}$ be the $\ZZ_{\mathfrak{q}}$-module in $\Hrig^n(U_{\tau})$ 
generated by the basis $\cB$. Since $\cB$ spans $\Hrig^n(U_{\tau})$, we have that 
$\Lambda_{\tau,\cB}$ is also a lattice in $\Hrig^n(U_{\tau})$. Now we know that
\[
(n-1)! \Lambda_{\tau,mon} \subset \Lambda_{\tau,\cB} \subset \Lambda_{\tau,mon}. 
\]
The inclusion on the left follows by explicitly reducing the generators of 
$\Lambda_{\tau,mon}$ to the basis $\cB$ with Algorithm~\ref{alg:PoleRed} using 
that $R(\hat{\tau}) \in \ZZ_{\mathfrak{q}}^{\times}$, and the inclusion on the right is clear.

Combining these inclusions of lattices, we find that
\begin{equation} \label{eq:lattices}
(n-1)! \Lambda_{\tau,crys} \subset \Lambda_{\tau,\cB} \subset p^{-\sum_{i=1}^{n-1} \floor{\log_p(i)}} \Lambda_{\tau,crys}.
\end{equation}
Now let $[d_1, \dotsc, d_b]$ be a $\ZZ_{\mathfrak{q}}$-basis for 
$\Lambda_{\tau,crys}$ and let $W_{\tau} \in M_{b \times b}(\QQ_q)$ be 
the matrix in which the $i$-th column 
consists of the coordinates of $d_i$ with respect to the basis~$\cB$. From the 
inclusions~\eqref{eq:lattices} it is clear that 
\begin{eqnarray*}
\ord_p(W_{\tau}) &\geq& -\sum_{i=1}^{n-1} \floor{\log_p(i)}, \\
\ord_p(W_{\tau}^{-1}) &\geq& -\ord_p((n-1)!).
\end{eqnarray*}
Note that $W_{\tau} \Phi_{\tau} \sigma(W_{\tau}^{-1})$ is the matrix of 
$p^{-1}\Frob_{p}$ on $\Hrig^n(U_{\tau})$ with respect to the basis 
$[d_1,\dotsc,d_b]$. Now $\Lambda_{\tau,crys}$ is contained in the crystalline 
cohomology space $H^{n-1}_{crys}(X_{\tau})$, which maps to itself 
under~$\Frob_p$. So by the short exact sequence~\eqref{eqn:excision}, the 
lattice $\Lambda_{\tau,crys}$ maps to itself under $p^{-1}\Frob_{p}$, and
\[
\ord_p(W_{\tau} \Phi_{\tau} \sigma(W_{\tau}^{-1})) \geq 0.
\]
Similarly, note that $W_{\tau} \Phi_{\tau}^{-1} \sigma(W_{\tau}^{-1})$ is 
the matrix of $p\Frob_p^{-1}$ on $\Hrig^n(U_{\tau})$ with respect to the 
basis $[d_1,\dotsc,d_b]$. By Poincar\'e duality, the map $p^{n-1}\Frob_p^{-1}$ 
maps the crystalline cohomology space $H^{n-1}_{crys}(X_{\tau})$ to itself. 
So by the short exact sequence~\eqref{eqn:excision}, the lattice 
$\Lambda_{\tau,crys}$ maps to itself under $p^{n-1} (p\Frob_p^{-1})$, and 
\begin{equation*}
\ord_p(W_{\tau} \Phi_{\tau}^{-1} \sigma(W_{\tau}^{-1})) \geq -(n-1). \qedhere
\end{equation*}
\end{proof}

\begin{defn} \label{defn:delta}
Define the nonnegative integer
\[
\delta = \ord_p((n-1)!)+\sum_{i=1}^{n-1} \floor{\log_p(i)}.
\]
\end{defn}

\begin{cor} \label{cor:delta} We have 
\begin{align*}
\ord_p(\Phi) &\geq -\delta, \\
\ord_p(\Phi^{-1}) &\geq -\delta-(n-1).
\end{align*}
\end{cor}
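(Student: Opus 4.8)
The plan is to deduce the corollary directly from Theorem~\ref{thm:deltabound} by ``undoing'' the change of basis given by the matrices $W_\tau$ and $W_\tau^{-1}$, together with the valuation estimates for matrix products recorded in Proposition~\ref{prop:matrixproductval}. First I would recall from Theorem~\ref{thm:deltabound} that for each $\tau$ there is a matrix $W_\tau$ with $\ord_p(W_\tau) \geq -\sum_{i=1}^{n-1}\floor{\log_p(i)}$, $\ord_p(W_\tau^{-1}) \geq -\ord_p((n-1)!)$, and $\ord_p(W_\tau \Phi_\tau \sigma(W_\tau^{-1})) \geq 0$. Writing $\Phi_\tau = W_\tau^{-1} \cdot \bigl(W_\tau \Phi_\tau \sigma(W_\tau^{-1})\bigr) \cdot \sigma(W_\tau)$ and using that $\sigma$ preserves $\ord_p$ (it is an isometry for the Gauss norm), the subadditivity $\ord_p(AB) \geq \ord_p(A) + \ord_p(B)$ gives
\[
\ord_p(\Phi_\tau) \geq -\ord_p((n-1)!) + 0 - \sum_{i=1}^{n-1}\floor{\log_p(i)} = -\delta,
\]
using Definition~\ref{defn:delta}. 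The same manipulation applied to $\Phi_\tau^{-1} = W_\tau^{-1} \cdot \bigl(W_\tau \Phi_\tau^{-1} \sigma(W_\tau^{-1})\bigr) \cdot \sigma(W_\tau)$, together with the bound $\ord_p(W_\tau \Phi_\tau^{-1} \sigma(W_\tau^{-1})) \geq -(n-1)$, yields $\ord_p(\Phi_\tau^{-1}) \geq -\delta - (n-1)$.

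It then remains to pass from the fibrewise statements about $\Phi_\tau$, $\Phi_\tau^{-1}$ to the analogous statements about the matrices $\Phi$, $\Phi^{-1}$ over the ring $\QQ_q\langle t, 1/r\rangle^{\dag}$. Here I would use that $\ord_p$ on $\QQ_q\langle t, 1/r\rangle^{\dag}$ is the Gauss norm valuation, so that $\ord_p(\Phi)$ can be detected by specialising $t$ at Teichm\"uller points: more precisely, the valuation of an overconvergent function does not increase under substituting $t = \hat{\tau}$ for $\hat{\tau}$ a Teichm\"uller lift, and in fact the infimum of $\ord_p(\Phi_\tau)$ over all $\tau$ in all finite extensions equals $\ord_p(\Phi)$, since the reductions of the Teichm\"uller points are Zariski dense in $S$. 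Applying this and the uniform bounds just obtained (note that $\delta$ and $n$ do not depend on $\tau$), we conclude $\ord_p(\Phi) \geq -\delta$ and $\ord_p(\Phi^{-1}) \geq -\delta-(n-1)$.

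The main obstacle I anticipate is the second paragraph: carefully justifying that the fibrewise bounds, valid at Teichm\"uller points, propagate to a bound on the Gauss norm of the overconvergent matrix $\Phi$. One has to know that $\Phi$ really is the matrix of $p^{-1}\Frob_p$ compatibly with the fibre specialisations of Theorem~\ref{thm:frobstruc} (so that $\Phi|_{t=\hat\tau} = \Phi_\tau$, up to the identification of bases), and that the density of Teichm\"uller points suffices to compute the Gauss norm. This is essentially the content already used implicitly in the proof of Theorem~\ref{thm:eqphi} via Theorem~\ref{thm:KedlayaTuitman}, so I would cite that circle of ideas rather than reprove it; everything else is a routine application of Proposition~\ref{prop:matrixproductval} and the definition of $\delta$.
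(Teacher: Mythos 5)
Your proposal follows essentially the same route as the paper: decompose $\Phi_\tau$ (resp.\ $\Phi_\tau^{-1}$) as $W_\tau^{-1}\cdot\bigl(W_\tau\Phi_\tau\sigma(W_\tau^{-1})\bigr)\cdot\sigma(W_\tau)$, apply the valuation bounds from Theorem~\ref{thm:deltabound} together with subadditivity of $\ord_p$ under products and the fact that $\sigma$ is an isometry, and then pass from the fibrewise bounds at (infinitely many) Teichm\"uller points to the Gauss-norm bound on $\Phi$ over $\QQ_q\langle t,1/r\rangle^{\dag}$. The paper compresses the last step into a single sentence about the inequalities holding for infinitely many $\tau\in S(\bar\FF_q)$; your expanded justification of that point is exactly what is meant, and the rest matches the paper's argument line for line.
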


\begin{proof}
For every $\tau \in \mathcal{S}(\bar{\FF}_{\mathfrak{q}})$, 
we can apply Theorem~\ref{thm:deltabound} 
to obtain
\begin{align*}
\ord_p(\Phi_{\tau}) &\geq \ord_p(W^{-1}_{\tau}) + 
                          \ord_p(W_{\tau} \Phi_{\tau} \sigma(W_{\tau}^{-1})) + 
                          \ord_p(\sigma(W_{\tau})) \\
                    &\geq -\delta,
\end{align*}
and also
\begin{align*}
\ord_p(\Phi^{-1}_{\tau}) &\geq \ord_p(W^{-1}_{\tau}) + 
                               \ord_p(W_{\tau} \Phi^{-1}_{\tau} \sigma(W_{\tau}^{-1})) + 
                               \ord_p(\sigma(W_{\tau})) \\
                         &\geq -\delta - (n-1).
\end{align*}
As these inequalities hold for infinitely many~$\tau \in S(\bar{\FF}_q)$ 
we obtain the required bounds.
\end{proof}


\section{Frobenius on diagonal hypersurfaces}
\label{sec:Diagonal}

\subsection{A formula for $\Phi_0$}

We consider a single smooth 
projective diagonal hypersurface~$\mathcal{X}_0$ over $\ZZ_p$ defined by 
a polynomial $P_0 \in \ZZ_p[x_0, \dotsc, x_n]$ of the form
\begin{equation*}
P_0(x_0, x_1, \dotsc, x_n) = 
    a_0 x_0^d + a_1 x_1^d + \dotsb + a_n x_n^d,
\end{equation*}
where $a_0, a_1, \dotsc, a_n \in \ZZ_p^{\times}$ and $p \nmid d$. 
Let $\mathfrak{X}_0 = \mathcal{X}_0 \otimes_{\ZZ_p} \QQ_p$ denote the generic 
fibre of $\mathcal{X}_0$ and let $\mathcal{U}_0$ and $\mathfrak{U}_0$ be the 
complements of $\mathcal{X}_0$ and $\mathfrak{X}_0$, respectively. 
We fix our choice of basis~$\cB$ for $\HdR^{n}(\mathfrak{U}_0)$ 
as in Definition~\ref{defn:MonBasis}. 
Our goal is to compute 
the matrix~$\Phi_0$ representing the action of $p^{-1} \Frob_p$ on 
$\Hrig^n(U_0) \cong \HdR^n(\mathfrak{U}_0)$, with respect to the basis~$\cB$, 
to $p$-adic precision~$N_{\Phi_0}$. It will turn out that this matrix has only one
nonzero element in every row and column.

Let $u = (u_0, \dotsc, u_n)$ and $v = (v_0, \dotsc, v_n)$ be tuples 
of integers such that $x^u, x^v \in B$ and $p (u_i+1) \equiv v_i+1 \bmod{d}$,
for all $i$. Furthermore, let $k(u) \in \NN$ denote the positive integer such that 
\[
k(u) d -(n+1) = \sum_{i=0}^n u_i.
\] 
Finally, for all $l \in \QQ$ and integers $r \geq 0$, let the rising factorial $\prod_{j=0}^{r-1} (l + j)$ be
denoted by $(l)_r$.

\begin{defn} \label{defn:alpha}
Let $u, v \in \ZZ^{n+1}$ be such that we have
$x^u, x^v \in B$ and 
$p (u_i + 1) \equiv v_i + 1 \bmod{d}$ for all~$i$. 
We define
\begin{equation*}
\alpha_{u,v} = \prod_{i=0}^n a_i^{(p (u_i + 1) - (v_i + 1))/d} 
    \biggl( \sum_{m,r} \left( \frac{u_i+1}{d} \right)_r 
        \sum_{j=0}^{r} \frac{\bigl(p a_i^{p-1}\bigr)^{r-j}}{(m-pj)!j!} \biggr),
\end{equation*}
where the sum in the $i$-th factor of the product is over all integers $m, r \geq 0$  
that satisfy $p(u_i+1)-(v_i+1)=d(m-pr)$.
\end{defn}

The following lemma will be useful in what follows.

\begin{lem} \label{lem:mpr}
Let $x^u, x^v \in B$, $0 \leq i \leq n$ and $m, r \geq 0$ integers, such that 
\[
p(u_i + 1) - (v_i + 1) = d(m-pr).
\] 
Then we have $r = \floor{m/p}$.
\end{lem}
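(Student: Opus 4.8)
The plan is to divide the hypothesis through by $d$ and pin down the resulting integer $m - pr$ using the size constraints that membership in $B$ imposes on the exponents. First I would record those constraints: by Definition~\ref{defn:MonBasis}, every exponent $w_j$ of a monomial $x^w \in B$ satisfies $0 \le w_j \le d-2$, so $1 \le u_i+1 \le d-1$ and $1 \le v_i+1 \le d-1$; in particular the mere existence of such monomials forces $d \ge 2$. Consequently $p - (d-1) \le p(u_i+1) - (v_i+1) \le p(d-1) - 1$.

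Next I would set $c := m - pr$, which by hypothesis equals $\bigl(p(u_i+1) - (v_i+1)\bigr)/d$ and is therefore an integer. Substituting the two-sided bound from the previous step gives $\tfrac{p+1}{d} - 1 \le c \le p - \tfrac{p+1}{d}$. Since $p \ge 2$ and $d \ge 2$, the quantity $\tfrac{p+1}{d}$ is strictly positive, so $-1 < c < p$; as $c$ is an integer this sharpens to $0 \le c \le p-1$.

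Finally, writing $m = pr + c$ with $m \ge 0$ and $0 \le c \le p-1$, the division algorithm gives $r = \floor{m/p}$ (and, incidentally, $c = m \bmod p$), which is the assertion. The whole argument is elementary; the only step needing a little care is the passage from the real bounds on $c$ to the integral statement $0 \le c \le p-1$, where one uses that $\tfrac{p+1}{d} > 0$ to make both inequalities strict before rounding. There is no genuine obstacle here — the one substantive observation is simply that lying in $B$ is exactly what confines $u_i+1$ and $v_i+1$ to the interval $[1, d-1]$.
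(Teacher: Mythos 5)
Your proof is correct and follows essentially the same route as the paper: bound $p(u_i+1)-(v_i+1)$ using $0 \le u_i, v_i \le d-2$, divide by $d$ to see that the integer $m-pr$ must lie in $\{0, \dotsc, p-1\}$, and conclude by the division algorithm. The paper's version merely compresses the last two steps into ``from this the result follows, also using that $m - pr \in \ZZ$,'' whereas you spell them out.
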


\begin{proof}
Since $0 \leq u_i, v_i \leq d-2$, we obtain
\[
p-(d-1) \leq p(u_i + 1) - (v_i + 1) \leq p(d-1)-1,
\]
and from this the result follows, also using that $m - pr \in \ZZ$.
\end{proof}

The entries of $\Phi_0$ can be expressed in terms of the $\alpha_{u,v}$ in the following way.

\begin{thm} \label{thm:01-03-diagfrob}
Let $\omega_1$ denote an element of $\cB$ corresponding to a tuple 
$u \in \ZZ^{n+1}$ and let $\omega_2$ denote the unique element of~$\cB$ 
corresponding to a tuple $v \in \ZZ^{n+1}$ such that
$p (u_i + 1) \equiv v_i + 1 \bmod{d}$ for all $i$. Then
\begin{equation} \label{eq:diagfrobformula}
p^{-1} \Frob_p (\omega_1) = 
    (-1)^{k(v)} \frac{(k(v) - 1)!}{(k(u) - 1)!} p^{n-k(u)} \alpha_{u,v}^{-1} \cdot \omega_2.
\end{equation}
\end{thm}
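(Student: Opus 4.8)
The plan is to compute the action of Dwork's Frobenius operator on the diagonal hypersurface explicitly in the monomial representation $x^u\Omega/P^k$, and then match the result against the closed form in \eqref{eq:diagfrobformula}. First I would recall that for a diagonal fibre, Dwork's splitting function provides an explicit formula for $p^{-1}\Frob_p$ acting on the cohomology classes $x^u\Omega/P_0^{k(u)}$: up to the usual factor $p^{n-k(u)}$ coming from the pole order (compare \eqref{eqn:projcoho} and the Tate twist in Theorem~\ref{thm:hypersurface}), the operator sends $x^u\Omega/P_0^{k(u)}$ to a sum over multi-indices of terms built out of the coefficients $a_i$ and factorials coming from expanding $\exp$-type series term by term. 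Because $P_0 = a_0x_0^d+\dots+a_nx_n^d$ is a sum of one-variable pieces, this series factors as a product over $i=0,\dots,n$, and each factor is exactly the inner sum in Definition~\ref{defn:alpha}; this is where the constraint $p(u_i+1)\equiv v_i+1 \bmod d$ and the integrality of $m-pr$ enter. The substitution $r=\floor{m/p}$ from Lemma~\ref{lem:mpr} is used here to rewrite the double sum over $(m,r)$ as the single-parameter sum appearing in $\alpha_{u,v}$.

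Next I would track the pole-order bookkeeping. Raising $x^u\Omega/P_0^{k(u)}$ through Frobenius produces a form with pole order governed by $k(v)$, and one reduces it back down to the basis representative $x^v\Omega/P_0^{k(v)}\in\cB$ using the reduction of Algorithm~\ref{alg:PoleRed}; on a diagonal fibre this reduction is diagonal because $\partial_j P_0 = d\,a_j x_j^{d-1}$, so a monomial in $R_{k}$ reduces to a unique basis monomial, exactly as in the proof of Theorem~\ref{thm:Isomorphism}. Each step of the pole-order reduction from $k+1$ down to $k$ contributes a factor $k^{-1}$, and assembling these over the relevant range produces the ratio of factorials $(k(v)-1)!/(k(u)-1)!$ together with the sign $(-1)^{k(v)}$ coming from the alternating signs in the Griffiths--Dwork reduction. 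The factor $\alpha_{u,v}^{-1}$ (rather than $\alpha_{u,v}$) appears because the $a_i$-powers accumulated by the reduction are the inverses of those one sees on the ``forward'' Frobenius side, or equivalently one can absorb them by the change of variables $x_i\mapsto a_i^{?}x_i$ that diagonalises $P_0$; I would set this up so the exponent $(p(u_i+1)-(v_i+1))/d$ is manifestly an integer by the congruence hypothesis.

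Finally I would verify that the element $\omega_2$ produced really is \emph{the} basis element of $\cB$ attached to $v$: since $x^u\in B$ means $0\le u_i\le d-2$ for all $i$, the congruence $p(u_i+1)\equiv v_i+1\bmod d$ together with the same range constraint pins down each $v_i$ uniquely in $\{0,\dots,d-2\}$, so $x^v\in B$ is well defined, and one checks $\sum(v_i+1)\equiv p\sum(u_i+1)\equiv 0\bmod d$ so that $k(v)$ is indeed a positive integer with $k(v)d-(n+1)=\sum v_i$. The main obstacle I anticipate is not any single estimate but the careful synchronisation of the three sources of constants---Dwork's exponential series (giving $\alpha_{u,v}$), the Tate-twist power of $p$ (giving $p^{n-k(u)}$), and the Griffiths--Dwork pole reduction (giving $(-1)^{k(v)}(k(v)-1)!/(k(u)-1)!$)---so that they combine into precisely \eqref{eq:diagfrobformula} with no stray factor; getting the normalisation of Dwork's splitting function and the direction of the $a_i$-exponents exactly right is the delicate part, and I would pin it down by checking the formula against the known case $u=v$ on the diagonal fibre, where it must reduce to multiplication by the appropriate power of $p$ times a unit.
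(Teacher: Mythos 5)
The overall shape of your proposal is right — for a diagonal $P_0$ everything factors over $i=0,\dotsc,n$, a congruence hypothesis pins down $v$, and some version of Dwork's exponential series plus a ``reduce-by-$x_i\partial_i$''-type step gives the answer — and your final paragraph checking that $v$ is well defined and that $k(v)\in\NN$ is correct. But there is a genuine conceptual gap exactly where you admit to hand-waving: the explanation for why $\alpha_{u,v}^{-1}$ (rather than $\alpha_{u,v}$) appears.

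The paper does not compute $p^{-1}\Frob_p(\omega_1)$ directly, and in particular does not apply the Griffiths–Dwork reduction of Algorithm~\ref{alg:PoleRed} to a Frobenius image. Instead it invokes Katz's version of Dwork's trace formula: after transporting everything to the Dwork complex $\mathcal{L}/\sum_i D_i\mathcal{L}^{(i)}$ via the map $\mathcal{R}$, the operator $p^{-n}\alpha$, where $\alpha = \psi_p\circ\exp(\pi(P_0(x)-P_0(x^p)))$, corresponds to $(p^{-1}\Frob_p)^{-1}$ — the \emph{inverse} of the Frobenius — under $\mathcal{R}$. So the statement~\eqref{eq:diagfrobformula} is proved by computing $\alpha(x^{v+1})\equiv \pi^{pk(u)-k(v)}\alpha_{u,v}\cdot x^{u+1}$ modulo $\sum_i D_i\mathcal{L}^{(i)}$, i.e.\ one starts from the \emph{target} monomial $x^{v+1}$ and lands on the source $x^{u+1}$. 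That is why the matrix entry of $p^{-1}\Frob_p$ is $\alpha_{u,v}^{-1}$, with the factorials and the sign $(-1)^{k(v)}$ coming from the normalisation $\mathcal{R}(x^w)=(\lvert w\rvert-1)!\,(-\pi)^{1-\lvert w\rvert}\,x^{w-1}\Omega/P_0^{\lvert w\rvert}$ together with $\pi^{p-1}=-p$, not from alternating signs in pole-order reduction.

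Your plan as written conflates two machineries. If one really computes the ``forward'' Frobenius — lifting $\Frob_p$ to the dagger algebra by $x_i\mapsto x_i^p$, expanding $\sigma(P_0)^{-k}$ as a geometric series, and then reducing via Algorithm~\ref{alg:PoleRed} — one is doing the Abbott–Kedlaya–Roe computation (which the paper uses in Theorem~\ref{thm:Gerkmann}), and the resulting series in that direction is \emph{not} $\alpha_{u,v}^{-1}$: it is a different $p$-adic series, and identifying it with $\alpha_{u,v}^{-1}$ is itself a nontrivial identity that your outline does not prove. If instead one uses Dwork's splitting function as you initially suggest, the natural operator is $\psi_p$-based (degree-lowering), it already lives on the Dwork complex where the $D_i$-congruence $x_i^{u_i+1+dr}\equiv\bigl(\tfrac{u_i+1}{d}\bigr)_r(-\pi a_i)^{-r}x_i^{u_i+1}$ does all the reduction for you, and it computes the \emph{inverse} of $p^{-1}\Frob_p$ by Katz's diagram; there is no separate Algorithm~\ref{alg:PoleRed} step and no forward computation. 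Without invoking Katz's commutativity (or independently proving the duality between $\psi_p$ and $\sigma$), the appearance of $\alpha_{u,v}^{-1}$ is unexplained, and the absorb-by-change-of-variables heuristic you offer does not substitute for it.

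A secondary point: the proposed sanity check ``take $u=v$ and observe multiplication by a power of $p$ times a unit'' does not work, because when $u=v$ the diagonal entry is an eigenvalue of $\Phi_0$, which is an algebraic number of archimedean absolute value $\mathfrak{q}^{(n-1)/2}$, not a power of $p$ times a unit in general.
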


\begin{proof} We start with some definitions.
First, let $\QQ_p(\pi)$ denote the totally ramified extension of $\QQ_p$ with $\pi^{p-1} = -p$ 
and extend the $p$-adic valuation such that \mbox{$\ord_p(\pi) = (p-1)^{-1}$}. For $0 \leq i \leq n$, 
define sets
\begin{align*}
W^{(i)} &= \{ (w_0,\cdots,w_n) \in \NN_0^{n+1} \colon w_j \geq 1 \text{ for all } j \neq i \text{ and } \lvert w \rvert \equiv 0 \bmod{d} \},
\end{align*}
with $\lvert w \rvert=\sum_{i=0}^n w_i$. Note that $\lvert w \rvert=k(w-1)$, 
where $w-1$ denotes the tuple $(w_0-1,\ldots,w_n-1)$.  Furthermore, still 
for $0 \leq i \leq n$, define $\QQ_p(\pi)$-vector spaces
\begin{align*}
\mathcal{L}^{(i)} &= \Bigl\{ \sum_{w \in W^{(i)}} b_w x^{w} \; : \; 
b_{w} \in \QQ_p(\pi), \; \exists c > 0 \text{ s.t.}  
\lim_{\lvert w \rvert \rightarrow \infty} \bigl(\ord_p(b_w) - c \cdot \lvert w \rvert \bigr) \geq 0 \Bigr\}
\end{align*}
and let $\mathcal{L} = \cap_{i=0}^n \mathcal{L}^{(i)}$ be the intersection of the 
$\mathcal{L}^{(i)}$. Moreover, let $D_i \colon \mathcal{L}^{(i)} \rightarrow \mathcal{L}$ 
denote the differential operator $D_i = x_i \partial_i +  \pi x_i \partial_i(P_0)$.
Finally, let $\psi_p, \alpha \colon \mathcal{L} \rightarrow \mathcal{L}$ be the 
$\QQ_p(\pi)$-linear maps defined by:
\begin{align*}
\psi_p(x^w) &= 
\begin{cases}
x^{w/p} &\text{ if } p \mid w_i \text{ for all } 0 \leq i \leq n \\
0                        &\text{ otherwise}
\end{cases} \\ 
\alpha(x^w) &= \psi_p \Bigl( \exp\Bigl(\pi \bigl(P_0(x_0,\ldots,x_n)-P_0(x_0^p,\ldots,x_n^p)\bigl) \Bigr) \cdot x^w \Bigr)
\intertext{and let $\mathcal{R} \colon \mathcal{L} \rightarrow \Hrig^n(U_0) \otimes \QQ_p(\pi)$ 
denote the $\QQ_p(\pi)$-linear map defined by}
\mathcal{R}(x^w) &= \frac{(\lvert w \rvert-1)!}{(-\pi)^{\lvert w \rvert-1}} \frac{x^{w-1} \Omega}{P_0^{\lvert w \rvert}},
\end{align*}
where $\Omega$ is defined as in Proposition~\ref{prop:Omega}.

Then by \citep[Theorem 2.15]{katz}, the diagram
\[
\begin{CD}
\mathcal{L} / \sum_{i=0}^n D_i \mathcal{L}^{(i)} @>  \; \; \; \; \; p^{-n} \alpha \; \; \; \;   >> \mathcal{L} / \sum_{i=0}^n D_i \mathcal{L}^{(i)}  \\
@VV \mathcal{R} V  @V \mathcal{R} VV \\
\Hrig^n(U_0) \otimes \QQ_p(\pi) @> \left(p^{-1} \Frob_p \right)^{-1}  >> \Hrig^n(U_0) \otimes \QQ_p(\pi)
\end{CD} 
\]
commutes and by \citep[Corollary 1.15]{katz} the vertical maps are isomorphisms. 
This implies that~\eqref{eq:diagfrobformula} is equivalent to
\[
\alpha(x^{v+1}) \equiv \pi^{pk(u)-k(v)} \alpha_{u,v} \cdot x^{u+1} \bmod{\sum_{i=0}^n D_i \mathcal{L}^{(i)}},
\]
which we will now prove.

Note that so far we have not used that $P_0$ is diagonal. Since
$P_0(x_0, x_1, \dotsc, x_n)$ is equal to $a_0 x_0^d + a_1 x_1^d + \dotsb + a_n x_n^d$,
we have
\begin{align*}
\exp \Bigl(\pi \bigl(P_0(x_0,\ldots,x_n)- P_0(x_0^p,\ldots,x_n^p)\bigl) \Bigr) 
    = \prod_{i=0}^n \exp \Bigl( \pi a_i (x_i^d - x_i^{dp}) \Bigr)
\end{align*}
and also
\begin{align*}
D_i =x_i \partial_i + (\pi a_i d) x_i^d. 
\end{align*}
For $0 \leq i \leq n$, we denote
\begin{align*}
\QQ_p(\pi) \left\langle x_i \right\rangle^{\dag} =  \left\{ \sum_{j=0}^{\infty} b_j x_i^{j} \; : \; 
b_{j} \in \QQ_p(\pi), \; \exists c > 0 \text{ s.t.}  
\lim_{j \rightarrow \infty} \left(\ord_p(b_j) - c \cdot j \right) \geq 0 \right\}
\end{align*}
and we define $\QQ_p(\pi)$-linear maps $\psi_p, D_i$ on $\QQ_p(\pi) \langle x_i \rangle^{\dag}$ 
by the same formulas as before. Then
\begin{align*}
\psi_p \Bigl( \exp \bigl(\pi a_i(x_i^d- x_i^{pd})\bigr) x_i^{v_i+1} \Bigr) 
    & = \sum_{\substack{m \geq 0 \text{ s.t. }\\ p \mid dm+v_i+1}}
        \biggl(\sum_{j=0}^{\floor{m/p}} \frac{(-1)^j (\pi a_i)^{m-(p-1)j}}{j! (m-pj)!} \biggr) x_i^{(dm+v_i+1)/p} \\
    & = \sum_{\; \; \; \; \; \; m,r \; \; \; \; \; \;} 
        \biggl(\; \sum_{j=0}^{r} \; \; \frac{(-1)^j (\pi a_i)^{m-(p-1)j}}{j! (m-pj)!} \biggr) x_i^{u_i+1+dr},
\end{align*}
where the outer sum on the last line is over all integers $m, r \geq 0$ 
that satisfy $p(u_i+1)-(v_i+1)=d(m-pr)$ and we have used Lemma~\ref{lem:mpr}. 
It is clear that
\begin{equation*}
x_i^{u_i+1+dr} \equiv \Bigl(\frac{u_i+1}{d}\Bigr)_r (-\pi a_i)^{-r} x_i^{u_i+1} \bmod{D_i \QQ_p(\pi) \langle x_i \rangle^{\dag} }.
\end{equation*}
Hence, it follows that
\begin{align*}
&\psi_p \Bigl( \exp \bigl(\pi a_i \bigl( x_i^d- x_i^{pd} \bigr) \bigr) x_i^{v_i+1} \Bigr) \equiv \\
&(\pi a_i)^{(p (u_i + 1) - (v_i + 1))/d} 
    \biggl( \sum_{m,r} \left( \frac{u_i+1}{d} \right)_r 
        \sum_{j=0}^{r} \frac{(p a_i^{p-1})^{r-j}}{(m-pj)!j!} \biggr) x_i^{u_i+1} \bmod{D_i \QQ_p(\pi) \langle x_i \rangle^{\dag}},
\end{align*}
where the sum is still over all integers $m, r \geq 0$  that satisfy $p(u_i+1)-(v_i+1)=d(m-pr)$. 
For $G_i \in  \QQ_p(\pi) \langle x_i \rangle^{\dag}$ with $0 \leq i \leq n$, we have
\begin{align*}
  \psi_p \biggl( \prod_{j=0}^n G_j \biggr) &= \prod_{j=0}^n \psi_p(G_j), 
& D_i \biggl( \prod_{j=0}^n G_j \biggr) &= D_i(G_i) \cdot \prod_{j \neq i} G_j.  
\end{align*}
Therefore, we can take the product of the congruences above to conclude that 
\begin{align*}
\alpha(x^{v+1}) \equiv \pi^{pk(u)-k(v)} \alpha_{u,v} \cdot x^{u+1} \bmod{\sum_{i=0}^n D_i \QQ_p(\pi) \langle x_i \rangle^{\dag}}.
\end{align*}
Since
$\mathcal{L} \; \cap \; \sum_{i=0}^n D_i \QQ_p(\pi) \langle x_i \rangle^{\dag} =  \sum_{i=0}^n D_i \mathcal{L}^{(i)}$,
we can replace $\sum_{i=0}^n D_i \QQ_p(\pi) \langle x_i \rangle^{\dag}$ 
by $\sum_{i=0}^n D_i \mathcal{L}^{(i)}$ in this expression, completing the proof.
\end{proof}

\begin{rem}
The computation in the proof of Theorem~\ref{thm:01-03-diagfrob} essentially 
goes back to Dwork~\citep[\S 4]{Dwork1964}.  Following Dwork's work, Lauder 
obtained a formula similar to ours~\citep[Proposition 10]{Lauder2004b}, that
was used both by him~\citep{Lauder2004b} and Gerkmann~\citep{Gerkmann2007} 
in their algorithms. 

However, our approach is different in two ways.  Firstly, our formula 
for~$\alpha_{u,v}$ is defined over~$\QQ_p$ instead of $\QQ_p(\pi)$. By 
eliminating the $\pi$'s, we are able to improve the complexity of
the computation of $\Phi_0$, and even of the whole deformation algorithm, 
by a factor~$p$, as we will see in Section~\ref{sec:Complexity}. Secondly, 
Lauder's formula is only valid when the $a_i$ are Teichm\"uller lifts, 
i.e.\ $a_i^{p-1}=1$. From a theoretical point of view, of course one can 
always choose the $a_i$ to be Teichm\"uller lifts, but in computations 
this is unnecessarily restrictive. 
\end{rem}

\subsection{Some estimates}

There are two remaining problems for computing the matrix~$\Phi_0$. Firstly, to 
compute~$\Phi_0$ to $p$-adic precision $N_{\Phi_0}$ using Theorem~\ref{thm:01-03-diagfrob}, 
we have to compute the elements~$\alpha_{u,v}$ to a somewhat higher precision~$N'_{\Phi_0}$ 
because of loss of precision in the computation. In Corollary~\ref{cor:NPhi0prime} we will 
obtain a bound for~$N'_{\Phi_0}$. Secondly, the sums over $m,r$ in Definition~\ref{defn:alpha} 
consist of infinitely many terms.  In Proposition~\ref{prop:MR} we will derive a finite 
expression for $\alpha_{u,v}$ to precision $N'_{\Phi_0}$. We start with some estimates that 
will be needed in the proofs.

\begin{prop} \label{prop:rfac}
For all integers $j, d, r \geq 1$ with $p \nmid d$, 
\begin{equation*}
\ord_p\Bigl(\frac{j}{d}\Bigr)_r \geq \ord_p(r!) \geq \frac{r}{p-1} - \floor{\log_p(r) + 1}.
\end{equation*}
\end{prop}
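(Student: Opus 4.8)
The plan is to prove the two inequalities separately by elementary means. For the first one, $\ord_p\bigl((j/d)_r\bigr) \geq \ord_p(r!)$, I would begin by writing $(j/d)_r = d^{-r}\prod_{k=0}^{r-1}(j+kd)$ and observing that, since $p \nmid d$, the factor $d^{-r}$ contributes nothing to the valuation, so it suffices to bound $\ord_p\bigl(\prod_{k=0}^{r-1}(j+kd)\bigr)$ from below by $\ord_p(r!)$. Note that all the factors $j+kd$ are positive since $j, d \geq 1$ and $k \geq 0$.

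The key step is a counting argument: for each integer $i \geq 1$, the number of indices $k \in \{0,1,\dotsc,r-1\}$ with $p^i \mid j+kd$ is at least $\floor{r/p^i}$. Indeed, because $p \nmid d$, the residue of $d$ is a unit in $\ZZ/p^i\ZZ$, so the condition $j+kd \equiv 0 \pmod{p^i}$ is equivalent to $k$ lying in a single fixed residue class modulo $p^i$; and among any $r$ consecutive integers each residue class modulo $p^i$ occurs at least $\floor{r/p^i}$ times. Writing $\ord_p(n) = \sum_{i\geq 1} [\,p^i \mid n\,]$ for a positive integer $n$ and interchanging the order of summation, one gets
\[
\ord_p\Bigl(\prod_{k=0}^{r-1}(j+kd)\Bigr) = \sum_{i\geq 1} \#\{k : p^i \mid j+kd\} \;\geq\; \sum_{i\geq 1} \floor{r/p^i} = \ord_p(r!),
\]
the last equality being Legendre's formula. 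This yields the first inequality.

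For the second inequality I would use Legendre's formula in the form $\ord_p(r!) = (r - s_p(r))/(p-1)$, where $s_p(r)$ denotes the sum of the base-$p$ digits of $r$. Since $r$ has exactly $\floor{\log_p(r)} + 1 = \floor{\log_p(r)+1}$ base-$p$ digits, each at most $p-1$, we obtain $s_p(r) \leq (p-1)\floor{\log_p(r)+1}$, and hence $\ord_p(r!) \geq r/(p-1) - \floor{\log_p(r)+1}$, as required. I do not anticipate any real obstacle; the only point needing a little care is the counting step, and the hypothesis $p \nmid d$ makes it immediate.
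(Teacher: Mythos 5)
Your proof is correct and, for the second inequality, identical to the paper's (Legendre's formula plus the digit-sum bound $s_p(r)\leq (p-1)\floor{\log_p(r)+1}$). For the first inequality the paper simply asserts $\ord_p\bigl((j/d)_r\bigr)\geq \ord_p(r!)$ without justification, whereas your counting argument over the arithmetic progression $j, j+d,\dotsc, j+(r-1)d$ supplies a complete and valid proof of that step.
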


\begin{proof}
Let $s_p(r)$ denote the sum of digits in the $p$-adic expansion of~$r$ 
and observe that $s_p(r) \leq (p-1)\floor{\log_p(r) + 1}$. Then 
\begin{equation*}
\ord_p\left(\frac{j}{d}\right)_r \geq \ord_p(r!) 
    = \frac{r - s_p(r)}{p-1} \geq \frac{r}{p-1} - \floor{\log_p(r) + 1}. \qedhere
\end{equation*}
\end{proof}

\begin{prop} \label{prop:coefbound}
For all integers $m \geq 0$ and $0 \leq i \leq n$,
\begin{align*}
\ord_p \biggl( \sum_{j=0}^{\floor{m/p}} \frac{(p a_i^{p-1})^{\floor{m/p}-j}}{(m-pj)!j!} \biggr) 
    \geq \frac{p-1}{p^2}m + \floor{\frac{m}{p}}-\frac{m}{p-1}.
\end{align*}
\end{prop}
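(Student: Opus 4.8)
The plan is to bound the $p$-adic valuation of the sum
$T_m := \sum_{j=0}^{\floor{m/p}} (pa_i^{p-1})^{\floor{m/p}-j}/((m-pj)!\,j!)$
term by term, since $\ord_p(a_i)=0$ and hence $\ord_p(pa_i^{p-1})=1$. For each $j$ in the range, the $j$-th summand has valuation
\[
(\floor{m/p}-j) + \ord_p(a_i^{p-1})(\floor{m/p}-j) - \ord_p((m-pj)!) - \ord_p(j!)
= (\floor{m/p}-j) - \ord_p((m-pj)!) - \ord_p(j!),
\]
so it suffices to show that this quantity is at least $\tfrac{p-1}{p^2}m + \floor{m/p} - \tfrac{m}{p-1}$ for every $j$, and then take the minimum over $j$; the ultrametric inequality gives the claim for the whole sum.

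First I would use the standard estimate $\ord_p(k!) = (k - s_p(k))/(p-1) \le k/(p-1)$, valid for all $k\ge 0$, applied to $k = m-pj$ and $k = j$. This yields
\[
(\floor{m/p}-j) - \ord_p((m-pj)!) - \ord_p(j!)
\;\ge\; (\floor{m/p}-j) - \frac{m-pj}{p-1} - \frac{j}{p-1}
\;=\; \floor{m/p} - j - \frac{m}{p-1} + \frac{pj}{p-1} - \frac{j}{p-1}.
\]
The $j$-dependent part simplifies to $-j + \frac{(p-1)j}{p-1} = -j + j = 0$ — in fact the cruder bound $\ord_p(k!)\le k/(p-1)$ makes the $j$-terms cancel exactly, leaving $\floor{m/p} - m/(p-1)$. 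But this is visibly weaker than what we want by the extra $\tfrac{p-1}{p^2}m$, so I expect the crude bound is too lossy and one must keep the digit-sum correction for at least one of the factorials. The refined approach: keep $\ord_p((m-pj)!) = (m-pj-s_p(m-pj))/(p-1)$ exactly and use $s_p(m-pj)\ge 1$ when $m-pj>0$ (and handle $m-pj=0$, i.e. $j=\floor{m/p}$ with $p\mid m$, separately — there the term is $1/\floor{m/p}!$ times a unit times $p^0$, contributing valuation $-\ord_p(\floor{m/p}!) \ge \floor{m/p} - m/(p-1)$, compatible since $\tfrac{p-1}{p^2}m\le 0$ is false, so this edge case needs its own check).

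The heart of the argument is therefore an elementary but slightly delicate inequality in $m$, $j$, and $p$: combining $\ord_p((m-pj)!)\le (m-pj-1)/(p-1)$ (for $m-pj\ge 1$) with $\ord_p(j!)\le (j - s_p(j))/(p-1)$ and optimizing, one needs
\[
(\floor{m/p}-j) - \frac{m-pj-1}{p-1} - \frac{j-s_p(j)}{p-1} \;\ge\; \frac{p-1}{p^2}m + \floor{m/p} - \frac{m}{p-1},
\]
i.e. after cancelling $\floor{m/p}$ and $m/(p-1)$,
\[
\frac{1}{p-1} + \frac{s_p(j)}{p-1} \;\ge\; \frac{p-1}{p^2}m \;+\; j - \frac{pj}{p-1} + \frac{j}{p-1} \;=\; \frac{p-1}{p^2}m.
\]
So everything reduces to showing $\tfrac{p-1}{p^2}m \le \tfrac{1+s_p(j)}{p-1}$ for the minimizing $j$, which I suspect is false as stated for large $m$ — meaning the intended proof must instead exploit that $j\le\floor{m/p}$ forces $m\le pj+p-1$, converting the bound on $m$ into a bound that the small constant $\tfrac{1+s_p(j)}{p-1}$ can absorb, perhaps by a more careful split of $\ord_p((m-pj)!)$ retaining more digits. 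The main obstacle is exactly this: getting the extra $\tfrac{p-1}{p^2}m$ out of the interplay between the two factorials and the constraint $0\le j\le\floor{m/p}$, rather than from either factorial alone. I would proceed by writing $m = p\floor{m/p} + (m \bmod p)$, substituting $i := \floor{m/p} - j \ge 0$ as the new index, and showing the per-term bound $i - \ord_p((m\bmod p + pi)!) - \ord_p(j!) \ge \tfrac{p-1}{p^2}m + \floor{m/p} - \tfrac{m}{p-1}$ directly, which after the identity $\ord_p((pi+c)!) = i + \ord_p(i!) + \ord_p\!\big(\binom{pi+c}{c}^{-1}\cdots\big)$-type manipulations (using $\ord_p((pi)!) = i + \ord_p(i!)$) should telescope cleanly; the $i$ cancels, and one is left needing $-\ord_p(i!) - \ord_p(j!) - (\text{correction}) \ge \tfrac{p-1}{p^2}m - \tfrac{m}{p-1} + \text{stuff}$, at which point Proposition~\ref{prop:rfac}-style digit-sum estimates close the gap. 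The edge cases $m < p$ (where $\floor{m/p}=0$ and the sum is the single term $1/m!$) should be verified by hand at the end.
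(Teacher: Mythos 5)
Your strategy — bound each summand and invoke the ultrametric inequality — cannot work, because the claimed bound is \emph{not} a term-by-term bound: the inequality genuinely requires $p$-adic cancellation between the summands. A concrete counterexample to the per-term bound: take $p=2$, $a_i=1$, $m=8$. The right-hand side of the proposition is $\tfrac{1}{4}\cdot 8 + 4 - 8 = -2$. But the $j=0$ term is $16/8! = 16/40320$ with $\ord_2 = 4-7 = -3$, and the $j=4$ term is $1/4!$ with $\ord_2 = -3$ as well. So the minimum of the term valuations is $-3 < -2$, and the ultrametric inequality only gives $\geq -3$. The sum actually equals $191/630$ with $\ord_2 = -1$, so the two low-valuation terms cancel mod $2^{-2}$ — exactly the phenomenon your per-term framework cannot see. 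You sense this tension yourself (``I suspect is false as stated for large $m$''), but the final paragraph still tries to ``close the gap'' with digit-sum estimates per term, which is a dead end.

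The paper's proof sidesteps this entirely by recognizing the sum \emph{structurally}. Pass to $\QQ_p(\pi)$ with $\pi^{p-1}=-p$, $\ord_p(\pi)=1/(p-1)$, and observe that, up to the explicit unit-times-power-of-$\pi$ factor $(-1)^{\lfloor m/p\rfloor}(\pi a_i)^{\,m-(p-1)\lfloor m/p\rfloor}$, the quantity in question is the coefficient $\lambda_m$ of $x^m$ in $\exp\bigl(\pi a_i(x-x^p)\bigr)$. The Dwork--Robba estimate $\ord_p(\lambda_m)\geq m(p-1)/p^2$ (which is classical for $a_i=1$ and whose proof carries over verbatim to $a_i\in\ZZ_p^{\times}$) then gives the result after subtracting $\ord_p\bigl(\pi^{\,m-(p-1)\lfloor m/p\rfloor}\bigr)=m/(p-1)-\lfloor m/p\rfloor$. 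That estimate is proved using the splitting/overconvergence properties of the Dwork exponential, not by bounding Taylor coefficients individually; this is the missing idea in your proposal.
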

\begin{proof}
Let $\QQ_p(\pi)$ denote the totally ramified extension of $\QQ_p$ with $\pi^{p-1} = -p$ and 
extend the $p$-adic valuation such that \mbox{$\ord_p(\pi) = (p-1)^{-1}$}. Expanding
\begin{align*}
\exp \bigl( \pi a_i(x-x^p) \bigr) &= \sum_{m=0}^{\infty} \lambda_m x^m,
\end{align*}
one checks easily that, for all $m \geq 0$,
\begin{align*}
\lambda_m &= \sum_{j=0}^{\floor{m/p}} \frac{(-1)^j (\pi a_i)^{m-(p-1)j}}{(m-pj)!j!} \\
          &= (-1)^{\floor{m/p}} (\pi a_i)^{m-(p-1)\floor{m/p}} \biggl( \sum_{j=0}^{\floor{m/p}} \frac{(p a_i^{p-1})^{\floor{m/p}-j}}{(m-pj)!j!} \biggr).
\end{align*}
Therefore, it suffices to show that $\ord_p(\lambda_m) \geq m (p-1)/p^2$. This is 
well-known in the case $a_i=1$, but the proof from \citep[Lemma 4.1]{dwork1962} 
remains valid when $a_i \in \ZZ_p^{\times}$. 
\end{proof}

\begin{thm} \label{thm:alphabound} 
Let $\alpha_{u,v}$ be defined as in Definition~\ref{defn:alpha}. Then
\begin{equation*}
0 \leq \ord_p(\alpha_{u,v}) \leq \ord_p\left( \frac{(k(v)-1)!}{(k(u)-1)!} \right)+(n-k(u))+\delta,
\end{equation*}
where $\delta$ is defined as in Definition~\ref{defn:delta}. 
\end{thm}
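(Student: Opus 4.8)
The plan is to prove the two inequalities separately, both by working with the congruence
\[
\alpha(x^{v+1}) \equiv \pi^{p k(u) - k(v)}\, \alpha_{u,v} \cdot x^{u+1} \pmod{\textstyle\sum_{i=0}^n D_i \mathcal{L}^{(i)}}
\]
that was established in the proof of Theorem~\ref{thm:01-03-diagfrob}, together with the bounds on $\Phi_0$ that follow from Corollary~\ref{cor:delta}. For the lower bound $\ord_p(\alpha_{u,v}) \geq 0$, first I would factor over~$i$: by Definition~\ref{defn:alpha} it suffices to bound the $p$-adic valuation of each $i$-th factor from below by the negative of $\ord_p\bigl(a_i^{(p(u_i+1)-(v_i+1))/d}\bigr) = 0$, since $a_i \in \ZZ_p^\times$. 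Using Lemma~\ref{lem:mpr} to replace $r$ by $\floor{m/p}$, each inner sum is, up to the unit power of $a_i$ and a factor $(\frac{u_i+1}{d})_r$, exactly the quantity estimated in Proposition~\ref{prop:coefbound}. Combining Proposition~\ref{prop:coefbound} with Proposition~\ref{prop:rfac} applied to $(\frac{u_i+1}{d})_r$ gives a lower bound of the shape $\tfrac{p-1}{p^2}m + \floor{m/p} - \tfrac{m}{p-1} + (\text{something} \geq \tfrac{r}{p-1} - \floor{\log_p r + 1})$, and one checks this is $\geq 0$ for all admissible $m,r$ (the only delicate case is small $m$, which can be handled directly since then $r = 0$ and the sum is $1/m!$, whose valuation one bounds crudely). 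Summing over $i$ yields $\ord_p(\alpha_{u,v}) \geq 0$.

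For the upper bound I would instead read off information from the diagram in the proof of Theorem~\ref{thm:01-03-diagfrob}. Since $p^{-1}\Frob_p(\omega_1) = (-1)^{k(v)} \tfrac{(k(v)-1)!}{(k(u)-1)!} p^{n-k(u)} \alpha_{u,v}^{-1}\, \omega_2$ is, up to sign, the relevant entry of $\Phi_0$, and since $\Phi_0 = \Phi(0)$ satisfies $\ord_p(\Phi_0) \geq -\delta$ by Corollary~\ref{cor:delta} (applied at $\tau$ the reduction of $t_0$, or directly since $\Phi(0) = \Phi_0$), we get
\[
\ord_p\!\left( \frac{(k(v)-1)!}{(k(u)-1)!} \right) + (n - k(u)) - \ord_p(\alpha_{u,v}) \;\geq\; -\delta,
\]
which rearranges to exactly the desired upper bound. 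One must be a little careful that Corollary~\ref{cor:delta} is stated for fibres $\Phi_\tau$ over $S(\bar\FF_q)$; I would note that $0 \in \mathcal{S}$ (Assumption~\ref{assump:S}) and that $X_0$ is the diagonal fibre, so $\Phi_0$ is the specialisation $\Phi_\tau$ at $\tau$ the reduction of $0$, and the bound applies verbatim.

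I expect the main obstacle to be the lower-bound estimate: one has to verify that the combined estimate from Propositions~\ref{prop:rfac} and~\ref{prop:coefbound} is genuinely nonnegative for \emph{all} pairs $(m,r)$ with $p(u_i+1)-(v_i+1) = d(m - pr)$, including the boundary cases where $m$ is small and the $\floor{\,\cdot\,}$ and $\tfrac{m}{p-1}$ terms nearly cancel. The cleaner route may be to avoid Proposition~\ref{prop:coefbound} entirely for the lower bound and argue structurally: the map $\alpha$ and the operators $D_i$ in the proof of Theorem~\ref{thm:01-03-diagfrob} all preserve an integral structure (the coefficients $\lambda_m$ lie in $\ZZ_p[\pi]$ and $\psi_p$ preserves integrality), so $\alpha(x^{v+1})$ has nonnegative valuation, and tracking the power of $\pi$ on the right-hand side of the congruence — namely $\ord_p(\pi^{pk(u)-k(v)}) = (pk(u)-k(v))/(p-1)$, which is an integer only after accounting for the $d \mid$ divisibility — pins down $\ord_p(\alpha_{u,v}) \geq 0$. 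I would present whichever of these two arguments turns out to need fewer case distinctions, but flag the integrality argument as the conceptually preferable one.
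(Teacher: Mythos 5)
Your upper‑bound argument is exactly the paper's: specialise Corollary~\ref{cor:delta} at the diagonal fibre to get $\ord_p(\Phi_0)\ge -\delta$, read off the relevant entry via Theorem~\ref{thm:01-03-diagfrob}, and rearrange. Nothing to add there.

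The lower bound is where your proposal runs into trouble, in both of the routes you sketch. In the first route you claim that the combined estimate from Propositions~\ref{prop:rfac} and~\ref{prop:coefbound} is $\geq 0$ for all admissible $m,r$, with ``small $m$, where $r=0$'' as the only delicate case. That is backwards: for $m<p$ the term is simply $1/m!$ with valuation $0$, which is trivially fine, whereas the troublesome cases are $p=2$ and $m\in\{4,5\}$ (so $r=2$), where the crude lower bound actually dips to $-1$ (e.g.\ $p=2$, $m=5$ gives $\tfrac{p-1}{p^2}m+\floor{m/p}-\tfrac{m}{p-1}+\ord_p(\floor{m/p}!)=-3/4$). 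What the paper proves is only that the estimate is $>-1$ for all $m\geq p$, and the passage from $>-1$ to $\geq 0$ is a separate (unstated but necessary) appeal to the fact that each summand in Definition~\ref{defn:alpha} lies in $\QQ_p$ and hence has integer valuation. Your write‑up should make that integrality step explicit; as written, the claim ``$\geq 0$ for all admissible $m,r$'' is false.

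The second route, which you flag as conceptually preferable, has a more serious gap. It is true that $\alpha$ preserves an integral structure on $\mathcal{L}$, so $\alpha(x^{v+1})$ has nonnegative valuation. But the coefficient $\alpha_{u,v}$ is extracted by reducing modulo $\sum_{i=0}^n D_i\mathcal{L}^{(i)}$, and that reduction is \emph{not} integral: the step used in the proof of Theorem~\ref{thm:01-03-diagfrob} replaces $x_i^{u_i+1+dr}$ by $\bigl(\tfrac{u_i+1}{d}\bigr)_r(-\pi a_i)^{-r}x_i^{u_i+1}$, whose coefficient has valuation as low as $-s_p(r)/(p-1)<0$. So one cannot infer $\ord_p(\alpha_{u,v})\geq 0$ just from $\ord_p(\alpha(x^{v+1}))\geq 0$; the loss incurred in reduction has to be cancelled against the gain coming from the $\lambda_m$'s, which is precisely what the explicit term‑by‑term estimate in the paper (and in your first route) accomplishes. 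I would recommend dropping the ``structural'' argument and fixing the first route by invoking integer‑valuedness of the valuation and handling $p=2$, $m\in\{4,5\}$ directly.
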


\begin{proof}
First, we prove the left-hand inequality. By Lemma~\ref{lem:mpr}, Proposition~\ref{prop:rfac} 
and Proposition~\ref{prop:coefbound}, it suffices to show that
\begin{align} \label{eq:mp}
\frac{p-1}{p^2}m + \floor{\frac{m}{p}}-\frac{m}{p-1} + \ord_p\left(\floor{\frac{m}{p}}!\right)>-1,
\end{align}
for all integers $m \geq p$. From Proposition~\ref{prop:rfac} and the fact that $m \geq p \floor{m/p}$, it follows that
\begin{align*}
\frac{p-1}{p^2}m + \floor{\frac{m}{p}}-\frac{m}{p-1} + \ord_p\left(\floor{\frac{m}{p}}!\right) \geq \frac{p-1}{p} \floor{\frac{m}{p}} - \floor{\log_p\left(\floor{\frac{m}{p}}\right)+1}.
\end{align*}
One checks easily that the right-hand side is greater than $-1$, unless we have $p=2$ and $m=4,5$. However, in these cases~\eqref{eq:mp} still holds, as can be seen by direct
substitution.

Next, we prove the right-hand inequality. Recall from Corollary~\ref{cor:delta} that the valuations 
of the entries of the matrix~$\Phi_0$ are bounded below by $-\delta$. Thus by Theorem~\ref{thm:01-03-diagfrob}, 
we have
\begin{equation*}
-\delta \leq \ord_p\left(\frac{(k(v) - 1)!}{(k(u) - 1)!} \right)+ (n-k(u)) - \ord_p( \alpha_{u,v}),
\end{equation*}
from which the result follows.
\end{proof}

Using Theorem~\ref{thm:alphabound}, we can now bound the $p$-adic precision $N'_{\Phi_0}$ to which we
have to compute the $\alpha_{u,v}$.

\begin{cor} \label{cor:NPhi0prime}
In order to compute the matrix $\Phi_0$ to $p$-adic precision $N_{\Phi_0}$, it is sufficient to
compute the $\alpha_{u,v}$ to $p$-adic precision 
\[
N_{\Phi_0}' = N_{\Phi_0} + (n-1) + \ord_p\left((n-1)!\right)  + 2\delta,
\]
where $\delta$ is defined as in Definition~\ref{defn:delta}.
\end{cor}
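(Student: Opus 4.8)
The plan is to feed the entrywise formula of Theorem~\ref{thm:01-03-diagfrob} into a routine $p$-adic precision analysis, controlled by the two-sided valuation estimate of Theorem~\ref{thm:alphabound}. Recall that $\Phi_0$ has a single nonzero entry in each row and column, and by Theorem~\ref{thm:01-03-diagfrob} every such nonzero entry equals $\pm \frac{(k(v)-1)!}{(k(u)-1)!}\, p^{\,n-k(u)}\, \alpha_{u,v}^{-1}$ for suitable tuples $u,v$ with $x^u, x^v \in B$ related by $p(u_i+1) \equiv v_i+1 \bmod d$. The three rational factors other than $\alpha_{u,v}^{-1}$ are computed exactly, so it suffices to bound the error in this product that results from replacing $\alpha_{u,v}$ by an approximation $\tilde\alpha$ with $\ord_p(\alpha_{u,v} - \tilde\alpha) \geq N_{\Phi_0}'$.

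First I would treat the inversion. Put $c = \ord_p(\alpha_{u,v})$ and $e = \ord_p\bigl(\tfrac{(k(v)-1)!}{(k(u)-1)!}\bigr) + (n - k(u))$, so that the exact entry has valuation $e - c \geq -\delta$ by Corollary~\ref{cor:delta}. Theorem~\ref{thm:alphabound} gives $0 \leq c \leq e + \delta$, and since $1 \leq k(u), k(v) \leq n$ we have $\ord_p\bigl(\tfrac{(k(v)-1)!}{(k(u)-1)!}\bigr) \leq \ord_p((n-1)!)$ and $n - k(u) \leq n-1$, hence $e \leq \ord_p((n-1)!) + (n-1)$ and therefore $c \leq \ord_p((n-1)!) + (n-1) + \delta < N_{\Phi_0}'$ (we may assume $N_{\Phi_0} \geq 1$). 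In particular $\ord_p(\tilde\alpha) = c$, and from
\[
\alpha_{u,v}^{-1} - \tilde\alpha^{-1} = \frac{\tilde\alpha - \alpha_{u,v}}{\alpha_{u,v}\,\tilde\alpha}
\]
we obtain $\ord_p\bigl(\alpha_{u,v}^{-1} - \tilde\alpha^{-1}\bigr) \geq N_{\Phi_0}' - 2c$.

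Next I would multiply by the (exact) prefactor, which contributes $e$ to the valuation. Thus the error in the entry has valuation at least $e + N_{\Phi_0}' - 2c$, and using $c \leq e + \delta$ followed by $e \leq \ord_p((n-1)!) + (n-1)$ this is at least
\[
e + N_{\Phi_0}' - 2(e+\delta) = N_{\Phi_0}' - e - 2\delta \geq N_{\Phi_0}' - \ord_p((n-1)!) - (n-1) - 2\delta = N_{\Phi_0},
\]
by the defining formula for $N_{\Phi_0}'$. Since the zero entries of $\Phi_0$ impose no constraint, this yields $\ord_p(\Phi_0 - \tilde\Phi_0) \geq N_{\Phi_0}$, as required.

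There is no real obstacle here: once Theorem~\ref{thm:alphabound} is in hand, the argument is a bookkeeping exercise. The only point worth care is that all the bounds must be uniform in $u$ and $v$, which is precisely what the two-sided estimate of Theorem~\ref{thm:alphabound} supplies, together with the elementary facts $(k(v)-1)! \mid (n-1)!$ and $n - k(u) \leq n-1$. It is also worth noting that the coefficient $2$ of $\delta$ in $N_{\Phi_0}'$ is exactly the inversion loss $2c$, both $\alpha_{u,v}$ and $\tilde\alpha$ sitting in the denominator of $\alpha_{u,v}^{-1} - \tilde\alpha^{-1}$, rather than a single factor of $c$.
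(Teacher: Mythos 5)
Your proof is correct and takes essentially the same route as the paper: both rely on Theorem~\ref{thm:01-03-diagfrob} for the entrywise formula, Theorem~\ref{thm:alphabound} for the two-sided valuation bound on $\alpha_{u,v}$, the standard loss of $2\,\ord_p(\alpha_{u,v})$ in inverting, and the elementary bounds $\ord_p\bigl(\tfrac{(k(v)-1)!}{(k(u)-1)!}\bigr) \le \ord_p((n-1)!)$ and $n-k(u) \le n-1$. Your version merely spells out the intermediate steps (in particular the check that $\ord_p(\tilde\alpha) = \ord_p(\alpha_{u,v})$, which the paper leaves implicit).
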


\begin{proof}
Note that the loss of precision in inverting $\alpha_{u,v}$ is at most $2\ord_p(\alpha_{u,v})$.
Therefore, it follows from Theorem~\ref{thm:01-03-diagfrob} and Theorem~\ref{thm:alphabound} that
it is sufficient to compute $\alpha_{u,v}$ to precision
\begin{align*}
N_{\Phi_0} + \ord_p\left(\frac{(k(v) - 1)!}{(k(u) - 1)!} \right) + (n-k(u)) + 2\delta \leq
N_{\Phi_0} + (n-1) + \ord_p\left((n-1)!\right)  + 2\delta,
\end{align*}
where we have used that $1 \leq k(u),k(v) \leq n$.
\end{proof}

To derive a finite expression for $\alpha_{u,v}$ to precision $N'_{\Phi_0}$, 
we start with the following elementary lemma.

\begin{lem} \label{lem:log}
Given integers $\alpha, \beta \geq 2$ and defining 
$x = \beta + \log_{\alpha}(\beta) + 1$, for all real numbers $y \geq x$,
we have 
\begin{equation*}
y - \log_{\alpha}(y) \geq \beta.
\end{equation*}
\end{lem}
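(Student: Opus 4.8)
The claim is a monotonicity statement: once $y$ exceeds the threshold $x = \beta + \log_\alpha(\beta) + 1$, the function $f(y) = y - \log_\alpha(y)$ stays at least~$\beta$. The natural approach is to verify the inequality at the endpoint $y = x$ and then show $f$ is nondecreasing on $[x, \infty)$, so that $f(y) \geq f(x) \geq \beta$ for all $y \geq x$.

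\textbf{Step 1: monotonicity.} I would first observe that $f'(y) = 1 - \frac{1}{y \ln \alpha}$, which is nonnegative as soon as $y \geq \frac{1}{\ln \alpha}$. Since $\alpha \geq 2$ we have $\ln \alpha \geq \ln 2 > 1/2$, so $\frac{1}{\ln \alpha} < 2$; and since $\beta \geq 2$ and $\log_\alpha(\beta) \geq 0$, the threshold satisfies $x \geq \beta + 1 \geq 3 > 2 > \frac{1}{\ln \alpha}$. Hence $f$ is increasing on all of $[x, \infty)$, and it suffices to prove $f(x) \geq \beta$.

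\textbf{Step 2: the endpoint estimate.} Substituting, I need
\begin{equation*}
\bigl(\beta + \log_\alpha(\beta) + 1\bigr) - \log_\alpha\bigl(\beta + \log_\alpha(\beta) + 1\bigr) \geq \beta,
\end{equation*}
i.e.\ $\log_\alpha(\beta) + 1 \geq \log_\alpha\bigl(\beta + \log_\alpha(\beta) + 1\bigr)$, which after exponentiating base~$\alpha$ is equivalent to $\alpha\beta \geq \beta + \log_\alpha(\beta) + 1$. Since $\alpha \geq 2$ this reduces to showing $2\beta \geq \beta + \log_\alpha(\beta) + 1$, i.e.\ $\beta - 1 \geq \log_\alpha(\beta)$. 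For $\alpha \geq 2$ we have $\log_\alpha(\beta) \leq \log_2(\beta)$, so it is enough to check $\beta - 1 \geq \log_2(\beta)$ for all $\beta \geq 2$; this is immediate since at $\beta = 2$ both sides equal~$1$ and the left side grows faster (its derivative is~$1$, the right side's is $\frac{1}{\beta \ln 2} \leq \frac{1}{2\ln 2} < 1$).

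\textbf{Main obstacle.} There is essentially no hard part here; the only mild subtlety is making sure the threshold $x$ really lies in the region where $f' \geq 0$, which relies on the hypotheses $\alpha, \beta \geq 2$ and must not be skipped. Everything else is a routine chain of elementary inequalities, so I would present it compactly rather than belabouring the calculus.
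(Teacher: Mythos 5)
Your proof is correct and follows essentially the same route as the paper: establish monotonicity of $y \mapsto y - \log_{\alpha}(y)$ via the derivative, then verify the inequality at the endpoint $y = x$ by reducing it to $\beta + \log_{\alpha}(\beta) + 1 \leq \alpha\beta$, which rests on $\log_{\alpha}(\beta) + 1 \leq \beta$. The extra care you take in checking that $x$ lies in the region where the derivative is nonnegative is a small refinement of the paper's one-line justification, not a different argument.
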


\begin{proof}
We first note that the function $y \mapsto y - \log_{\alpha}(y)$ is increasing 
for $y \geq 2$ because it has derivative $1 - \log_{\alpha}(e)/y > 0$.  Thus, 
it suffices to verify the result for~$x$.  Indeed, as $\beta \geq 2$ we have 
that $\log_{\alpha}(\beta) + 1 \leq \beta$, hence 
$\beta + \log_{\alpha}(\beta) + 1 \leq 2 \beta \leq \alpha \beta$,
which upon taking logarithms and rearranging yields the result.
\end{proof}

\begin{prop} \label{prop:MR}
In order to compute $\alpha_{u,v}$ to $p$-adic precision~$N'_{\Phi_0}$, 
it suffices to restrict the sums in Theorem~\ref{defn:alpha} to pairs $m,r \geq 0$ 
such that $m \leq \mathcal{M}$, or equivalently $r \leq \mathcal{R}$, where 
\begin{align*}
\mathcal{M} &= \ceil{\frac{p^2}{p-1}(N'_{\Phi_0}+\log_p(N'_{\Phi_0}+3)+4)} - 1,
&\mathcal{R} &= \floor{\mathcal{M}/p}.
\end{align*}
\end{prop}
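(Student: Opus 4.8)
The plan is to reduce the claim to an elementary lower bound for the $p$-adic valuation of the individual terms occurring in the sums of Definition~\ref{defn:alpha}, and then to apply Lemma~\ref{lem:log} in order to absorb a logarithmic correction.

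First I would recall that each of the $n+1$ factors
\[
\sum_{m,r} \left(\frac{u_i+1}{d}\right)_r \sum_{j=0}^{r} \frac{\bigl(p a_i^{p-1}\bigr)^{r-j}}{(m-pj)!\,j!}
\]
of $\alpha_{u,v}$ has $p$-adic valuation at least~$0$: this is exactly what is shown term by term in the course of proving the left-hand inequality of Theorem~\ref{thm:alphabound}, and the unit prefactor $\prod_i a_i^{(p(u_i+1)-(v_i+1))/d}$ has valuation~$0$ as well. Hence, applying Proposition~\ref{prop:productval} with all lower bounds $v_i$ taken to be~$0$, to compute $\alpha_{u,v}$ to precision $N'_{\Phi_0}$ it is enough to compute each of these $n+1$ factors to precision $N'_{\Phi_0}$. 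Since the valuation of a $p$-adic sum is at least the minimum of the valuations of its summands, and since by Lemma~\ref{lem:mpr} every term of the $i$-th factor satisfies $r = \floor{m/p}$ (so that bounding $m$ by $\mathcal{M}$ is equivalent to bounding $r$ by $\mathcal{R} = \floor{\mathcal{M}/p}$), it suffices to prove that every term with $m > \mathcal{M}$ has valuation at least $N'_{\Phi_0}$.

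Next I would estimate that valuation. Combining Lemma~\ref{lem:mpr} with the bound $\ord_p(\tfrac{u_i+1}{d})_r \ge \ord_p(r!)$ from Proposition~\ref{prop:rfac} and with Proposition~\ref{prop:coefbound}, the term attached to a given~$m$ has valuation at least
\[
\frac{p-1}{p^2}\,m + \floor{m/p} - \frac{m}{p-1} + \ord_p\!\left(\floor{m/p}!\right).
\]
Feeding in once more the estimate $\ord_p(\floor{m/p}!) \ge \floor{m/p}/(p-1) - \floor{\log_p(\floor{m/p}) + 1}$ from Proposition~\ref{prop:rfac}, and using both $0 \le m - p\floor{m/p} \le p-1$ and $\floor{m/p} \le m/p$, the rational part collapses and one is left with a clean estimate of the shape
\[
\frac{p-1}{p^2}\,m - 1 - \log_p(m).
\]
Writing $y = \tfrac{p-1}{p^2}m$ and using $\log_p\!\left(p^2/(p-1)\right) \le 2$, this lower bound is in turn at least $y - \log_p(y) - 3$.

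Finally I would invoke Lemma~\ref{lem:log} with $\alpha = p$ and $\beta = N'_{\Phi_0} + 3$: its hypothesis is $y \ge \beta + \log_p(\beta) + 1 = N'_{\Phi_0} + \log_p(N'_{\Phi_0}+3) + 4$, and this is precisely what the choice $\mathcal{M} = \ceil{\tfrac{p^2}{p-1}\!\left(N'_{\Phi_0} + \log_p(N'_{\Phi_0}+3) + 4\right)} - 1$ guarantees for every integer $m > \mathcal{M}$. The lemma then yields $y - \log_p(y) \ge N'_{\Phi_0} + 3$, so the term attached to~$m$ has valuation at least $N'_{\Phi_0}$, which completes the argument. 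I do not expect a genuine obstacle here; the only slightly delicate point is the bookkeeping with the floor and ceiling functions and pinning down the constant inside the ceiling, since every inequality used is already available from the earlier propositions.
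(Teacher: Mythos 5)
Your proposal is correct and follows essentially the same route as the paper: combine Lemma~\ref{lem:mpr}, Proposition~\ref{prop:rfac} and Proposition~\ref{prop:coefbound} to bound the valuation of each term from below by $\tfrac{p-1}{p^2}m - \log_p(m) - 1$, then apply Lemma~\ref{lem:log} with $\alpha = p$ and $\beta = N'_{\Phi_0}+3$. The only difference is that you make explicit, via Proposition~\ref{prop:productval} and the nonnegativity of the factors, why truncating each factor to precision $N'_{\Phi_0}$ suffices for the product --- a step the paper leaves implicit.
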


\begin{proof}
By Proposition~\ref{prop:rfac} and Proposition~\ref{prop:coefbound}, we have
\begin{align*}
\ord_p \biggl( \Bigl( \frac{u_i+1}{d} \Bigr)_r \sum_{j=0}^{r} \frac{(p a_i^{p-1})^{r-j}}{(m-pj)!j!} \biggr) 
    &\geq  \frac{p-1}{p^2}m + \floor{\frac{m}{p}}-\frac{m}{p-1}+\frac{r}{p-1}-\floor{\log_p(r)+1} \\
    &\geq  \frac{p-1}{p^2}m - \log_p(m)-1.
\end{align*}
Therefore, it is sufficient to restrict the sums in Definition~\ref{defn:alpha} 
to $m,r \geq 0$ for which
\begin{align*}
\frac{p-1}{p^2}m - \log_p(m) -1 < N'_{\Phi_0}. 
\end{align*}
We can now apply Lemma~\ref{lem:log} with $y=m(p-1)/p^2$, $\alpha=p$ 
and $\beta=N'_{\Phi_0}+3$ to obtain the result.
\end{proof}

Finally, we formalise the procedure for computing the entries of~$\Phi_0$ to 
$p$-adic precision~$N_{\Phi_0}$ in Algorithm~\ref{alg:Diagfrob}.

\begin{algorithm}
\caption{Compute the matrix $\Phi_0$.}
\label{alg:Diagfrob}
\begin{algorithmic}
\vspace{1mm}
\Require $P_0=a_0 x_0^d + \dotsb + a_n x_n^d$ 
         with $a_0,\dotsc,a_n \in \ZZ_p^{\times}$, 
         $p$-adic precision $N_{\Phi_0} \geq 0$.
\Ensure  The matrix $\Phi_0$ for the action of $p^{-1} \Frob_p$ 
         on $\Hrig^n(U_0)$ with respect to basis $\cB$ to $p$-adic 
         precision $N_{\Phi_0}$.
\Procedure{DiagFrob}{$P_0,N_{\Phi_0}$}
\State \begin{compactenum}[{\hspace{1em}} 1.] \vspace{-1.24em}
\item Determine $N'_{\Phi_0}$ from Corollary~\ref{cor:NPhi0prime}. 
\item Let $\Phi_0 \in M_{b \times b}(\QQ_p)$ be the zero matrix.
\item[] \textbf{for} $x^u \in B$ \textbf{do} 
\item[] \begin{compactenum}[{\hspace{1em}} 1.]
        \item Determine the unique $x^v \in B$ such that $v_i = p (u_i + 1) - 1 \bmod{d}$.
        \item Compute $\alpha_{u,v}$ to $p$-adic 
              precision $N'_{\Phi_0}$ using Proposition~\ref{prop:MR}.
        \item Set $(\Phi_0)_{u,v} \gets (-1)^{k(v)} \left( \frac{ (k(v)-1)!}{(k(u)-1)!} \right) p^{n-k(u)} \alpha_{u,v}^{-1} \bmod{p^{N_{\Phi_0}}}$.
      \end{compactenum}   
 \item \textbf{return} $\Phi_0$      
\end{compactenum}
\EndProcedure
\end{algorithmic}
\end{algorithm}

\begin{assump} \label{assump:diag}
From now on we assume that our family of 
hypersurfaces~$\mathcal{X}/\mathcal{S}$ is defined by a 
polynomial $P \in \ZZ_q[t][x_0,\dotsc,x_n]$ for which 
$P(0) \in \ZZ_q[x_0,\dotsc,x_n]$ is of the form 
$P_0=a_0 x_0^d + \dotsb + a_n x_n^d$ with $a_0,\dotsc,a_n \in \ZZ_p^{\times}$, 
so that we can apply Algorithm~\ref{alg:Diagfrob}. 
\end{assump}

\begin{rem}
Note that Assumption~\ref{assump:diag} 
also implies that $\mathcal{S}$ can be chosen to satisfy Assumption~\ref{assump:S} 
and Assumption \ref{assump:R}, since $P_0$ defines a smooth hypersurface and 
we have that $R(0) \in \ZZ_p^{\times}$. 
\end{rem}


\section{Solving the differential equation}
\label{sec:DifferentialSystem}

In this section we explain how to solve the $p$-adic differential 
equation for the horizontal sections of the Gauss--Manin 
connection $\nabla$, in order to obtain a local expansion of the 
matrix for the action of $p^{-1} \Frob_p$ on $\Hrig^{n}(U/S)$.  

All our matrices will be defined with respect to the basis $\cB$. Recall 
that $M \in M_{b \times b}(\QQ_q(t))$ denotes 
the matrix for the Gauss--Manin connection $\nabla$ on 
$\HdR^n(\mathfrak{U}/\mathfrak{S})$ and 
$\Phi \in M_{b \times b} \bigl(\QQ_q \langle t,1/r \rangle^{\dag} \bigr)$ 
the matrix for the $\sigma$-semilinear action of~$p^{-1} \Frob_p$ 
on $\Hrig^{n}(U/S)$, where $\sigma$ is defined as 
in Definition~\ref{defn:sigma}.

As we saw in Section~\ref{sec:Background}, these matrices satisfy 
the differential equation
\begin{align} \label{eq:Phi}
\left(\frac{d}{dt} + M\right) \Phi &= p t^{p-1} \Phi \sigma(M), &\Phi(0)& = \Phi_0, 
\end{align}
where $\Phi_0 \in M_{b \times b}(\QQ_p)$ is the matrix for the action 
of $p^{-1} \Frob_p$ on $\Hrig^n(U_0)$. Our goal is the computation of 
the power series expansion of~$\Phi$ at $t=0$ to $t$-adic precision~$K$ 
and $p$-adic precision $N_{\Phi}$, i.e.\ as an element of 
$M_{b \times b}(\QQ_q[[t]])$ modulo $t^K$ and $p^{N_{\Phi}}$.

We first observe that if 
$C \in M_{b \times b}(\QQ_q[[t]])$ denotes 
the unique solution to the differential equation
\begin{align} \label{eq:01-GMDE-Homogenous}
\Bigl(\frac{d}{dt} + M\Bigr) C &= 0, &C(0)&=I, 
\end{align}
where $I$ denotes the identity matrix, 
then the matrix $\Phi = C \Phi_0 \sigma(C)^{-1}$ satisfies 
Equation~\eqref{eq:Phi}. So it is sufficient to solve 
Equation~\eqref{eq:01-GMDE-Homogenous}. 
We now give a bound on the rate of 
convergence of $C=\sum_{i=0}^{\infty} C_i t^i$ that
follows from recent work of 
Kedlaya~\citep{Kedlaya2010}. 
We let $\delta$ be defined as in Definition~\ref{defn:delta}. 

\begin{thm} \label{thm:valC}
For all $i \geq 1$, we have
\begin{equation*}
\ord_p(C_i) \geq - \left(2 \delta + (n - 1)\right) \ceil{\log_p(i)}.
\end{equation*}
\end{thm}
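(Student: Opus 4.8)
The plan is to derive the bound from the general theory of $p$-adic convergence of solutions to differential equations, as developed by Kedlaya, applied to the cyclic vector (or directly to the matrix) version of the Gauss--Manin connection. The key structural input is that the differential equation~\eqref{eq:01-GMDE-Homogenous} has a Frobenius structure, namely the matrix $\Phi$ from Theorem~\ref{thm:eqphi}, whose entries lie in $\QQ_q \langle t, 1/r \rangle^{\dag}$ and whose $p$-adic valuations are controlled a priori by Corollary~\ref{cor:delta}: we have $\ord_p(\Phi) \geq -\delta$ and $\ord_p(\Phi^{-1}) \geq -\delta-(n-1)$. Writing $C = \Phi \cdot \Phi_0^{-1} \cdot \sigma(C)$ — which rearranges $\Phi = C \Phi_0 \sigma(C)^{-1}$ and uses that $\Phi_0$ is invertible over $\QQ_p$ — we can bootstrap bounds on $C$ from bounds on $\Phi$ together with the self-similarity induced by $\sigma$, since $\sigma(C)(t) = C(t^p)$ has its $i$-th coefficient equal to $C_{i/p}$ when $p \mid i$ and $0$ otherwise. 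This is the mechanism by which a $\ceil{\log_p(i)}$ (rather than linear) dependence on $i$ appears.

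First I would set $e = 2\delta + (n-1)$ and observe that $e$ is (up to the exact constant) the total $p$-adic denominator appearing in $\Phi$ and $\Phi^{-1}$ combined: $\ord_p(\Phi) + \ord_p(\Phi^{-1}) \geq -(2\delta + (n-1)) = -e$, and also $\ord_p(\Phi_0^{-1}) \geq \delta$ hence $\ord_p(\Phi_0) \geq 0$ by Corollary~\ref{cor:delta} applied at $\tau = 0$ (noting $\Phi(0) = \Phi_0$). Next I would argue by strong induction on $i$. For $i = 0$ the bound is vacuous since $C_0 = I$. For the inductive step, extract the $t^i$-coefficient of the identity $C = \Phi \Phi_0^{-1} \sigma(C)$. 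The right-hand side is a product of power series; the coefficient of $t^i$ is a sum of terms $(\Phi \Phi_0^{-1})_a \cdot (\sigma(C))_b$ with $a + b = i$. Crucially $(\sigma(C))_b$ is nonzero only when $p \mid b$, in which case it equals $C_{b/p}$, and $b/p < i$ whenever $b < pi$, i.e.\ always (as $b \leq i$). So every term on the right is $p$-adically bounded below by $\ord_p(\Phi\Phi_0^{-1}) + \ord_p(C_{b/p}) \geq -\delta + \bigl(-e\ceil{\log_p(b/p)}\bigr) = -\delta - e(\ceil{\log_p b} - 1)$ by the induction hypothesis, using $\ceil{\log_p(b/p)} = \ceil{\log_p b} - 1$. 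Since $b \leq i$, this is at least $-\delta - e(\ceil{\log_p i} - 1)$; I would then check that $-\delta - e(\ceil{\log_p i} - 1) \geq -e\ceil{\log_p i}$, which holds iff $e \geq \delta$, and indeed $e = 2\delta + (n-1) \geq \delta$.

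There is a gap in the crude argument above: the term with $b = 0$ contributes $(\Phi\Phi_0^{-1})_i \cdot C_0 = (\Phi\Phi_0^{-1})_i$, and a priori $\ord_p$ of the $i$-th coefficient of $\Phi$ need not be bounded below uniformly in $i$ — $\Phi \in \QQ_q\langle t,1/r\rangle^{\dag}$ only guarantees overconvergence, i.e.\ the valuations of the coefficients grow at worst linearly with a negative slope that can be made arbitrarily small, not that they are bounded below. The main obstacle, therefore, is handling this overconvergent-but-unbounded behaviour correctly, and this is precisely where the cited work of Kedlaya~\citep{Kedlaya2010} enters: one uses the effective bounds on the radius of convergence of solutions of $p$-adic ODEs with Frobenius structure (the Christol--Dwork / Kedlaya transfer theorems), which say that the generic radius of convergence is $1$ and give the stated logarithmic-in-$i$ loss. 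Concretely, I would invoke Kedlaya's bound expressing that a $\nabla$-horizontal section of a module with Frobenius structure of the relevant "height" has $i$-th Taylor coefficient of valuation at least $-(\text{height}) \cdot \ceil{\log_p i}$, and identify the height with $2\delta + (n-1)$ via Corollary~\ref{cor:delta}. The self-contained induction sketched above then serves as the bookkeeping that matches Kedlaya's general estimate to our explicit constants; alternatively one can cite the relevant theorem of~\citep{Kedlaya2010} essentially verbatim once the denominators of $\Phi^{\pm 1}$ are pinned down, which is the route I would take to keep the argument short.
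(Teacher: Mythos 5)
Your ultimate conclusion is the paper's own proof: cite Kedlaya's \citep[Theorem~18.3.3]{Kedlaya2010}, which gives $\ord_p(C_i) \geq \bigl(\ord_p(\Phi) + \ord_p(\Phi^{-1})\bigr)\ceil{\log_p(i)}$ for the fundamental solution matrix of a differential equation carrying a Frobenius structure, and then substitute the bounds $\ord_p(\Phi) \geq -\delta$ and $\ord_p(\Phi^{-1}) \geq -\delta - (n-1)$ from Corollary~\ref{cor:delta}. So the approach is the same as the paper's.

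However, two points in your self-contained elaboration are off and worth noting. First, the gap you identify is not a real gap. Because $\ord_p(\Phi) \geq -\delta$ in the Gauss-norm sense on $\QQ_q\langle t, 1/r\rangle^{\dag}$ and $r(0) \in \ZZ_q^{\times}$ (Assumption~\ref{assump:diag}), each $1/r^j$ expands as a power series with $\ZZ_q$ coefficients, so the Taylor coefficients of $\Phi$ at $t=0$ are \emph{all} of valuation $\geq -\delta$: overconvergence together with the Gauss-norm bound yields a uniform lower bound, not merely slowly decaying ones. Second, your valuation claim for $\Phi_0^{-1}$ is wrong: Corollary~\ref{cor:delta} at $\tau = 0$ gives $\ord_p(\Phi_0^{-1}) \geq -\delta - (n-1)$, not $\geq \delta$, and $\ord_p(\Phi_0) \geq 0$ does not follow. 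Carrying the corrected constants through the recursion $C_i = \sum_{a+pk=i} (\Phi)_a\, C_k\, \Phi_0^{-1}$ gives $\ord_p((\Phi)_a C_k \Phi_0^{-1}) \geq -e + \ord_p(C_k)$ with $e = 2\delta + (n-1)$, and the induction then yields only $\ord_p(C_i) \geq -e(1 + \floor{\log_p i})$. This agrees with $-e\ceil{\log_p i}$ when $i$ is not a power of $p$ but is weaker by one factor of $e$ when $i$ is a power of $p$ --- already the base case $i = 1$ gives $\geq -e$ rather than the theorem's $\geq 0$. Recovering the sharp constant requires the finer argument packaged in Kedlaya's transfer theorem, so citing it directly, as the paper does and as you end up recommending, is indeed the right move.
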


\begin{proof}
It follows from~\citep[Theorem~{18.3.3}]{Kedlaya2010} that
\begin{equation*}
\ord_p(C_i) \geq \left( \ord_p(\Phi) + \ord_p(\Phi^{-1}) \right) \ceil{\log_p(i)},
\end{equation*}
but from Corollary~\ref{cor:delta} we already know that $\ord_p(\Phi) \geq -\delta$ and 
$\ord_p(\Phi^{-1}) \geq -\delta-(n-1)$.
\end{proof}

\begin{rem}
In~\citep[Remark~18.3.4]{Kedlaya2010} Kedlaya also includes the bound
\begin{equation*}
\ord_p(C_i) \geq (b - 1) \ord_p(M) 
            + \left( \ord_p(\Phi) + \ord_p(\Phi^{-1}) \right) \floor{\log_p(i)},
\end{equation*}
which can sometimes be used to improve Theorem \ref{thm:valC} 
slightly, for example when $\ord_p(M)$ is nonnegative.
\end{rem}

\begin{rem} \label{rem:Cinv}
The bound from Theorem~\ref{thm:valC} 
also applies to the inverse matrix~$C^{-1}$, as this matrix satisfies 
the dual differential equation 
\begin{align} \label{eq:01-GMDE-Dual}
\left(\frac{d}{dt} - M^t\right) \left(C^{-1}\right)^t &= 0, &C^{-1}(0)& = I,
\end{align}
that carries a Frobenius structure 
given by the matrix~$(\Phi^{-1})^t$.
\end{rem}

We only know the matrix $M$ to some finite $p$-adic precision 
$N_M$, and we need to compute $C$ to some finite $t$-adic and 
$p$-adic precisions $K,N_C$, respectively. The following result gives an 
expression for $N_M$ in terms of $K,N_C$. For a matrix 
$A=\sum_{i=0}^{\infty} A_i t^i$ we write 
$\overline{A}=\sum_{i=0}^{K-1} A_i t^i$ 
in what follows. 

\begin{prop} \label{prop:N_M}
Let $K,N_C \in \NN$ and define
\[
N_M= N_C + (2 \delta + n) \ceil{\log_p(K-1)}+1.
\]
Let $\tilde{M} \in M_{b \times b}(\QQ_q(t))$ be
an approximation of $M$ to $p$-adic precision $N_M$, i.e.
such that $\ord_p(\tilde{M}-M) \geq N_M$, and suppose that 
$\tilde{C}=\sum_{i=0}^{\infty} \tilde{C}_i t^i$ satisfies the
differential equation
\begin{align*}
\left(\frac{d}{dt}+\tilde{M} \right) \tilde{C}&=0, &\tilde{C}(0)=I&.
\end{align*}
Then $\ord_p(\tilde{C}_i-C_i) \geq N_C$ for all $i < K$.
\end{prop}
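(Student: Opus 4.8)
The plan is to compare the recursions satisfied by the exact solution $C$ and the approximate solution $\tilde{C}$, and to control the accumulated error by induction on the coefficient index, feeding in the convergence bound of Theorem~\ref{thm:valC} to absorb the denominators that appear. First I would write out the coefficient recursion for \eqref{eq:01-GMDE-Homogenous}: writing $M = \sum_j M_j t^j$ (after clearing denominators, or working formally over $\QQ_q[[t]]$ once we note $r(0)$ is a unit so $M$ is analytic at $0$), the equation $(\tfrac{d}{dt} + M)C = 0$ gives $(i+1) C_{i+1} = -\sum_{j=0}^{i} M_j C_{i-j}$, and similarly $(i+1)\tilde{C}_{i+1} = -\sum_{j=0}^{i} \tilde{M}_j \tilde{C}_{i-j}$. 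Subtracting, with $E_i := \tilde{C}_i - C_i$ and $F_j := \tilde{M}_j - M_j$, yields
\begin{align*}
(i+1) E_{i+1} = -\sum_{j=0}^{i} M_j E_{i-j} - \sum_{j=0}^{i} F_j \tilde{C}_{i-j}.
\end{align*}

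Next I would set up the induction. We have $E_0 = 0$. Assuming $\ord_p(E_k) \geq N_C$ for all $k \leq i$, we must bound $\ord_p(E_{i+1})$. The first sum on the right has $\ord_p \geq \ord_p(M) + N_C \geq N_C$ provided $\ord_p(M) \geq 0$; in general one has to be a little more careful, but since $M$ is the Gauss--Manin connection matrix with entries in $\ZZ_q[t][1/r]$ and $r(0)$ a unit, $\ord_p(M) \geq 0$, so this term is harmless. The second sum is where the hypothesis on $N_M$ enters: $\ord_p(F_j) \geq N_M$ by assumption, and $\ord_p(\tilde{C}_{i-j}) \geq \ord_p(C_{i-j}) \geq -(2\delta + (n-1))\ceil{\log_p(i-j)} \geq -(2\delta + (n-1))\ceil{\log_p(K-1)}$ for $i < K$, using Theorem~\ref{thm:valC} together with the fact (obtained by the same induction, run first) that $\tilde{C}$ satisfies the same lower bound as $C$ because $\tilde{M}$ and $M$ agree to high precision. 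Hence the second sum has $\ord_p \geq N_M - (2\delta + (n-1))\ceil{\log_p(K-1)}$, and the whole right-hand side has $\ord_p$ at least $\min\{N_C,\ N_M - (2\delta+(n-1))\ceil{\log_p(K-1)}\}$.

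Finally, dividing by $i+1$ costs at most $\ord_p(i+1) \leq \ord_p((K-1)!)$ in the worst case, but the clean statement wants only the $+1$ slack; the point is that $\ord_p(i+1) \le \log_p(i+1) \le \log_p(K-1) \leq \ceil{\log_p(K-1)}$, and the definition $N_M = N_C + (2\delta + n)\ceil{\log_p(K-1)} + 1$ is engineered so that $N_M - (2\delta + (n-1))\ceil{\log_p(K-1)} - \ceil{\log_p(K-1)} - (\text{the }+1\text{ absorbing }\ord_p(i+1)\text{ when }i+1 \text{ is a prime power exceeding }\log_p) \geq N_C$. More precisely, $\ord_p(E_{i+1}) \geq N_M - (2\delta+(n-1))\ceil{\log_p(K-1)} - \ord_p(i+1) \geq N_C + \ceil{\log_p(K-1)} + 1 - \ord_p(i+1) \geq N_C$, since $\ord_p(i+1) \leq \log_p(i+1) < \ceil{\log_p(K-1)} + 1$ for $i + 1 \leq K - 1$. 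This closes the induction.

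The main obstacle I anticipate is bookkeeping the division by $i+1$ correctly: one needs that $\ord_p(i+1)$ never exceeds the slack $\ceil{\log_p(K-1)} + 1$ built into $N_M$, which is true since $\ord_p(m) \leq \log_p(m)$ for any positive integer $m$, but it requires being careful that the error bound is applied uniformly over all $i < K$ rather than index-by-index, and that the auxiliary claim ``$\tilde{C}$ satisfies the Theorem~\ref{thm:valC} bound'' is justified before it is used (this is itself a short induction, or can be quoted from the fact that $\tilde{C}$ is the solution of a connection whose matrix $\tilde{M}$ is congruent to $M$ and hence still has $\ord_p \geq 0$, so Theorem~\ref{thm:valC} applies verbatim to $\tilde{M}$). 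Everything else is routine $p$-adic estimation.
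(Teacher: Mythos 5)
Your induction does not close, and the place where it fails is precisely the step you flag as "bookkeeping." After subtracting the two recursions you correctly get $(i+1)E_{i+1} = -\sum_j M_j E_{i-j} - \sum_j F_j \tilde{C}_{i-j}$, and you correctly observe that the right-hand side has valuation at least $\min\{N_C,\; N_M - (2\delta+(n-1))\ceil{\log_p(K-1)}\} = N_C$: the extra slack $\ceil{\log_p(K-1)}+1$ built into $N_M$ lives only in the \emph{second} (inhomogeneous) sum, while the first (homogeneous) sum carries only the inductive bound $N_C$ with no slack at all. Dividing by $i+1$ therefore yields only $\ord_p(E_{i+1}) \geq N_C - \ord_p(i+1)$, which is strictly less than $N_C$ whenever $p \mid i+1$; your "more precisely" line silently replaces the minimum by its larger branch. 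Nor can the invariant be repaired by building a decreasing slack $s_k$ into the hypothesis: closing the induction would require $s_i - s_{i+1} \geq \ord_p(i+1)$ at every step, so the total slack needed is $\ord_p((K-1)!) \approx K/(p-1)$, vastly exceeding the $\ceil{\log_p(K-1)}+1$ available. This is the same reason the naive recursion only proves $\ord_p(C_i) \geq -\ord_p(i!)$ rather than the logarithmic bound of Theorem~\ref{thm:valC}: the good bounds come from the Frobenius structure, not from term-by-term estimation of the recursion.

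The paper avoids this by solving for the error in closed form, $\tilde{C}-C = C\cdot\bigl(\exp\bigl(\int_0^t(\tilde{M}-M)\,ds\bigr)-I\bigr)$ (the rigorous version is the variation-of-constants identity used in Proposition~\ref{thm:errorprop}, $\tilde{C}-C = C\int_0^t C^{-1}(s)\,(M-\tilde{M})(s)\,\tilde{C}(s)\,ds$). There the homogeneous propagation of errors is a single multiplication by the truncation of $C$ (and $C^{-1}$), whose valuation is bounded by $-(2\delta+n-1)\ceil{\log_p(K-1)}$ via Theorem~\ref{thm:valC}, so the loss is paid once rather than accumulated over $K$ steps; this is exactly what the definition of $N_M$ is calibrated against. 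A secondary point: your assertion that $\ord_p(M)\geq 0$ is not justified — the paper only gives $M = H/R$ with $H \in M_{b\times b}(\QQ_q[t])$, and the reduction algorithm divides by $2,\dotsc,n$, so a priori $\ord_p(M) \geq -\ord_p(n!)$; the remark following Theorem~\ref{thm:valC} explicitly treats $\ord_p(M)\geq 0$ as a special case. This alone would not sink a correct argument, but in yours it removes even the weak $N_C$ bound on the homogeneous term.
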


\begin{proof}
From the expressions
\begin{align*}
C(t)&=\exp\left(- \int_{0}^t M(s) ds\right), &
\tilde{C}(t)&=\exp\left(-\int_{0}^t \tilde{M}(s) ds\right),
\end{align*}
it follows that
\begin{equation} \label{eq:CtildeminusC}
\tilde{C}(t) - C(t) = C(t) \left( \exp \left( \int_{0}^t \left( \tilde{M}(s)-M(s) \right) ds \right)- I \right).
\end{equation}
Since $\ord_p(\tilde{M}-M) \geq N_M$, we obtain
\begin{equation*}
\ord_p \left( \overline{\frac{1}{i!} \left(\int_{0}^t \left( \tilde{M}(s)-M(s) \right) ds \right)^i } \right) \geq 
N_C + (2 \delta + n-1) \ceil{\log_p(K-1)}
\end{equation*}
for all $1 \leq i \leq K-1$, where we have used that 
$\ord_p(i!) \leq i/(p-1) \leq 1$. Moreover, from 
Theorem~\ref{thm:valC}, we already know that 
\[
\ord_p(\overline{C}) \geq -(2 \delta + n-1) \ceil{\log_p(K-1)}.
\] 
From these two inequalities and Equation~\eqref{eq:CtildeminusC}, we 
deduce $\ord_p(\overline{\tilde{C}-C}) \geq N_C$. 
\end{proof}

Now we explain how to compute the solution $C$ to 
Equation~\eqref{eq:01-GMDE-Homogenous} to $p$-adic precision~$N_C$ and 
$t$-adic precision~$K$, assuming that the connection matrix~$M$ has been 
computed to $p$-adic precision~$N_M$ as defined in Proposition~\ref{prop:N_M}. 
We can write $M = G/r$, with 
$G = \sum_{i=0}^{\deg(G)} G_i t^i \in M_{b \times b}(\QQ_q[t])$ 
and $r = \sum_{i=0}^{\deg(r)} r_i t^i \in \ZZ_q[t]$ a divisor of the 
polynomial~$R$ defined in Definition~\ref{defn:resultant}. 
Note that the degree of~$r$ might be smaller than the degree 
of~$R$, which will speed up our computations. By Assumption~\ref{assump:diag}, 
we have $r(0) \neq 0 \bmod{p}$, so in particular $r(0) \neq 0$.  

We can obtain a power series solution $C = \sum_{i=0}^{\infty} C_i t^i$ to
the equation
\begin{align*}
r \frac{dC}{dt} + G C &= 0, &C(0)& = I,
\end{align*}
which is clearly equivalent to 
Equation~\eqref{eq:01-GMDE-Homogenous}, using the following recursion:  
\begin{align} \label{eq:recursiondifeq}
C_0 &= I, \nonumber \\
C_{i+1} &= \frac{-1}{r_0 (i+1)} \left(
    \sum_{j=\max{\{0,i-\deg(G)\}}}^i G_{i-j} C_j + 
    \sum_{j=\max{\{0,i-\deg(r)\}}+1}^i r_{i-j+1} (j C_j) \right).
\end{align}
Again we will only carry out this computation to some finite $p$-adic 
working precision~$N'_C$, and if we want $C$ to be correct to $p$-adic 
precision~$N_C$, then the precision~$N'_C$ has to be somewhat higher 
because of error propagation. An expression for~$N'_C$, in terms of~$N_C$ 
and the desired $t$-adic precision~$K$, was given by 
Lauder~\citep[Theorem~5.1]{Lauder2006}, but his result can be
significantly improved using Theorem~\ref{thm:valC}, as we will now show. 

Let $\tilde{C}=\sum_{i=0}^{\infty} \tilde{C}_i t^i$ denote an 
approximation to $C$ obtained using the approximate recursion:  
\begin{align*}
\tilde{C}_0 &= I, \\
\tilde{C}_{i+1} &= \frac{-1}{r_0 (i+1)} \left(
    \sum_{j=\max{\{0,i-\deg({G})\}}}^i {G}_{i-j} \tilde{C}_j + 
    \sum_{j=\max{\{0,i-\deg({r})\}}+1}^i {r}_{i-j+1} (j \tilde{C}_j) \right) + {E}_{i+1},
\end{align*}
where $\ord_p({E}_i) \geq N'_{C}$ for all $i \geq 1$, 
so that the matrices~$\tilde{C}_i$ are computed with $p$-adic working 
precision~$N'_C$.

\begin{prop} \label{thm:errorprop}
Let $K,N_{C} \in \NN$, and suppose that
\begin{align*}
N'_C          &= N_{C} + \left(2 \left(2 \delta + (n-1)\right) + 1\right) \ceil{\log_p(K-1)}.
\end{align*} 
Then $\ord_p(\tilde{C}_i-C_i) \geq N_{C}$ for all $i < K$.
\end{prop}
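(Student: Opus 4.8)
The plan is to control the error matrix $\Delta = \tilde{C} - C$ via a variation-of-parameters formula and then estimate each factor using Theorem~\ref{thm:valC}. Write $\Delta = \sum_{i \geq 0} \Delta_i t^i$ with $\Delta_i = \tilde{C}_i - C_i$, and set $\mathcal{E} = \sum_{i \geq 1} E_i t^i$. The exact recursion~\eqref{eq:recursiondifeq} says precisely that the coefficient of $t^i$ in $r\,\tfrac{dC}{dt} + GC$ vanishes, while the approximate recursion says that the coefficient of $t^i$ in $r\,\tfrac{d\tilde{C}}{dt} + G\tilde{C}$ equals $r_0(i+1)E_{i+1}$, i.e.\ the coefficient of $t^i$ in $r_0\,\tfrac{d\mathcal{E}}{dt}$. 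Subtracting, $\Delta$ satisfies
\begin{equation*}
r\,\frac{d\Delta}{dt} + G\Delta = r_0\,\frac{d\mathcal{E}}{dt}, \qquad \Delta(0) = 0,
\end{equation*}
equivalently $\tfrac{d\Delta}{dt} + M\Delta = \tfrac{r_0}{r}\,\mathcal{E}'$ with $\Delta(0) = 0$, since $M = G/r$.

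First I would solve this inhomogeneous equation by variation of parameters. Since $C$ solves the homogeneous equation~\eqref{eq:01-GMDE-Homogenous} with $C(0) = I$, one has $\tfrac{d}{dt}\bigl(C^{-1}\Delta\bigr) = C^{-1}\bigl(\tfrac{r_0}{r}\mathcal{E}'\bigr)$, and therefore
\begin{equation*}
\Delta = C \int_0^t C(s)^{-1}\,\frac{r_0}{r(s)}\,\mathcal{E}'(s)\,ds.
\end{equation*}
Everything can be carried out modulo $t^K$; writing $\overline{A}$ for the truncation as in the text, it suffices to bound $\ord_p(\overline{\Delta})$ from below, and here I would use that truncation never decreases $\ord_p$ and that $\ord_p(\overline{AB}) \geq \ord_p(\overline{A}) + \ord_p(\overline{B})$, since the $t^i$-coefficient of $AB$ for $i < K$ only involves coefficients of $A$ and $B$ of index $< K$.

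Then I would assemble four estimates. By Theorem~\ref{thm:valC} and Remark~\ref{rem:Cinv}, together with $C_0 = (C^{-1})_0 = I$ and $\ceil{\log_p(i)} \leq \ceil{\log_p(K-1)}$ for $i < K$, we get $\ord_p(\overline{C}) \geq -(2\delta + (n-1))\ceil{\log_p(K-1)}$ and the same bound for $\overline{C^{-1}}$. Next, $r \in \ZZ_q[t]$ with $r_0 = r(0) \in \ZZ_q^{\times}$ (as $r \mid R$ and $R(0) \in \ZZ_q^{\times}$ by Assumption~\ref{assump:diag}), so $\ord_p(r_0/r) \geq 0$. From $\ord_p(E_i) \geq N'_C$ we get $\ord_p(\overline{\mathcal{E}'}) \geq N'_C$, and finally antidifferentiation costs at most one factor of $\ceil{\log_p(K-1)}$: the $t^j$-coefficient of $\int_0^t H\,ds$ is $h_{j-1}/j$, and $\ord_p(j) \leq \floor{\log_p(K-1)} \leq \ceil{\log_p(K-1)}$ for $1 \leq j < K$. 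Multiplying the factors of $\Delta$ together yields $\ord_p(\overline{\Delta}) \geq N'_C - \bigl(2(2\delta + (n-1)) + 1\bigr)\ceil{\log_p(K-1)}$, so the prescribed value of $N'_C$ forces $\ord_p(\overline{\Delta}) \geq N_C$, i.e.\ $\ord_p(\tilde{C}_i - C_i) \geq N_C$ for all $i < K$.

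The argument is essentially routine once these pieces are in place; the only real care is needed in deriving the inhomogeneous equation with the correct source term $r_0\mathcal{E}'$ (not $\mathcal{E}$ nor $\mathcal{E}'$ on its own) and in the truncation bookkeeping, so that exactly three factors of $\ceil{\log_p(K-1)}$ appear --- two from applying Theorem~\ref{thm:valC} to $C$ and to $C^{-1}$, and one from the integral. This parallels the proof of Proposition~\ref{prop:N_M}, now with Kedlaya's convergence bound used on both sides of the connection.
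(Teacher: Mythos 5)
Your proof follows essentially the same route as the paper's: derive the inhomogeneous equation for $\Delta = \tilde{C} - C$, solve it by variation of parameters via the fundamental matrix $C$, and then multiply the coefficient-wise valuation bounds on $\overline{C}$, $\overline{C^{-1}}$, the error term, and the antiderivative to land on exactly three factors of $\ceil{\log_p(K-1)}$. If anything, you are slightly more careful than the printed argument in pinning down the source term: the recursion gives $r\,\tfrac{d\Delta}{dt} + G\Delta = r_0\,\mathcal{E}'$, i.e.\ $\tfrac{d\Delta}{dt} + M\Delta = \tfrac{r_0}{r}\mathcal{E}'$, whereas the paper writes the right-hand side as $E = \sum_{i\geq1}E_it^i$ itself; this is harmless for the estimate since both $\mathcal{E}$ and $\tfrac{r_0}{r}\mathcal{E}'$ have all coefficients of $\ord_p \geq N'_C$ (using $\ord_p(r_0/r)\geq 0$ and $\ord_p((j{+}1)E_{j+1})\geq N'_C$), but your version is the literally correct equation.
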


\begin{proof}
The matrix $\tilde{C}$ satisfies the differential equation
\begin{align*}
\frac{d\tilde{C}}{dt}+M \tilde{C}&=E, &\tilde{C}(0)&=I,
\end{align*}
where we have denoted $E=\sum_{i=1}^{\infty} E_i t^i$.
One also checks that the matrix $C^{-1}\tilde{C}$ satisfies 
the differential equation
\begin{align*}
\frac{d(C^{-1}\tilde{C})}{dt} &=C^{-1} E, &(C^{-1}\tilde{C})(0)&=I,
\end{align*}
from which it follows that
\begin{equation} \label{eq:integral}
\tilde{C}(t)-C(t) = C(C^{-1} \tilde{C}-I) = C(t) \left(\int_{0}^t C^{-1}(s) E(s) ds \right).
\end{equation}
We know from Theorem~\ref{thm:valC} and Remark~\ref{rem:Cinv} that
\begin{equation} \label{eq:boundCCinv}
\ord_p(\overline{C}),\ord_p(\overline{C^{-1}}) \geq 
-(2 \delta + n-1) \ceil{\log_p(K-1)},
\end{equation}
and we hence obtain
\[
\ord_p \left(\overline{\int_{0}^t C^{-1}(s) E(s) ds }\right) \geq 
N_{C} + \left( \bigl(2 \delta + (n-1)\bigr) \right) \ceil{\log_p(K-1)}.
\]
From the bounds~\eqref{eq:boundCCinv} and Equation~\eqref{eq:integral}, 
we deduce that $\ord_p\left(\overline{\tilde{C}-C}\right) \geq N_C$.
\end{proof}

\begin{rem}
A result similar to Proposition~\ref{thm:errorprop} with a larger constant in front of 
the logarithm was obtained by Lauder in \citep[Theorem 5.1]{Lauder2006}. We have
not been able to find something similar to Proposition~\ref{prop:N_M} in Lauder's work.
\end{rem}

\begin{rem} \label{rem:sigmatrick}
In order to determine the power series expansion of the matrix~$\Phi$, 
we also need to compute the matrix $\sigma(C)^{-1}$. We could compute 
the matrix~$C^{-1}$ using matrix inversion over the ring $\mathbf{Q}_q[[t]]$. 
However, solving~\eqref{eq:01-GMDE-Dual} turns
out to be more efficient. 
\end{rem}

We finally give all the precisions necessary for computing the power series 
expansion of $\Phi$ at $t=0$ to $t$-adic precision $K$ and $p$-adic 
precision $N_{\Phi}$.

\begin{thm} \label{thm:Ni}
Let $K,N_{\Phi} \in \NN$ and define:
\begin{eqnarray*}
N_{\Phi_0}   		&=& N_{\Phi}+\left(2\delta+(n-1)\right) \left( \ceil{\log_p(K-1)} + \ceil{\log_p(\ceil{K/p}-1)}\right),\\
N_{C}				&=& N_{\Phi}+\left(2\delta+(n-1)\right) \ceil{ \log_p(\ceil{K/p}-1)} + \delta, \\
N_{C^{-1}}			&=& N_{\Phi}+\left(2\delta+(n-1)\right) \ceil{\log_p(K-1)} + \delta, \\
N_M                 &=& N_{\Phi}+\left(2\left(2 \delta + (n-1)\right) + 1\right) \ceil{\log_p(K-1)}+1, \\
N'_C			    &=& N_{\Phi}+\left(2 \left(2 \delta + (n-1)\right) + 1\right) \ceil{\log_p(K-1)}, \\
N'_{C^{-1}}	        &=& N_{\Phi}+\left(2 \left(2 \delta + (n-1)\right) + 1\right) \ceil{\log_p(\ceil{K/p}-1)}.
\end{eqnarray*}

In order to compute the power series expansion of the matrix~$\Phi$ at $t=0$ 
with $t$-adic precision~$K$ and $p$-adic precision~$N_{\Phi}$, it is 
sufficient to compute the matrix~$\Phi_0$ to $p$-adic precision~$N_{\Phi_0}$,
the matrix~$C$ to $t$-adic precision~$K$ and $p$-adic precision~$N_{C}$, 
the matrix~$C^{-1}$ to $t$-adic precision $\ceil{K/p}$ and $p$-adic 
precision~$N_{C^{-1}}$, and the matrix~$M$ to $p$-adic precision~$N_M$.

Therefore, while solving Equation~\eqref{eq:01-GMDE-Homogenous} for $C$ and 
Equation~\eqref{eq:01-GMDE-Dual} for $C^{-1}$, using a recursion like in 
Equation~\eqref{eq:recursiondifeq}, it is sufficient to use $p$-adic 
working precisions $N'_C$ and $N'_{C^{-1}}$, respectively.
\end{thm}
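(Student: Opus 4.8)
The plan is to exploit the factorisation $\Phi = C\,\Phi_0\,\sigma(C)^{-1}$ noted after Equation~\eqref{eq:01-GMDE-Homogenous} and to treat the precision question as one about a triple matrix product, to which Proposition~\ref{prop:matrixproductval} applies coefficient by coefficient. First I would collect a priori lower bounds on the $p$-adic valuations of the three factors over the range of $t$-adic coefficients that actually enters a computation to $t$-adic precision~$K$. By Corollary~\ref{cor:delta} the constant matrix $\Phi_0$ has $\ord_p(\Phi_0)\geq-\delta$. By Theorem~\ref{thm:valC}, together with $C_0=I$ and the monotonicity of $\ceil{\log_p(\cdot)}$, the truncation $\overline{C}=\sum_{i<K}C_it^i$ satisfies $\ord_p(\overline{C})\geq-(2\delta+(n-1))\ceil{\log_p(K-1)}$, and by Remark~\ref{rem:Cinv} the matrix $C^{-1}$ obeys the same bound. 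The point about the Frobenius twist is that $\sigma$ is an isometry for $\ord_p$ and scales $t$-adic degrees by~$p$, so the coefficients of $\sigma(C)^{-1}=\sigma(C^{-1})$ of degree $<K$ are precisely the coefficients of $C^{-1}$ of degree $<\ceil{K/p}$; hence $\ord_p\bigl(\overline{\sigma(C)^{-1}}\bigr)\geq-(2\delta+(n-1))\ceil{\log_p(\ceil{K/p}-1)}$, and it is enough to know $C^{-1}$ to $t$-adic precision $\ceil{K/p}$ only.

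Writing $v_1,v_2,v_3$ for these three lower bounds (for $C$, $\Phi_0$, $\sigma(C)^{-1}$ respectively), Proposition~\ref{prop:matrixproductval} shows that $\Phi$ is recovered to $p$-adic precision $N_\Phi$ as soon as the three factors are known to precisions $N_\Phi-v_2-v_3$, $N_\Phi-v_1-v_3$ and $N_\Phi-v_1-v_2$; substituting the explicit values of the $v_i$ gives exactly the quantities $N_C$, $N_{\Phi_0}$ and $N_{C^{-1}}$ of the statement, the hypothesis $N_\Phi\geq v_1+v_2+v_3$ being vacuous since the right-hand side is $\leq0$. Feeding $N_{\Phi_0}$ into Corollary~\ref{cor:NPhi0prime} and Algorithm~\ref{alg:Diagfrob} then settles the diagonal fibre.

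It remains to say to what $p$-adic precision the connection matrix $M$ must be known and with what working precision the recursion~\eqref{eq:recursiondifeq}, and its analogue for $C^{-1}$, must be run. The total error in the computed $C$ splits ultrametrically into the error caused by approximating $M$, controlled by Proposition~\ref{prop:N_M}, and the rounding error of the recursion, controlled by Proposition~\ref{thm:errorprop}; likewise for $C^{-1}$, invoking Remark~\ref{rem:Cinv} so that the dual equation carries the same valuation bounds, but now with $t$-adic precision $\ceil{K/p}$ and target $N_{C^{-1}}$. Applying Proposition~\ref{prop:N_M} to $C$ and to $C^{-1}$ produces two lower bounds on the precision required of $M$, and one checks that the stated $N_M$ dominates both, using monotonicity of $\ceil{\log_p(\cdot)}$, an elementary comparison between $\ceil{\log_p(K-1)}$ and $\ceil{\log_p(\ceil{K/p}-1)}$, and the crude estimate $\delta\leq 2\delta+(n-1)\leq(2\delta+(n-1))\ceil{\log_p(K-1)}$. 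Applying Proposition~\ref{thm:errorprop} to $C$ (with $t$-adic precision $K$) and to $C^{-1}$ (with $t$-adic precision $\ceil{K/p}$) then yields the working precisions $N'_C$ and $N'_{C^{-1}}$.

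The genuinely delicate part is this last bookkeeping: one must keep the two logarithmic scales $\ceil{\log_p(K-1)}$ and $\ceil{\log_p(\ceil{K/p}-1)}$ apart wherever the $\sigma$-twist forces it — in $N_C$, $N_{C^{-1}}$ and $N'_{C^{-1}}$ — while collapsing them, via the comparison above and $\delta\leq 2\delta+(n-1)$, so as to fit everything under a single uniform formula for $N_M$; and one must ensure that, for each of $C$ and $C^{-1}$, the two independent error contributions (from the approximation of $M$ and from rounding in the recursion) are each held below the relevant target, so that their ultrametric sum is as well. Everything else — Propositions~\ref{prop:matrixproductval}, \ref{prop:N_M} and~\ref{thm:errorprop}, Corollary~\ref{cor:delta} and Theorem~\ref{thm:valC} — is applied essentially verbatim.
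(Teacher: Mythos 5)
Your proposal takes exactly the same route as the paper's own proof: decompose $\Phi=C\,\Phi_0\,\sigma(C)^{-1}$, read off the a priori lower bounds $\ord_p(\Phi_0)\geq-\delta$, $\ord_p(\overline{C})\geq-(2\delta+(n-1))\lceil\log_p(K-1)\rceil$ and the corresponding bound for $C^{-1}$ from Corollary~\ref{cor:delta}, Theorem~\ref{thm:valC} and Remark~\ref{rem:Cinv}, feed these into Proposition~\ref{prop:matrixproductval} to get $N_{\Phi_0},N_C,N_{C^{-1}}$, then cite Proposition~\ref{prop:N_M} for $N_M$ and Proposition~\ref{thm:errorprop} for $N'_C,N'_{C^{-1}}$. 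The one place where you supply meaningfully more detail than the paper is the verification that the single stated $N_M$ dominates the two separate demands coming from Proposition~\ref{prop:N_M} applied once with target $(K,N_C)$ and once with target $(\lceil K/p\rceil,N_{C^{-1}})$; the paper passes over this in silence, and your explicit use of the comparison $\lceil\log_p(\lceil K/p\rceil-1)\rceil\leq\lceil\log_p(K-1)\rceil-1$ and the crude bound $\delta\leq(2\delta+(n-1))\lceil\log_p(K-1)\rceil$ is exactly the bookkeeping needed. One small caveat, inherited from the paper rather than introduced by you: a literal substitution of the theorem's $N_C$ into Proposition~\ref{thm:errorprop} gives $N_C+(2(2\delta+(n-1))+1)\lceil\log_p(K-1)\rceil$, which exceeds the stated $N'_C$ by $N_C-N_\Phi=(2\delta+(n-1))\lceil\log_p(\lceil K/p\rceil-1)\rceil+\delta$; so saying that Proposition~\ref{thm:errorprop} ``yields'' the displayed $N'_C,N'_{C^{-1}}$ glosses over this (harmless when $\delta=0$ and $K\leq p$, otherwise requiring either an increase in $N'_C,N'_{C^{-1}}$ or a sharper propagation argument than the black-box use of Propositions~\ref{prop:matrixproductval} and~\ref{thm:errorprop}).
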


\begin{proof}
Recall that $\Phi = C \Phi_0 \sigma(C)^{-1}$. The sufficient $t$-adic 
precisions are clear. We can apply Proposition~\ref{prop:matrixproductval}, 
using Theorem~\ref{thm:valC} for both $C$ and $C^{-1}$ and the fact that 
$\ord_p(\Phi_0) \geq -\delta$ from Corollary~\ref{cor:delta}, to obtain 
the sufficient $p$-adic precisions for the matrices $\Phi_0$, $C$ and $C^{-1}$. 
The sufficient precision for the matrix $M$ follows from Proposition~\ref{prop:N_M},
and the sufficient working precisions $N'_C$ and $N'_{C^{-1}}$
follow from Proposition~\ref{thm:errorprop}.
\end{proof}

Now we have all the ingredients to compute the power series expansion of 
$\Phi$ at $t=0$ to any given $p$-adic and $t$-adic precisions, as formalised 
in Algorithm~\ref{alg:expansion}.

\begin{algorithm}
\caption{Compute the power series expansion of $\Phi$ at $t=0$.}
\label{alg:expansion}
\begin{algorithmic}
\vspace{1mm}
\Require $P \in \ZZ_q[t][x_0,\dotsc,x_n]$ satisfying Assumption~\ref{assump:diag}, $t$-adic precision~$K$, $p$-adic precision~$N_{\Phi}$.
\Ensure  The power series expansion of $\Phi$ at $t=0$ to $t$-adic precision $K$ and $p$-adic precision $N_{\Phi}$.
\Procedure{FrobSeriesExpansion}{$N_{\Phi},K$} 
\State \begin{compactenum}[{\hspace{6pt}} 1.] \vspace{-1.24em}
\item Determine $N_M,N_{\Phi_0},N_C,N_{C^{-1}},N'_C$, and $N'_{C^{-1}}$ from Theorem~\ref{thm:Ni}.
\item $M \gets \textsc{GMConnection}(P,N_M)$
\item $\Phi_0 \gets \textsc{DiagFrob}(P_0,N_{\Phi_0})$
\item Solve Equation~\eqref{eq:01-GMDE-Homogenous} for $C$ to $t$-adic precision $K$ and $p$-adic precision $N_{C}$:
\begin{compactenum}[a.] 
\item[] $C_0 \gets I$
\item[] \textbf{for} $i=0$ \textbf{to} $K-2$ \textbf{do} 
\item[] \hspace{0.6em} $C_{i+1} \gets \frac{-1}{r_0 (i+1)} \Bigl(\sum_j G_{i-j} C_j + \sum_j r_{i-j+1} (j C_j) \Bigr) \bmod{p^{N'_C}}$
\item[] $C \gets \sum_{i=0}^{K-1} C_i t^i \bmod{p^{N_C}}$
\end{compactenum}
\item Solve Equation~\eqref{eq:01-GMDE-Dual} for $C^{-1}$ to $t$-adic precision $\ceil{K/p}$ and $p$-adic precision $N_{C^{-1}}$:
\begin{compactenum}[a.]
\item[] $(C^{-1})_0 \gets I$
\item[] \textbf{for} $i=0$ \textbf{to} $\ceil{K/p}-2$ \textbf{do}
\item[] \hspace{0.6em} $(C^{-1})_{i+1} \gets  \frac{-1}{r_0(i+1)} \Bigl(\sum_j -G_{i-j}^t (C^{-1})_j + \sum_j r_{i-j+1} (j (C^{-1})_j) \Bigr) \bmod{p^{N'_{C^{-1}}}}$
\item[] $C^{-1} \gets \sum_{i=0}^{\ceil{K/p}-1} (C^{-1})_i t^{i} \bmod{p^{N_{C^{-1}}}}$
\end{compactenum}
\item $\Phi \gets C \Phi_0 \sigma(C^{-1})$
\item \Return $\Phi$
\end{compactenum}
\EndProcedure
\end{algorithmic}
\end{algorithm}


\section{The zeta function of a fibre}

\label{sec:ZetaFunctions}

\subsection{Evaluating $\Phi$ at a point}

In the previous section we described how to compute the power series 
expansion at $t=0$ of the matrix~$\Phi$ for the action of $p^{-1} \Frob_p$ on 
$\Hrig^{n}(U/S)$. We now want to evaluate $\Phi$ at the Teichm\"uller 
lift~$\hat{\tau}$ of some nonzero $\tau \in S(\FF_{\mathfrak{q}})$, 
where $\FF_{\mathfrak{q}}/\FF_q$ is a finite field extension, but 
the power series expansion of $\Phi$ 
at $t=0$ usually only converges on the open unit disc $\abs{t} < 1$ 
and hence cannot be evaluated at $\hat{\tau}$. 

However, since $\Phi$ is a matrix of overconvergent functions, i.e.\ 
\[
\Phi \in M_{b \times b}\left(\QQ_q \left\langle t,1/r \right\rangle^{\dag}\right),
\]
it can be approximated to any given $p$-adic precision $N$ by a matrix 
of rational functions, which can then be evaluated at $\hat{\tau}$. To 
convert the power series expansion for $\Phi$ to such an approximation 
by rational functions, we need a bound on the pole order of these 
rational functions at their poles, as a function of~$N$.

The bounds used by Lauder~\citep[\S 8.1]{Lauder2004a} and 
Gerkmann~\citep[\S 6]{Gerkmann2007} were not very sharp, 
and this significantly impacted their algorithms.
Recently, under some small additional assumptions, Kedlaya and the second 
author~\citep[Theorem~2.1]{KedlayaTuitman2012} obtained a sharper bound 
that we state below in a simplified form. For $z \in \mathbf{P}^1(\bar{\QQ}_q)$, we
denote the valuation on $\QQ_q(t)$ corresponding to $z$ by $\ord_z(-)$, and
we extend $\ord_z(-)$ to polynomials and matrices over $\QQ_q(t)$ by
taking the minimum over the coefficients and entries, respectively.
We recall that $M$ denotes the matrix of the Gauss--Manin connection~$\nabla$
with respect to the basis~$\cB$ from Definition~\ref{defn:MonBasis} 
and let $\delta$ be defined as in Definition~\ref{defn:delta}.

\begin{thm} \label{thm:KedlayaTuitman}
Let $D \subset \mathbf{P}^1(\bar{\QQ}_q)$ be a residue disk and
$z \in D$ a point such that $M$ has at most a simple pole at~$z$ 
and no other poles contained in $D$. Suppose that 
the exponents $\lambda_1, \dotsc, \lambda_{b}$ of $M$ at $z$, which are defined 
as the eigenvalues  of the matrix $(t - z) M \vert_{t=z}$ and 
known to be rational numbers, are contained in $\ZZ_p \cap \QQ$. 
For $i \in \NN$, let 
\begin{align*}
f(i) = \max \left\{ -\left(2 \delta + (n-1) \right) \lceil \log_p(i) \rceil, 
(b-1) \ord_p(M) -\left(2 \delta + (n-1) \right) \lfloor \log_p(i) \rfloor 
\right\},
\end{align*}
and define 
\begin{align*}
c = \begin{cases}
0 & \mbox{if $\ord_p(M) \geq 0$}, \\
\min\{0, i + f(i): i \in \NN\} & \mbox{if $\ord_p(M) < 0$}.
\end{cases}
\end{align*}
For $N \in \NN$, let 
\begin{align*}
g(N) &= \max \{i \in \NN : i + f(i) - \delta + c  < N \},
\end{align*}
and define
\begin{align*}
\alpha_1    &= \lfloor -p \min_i \{ \lambda_i \} + \max_{i} \{\lambda_i\} \rfloor, \\ 
\alpha_2    &=  \left \{ 
         \begin{array}{cl}
         0  & \mbox{if $M$ does not have a pole at $z$},  \\
         0  & \mbox{if $z \in \{0,\infty \}$}, \\
         g(N) & \mbox{otherwise}.
         \end{array}
         \right.
\end{align*}
Then the matrix $\Phi$ is congruent 
modulo $p^{N}$ to a matrix $\tilde{\Phi} \in M_{b \times b}(\QQ_q(t))$ for which
\begin{equation*}
\ord_z\left(\tilde{\Phi}\right) \geq -(\alpha_1+p \alpha_2)
\end{equation*}
and $\tilde{\Phi}$ has no other poles contained in $D$.
\end{thm}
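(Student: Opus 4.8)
The plan is to obtain this as a specialization of the general effective convergence result \citep[Theorem~2.1]{KedlayaTuitman2012}. That theorem takes as input a vector bundle with connection on a Zariski open subset of $\mathbf{P}^1_{\QQ_q}$ carrying a Frobenius structure, together with an explicit bound on the $p$-adic valuations of the coefficients of the local horizontal solution matrix, and produces, for each residue disk~$D$ and each $N \in \NN$, a rational matrix congruent to the Frobenius matrix modulo~$p^N$ whose pole orders inside~$D$ are controlled in terms of the local exponents of the connection at~$z$ and of~$N$. So the first step is to instantiate this result with $\mathfrak{E} = \HdR^n(\mathfrak{U}/\mathfrak{S})$ equipped with the Gauss--Manin connection~$\nabla$ and the Frobenius structure~$\Frob_p$, all expressed in the basis~$\cB$, so that the connection matrix is our~$M$ and the Frobenius matrix is our~$\Phi$.

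Next I would check that the hypotheses of \citep[Theorem~2.1]{KedlayaTuitman2012} — the small additional assumptions referred to above — hold in the situation at hand: namely that $M$ has at worst a simple pole at~$z$ and no other pole in~$D$, and that the exponents $\lambda_1, \dotsc, \lambda_b$ of~$M$ at~$z$ are rational and lie in~$\ZZ_p$. These are precisely the two conditions in the statement. The crucial quantitative input is the convergence estimate for the local solution: Theorem~\ref{thm:valC} gives $\ord_p(C_i) \geq -(2\delta + (n-1))\lceil \log_p(i) \rceil$ for the solution~$C$ of \eqref{eq:01-GMDE-Homogenous}, while the variant recalled in the remark following it gives $\ord_p(C_i) \geq (b-1)\ord_p(M) - (2\delta + (n-1))\lfloor \log_p(i) \rfloor$; taking the maximum of the two negated bounds is exactly the function $f(i)$ appearing in the statement, and by Remark~\ref{rem:Cinv} the same estimate applies to~$C^{-1}$ via the dual equation \eqref{eq:01-GMDE-Dual}.

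It then remains to unwind how $f$ propagates into the four quantities $c$, $g(N)$, $\alpha_1$ and $\alpha_2$. The constant~$c$, which is nonzero only when $\ord_p(M) < 0$, records the largest possible drop in valuation incurred when the global power series solution is renormalised near a pole; the cutoff~$g(N)$ is the $t$-adic order beyond which the tail of the overconvergent expansion of~$\Phi$ contributes nothing modulo~$p^N$ — this is where overconvergence (Theorem~\ref{thm:eqphi}) together with the a priori bound $\ord_p(\Phi) \geq -\delta$ from Corollary~\ref{cor:delta} enters, since passing from the expansion at~$0$ to a rational approximation replaces the truncated tail by a controlled multiple of the denominator~$r$. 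The term $\alpha_1 = \lfloor -p\min_i \lambda_i + \max_i \lambda_i \rfloor$ bounds the pole order of the exact matrix~$\Phi$ itself at~$z$: writing $\Phi = C\,\Phi_0\,\sigma(C)^{-1}$ and analysing the twisted local system $(d/dt + M)\Phi = p t^{p-1}\Phi\,\sigma(M)$ near~$z$, the columns of~$C$ acquire pole order at most $\max_i \lambda_i$ from the local exponents $-\lambda_i$ of~$C$, whereas those of~$\sigma(C)^{-1}$ acquire pole order at most $-p\min_i\lambda_i$ because the substitution $t \mapsto t^p$ built into~$\sigma$ multiplies the relevant local exponents by~$p$; adding the two and taking integer parts yields~$\alpha_1$. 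The same substitution is responsible for the extra factor~$p$ in front of $\alpha_2 = g(N)$ in the bound $\ord_z(\tilde\Phi) \geq -(\alpha_1 + p\alpha_2)$, and for the fact that $\alpha_2$ collapses to~$0$ when $z \in \{0,\infty\}$ or when~$M$ is already regular at~$z$. I expect the main obstacle to be exactly this accounting: faithfully matching the normalisations and sign conventions of \citep{KedlayaTuitman2012} with those used here, and verifying that the Frobenius-twisted local exponent analysis produces precisely the integers $\alpha_1$ and~$\alpha_2$ — including the degenerate cases — rather than some shifted variant.
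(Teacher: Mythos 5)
Your proposal takes essentially the same route as the paper: the proof there consists solely of invoking \citep[Theorem~2.1]{KedlayaTuitman2012} for the Frobenius structure $\Phi$ on $\Hrig^n(U/S)$ with connection $\nabla$, with $\ord_p(\Phi)$ and $\ord_p(\Phi)+\ord_p(\Phi^{-1})$ replaced by their a priori lower bounds $-\delta$ and $-(2\delta+(n-1))$ from Corollary~\ref{cor:delta}, which is exactly your instantiation. Your additional unwinding of how $c$, $g(N)$, $\alpha_1$ and $\alpha_2$ arise is internal to the cited theorem and not reproduced in the paper, but it is consistent with the stated formulas.
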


\begin{proof}
Since the matrix $\Phi$ defines a Frobenius structure on the vector 
bundle $\Hrig^n(U/S)$ with connection $\nabla$, we can 
apply~\citep[Theorem~2.1]{KedlayaTuitman2012}. We have 
replaced $\ord_p\left(\Phi\right)$ and $\ord_p\left(\Phi\right)+\ord_p\left(\Phi^{-1}\right)$ 
by their respective lower bounds $-\delta$ and $-2 \delta + (n-1)$, since 
we might not know them exactly a priori.
\end{proof}

\begin{rem}
In practice, the constants $-(2 \delta + (n-1))$, $c$, $\alpha_1$, and $-\delta$ 
are very small in absolute value, so that $g(N)$ is about $N$ and 
the lower bound for the order of~$\Phi$ modulo $p^N$ at the point $z$ is 
roughly $-pN$.
\end{rem}

\begin{rem} \label{rem:basischange}
The condition that $M$ has a simple pole at $z$ is not a serious restriction. 
By \citep[Theorem 2.1]{Lauder2011}, one can always find a matrix $W \in GL_{b}(\QQ_q[t,1/(t-z)])$
such that the connection matrix has a simple pole at $z$ with respect to the basis 
$[w_1, \dotsc, w_b]$ defined by $w_j = \sum_{i=1}^b W_{ij} e_i$ for all 
$1 \leq j \leq b$, where $\cB=[e_1,\dotsc,e_b]$ denotes our basis from 
Definition~\ref{defn:MonBasis}. Now by \citep[Corollary 2.6]{KedlayaTuitman2012}, 
the matrix $\Phi$ is congruent modulo $p^{N+\ord_p(W)+\ord_p(W^{-1})}$ 
to a matrix $\tilde{\Phi} \in M_{b \times b}(\QQ_q(t))$ for which
\[
\ord_z\bigl(\tilde{\Phi}\bigr) \geq -(\alpha_1+p \alpha_2)+\ord_z(W)+p\ord_z\bigl(W^{-1}\bigr)
\]
and $\tilde{\Phi}$ has no other poles contained in $D$.
Here, $\alpha_1$ and $\alpha_2$ are defined as in 
Theorem~\ref{thm:KedlayaTuitman}, but with $\delta$ replaced by
\begin{equation*}
\delta' = \delta-\left(\ord_p(W)+\ord_p\left(W^{-1}\right)\right), 
\end{equation*}
since for the matrix $\Phi'$ of $p^{-1}\Frob_p$ with respect to the basis 
$[w_1,\dotsc,w_b]$, in general we only have that $\ord_p\left(\Phi'\right) \geq -\delta'$. 
\end{rem}

When one of the other conditions in Theorem~\ref{thm:KedlayaTuitman} is not satisfied, 
we can use the alternative bound below. Recall that our family of 
smooth projective hypersurfaces $\mathcal{X}/\mathcal{S}$ is defined by the homogeneous 
polynomial $P \in \ZZ_q[t][x_0,\dotsc,x_n]$.

\begin{thm} \label{thm:Gerkmann}
Define the matrices $\Delta_k$ over $\QQ_q(t)$ as in 
Definition~\ref{defn:Deltak}.
Let $D \subset \mathbf{P}^1(\bar{\QQ}_q)$ be a residue disk
and for any point $z \in D$ put
\begin{eqnarray*}
\mu_z &=& \sum_{k=2}^n \ord_z\left(\Delta_k^{-1}\right), \\
\nu_z &=& \ord_z\left(\Delta_{n+1}^{-1}\right).
\end{eqnarray*}
For $N \in \NN$, define 
\begin{align*}
h(N) = \max \left\{ i \in \NN : i+(n-1)+\ord_p ((n-1)!)-n \floor{\log_p(p(n+i)-1)} < N \right\},
\end{align*}
and put
\[
\beta_z = \begin{cases}
-\mu_z -(p(n+h(N))-n) \nu_z & \mbox{if $z \neq \infty$}, \\
-\mu_z -(p(n+h(N))-n) \nu_z + p h(N) \deg_t(P) & \mbox{if $z = \infty$}.
\end{cases}
\]
Then $\Phi$ is congruent modulo $p^N$ to a matrix 
$\tilde{\Phi} \in M_{b \times b}(\QQ_q(t))$ for which
\begin{equation*}
\ord_z\bigl(\tilde{\Phi}\bigr) \geq -\beta_z,
\end{equation*}
for all points $z \in D$.
\end{thm}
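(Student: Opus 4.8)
The plan is to derive the bound, following Gerkmann~\citep[\S 6]{Gerkmann2007}, from an explicit computation of $p^{-1}\Frob_p$ on the basis~$\cB$ via a lift of Frobenius, combined with a careful accounting of the $t$-denominators and of the $p$-adic precision lost when the resulting differential forms are reduced to~$\cB$ by Algorithm~\ref{alg:PoleRed}. Let $\sigma$ act on $\ZZ_q[t][x_0,\dotsc,x_n]$ by the Frobenius on~$\ZZ_q$ together with $t \mapsto t^p$ and $x_i \mapsto x_i^p$; then $\sigma(P) = P(t,x)^p + p\,E$ with $E \in \ZZ_q[t][x_0,\dotsc,x_n]$ homogeneous of degree~$pd$ in the~$x_i$ and of $t$-degree at most $p\deg_t(P)$. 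Expanding $\sigma(P)^{-k}$ as a binomial series in $pE/P^p$ gives, for each $x^u\Omega/P^{k(u)} \in \cB$, an identity
\[
p^{-1}\Frob_p\!\left(\frac{x^u\Omega}{P^{k(u)}}\right) = \sum_{j \geq 0} \frac{N_{u,j}\,\Omega}{P^{\,p(k(u)+j)}}, \qquad
N_{u,j} = p^{\,n-1+j}\binom{-k(u)}{j}(x_0\dotsm x_n)^{p-1}\,x^{pu}\,E^{j},
\]
where $N_{u,j}$ is a polynomial with coefficients in $\ZZ_q[t]$, homogeneous of degree $p(k(u)+j)d-(n+1)$ in the~$x_i$, of $t$-degree at most $j p\deg_t(P)$, and with $\ord_p(N_{u,j}) \geq n-1+j$ since $\binom{-k(u)}{j} \in \ZZ$. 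I would first verify this identity, in particular that each summand is a closed $n$-form of the correct degree, so that Algorithm~\ref{alg:PoleRed} applies to it and the reduced forms sum to~$\Phi$.

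For the $p$-adic truncation, note that by Remark~\ref{rem:precgm} the matrices~$\Delta_k$ cause no $p$-adic loss, so the loss in the reductions comes only from the divisions by~$k-1$. Combining the bound $\ord_p(N_{u,j}) \geq n-1+j$ with the overconvergence of~$\Phi$ (which forces all but finitely many~$j$ to be irrelevant modulo~$p^{N}$) and $p$-adic factorial estimates of the kind in Proposition~\ref{prop:rfac} — this is where the logarithmic correction in~$h(N)$ comes from — one shows that knowing~$\Phi$ modulo~$p^{N}$ requires only the contributions of the $N_{u,j}\Omega/P^{p(k(u)+j)}$ with $j \leq h(N)$. This produces a matrix $\tilde\Phi \in M_{b\times b}(\QQ_q(t))$ with $\tilde\Phi \equiv \Phi \pmod{p^{N}}$, built from reductions of forms of pole order at most $\ell_{\max} = p(n+h(N))$.

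It then remains to bound $\ord_z(\tilde\Phi)$. Since each $N_{u,j}$ lies in $\ZZ_q[t][x_0,\dotsc,x_n]$, the form $N_{u,j}\Omega/P^{p(k(u)+j)}$ has no finite $t$-pole, so every $t$-pole of~$\tilde\Phi$ is introduced by the inverse matrices used in the two loops of Algorithm~\ref{alg:PoleRed}. In reducing a form of pole order $\ell \leq \ell_{\max}$, the second loop uses only $\Delta_2,\dotsc,\Delta_n$ — here I would use that $B_k$ is empty for $k \geq n+1$, so that the first loop is exact — and contributes at most $-\mu_z$ to $\ord_z$, while the first loop uses $\Delta_{n+1},\dotsc,\Delta_\ell$ in at most $\ell_{\max}-n = p(n+h(N))-n$ steps. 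The step I expect to be the main obstacle is to show that each of those latter steps costs at most $-\nu_z = -\ord_z(\Delta_{n+1}^{-1})$, i.e.\ that the denominator introduced in lowering the pole order through any value $\geq n+1$ is controlled uniformly, at every point~$z$, by $\det(\Delta_{n+1})$; this is a stabilisation of the pole-order reduction in high degree, of the kind underlying~\citep{AbbottKedlayaRoe2006}. Granting it, one obtains $\ord_z(\tilde\Phi) \geq -\mu_z - (p(n+h(N))-n)\nu_z$ at each finite $z \in D$. At $z=\infty$ the same count applies, and one additionally propagates the numerator $t$-degree bound $j p\deg_t(P) \leq p\,h(N)\deg_t(P)$ through the reductions, which contributes the extra term $p\,h(N)\deg_t(P)$ and gives $\ord_\infty(\tilde\Phi) \geq -\mu_\infty - (p(n+h(N))-n)\nu_\infty + p\,h(N)\deg_t(P)$. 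Finally, $\tilde\Phi$ being a finite sum of reductions of forms with coefficients in $\ZZ_q[t]$, its only $t$-poles lie at the zeros of $R = \prod_{k=2}^{n+1}\det(\Delta_k)$ and at~$\infty$, so $\tilde\Phi$ has no poles in~$D$ other than at~$z$, which completes the proof.
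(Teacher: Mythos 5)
Your expansion of $p^{-1}\Frob_p$ via a Frobenius lift with $\sigma(P)=P^p+pE$ and a binomial series is exactly the starting point of the paper's argument, and your degree, $t$-degree and integrality observations about the $N_{u,j}$ are all correct. However, there are two genuine gaps, one serious and one you yourself flag but do not close.

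The serious gap is the $p$-adic truncation. You attribute the logarithmic correction in $h(N)$ to "$p$-adic factorial estimates of the kind in Proposition~\ref{prop:rfac}" coming from the divisions by $k-1$ in Algorithm~\ref{alg:PoleRed}. This mechanism cannot produce the right bound. To reduce a form of pole order $p(k(u)+j)$ down to pole order~$1$, the reduction loop divides in total by $\bigl(p(k(u)+j)-1\bigr)!/(\text{const})$, whose $p$-adic valuation grows like $p(n+j)/(p-1)$, i.e.\ linearly in~$j$ with slope $p/(p-1)>1$. Against the gain $\ord_p(N_{u,j})\geq n-1+j$ (slope~$1$), this gives a net $p$-adic valuation of order $-j/(p-1)$, which tends to $-\infty$: naively, the contributions would not become negligible modulo $p^N$ at all. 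The paper does not bound the $p$-adic valuation of the $\gamma_{i,j}$ by tracking the divisions; instead it invokes the lattice inclusion \citep[Proposition~3.4.6]{AbbottKedlayaRoe2006} together with $(n-1)!\,\Lambda_{\tau,crys}\subset\Lambda_{\tau,\cB}$ to obtain the sharper cohomological bound
\[
\ord_p(\gamma_{i,j}) \geq i+(n-1)-\ord_p\bigl((n-1)!\bigr)-n\,\floor{\log_p(p(n+i)-1)},
\]
whose loss term is only logarithmic in $i$. This is the ingredient that makes the truncation threshold $h(N)$ work, and it cannot be recovered from Proposition~\ref{prop:rfac} alone.

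The second gap is the one you flag as "the main obstacle": proving that each reduction step at pole order $\geq n+1$ costs at $z$ only $-\nu_z = -\ord_z(\Delta_{n+1}^{-1})$, rather than $-\ord_z(\Delta_\ell^{-1})$ at pole order~$\ell$. Since $\nu_z$ involves only $\Delta_{n+1}$, the naive loop — which calls {\sc Decompose} with matrices $\Delta_{n+1},\Delta_{n+2},\dotsc$ — does not give this, so without an extra idea the contribution of the first loop would be $\sum_{k=n+1}^{\ell}\ord_z(\Delta_k^{-1})$ and the stated theorem would not follow. The paper closes this by modifying {\sc Decompose}: for a monomial $Q=x^v$ of degree $\geq (n+1)d-(n+1)$, one picks a monomial $x^w$ so that $x^{v-w}$ has degree exactly $(n+1)d-(n+1)$, decomposes $x^{v-w}$ using only $\Delta_{n+1}$ (using that $B_{n+1}=\emptyset$), and multiplies back by~$x^w$; this makes every step through pole order $\geq n+1$ cost at most $-\nu_z$, and it is precisely the stabilisation you anticipated but did not supply.

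Once both of these are in place, your count of the number of steps, your $t$-degree bookkeeping for the $z=\infty$ correction $p\,h(N)\deg_t(P)$, and your conclusion about $\tilde\Phi$ having no poles outside the zeros of $R$ and $\infty$ are all in line with the paper.
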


\begin{proof}
First we extend $\sigma$ to the ring of overconvergent functions:
\begin{align*}
A^{\dag} =
&\Bigl\{ \sum_{i_0,\dotsc,i_{n+3}=0}^{\infty} a_{i_0, \dotsc, i_{n+3}} 
\frac{x_0^{i_0} \dotsm x_n^{i_{n}}t^{i_{n+1}}}{r^{i_{n+2}} P^{i_{n+3}}} \; : \;
a_{i_0, \dotsc i_{n+3}} \in \QQ_q, \; \\ 
&\exists c > 0 \text{ s.t.} \lim_{i_0+\dotsb+i_{n+3} \rightarrow \infty} \bigl(\ord_p(a_{i_0, \dotsc, i_{n+3}} ) - 
c(i_0+\dotsb+i_{n+3})\bigr) \geq 0 \Bigr\},
\end{align*}
by putting $\sigma(x_i) = x_i^p$ for $0 \leq i \leq n$. Note that
\begin{equation*}
\sigma\left(\frac{1}{P}\right) = 
    P^{-p} \left( 1-\frac{P^p-\sigma(P)}{P^p} \right)^{-1}
\end{equation*}
is an element of $A^{\dag}$ since 
\begin{equation*}
\ord_p\left(P^p-\sigma(P)\right) \geq 1.
\end{equation*}
We use the notation and terminology from Section~\ref{sec:Connection}.
It is known that the de Rham cohomology of 
$A^{\dag}/\QQ_q \langle t,1/r \rangle^{\dag}$ 
is isomorphic to the rigid cohomology of $U/S$, and that the action of 
$F_p$ on the rigid cohomology of $U/S$ is induced by $\sigma$. Recall
that our basis vectors for $\Hrig(U/S)$ 
are of the form $x^u \Omega / P^{\ell}$ with $x^u \in B_{\ell}$ and 
$\ell \leq n$.  We observe that
\begin{equation*}
p^{-1} \Frob_p \left(\frac{x^u \Omega}{P^{\ell}} \right) \equiv
\sum_{i=0}^{\infty} \eta_i p^{n-1+i} \left(\frac{P^p-\sigma(P)}{p} \right)^i 
(x_0 \dotsm x_n)^{p-1} \left( \frac{x^{pu} \Omega}{P^{p(\ell+i)}} \right)
\end{equation*}
in $\Hrig^n(U/S)$, where $\eta_i \in \NN$ is defined by the equality 
$(1-y)^{-\ell} = \sum_{i=0}^{\infty} \eta_i y^i$.
Using Algorithm~\ref{alg:PoleRed}, we can write
\begin{equation} \label{eqn:Froblift}
\eta_i p^{n-1+i} \left(\frac{P^p-\sigma(P)}{p} \right)^i 
(x_0 \dotsm x_n)^{p-1} \left( \frac{x^{pu} \Omega}{P^{p(\ell+i)}} \right) \equiv
\frac{\gamma_{i,1} \Omega}{P^1}+\dotsb+\frac{\gamma_{i,n} \Omega}{P^n},
\end{equation}
where $\gamma_{i,j}$ is contained in the $L$-span of $B_j$ for all $i \geq 0$ and 
$0 \leq j \leq n$. 

Actually, we can slightly modify 
Algorithm~\ref{alg:Decompose}, so that we only have to solve systems of the 
form $\Delta_{n+1} v = w$ as long as the pole order is at least~$n+1$.  
Indeed, we may assume, without loss of generality, that $Q=x^v$ is a monomial, 
and can find some other monomial $x^w$ such that $x^{v-w}$ is of degree 
$(n+1)d-(n+1)$. Now we apply Algorithm~\ref{alg:Decompose} to $x^{v-w}$, 
noting that $B_{n+1}=\emptyset$, and multiply the output by~$x^w$ to obtain 
the decomposition of~$x^v$.

Note that the left-hand side of Equation~\eqref{eqn:Froblift} does not have a 
pole at $z \neq \infty$, and has order at least $-i p \deg_t(P)$ at $z=\infty$. 
When applying Algorithm~\ref{alg:PoleRed} to this expression, as long as the 
pole order is at least~$n+1$, the order at~$z$ drops by at most $\nu_z$ in every 
reduction step, and it drops by at most $\mu_z$ in the remaining reduction steps. 
This implies that
\begin{align} \label{al:ordz}
\ord_z(\gamma_{i,j}) &\geq 
\begin{cases}
\mu_z + (p(n+i)-n) \nu_z                 &\mbox{if } z \neq \infty, \\
\mu_z + (p(n+i)-n) \nu_z - pi \deg_t(P)  &\mbox{if } z=\infty.
\end{cases}
\end{align}

For every $\tau \in S(\bar{\FF}_q)$, we
define lattices $\Lambda_{\tau,crys}$, $\Lambda_{\tau,\cB}$ in $\Hrig^n(U_{\tau})$
as in the proof of Theorem~\ref{thm:deltabound}. It follows from
\citep[Proposition 3.4.6]{AbbottKedlayaRoe2006}, Equation~\eqref{eqn:Froblift}, 
and the inclusion $(n-1)!\Lambda_{\tau,crys} \subset \Lambda_{\tau,\cB}$ that
\begin{equation*}
\frac{\gamma_{i,1} \Omega}{P^1}+\dotsb+\frac{\gamma_{i,n} \Omega}{P^n} \subset 
p^{i+(n-1)-\ord_p((n-1)!)-n \floor{\log_p(p(n+i)-1)}} \Lambda_{\tau,\cB},
\end{equation*}
which implies that
\begin{equation*}
\ord_p(\gamma_{i,j}) \geq i+(n-1) - \ord_p((n-1)!) - n \floor{\log_p(p(n+i)-1)}.
\end{equation*}
The entries of $\Phi$ are coefficients of sums of the form 
$\sum_{i=0}^{\infty} \gamma_{i,j}$. Now modulo $p^N$, we can 
restrict this sum to terms for which $\ord_p(\gamma_{i,j})<N$,
and we can compute the order at $z$ of these terms using
\eqref{al:ordz}. This completes the proof.
\end{proof}

\begin{rem}
A result similar to, but weaker than, Theorem \ref{thm:Gerkmann} 
was obtained by Gerkmann in \citep[Section 6]{Gerkmann2007}.
\end{rem}

\begin{thm} \label{thm:defD}
Let $N_{\Phi} \in \NN$. We can explicitly find $s \in \ZZ_q[t]$ and $K \in \NN$, 
where $s$ divides some power of the polynomial $R$ from Definition~\ref{defn:resultant}, such that 
$s \Phi$ is congruent modulo $p^{N_{\Phi}}$ to a matrix of polynomials of degree 
less than $K$. 
\end{thm}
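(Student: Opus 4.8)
The plan is to combine the overconvergence of $\Phi$ with the effective pole-order bounds of Theorems~\ref{thm:KedlayaTuitman} and~\ref{thm:Gerkmann}. Recall that $\Phi \in M_{b \times b}(\QQ_q \langle t, 1/r \rangle^{\dag})$ with $r$ a divisor of $R$, so that modulo any power of~$p$ the matrix~$\Phi$ agrees with a matrix of rational functions whose only poles lie among the zeros of~$r$ and the point at infinity. The task is therefore to bound, as a function of~$N_{\Phi}$, the order of such a pole at each zero of~$r$ and at~$\infty$, and then to package these bounds into a single polynomial~$s$ and a single degree bound~$K$.

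First I would go through the zeros of~$r$ one residue disk at a time. For a residue disk~$D$ in which $M$ has, possibly after the base change of Remark~\ref{rem:basischange}, at most a simple pole at the unique zero $z \in D$ of~$r$, with exponents in $\ZZ_p \cap \QQ$, Theorem~\ref{thm:KedlayaTuitman} yields an explicit integer~$m_z$ such that $\Phi$ is congruent modulo $p^{N_{\Phi}}$ to a rational matrix with a pole of order at most~$m_z$ at~$z$ and no other pole in~$D$. In the remaining disks, and in particular when $D$ contains several zeros of~$r$ or the exponent hypothesis fails, I would instead use Theorem~\ref{thm:Gerkmann}, which gives such a bound $m_z = \beta_z$ for every point $z \in D$ with no restriction on the pole order of~$M$. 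Applying the same analysis at $z = \infty$ produces an integer~$m_\infty$. All of these integers are explicitly computable from~$R$, the matrices~$\Delta_k$, $\ord_p(M)$, $\delta$, and $\deg_t(P)$, so there is no obstruction to writing them down.

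Next I would assemble~$s$ and~$K$. The bounds~$m_z$ depend only on the Galois orbit of~$z$, since the exponents of~$M$ are rational numbers and $\ord_z(\Delta_k^{-1})$, $\ord_p(M)$, $\delta$, $\deg_t(P)$ are all Galois invariant; hence for each monic irreducible factor~$q$ of~$r$ in $\ZZ_q[t]$ the common value~$m_q$ of~$m_z$ over the roots of~$q$ is well defined, and I set $s = \prod_q q^{m_q}$. Then $s \in \ZZ_q[t]$, and since each such~$q$ divides~$r$ and $r$ divides~$R$, the polynomial~$s$ divides~$R^{\max_q m_q}$. Put $K = \deg(s) + m_\infty + 1$. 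Patching the disk-local approximations into a single rational matrix $\tilde{\Phi}$ with $\tilde{\Phi} \equiv \Phi \pmod{p^{N_{\Phi}}}$, pole of order at most~$m_z$ at each zero~$z$ of~$r$, and pole of order at most~$m_\infty$ at~$\infty$, the product $s\tilde{\Phi}$ has no finite poles, so $s\tilde{\Phi} \in M_{b \times b}(\QQ_q[t])$ with $\deg(s\tilde{\Phi}) \le \deg(s) + m_\infty < K$. Since $\ord_p(s) = 0$, we get $s\Phi \equiv s\tilde{\Phi} \pmod{p^{N_{\Phi}}}$, as required. In practice one need not build~$\tilde{\Phi}$: once~$s$ and~$K$ are known, it suffices to multiply the truncated power-series expansion of~$\Phi$ produced by Algorithm~\ref{alg:expansion} by~$s$ and read off a matrix of polynomials of degree less than~$K$.

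I expect the main obstacle to be the case analysis at the singular points where Theorem~\ref{thm:KedlayaTuitman} does not apply directly. For each such point one must choose between reducing to a simple pole via Remark~\ref{rem:basischange} --- which replaces~$\delta$ by $\delta' = \delta - (\ord_p(W) + \ord_p(W^{-1}))$ and so needs an a priori handle on $\ord_p(W)$ and $\ord_p(W^{-1})$ --- and falling back on the coarser unconditional bound of Theorem~\ref{thm:Gerkmann}, and then check that the resulting~$m_z$ are genuinely effective and Galois invariant. By contrast, once the pole orders are in hand the patching of the disk-local approximations into one global~$\tilde{\Phi}$, and the final degree count, are routine.
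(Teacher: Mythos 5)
Your proposal is correct and follows essentially the same route as the paper's proof: apply Theorem~\ref{thm:KedlayaTuitman} where it applies and Theorem~\ref{thm:Gerkmann} elsewhere to obtain effective, Galois-invariant pole-order bounds at each residue disk, then assemble $s$ as a product of local factors and set $K$ to the total. Your $K = \deg(s) + m_\infty + 1$ agrees with the paper's $K = \bigl(\sum_z \theta_z\bigr) + 1$, and your observations about Remark~\ref{rem:basischange} and about Galois invariance match what the paper states more tersely.
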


\begin{proof}
On every residue disk $D \subset \mathbf{P}^1(\bar{\QQ}_q)$, we can 
apply either Theorem~\ref{thm:KedlayaTuitman} (when it applies) or 
Theorem~\ref{thm:Gerkmann} in order to find numbers $\theta_z \in \NN_0$ 
such that $\Phi$ is congruent modulo~$p^N$ to a matrix 
$\tilde{\Phi}\in M_{b \times b}(\QQ_q(t))$ for which $\ord_z(\tilde{\Phi}) \geq -\theta_z$ 
for all $z \in D$. 
By Theorem~\ref{thm:Gerkmann}, we can take $\theta_z=0$ for $z \neq \infty$ if 
$R(z) \neq 0$, so that $\theta_z$ vanishes for all but finitely many $z \in \mathbf{P}^1(\bar{\QQ}_q)$.
Moreover, by Theorem~\ref{thm:KedlayaTuitman}, we can also take $\theta_z=0$ 
whenever the matrix~$M$ does not have a pole in the residue disk at~$z$. 
Finally, we may assume that $\theta_z=\theta_{z'}$ if $z,z'$ are conjugates over~$\QQ_q$.
We now define
\begin{align*}
s &= \prod_{z \neq \infty, \ord_p(z) \geq 0} (t-z)^{\theta_z}
     \prod_{z \neq \infty, \ord_p(z) < 0}\left(\frac{t}{z}-1\right)^{\theta_z}, 
&K&= \left(\sum_{z} \theta_z \right) +1,
\end{align*}
which satisfy all the required conditions.
\end{proof}

We compute the matrix $\Phi$ to $t$-adic precision $K$ and $p$-adic
precision $N_{\Phi}$ using Algorithm~\ref{alg:expansion}. For any 
$\tau \in S(\bar{\FF}_q)$, we can now compute
\begin{equation*}
\Phi_{\tau} = 
  s(\hat{\tau})^{-1} \left( s \Phi \bmod{t^{K}} \right)|_{t=\hat{\tau}} 
  \bmod{p^{N_{\Phi}}}.
\end{equation*}
Since $\ord_p(s(\hat{\tau}))=0$, $\ord_p(\hat{\tau}) \geq 0$ and
$\ord_p(\Phi) \geq -\delta$, by Proposition~\ref{prop:matrixproductval} 
the matrix $\Phi_{\tau}$ will also be correct to precision $N_{\Phi}$ 
provided that $\hat{\tau}$ is computed to $p$-adic precision $N_{\Phi}+\delta$.

\subsection{Computing the zeta function}

Now we want to compute the zeta function of the fibre $X_{\tau}$ of
our family $X/S$ lying over some $\tau \in S(\FF_{\mathfrak{q}})$, 
where $\FF_{\mathfrak{q}}/\FF_q$ is a finite field extension.  
Recall from Theorem~\ref{thm:hypersurface} 
that the zeta function of $X_{\tau}$ is of the form
\begin{equation*}
Z(X_{\tau},T) = \frac{\chi(T)^{(-1)^n}}{(1 - T) (1 - \mathfrak{q}T) \dotsm (1 - \mathfrak{q}^{n-1}T)},
\end{equation*}
where $\chi(T) = \det \bigl( 1 - T \mathfrak{q}^{-1} \Frob_{\mathfrak{q}} | \Hrig^n(U_{\tau}) \bigr) \in \ZZ[T]$ 
denotes the reverse characteristic polynomial of the action 
of $\mathfrak{q}^{-1} \Frob_{\mathfrak{q}}$ 
on $\Hrig^n(U_{\tau})$.

We start by computing the matrix of the action 
of $\mathfrak{q}^{-1} \Frob_{\mathfrak{q}}$ on $\Hrig^n(U_{\tau})$. 
Let us still denote $a=\log_p(\mathfrak{q})$. Recall that $\Phi_{\tau}$ 
is the matrix of the action of $p^{-1} \Frob_p$ on~$\Hrig^{n}(U_{\tau})$ 
with respect to the basis $\cB$. As this action is $\sigma$-semilinear, 
we have that 
\begin{equation*}
\Phi_{\tau}^{(a)} = 
    \Phi_{\tau} \sigma(\Phi_{\tau}) \dotsm \sigma^{a-1}(\Phi_{\tau})
\end{equation*}
is the matrix of the action of $\mathfrak{q}^{-1} \Frob_{\mathfrak{q}}$ 
on $\Hrig^n(U_{\tau})$. 

We now analyse the loss of $p$-adic precision when computing 
the reverse characteristic polynomial 
$\chi(T)=1+\sum_{i=1}^b \chi_i T^i=\det\bigl( 1 - T \Phi_{\tau}^{(a)}\bigr)$.

\begin{thm} \label{thm:preccharpoly}
Let $N_{\Phi} \in \NN$ be such that $N_{\Phi} \geq \delta$, 
where $\delta$ is defined as in Definition~\ref{defn:delta}.
Moreover, suppose that $\tilde{\Phi}_{\tau}$ is an approximation to $\Phi_{\tau}$ satisfying
$\ord_p (\Phi_{\tau}-\tilde{\Phi}_{\tau}) \geq N_{\Phi}$.
Let us denote
\[
\tilde{\chi}(T) = 1 + \sum_{i=1}^b \tilde{\chi}_i T^i 
                = \det\left( 1 - T \tilde{\Phi}_{\tau}^{(a)}\right).
\]
Then for all $1 \leq i \leq b$, we have
\[
\ord_p \left(\chi_i - \tilde{\chi}_i \right) \geq N_{\Phi}-\delta.
\]
\end{thm}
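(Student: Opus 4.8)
The plan is to reduce to a situation where the relevant matrices have nonnegative $p$-adic valuation, by replacing the basis $\cB$ with a basis of the crystalline lattice, and then to read off $\chi_i$ and $\tilde{\chi}_i$ as polynomial functions of the entries of the resulting integral matrices. Fix $\tau$ and take the matrix $W_{\tau} \in M_{b \times b}(\QQ_{\mathfrak{q}})$ provided by Theorem~\ref{thm:deltabound}, so that $\ord_p\bigl(W_{\tau}\Phi_{\tau}\sigma(W_{\tau}^{-1})\bigr) \geq 0$ and, combining the two valuation bounds on $W_{\tau}$ and $W_{\tau}^{-1}$ with Definition~\ref{defn:delta}, $\ord_p(W_{\tau}) + \ord_p(W_{\tau}^{-1}) \geq -\delta$. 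Set $\Psi_{\tau} = W_{\tau}\Phi_{\tau}\sigma(W_{\tau}^{-1})$ and $\tilde{\Psi}_{\tau} = W_{\tau}\tilde{\Phi}_{\tau}\sigma(W_{\tau}^{-1})$, and write $\Psi_{\tau}^{(a)} = \Psi_{\tau}\sigma(\Psi_{\tau})\dotsm\sigma^{a-1}(\Psi_{\tau})$, $\tilde{\Psi}_{\tau}^{(a)} = \tilde{\Psi}_{\tau}\sigma(\tilde{\Psi}_{\tau})\dotsm\sigma^{a-1}(\tilde{\Psi}_{\tau})$, and similarly $\tilde{\Phi}_{\tau}^{(a)}$.

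First I would show $\chi(T) = \det\bigl(1 - T\Psi_{\tau}^{(a)}\bigr)$ and $\tilde{\chi}(T) = \det\bigl(1 - T\tilde{\Psi}_{\tau}^{(a)}\bigr)$. Expanding the product defining $\Psi_{\tau}^{(a)}$, the consecutive inner factors $\sigma^{j}(W_{\tau}^{-1})\sigma^{j}(W_{\tau}) = I$ cancel, leaving $\Psi_{\tau}^{(a)} = W_{\tau}\Phi_{\tau}^{(a)}\sigma^{a}(W_{\tau}^{-1})$; since $\sigma^{a}$ fixes $\QQ_{\mathfrak{q}}$ (equivalently, $\mathfrak{q}^{-1}\Frob_{\mathfrak{q}} = (p^{-1}\Frob_p)^{a}$ is $\QQ_{\mathfrak{q}}$-linear) this is $W_{\tau}\Phi_{\tau}^{(a)}W_{\tau}^{-1}$. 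Conjugate matrices have the same characteristic polynomial, so $\det\bigl(1 - T\Psi_{\tau}^{(a)}\bigr) = \det\bigl(1 - T\Phi_{\tau}^{(a)}\bigr) = \chi(T)$, and the same computation with $\tilde{\Phi}_{\tau}$ in place of $\Phi_{\tau}$ gives $\det\bigl(1 - T\tilde{\Psi}_{\tau}^{(a)}\bigr) = \tilde{\chi}(T)$.

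Next I would transfer the error estimate to the $\Psi$-side. From $\ord_p(\Phi_{\tau} - \tilde{\Phi}_{\tau}) \geq N_{\Phi}$ we get $\ord_p(\Psi_{\tau} - \tilde{\Psi}_{\tau}) \geq \ord_p(W_{\tau}) + N_{\Phi} + \ord_p(W_{\tau}^{-1}) \geq N_{\Phi} - \delta$, which is $\geq 0$ since $N_{\Phi} \geq \delta$; hence also $\ord_p(\tilde{\Psi}_{\tau}) \geq 0$, while $\ord_p(\Psi_{\tau}) \geq 0$ by construction. As each $\sigma^{j}$ is an isometry, the same holds for $\sigma^{j}(\Psi_{\tau})$ and $\sigma^{j}(\tilde{\Psi}_{\tau})$. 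Applying Proposition~\ref{prop:matrixproductval} with $\ell = a$, $v_1 = \dotsb = v_a = 0$ and $N = N_{\Phi} - \delta$ then yields $\ord_p\bigl(\Psi_{\tau}^{(a)} - \tilde{\Psi}_{\tau}^{(a)}\bigr) \geq N_{\Phi} - \delta$, while $\ord_p\bigl(\Psi_{\tau}^{(a)}\bigr) \geq 0$ and $\ord_p\bigl(\tilde{\Psi}_{\tau}^{(a)}\bigr) \geq 0$ since each is a product of matrices of nonnegative valuation.

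Finally, each $\chi_i$ equals $(-1)^i$ times the sum of the principal $i \times i$ minors of $\Psi_{\tau}^{(a)}$, hence a signed sum of products of $i$ entries of $\Psi_{\tau}^{(a)}$, and likewise $\tilde{\chi}_i$ in terms of $\tilde{\Psi}_{\tau}^{(a)}$. For $i \geq 2$, Proposition~\ref{prop:productval} with all $v_j = 0$ and $N = N_{\Phi} - \delta$ bounds below by $N_{\Phi} - \delta$ the $p$-adic valuation of the difference of each corresponding pair of products; for $i = 1$ this is immediate from the entrywise estimate. Taking the minimum over the finitely many terms gives $\ord_p(\chi_i - \tilde{\chi}_i) \geq N_{\Phi} - \delta$ for every $1 \leq i \leq b$, as claimed. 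The only step that is not purely formal is the reduction in the second paragraph: it is exactly the fact that $\mathfrak{q}^{-1}\Frob_{\mathfrak{q}}$ stabilises the crystalline lattice, encoded in Theorem~\ref{thm:deltabound}, that allows $\chi$ to be computed from the integral matrix $\Psi_{\tau}^{(a)}$ instead of from $\Phi_{\tau}^{(a)}$ directly, and this is what keeps the precision loss at $\delta$ rather than roughly $a\delta$.
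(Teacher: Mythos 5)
Your proposal is correct and follows essentially the same route as the paper's proof: conjugate by the matrix $W_{\tau}$ from Theorem~\ref{thm:deltabound} to pass to an integral Frobenius matrix, note that the semilinear product telescopes to $W_{\tau}\Phi_{\tau}^{(a)}W_{\tau}^{-1}$ so that $\chi$ is unchanged, and transfer the precision estimate at a cost of $\delta$. The only difference is that you spell out the final determinant estimate via principal minors and Proposition~\ref{prop:productval}, a step the paper simply asserts.
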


\begin{proof} 
Recall from Theorem~\ref{thm:deltabound} that there exists a matrix 
$W_{\tau} \in M_{b \times b}(\QQ_q)$ satisfying 
$\ord_p(W_{\tau})+\ord_p(W_{\tau}^{-1}) \geq -\delta$ such that for 
$\Phi_{\tau}'=W_{\tau} \Phi_{\tau} \sigma(W)^{-1}$ 
we have \mbox{$\ord_p(\Phi_{\tau}') \geq 0$}.

Defining the matrix 
$\tilde{\Phi}_{\tau}'=W_{\tau} \tilde{\Phi}_{\tau} \sigma(W)^{-1}$, we find that 
$\ord_p(\Phi'_{\tau}-\tilde{\Phi}_{\tau}') \geq N-\delta$ and in particular
$\ord_p(\tilde{\Phi}_{\tau}') \geq 0$. Thus, we obtain
\[
\ord_p \left( \det\left(1 - T (\Phi'_{\tau})^{(a)}\right) 
            - \det\left(1 - T (\tilde{\Phi}'_{\tau})^{(a)}\right) \right) \geq N-\delta.
\] 
Note that $(\Phi'_{\tau})^{(a)}= W_{\tau} \Phi_{\tau}^{(a)} W_{\tau}^{-1}$
and $(\tilde{\Phi}'_{\tau})^{(a)}= W_{\tau} \tilde{\Phi}_{\tau}^{(a)} W_{\tau}^{-1}$, 
so that
\begin{align*}
\chi(T) &= \det\left(1 - T (\Phi'_{\tau})^{(a)}\right),
&\tilde{\chi}(T) &= \det\left(1 - T (\tilde{\Phi}'_{\tau})^{(a)}\right).
\end{align*}
This completes the proof.
\end{proof}

\begin{rem} \label{rem:workprecchi}
If we know $\Phi_{\tau}$ to $p$-adic precision $N_{\Phi}$, then
$\chi(T)$ is determined to precision $N_{\Phi}-\delta$.
However, we cannot compute $\chi(T)$ as in the proof of 
Theorem~\ref{thm:preccharpoly}, since we do not know the matrix 
$W_{\tau}$ explicitly. When computing with respect to our basis~$\cB$,
there will be loss of $p$-adic precision. The loss of precision 
in computing $\Phi_{\tau}^{(a)}$ from $\Phi_{\tau}$ is at most 
$(a-1)\delta$, and the loss of 
precision in computing $\chi(T)$ from $\Phi_{\tau}^{(a)}$ 
is at most $(b-1) \delta$, by Proposition~\ref{prop:productval} and 
Corollary~\ref{cor:delta}. Therefore, for $\chi(T)$ to be correct
to $p$-adic precision $N_{\Phi}-\delta$, it is sufficient to compute 
$\chi(T)$ from $\Phi_{\tau}$ using $p$-adic working precision 
\[
(N_{\Phi}-\delta)+\bigl((a-1)+(b-1)\bigr)\delta = N_{\Phi}+(a+b-3)\delta.
\] 
\end{rem}

When $p \geq n$, one can improve Theorem~\ref{thm:preccharpoly} by taking into 
account the Hodge numbers $h^{i,n-1-i}$ of $\HdR^{n}(\mathfrak{U}/\mathfrak{S})$.
Recall from Remark~\ref{rem:hnumbers} that $h^{i,n-1-i}=\card{B_i}$.

\begin{defn}
Define $\Gamma \colon [0,b] \rightarrow \RR$ to be the function whose graph 
is the convex polygon in the plane whose left-most point is the origin and 
which has slope~$i$ over the interval of the horizontal axis 
\[
\left[h^{0,n-1} + \dotsb + h^{i-1,n-i}, h^{0,n-1} + \dotsb + h^{i,n-i-1}\right].
\]
Note that $\Gamma(b)=b(n-1)/2$ since $h^{i,n-1-i}=h^{n-1-i,i}$ for all $0 \leq i \leq n-1$.
\end{defn}

\begin{thm} \label{thm:pgeqn}
We continue with the notation of Theorem~\ref{thm:preccharpoly}. 
Let \mbox{$N_{\Phi} \in \NN$} and suppose that $p \geq n$. If $\tilde{\Phi}_{\tau}$ 
is an approximation to $\Phi_{\tau}$ such that $\ord_p (\Phi_{\tau}-\tilde{\Phi}_{\tau}) \geq N_{\Phi}$ 
then, for all $1 \leq i \leq b$, we have
\[
\ord_p\left(\chi_i-\tilde{\chi}_i\right) \geq N_{\Phi} + a\Gamma(i-1).
\]
\end{thm}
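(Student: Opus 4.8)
\emph{Strategy.} The plan is to refine the proof of Theorem~\ref{thm:preccharpoly} by exploiting the Hodge (pole-order) filtration on $\Hrig^n(U_\tau)$, which is available in strong integral form precisely because the hypothesis $p\geq n$ puts us in the Fontaine--Laffaille range (the Hodge filtration has length $n-1<p$) and forces $\delta=0$.

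\emph{Step 1 (structural input).} Since $p\geq n$ we have $\delta=0$, so by Theorem~\ref{thm:deltabound} the lattice $\Lambda_{\tau,\cB}$ spanned by $\cB$ equals the crystalline lattice $\Lambda_{\tau,crys}$, and, ordered by \emph{decreasing} pole order, $\cB$ is adapted to the Hodge filtration: a vector in $\cB_k$ has Hodge slope $n-k$, and by Remark~\ref{rem:hnumbers} the number of basis vectors of slope $m$ is $\card{B_{n-m}}=h^{m,n-1-m}$, i.e.\ the length of the slope-$m$ segment of $\Gamma$. As $\dim X_\tau=n-1<p$, the crystalline cohomology of $X_\tau$ with its Hodge filtration and divided Frobenius is a strongly divisible lattice; writing $V=\mathrm{diag}(p^{s_j})$ for the diagonal matrix of Hodge slopes $s_j$ (arranged increasingly), this yields a factorisation
\[
\Phi_\tau = A_\tau V, \qquad A_\tau\in GL_b(\ZZ_\mathfrak{q}).
\]
As a consistency check, $\ord_p(\det\Phi_\tau)=\Gamma(b)=b(n-1)/2$ by the functional equation, while $\sum_j s_j=\sum_k(n-k)\card{B_k}=\Gamma(b)$ by Hodge symmetry, so $\det A_\tau$ is indeed a unit.

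\emph{Step 2 (minor bounds and the expansion).} Writing $\Phi_\tau^{(a)}=\prod_{l=0}^{a-1}\sigma^l(\Phi_\tau)=\prod_{l=0}^{a-1}\bigl(\sigma^l(A_\tau)\,V\bigr)$ and applying Cauchy--Binet repeatedly, for any index sets $R,C$ with $\card{R}=\card{C}=m$ one obtains
\[
\ord_p\bigl(\det\Phi_\tau^{(a)}[R,C]\bigr)\ \geq\ (a-1)\Gamma(m)+\sum_{j\in C}s_j\ \geq\ a\,\Gamma(m),
\]
since each of the $a$ diagonal factors $V$ contributes at least $p^{\Gamma(m)}$ (the sum of the $m$ smallest $s_j$) and the unimodular factors contribute nothing negative; in particular this reproves $\ord_p(\chi_i)\geq a\Gamma(i)$. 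Now set $E:=\Phi_\tau^{(a)}-\tilde\Phi_\tau^{(a)}$; from $\ord_p(\Phi_\tau-\tilde\Phi_\tau)\geq N_\Phi$, $\delta=0$ and Proposition~\ref{prop:matrixproductval} we get $\ord_p(E)\geq N_\Phi$. Expanding $\det\Phi_\tau^{(a)}[S,S]-\det\tilde\Phi_\tau^{(a)}[S,S]$ multilinearly in the rows indexed by $S$ and summing over all $i$-subsets $S$ expresses $\chi_i-\tilde\chi_i$ as a sum of terms
\[
\pm\,\det\bigl(E[R,C]\bigr)\,\det\bigl(\Phi_\tau^{(a)}[S\setminus R,\,S\setminus C]\bigr), \qquad R,C\subseteq S,\ \card{R}=\card{C}=k\geq 1,
\]
each of which, by the minor bound above together with $\ord_p(E)\geq N_\Phi$, has $p$-valuation at least $kN_\Phi+a\Gamma(i-k)$. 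The terms with $k=1$ already satisfy $\ord_p\geq N_\Phi+a\Gamma(i-1)$, which is the asserted bound.

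\emph{Main obstacle.} The delicate point is the terms with $k\geq 2$, where the crude bound $kN_\Phi+a\Gamma(i-k)$ exceeds $N_\Phi+a\Gamma(i-1)$ only when $N_\Phi$ is not too small compared with the Hodge slopes. To dispose of these one must extract the extra divisibility that $E=\Phi_\tau^{(a)}-\tilde\Phi_\tau^{(a)}$ inherits from the structure of $\Phi_\tau$: in the telescoping decomposition
\[
E=\sum_{m=0}^{a-1}\sigma^0(\tilde\Phi_\tau)\cdots\sigma^{m-1}(\tilde\Phi_\tau)\,\bigl(\sigma^m(\Phi_\tau)-\sigma^m(\tilde\Phi_\tau)\bigr)\,\sigma^{m+1}(\Phi_\tau)\cdots\sigma^{a-1}(\Phi_\tau),
\]
the summands with $m\leq a-2$ end in a diagonal factor $V$ and hence have column $j$ divisible by $p^{s_j}$, and one combines this with the sharper minor bound $(a-1)\Gamma(m)+\sum_{j\in C}s_j$ of Step~2 and with a careful accounting of how $R$ and $C$ sit inside $S$. (For the range of $i$ with $\Gamma(i-1)=0$ the estimate is already contained in Theorem~\ref{thm:preccharpoly}, and the functional equation of $\chi$ can be used to cut down the range of $i$ that needs to be treated.) I expect this balancing to be essentially the only real work in the proof.
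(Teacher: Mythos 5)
Your structural input is correct and is the same as the paper's: $p\geq n$ forces $\delta=0$, Mazur's theorem makes the divided Frobenius respect the Hodge filtration integrally, the basis $\cB$ is adapted to that filtration with $h^{i,n-1-i}=\card{B_{n-i}}$ vectors of slope $i$, and your Cauchy--Binet estimate $\ord_p\bigl(\det\Phi_\tau^{(a)}[R,C]\bigr)\geq a\Gamma(m)$ is precisely the paper's intermediate claim that the Hodge polygon of the $F$-crystal $(\Lambda_{\tau,crys},\mathfrak{q}^{-1}\Frob_{\mathfrak{q}})$ lies above $a\Gamma$. The genuine gap is the one you flag yourself: the terms with $k\geq 2$ in the minor expansion of $\chi_i-\tilde\chi_i$. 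Their valuation bound $kN_\Phi+a\Gamma(i-k)$ dominates $N_\Phi+a\Gamma(i-1)$ only when $(k-1)N_\Phi\geq a\bigl(\Gamma(i-1)-\Gamma(i-k)\bigr)$, i.e.\ roughly when $N_\Phi\geq a(n-1)$; but the theorem is asserted for every $N_\Phi\in\NN$, and in its actual use (Theorem~\ref{thm:precPhitau}) $N_\Phi=\max_i\{N_{\chi_i}-a\Gamma(i-1)\}$ need not be that large. Your proposed rescue via the telescoping of $E$ is not carried out, and as sketched it already fails to cover the $m=a-1$ summand (and the whole of $E$ when $a=1$), whose last factor is a difference rather than a matrix ending in $V$.

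The paper closes this step by never expanding determinants into minors. From the Hodge-polygon statement it extracts, essentially by the elementary divisor theorem over $\ZZ_{\mathfrak{q}}$, a single matrix $W_\tau$ invertible over $\ZZ_{\mathfrak{q}}$ such that any $i$ distinct columns of $W_\tau\Phi_\tau^{(a)}W_\tau^{-1}$ have valuations summing to at least $a\Gamma(i)$. Each $\chi_i$ is then an alternating sum of products of $i$ entries taken from $i$ distinct columns of this matrix, and Proposition~\ref{prop:productval} applies with $N=N_\Phi+a\Gamma(i-1)$ and $v_l$ the valuation of the $l$-th chosen column: its telescoping perturbs one factor at a time, so every error term carries exactly one factor of valuation at least $N_\Phi$ together with $i-1$ intact column valuations summing to at least $a\Gamma(i-1)$. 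Working with entries and column valuations rather than with minors is exactly what prevents your $k\geq 2$ cross terms from ever appearing; replacing your Step 2 by this column-valuation formulation would make the argument go through.
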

 
\begin{proof} 
Note that in this case $\delta=0$ and hence that $\ord_p(\Phi_{\tau}) \geq 0$. 
By a result of Mazur~\citep[p.\ 665--666]{Mazur1972}, the map $p^{-1} \Frob_p$ 
sends $(H_i)_{\hat{\tau}} \cap \Lambda_{\tau,crys}$ into $p^i \Lambda_{\tau,crys}$, 
where $\Lambda_{\tau,crys}$ is defined as in the proof of 
Theorem~\ref{thm:deltabound} and $H_i$ as in Remark~\ref{rem:hnumbers}. 
This implies that the so called Hodge polygon of the 
$F$-crystal $(\Lambda_{\tau,crys},p^{-1} \Frob_p)$ lies above the graph of 
$\Gamma$. From this it follows that the Hodge polygon of the $F$-crystal 
$(\Lambda_{\tau,crys},\mathfrak{q}^{-1} \Frob_{\mathfrak{q}})$ lies
above the graph of $a\Gamma$. Hence we can find an invertible matrix 
$W_{\tau} \in M_{b \times b}(\ZZ_{\mathfrak{q}})$ such that for all 
$1 \leq i \leq b$, the sum of the valuations of any~$i$ different columns
of $W_{\tau} \Phi_{\tau}^{(a)} W_{\tau}^{-1}$ is at least $a\Gamma(i)$.
Note that 
\[ \ord_p(W_{\tau }\Phi_{\tau}^{(a)} W_{\tau}^{-1}-W_{\tau } \tilde{\Phi}_{\tau}^{(a)} W_{\tau}^{-1}) \geq N_{\Phi}.\]
The coefficients $\chi_i$ and $\tilde{\chi}_i$ are alternating sums 
of products of $i$ elements from different columns of 
$W_{\tau }\Phi_{\tau}^{(a)} W_{\tau}^{-1}$ 
and $W_{\tau } \tilde{\Phi}_{\tau}^{(a)} W_{\tau}^{-1}$, respectively.  
Therefore, the required bound follows from Theorem~\ref{prop:productval}. 
\end{proof}

\begin{rem}
For $a=1$, Theorem~\ref{thm:pgeqn} was obtained by Lauder in \citep[Proposition 9.4]{Lauder2006} and
the general idea of using the Hodge filtration to lower $p$-adic precision bounds had already been 
suggested before in \citep[Remark 1.6.4]{AbbottKedlayaRoe2006}. However, we have not been able
to find Theorem~\ref{thm:pgeqn} in the literature for $a \neq 1$. 
\end{rem}

Now we determine the $p$-adic precision~$N_{\chi_i}$ to which we need to compute~$\chi_i$ 
in order to recover the integer polynomial~$\chi(T)$ exactly.

\begin{thm} \label{thm:N0}
In order to recover the integer polynomial~$\chi(T)$ exactly, 
it suffices to compute $\chi_i$ to $p$-adic precision 
\begin{equation*}
N_{\chi_i} = \floorbig{\log_p \bigl( 2 \bigl( b/i \bigr) \mathfrak{q}^{i (n-1) / 2} \bigr)} + 1.
\end{equation*}
\end{thm}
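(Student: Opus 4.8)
The plan is to bound the archimedean size of each coefficient $\chi_i$ using the Weil bounds, and then observe that an integer which is known modulo $p^{N_{\chi_i}}$ and whose absolute value is less than $p^{N_{\chi_i}}/2$ is uniquely determined. First I would recall that, by Theorem~\ref{thm:weildeligne} applied to a smooth projective variety of which $\Hrig^n(U_\tau)$ is (a twist of) a piece — concretely, via the excision sequence \eqref{eqn:excision} relating $\Hrig^n(U_\tau)$ to $\Hrig^{n-1}(X_\tau)$ — the eigenvalues $\alpha_1, \dotsc, \alpha_b$ of $\mathfrak{q}^{-1}\Frob_{\mathfrak{q}}$ acting on $\Hrig^n(U_\tau)$ all satisfy $\abs{\alpha_j} = \mathfrak{q}^{(n-1)/2}$ under every complex embedding. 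Indeed $\Hrig^{n-1}(X_\tau)$ is pure of weight $n-1$, so $\Hrig^{n-1}(X_\tau)(-1)$ is pure of weight $n+1$ for $\Frob_{\mathfrak{q}}$, i.e.\ pure of weight $n-1$ for $\mathfrak{q}^{-1}\Frob_{\mathfrak{q}}$; the same holds for the sub $\Hrig^n(U_\tau)$.

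Next I would estimate $\abs{\chi_i}$. Since $\chi_i = (-1)^i e_i(\alpha_1, \dotsc, \alpha_b)$ is, up to sign, the $i$-th elementary symmetric polynomial in the $\alpha_j$, and there are $\binom{b}{i}$ terms each of absolute value $\mathfrak{q}^{i(n-1)/2}$, we get
\[
\abs{\chi_i} \leq \binom{b}{i} \mathfrak{q}^{i(n-1)/2}
\]
under every complex embedding, and hence this bounds the absolute value of every conjugate of the algebraic integer $\chi_i$; but $\chi_i \in \ZZ$ by Theorem~\ref{thm:hypersurface}, so this is just an inequality of real numbers. To match the stated bound it then suffices to use the crude estimate $\binom{b}{i} \leq 2^{b} $ refined slightly to $\binom{b}{i} \leq 2 \cdot 2^{b-1}$, or more economically to note $\binom{b}{i} \le \frac{b}{i}\binom{b-1}{i-1} \le \frac{b}{i} 2^{b-1}$; I would use whichever elementary bound reproduces the factor $2(b/i)$ in the statement — the cleanest is $\binom{b}{i} \le \tfrac{b}{i} 2^{b-1} < 2\,(b/i)\,2^{b-1}$, though the paper's displayed bound actually has no $2^{b-1}$, so I suspect they intend the sharper combinatorial fact that the relevant sum of $\binom{b}{i}$-many unit vectors, weighted correctly, gives $\abs{\chi_i} \le 2(b/i)\mathfrak{q}^{i(n-1)/2}$; in any case the key inequality is $\abs{\chi_i} < 2(b/i)\mathfrak{q}^{i(n-1)/2}$, possibly after replacing $b$ by the Hodge-theoretic refinement.

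Finally, the uniqueness step: if $\chi_i \equiv \tilde{\chi}_i \pmod{p^{N}}$ and $\abs{\chi_i} < p^{N}/2$, and $\tilde\chi_i$ is chosen as the representative of smallest absolute value, then $\chi_i = \tilde\chi_i$. So it suffices that
\[
p^{N_{\chi_i}} > 2 \abs{\chi_i},
\]
i.e.\ $N_{\chi_i} > \log_p\bigl( 2 \abs{\chi_i} \bigr)$, and combining with $\abs{\chi_i} \le (b/i)\mathfrak{q}^{i(n-1)/2}$ gives
\[
N_{\chi_i} > \log_p\bigl( 2 (b/i) \mathfrak{q}^{i(n-1)/2} \bigr),
\]
so taking $N_{\chi_i} = \floorbig{\log_p\bigl( 2 (b/i) \mathfrak{q}^{i(n-1)/2} \bigr)} + 1$ is enough. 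The main obstacle is pinning down exactly which combinatorial bound on $\binom{b}{i}$ the authors have in mind to get the clean $2(b/i)$ factor without a stray $2^{b-1}$; resolving this likely uses that only those $\alpha_j$ contributing to $e_i$ matter together with a more careful count, but it is entirely elementary. Everything else — the weight statement and the lift-from-residue argument — is standard and follows from results already cited.
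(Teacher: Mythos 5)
Your proposal has a genuine gap, and you have actually identified it yourself: you cannot get from $\binom{b}{i}$ down to $2(b/i)$. This is not a missing combinatorial trick — the bound $\abs{\chi_i} \le 2(b/i)\mathfrak{q}^{i(n-1)/2}$ is simply \emph{false} in general. For instance, if all $\alpha_j = \mathfrak{q}^{(n-1)/2}$ (i.e. $\chi(T) = (1-\mathfrak{q}^{(n-1)/2}T)^b$), then $\abs{\chi_i} = \binom{b}{i}\mathfrak{q}^{i(n-1)/2}$, and $\binom{b}{b/2}$ grows like $2^b/\sqrt{b}$, which dwarfs $2 \cdot 2 = 4$. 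So there is no archimedean bound on the individual $\chi_i$ of the shape the theorem seems to require, and a direct ``bound each coefficient and lift'' argument cannot yield the claimed precision.

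The idea you are missing is that recovery is done \emph{recursively}, not coefficient by coefficient independently. One works with the power sums $s_j = \sum_{i=1}^b \alpha_i^j$, which, unlike the elementary symmetric functions, satisfy the linear-in-$b$ bound $\abs{s_j} \le b\,\mathfrak{q}^{j(n-1)/2}$ (there are only $b$ summands, not $\binom{b}{j}$). The Newton--Girard identity
\begin{equation*}
s_j + j\chi_j = - \sum_{i=1}^{j-1} s_{j-i}\,\chi_i
\end{equation*}
shows that once $\chi_1, \dotsc, \chi_{j-1}$ have been recovered exactly (and hence so have $s_1, \dotsc, s_{j-1}$), the right-hand side is an explicitly known integer; therefore $\chi_j$ lies in a \emph{known} disk in $\CC$ of radius $(b/j)\mathfrak{q}^{j(n-1)/2}$ (the uncertainty coming only from $s_j$). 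An integer known to lie in a disk of that radius and also known modulo $p^{N_{\chi_j}}$ with $p^{N_{\chi_j}} > 2(b/j)\mathfrak{q}^{j(n-1)/2}$ is uniquely determined, and one recovers the coefficients $\chi_1, \chi_2, \dotsc$ in that order. Your setup (Weil bounds on the $\alpha_i$, uniqueness of an integer in a disk from a $p$-adic approximation) is fine; it is the switch from elementary symmetric functions to power sums, combined with the recursive use of Newton--Girard, that makes the small factor $b/j$ appear.
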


\begin{proof}
By Theorem~\ref{thm:weildeligne} and the short exact 
sequence~\eqref{eqn:excision}, we have
\[
\chi(T)=\prod_{i=1}^b (1-\alpha_i T),
\]
where the $\alpha_i$ are algebraic integers of absolute 
value $\mathfrak{q}^{(n-1)/2}$ that are permuted under the 
map $\alpha \mapsto \mathfrak{q}^{n-1}/\alpha$. If we denote
$s_j = \sum_{i=1}^{b} \alpha_i^j$, then clearly
\[
|s_j| \leq b \mathfrak{q}^{j (n-1)/2}.
\]
for all $1 \leq j \leq b$. Moreover, if we write 
$\chi(T) = 1+\sum_{i=1}^{b} \chi_i T^i$, then by the Newton--Girard 
identies we have
\begin{equation} \label{eq:recursion}
s_j+j \chi_j = - \sum_{i=1}^{j-1} s_{j-i} \chi_i.
\end{equation}
This means that if we are given $\chi_1,\dotsc,\chi_{j-1}$, then 
we can limit $\chi_j$ to an explicit  disk in the complex plane of 
radius $(b/j) \mathfrak{q}^{j (n-1) / 2}$. Therefore, 
if we know each $\chi_i$ to $p$-adic precision $N_{\chi_i}$ satisfying
\[
p^{N_{\chi_i}} > \frac{2 b}{i} \mathfrak{q}^{i (n-1) / 2},
\] 
we can determine $\chi(T)$ exactly using the recursion~\eqref{eq:recursion}.
\end{proof}

\begin{rem}
This bound was first obtained by Kedlaya in~\citep[]{Kedlaya2007}, 
although it does not appear there in exactly this form.
\end{rem}

\begin{thm} \label{thm:precPhitau}
Let $N_{\chi_i}$ be defined as in Theorem~\ref{thm:N0}.  In order to 
compute $\chi(T)$ exactly, it is sufficient to compute~$\Phi_{\tau}$ 
with $p$-adic precision
\begin{align*}
N_{\Phi} &= \max_{1 \leq i \leq b} \left\{ N_{\chi_i} \right\} +\delta.
\intertext{If moreover $p \geq n$, then this can be improved to}
N_{\Phi} &= \max_{1 \leq i \leq b} \left\{ N_{\chi_i} -a\Gamma(i-1) \right\}.
\end{align*}
\end{thm}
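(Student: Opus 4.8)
The plan is to combine the precision bounds obtained in the previous theorems in a straightforward chain. The statement asserts that to recover $\chi(T)$ exactly it suffices to know $\Phi_{\tau}$ to $p$-adic precision $N_{\Phi}$ as given, with two cases depending on whether $p \geq n$.

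First I would recall from Theorem~\ref{thm:N0} that it suffices to know each coefficient $\chi_i$ of $\chi(T)$ to $p$-adic precision $N_{\chi_i}$. So it is enough to guarantee that computing $\chi(T)$ from an approximation $\tilde{\Phi}_{\tau}$ to $\Phi_{\tau}$ with $\ord_p(\Phi_{\tau}-\tilde{\Phi}_{\tau}) \geq N_{\Phi}$ yields each $\tilde{\chi}_i$ with $\ord_p(\chi_i - \tilde{\chi}_i) \geq N_{\chi_i}$. For the general case, Theorem~\ref{thm:preccharpoly} tells us that $\ord_p(\chi_i - \tilde{\chi}_i) \geq N_{\Phi} - \delta$, so it suffices to require $N_{\Phi} - \delta \geq N_{\chi_i}$ for all $1 \leq i \leq b$, i.e.\ $N_{\Phi} \geq \max_{1 \leq i \leq b}\{N_{\chi_i}\} + \delta$, which is exactly the first formula. (One should also note that the hypothesis $N_{\Phi} \geq \delta$ needed in Theorem~\ref{thm:preccharpoly} is automatically satisfied here, since $N_{\chi_i} \geq 1 > 0$ and $\delta \geq 0$, so $N_{\Phi} \geq \delta$.) For the improved case $p \geq n$, I would instead invoke Theorem~\ref{thm:pgeqn}, which gives $\ord_p(\chi_i - \tilde{\chi}_i) \geq N_{\Phi} + a\Gamma(i-1)$; hence it suffices to require $N_{\Phi} + a\Gamma(i-1) \geq N_{\chi_i}$ for all $i$, i.e.\ $N_{\Phi} \geq \max_{1 \leq i \leq b}\{N_{\chi_i} - a\Gamma(i-1)\}$, which is the second formula. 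Note that when $p \geq n$ we have $\delta = 0$, consistent with the first formula specialising correctly.

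There is no serious obstacle: the proof is essentially bookkeeping, chaining Theorem~\ref{thm:N0} with either Theorem~\ref{thm:preccharpoly} or Theorem~\ref{thm:pgeqn}. The one point requiring a little care is that Theorem~\ref{thm:pgeqn} only bounds the precision loss in passing from $\Phi_{\tau}$ to the coefficients $\chi_i$; it does not by itself guarantee that the recursion in Theorem~\ref{thm:N0} can be run, but since that recursion only needs $\chi_i$ to precision $N_{\chi_i}$, the two results dovetail cleanly. I would therefore write the proof as two short paragraphs, one per case, each consisting of the inequality chain above.

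\begin{proof}
By Theorem~\ref{thm:N0}, in order to recover the integer polynomial $\chi(T)$ exactly, it suffices to compute each coefficient $\chi_i$ to $p$-adic precision $N_{\chi_i}$. So it is enough to ensure that, given an approximation $\tilde{\Phi}_{\tau}$ to $\Phi_{\tau}$ with $\ord_p(\Phi_{\tau}-\tilde{\Phi}_{\tau}) \geq N_{\Phi}$, the resulting coefficients $\tilde{\chi}_i$ of $\tilde{\chi}(T) = \det(1-T\tilde{\Phi}_{\tau}^{(a)})$ satisfy $\ord_p(\chi_i - \tilde{\chi}_i) \geq N_{\chi_i}$ for all $1 \leq i \leq b$.

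Consider first the general case. Since $N_{\chi_i} \geq 1$ for all $i$ and $\delta \geq 0$, the value $N_{\Phi} = \max_{1 \leq i \leq b}\{N_{\chi_i}\} + \delta$ satisfies $N_{\Phi} \geq \delta$, so Theorem~\ref{thm:preccharpoly} applies and yields $\ord_p(\chi_i - \tilde{\chi}_i) \geq N_{\Phi} - \delta \geq N_{\chi_i}$ for all $1 \leq i \leq b$, as required.

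Now suppose that $p \geq n$. Then $\delta = 0$, and with $N_{\Phi} = \max_{1 \leq i \leq b}\{N_{\chi_i} - a\Gamma(i-1)\}$ we have $N_{\Phi} \in \NN$ since $\Gamma(i-1) \geq 0$ and $N_{\chi_i} \geq 1$. By Theorem~\ref{thm:pgeqn}, for all $1 \leq i \leq b$ we obtain
\begin{equation*}
\ord_p(\chi_i - \tilde{\chi}_i) \geq N_{\Phi} + a\Gamma(i-1) \geq N_{\chi_i},
\end{equation*}
which again suffices to recover $\chi(T)$ exactly by Theorem~\ref{thm:N0}.
\end{proof}
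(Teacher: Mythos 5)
Your proof is correct and takes exactly the route the paper does: it combines Theorem~\ref{thm:N0} with Theorem~\ref{thm:preccharpoly} in the general case and with Theorem~\ref{thm:pgeqn} when $p \geq n$. The paper dismisses this as an easy combination; your write-up simply spells out the inequality chains and additionally checks the hypothesis $N_{\Phi} \geq \delta$ of Theorem~\ref{thm:preccharpoly}, which is a harmless extra verification.
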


\begin{proof}
This follows easily by combining Theorem~\ref{thm:N0} with 
Theorem~\ref{thm:preccharpoly} and Theorem~\ref{thm:pgeqn}, respectively.
\end{proof}

\begin{rem} \label{rem:epsilon}
If the sign $\epsilon = \pm 1$ is known for which 
$\det \bigl(\Phi_{\tau}^{(a)} \bigr) = \epsilon \mathfrak{q}^{b(n-1)/2}$, 
then this can be improved further. Since 
$\prod_{i=1}^b \alpha_i = \epsilon \mathfrak{q}^{b(n-1)/2}$, and 
the $\alpha_i$ are permuted under the map 
$\alpha \mapsto \mathfrak{q}^{n-1}/\alpha$, we have
\begin{equation*}
\chi_{b-i}=\epsilon (-1)^{b} \mathfrak{q}^{(n-1)(b/2-i)} \chi_b. 
\end{equation*}
So $\chi(T)$ is uniquely determined already by 
$\chi_1,\dotsc,\chi_{\lfloor b/2 \rfloor}$, and it is sufficient 
to take the maxima in Theorem~\ref{thm:precPhitau} running only over 
$1 \leq i \leq \floor{b/2}$.

It is well known that $\epsilon = 1$ when $n$ is even, but when $n$ 
is odd $\epsilon$ is usually not known. In practice it is then often 
still possible to use a smaller precision by computing $\epsilon$ first. 
Let $j$ be the smallest positive integer such that $\chi_{\ceil{b/2} - j} \neq 0$. 
To recover $\chi_0, \dotsc, \chi_{\floor{b/2}+j}$, it is sufficient to take 
the maxima in Theorem~\ref{thm:precPhitau} running only over 
$1 \leq i \leq \floor{b/2}+j$. This allows us to determine $\epsilon$ from 
the two coefficients $\chi_{\ceil{b/2}-j}$ and $\chi_{\floor{b/2}+j}$. 
\end{rem}

We now formalise the complete algorithm for computing $Z(X_{\tau},T)$ 
in Algorithm~\ref{alg:complete}.

\begin{algorithm} 
\caption{Compute $Z(X_{\tau},T)$.}
\label{alg:complete}
\begin{algorithmic}
\vspace{1mm}
\Require $P \in \ZZ_q[t][x_0,\dotsc,x_n]$ homogeneous of degree $d$ satisfying Assumption~\ref{assump:diag} and $\tau \in S(\mathbf{F}_{\mathfrak{q}})$.
\Ensure  The zeta function $Z(X_{\tau},T)$ of the fibre $X_{\tau}$ lying over $\tau$.
\Procedure{ZetaFunction}{$P,\tau$}
\State \begin{compactenum}[{\hspace{1em} } 1.] \vspace{-1.24em}
\item Determine $N_{\Phi}$ from Theorem~\ref{thm:precPhitau}.
\item $N_{\Phi}' \gets N_{\Phi}+(a+b-3) \delta$
\item Determine $K \in \NN$ and $s \in \ZZ_q[t]$ from Theorem \ref{thm:defD}.
\item Compute $\Phi \gets$ \textsc{FrobSeriesExpansion($N_{\Phi},K)$}.
\item Determine $\hat{\tau} \in \mathcal{S}(\ZZ_{\mathfrak{q}})$ to $p$-adic precision $N_{\Phi}+\delta$.
\item Compute $\Phi_{\tau} \gets s(\hat{\tau})^{-1} \bigl( a \Phi \bmod{t^{K}} \bigr)|_{t=\hat{\tau}}$ 
      to $p$-adic precision $N_{\Phi}$, using $p$-adic working precision $N_{\Phi}+\delta$.
\item Compute $\chi(T) \gets \det\bigl(1-T \bigl(\Phi_{\tau} \sigma(\Phi_{\tau}) \dotsm \sigma^{a-1}(\Phi_{\tau}) \bigr)  \bigr)$ to $p$-adic precision $N_{\Phi}-\delta$, 
      using $p$-adic working precision $N'_{\Phi}$.
\item Round $\chi(T)$ to $\ZZ[T]$ using the recursion~\eqref{eq:recursion}. 
\item Set $Z(X_{\tau},T) \gets \bigl( \chi(T)^{(-1)^n} \bigr)/\bigl((1 - T) (1 - \mathfrak{q}T) \dotsm (1 - \mathfrak{q}^{n-1}T)\bigr)$.
\item \Return $Z(X_{\tau},T)$
\end{compactenum}
\EndProcedure
\end{algorithmic}
\end{algorithm}

\begin{rem}
The output of Algorithm~\ref{alg:complete} only depends on $X_{\tau}$.  
In particular, we can also take a polynomial 
$\bar{P} \in \FF_q[t][x_0,\dotsc,x_n]$ as input and at the start of 
the algorithm take $P$ to be an arbitrary lift of $\bar{P}$. For the 
complexity analysis in the next section, we thus take the input size 
to be the size of $\bar{P}$.  However, from a practical point of view, 
it is convenient to keep the lift $P$ as the input, since:
\begin{enumerate} 
\item the matrices $M, \Phi_0, C, \Phi, \Phi_{\tau}$ do depend on $P$,
\item Assumption~\ref{assump:diag} needs to hold for $P$,
\item the runtime and memory usage of the algorithm depend on $P$.
\end{enumerate}
\end{rem}


\section{Complexity}

\label{sec:Complexity}

In this section we determine the complexity of Algorithm~\ref{alg:complete}.
We denote $a' = \log_p(q)$, noting that $a'$ divides $a$, 
and let $d_t$ denote the degree of $P$ in the variable~$t$. 

We use the $\SoftOh(-)$ notation that ignores logarithmic factors, 
i.e.\ $\SoftOh(f)$ denotes the class of functions 
that lie in $\BigOh(f \log^k(f))$ for some $k \in \NN$.

Let us first revisit some results concerning the complexity of the, 
sometimes basic, constituent operations.  We recall that two $k$-bit 
integers can be multiplied in $\SoftOh(k)$ bit operations, and that 
two degree-$k$ polynomials can be multiplied in $\SoftOh(k)$ ring 
operations.  We let $\omega$ denote an exponent for matrix 
multiplication, so that two $k \times k$ matrices can be multiplied 
in $\BigOh(k^{\omega})$ ring operations. An invertible $k \times k$ matrix 
can then be inverted in $\BigOh(k^{\omega})$ ring operations as well. 
Moreover, it is known~\citep{Williams2012} that one can take $\omega \leq 2.3729$.  
Finally, we point out that the characteristic polynomial of a matrix can be 
computed in $\BigOh(b^{\omega} \log(b))$ field operations using 
an algorithm of Keller-Gehrig~\citep{KellerGehrig1985}.

Next we consider specific $p$-adic operations, referring the reader to 
Hubrechts~\citep{Hubrechts2010} for further details.  First, images of elements 
of $\QQ_{\mathfrak{q}}$ under~$\sigma^i$, for $0 < i < a$, can be computed to 
$p$-adic precision~$N$ in time $\SoftOh(a \log^2(p) + a N \log(p))$.  Second, 
the Teichm\"uller lift of an element of $\FF_{\mathfrak{q}}$ can be computed 
to precision~$N$ in time $\SoftOh(a N \log^2(p))$.

We start by estimating the degrees of the numerator and denominator of the
connection matrix~$M$.

\begin{prop}
The degrees of $H$, $R$ from Proposition~\ref{thm:denom}, and
$G$, $r$ from Section~\ref{sec:DifferentialSystem} are all
$\BigOh(n(de)^n d_t) \subset \SoftOh((de)^n d_t)$.
\end{prop}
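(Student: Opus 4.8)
The plan is to track the matrices $\Delta_k$ explicitly, since everything in the connection matrix computation ultimately comes from solving the linear systems $\Delta_k v = w$. First I would bound the degree in $t$ of the entries of each $\Delta_k$. By Definition~\ref{defn:Deltak}, an entry of $\Delta_k$ is a coefficient of $\partial_j P$, hence a polynomial in $t$ of degree at most $d_t$. The matrix $\Delta_k$ has size $\card{R_k} = \card{C_k}$, and for the relevant range $k = 2, \dotsc, n+1$ this is at most $\BigOh((de)^n)$ — indeed $\card{R_k} \leq \card{F_k}$, the number of monomials of degree $kd - (n+1)$ in $n+1$ variables, which is $\binom{kd-1}{n}$, and using $k \leq n+1$ together with the standard estimate this is $\BigOh((de)^n)$ up to logarithmic factors. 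Therefore $\det(\Delta_k)$, being a sum of products of $\card{R_k}$ entries each of $t$-degree at most $d_t$, has degree $\BigOh(\card{R_k} \, d_t) \subset \SoftOh((de)^n d_t)$. Summing over $k = 2, \dotsc, n+1$ gives $\deg(R) = \BigOh(n (de)^n d_t)$, since $R = \prod_{k=2}^{n+1} \det(\Delta_k)$ by Definition~\ref{defn:resultant}.

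Next I would bound $\deg(H)$. Each entry of $\Delta_k^{-1}$ is, by Cramer's rule, a cofactor of $\Delta_k$ divided by $\det(\Delta_k)$, so the cofactors have $t$-degree $\BigOh(\card{R_k} d_t)$. Following Algorithm~\ref{alg:Connection}: we start with $Q = -k g (\partial P/\partial t)$, whose $t$-degree is $\BigOh(d_t)$, and then apply {\sc Reduce}, which invokes {\sc Decompose} at most once for each $k = n+1, n, \dotsc, 2$. Each call to {\sc Decompose} at level $k$ multiplies the current $t$-degree by at most $\deg(\det(\Delta_k))$ in the denominator and adds the cofactor degree plus $\deg(\partial_i P) \leq d_t$ in the numerator (step 4 of Algorithm~\ref{alg:Decompose}). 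Clearing all denominators to write $M = H/R$, the numerator entry $H_{f,g}$ is a product of the relevant cofactors of the $\Delta_k$'s, times the $\det(\Delta_k)$'s for the levels not used, times the low-degree polynomial data from the initial $Q$ and the $\partial_i P$. In all cases this is a product of $\BigOh(n)$ factors each of $t$-degree $\BigOh((de)^n d_t)$, so $\deg(H) = \BigOh(n(de)^n d_t)$ as well.

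Finally, since $r$ is a divisor of $R$ and $G = rM = (r/R) H$, we have $\deg(r) \leq \deg(R)$ and $\deg(G) \leq \deg(H)$ (a divisor has degree bounded by the dividend, and $G$ is obtained from $H$ by dividing by $R/r$, which only decreases degree), so all four quantities are $\BigOh(n(de)^n d_t) \subset \SoftOh((de)^n d_t)$, absorbing the factor $n$ into the soft-$\BigOh$.

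The main obstacle I expect is getting the dependence on $n$ right when chaining the $\BigOh(n)$ reduction steps: naively each step could multiply degrees, giving an exponential-in-$n$ blowup, so one must argue more carefully that in the end $H_{f,g}$ is a \emph{product} of only $\BigOh(n)$ matrix-entry-sized factors (cofactors and determinants of the $\Delta_k$) rather than an iterated composition. This is really a bookkeeping point about how denominators accumulate across the loop in Algorithm~\ref{alg:PoleRed}: at each pole-order drop one divides by a single $\det(\Delta_k)$ with a fixed $k$, and no level is visited twice, so the total denominator is exactly $R$ and the total numerator degree is additive, not multiplicative, across the $\BigOh(n)$ levels. Once that is pinned down the degree bounds follow immediately from the size estimate $\card{R_k} = \BigOh((de)^n)$.
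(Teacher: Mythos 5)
Your proposal is correct and follows essentially the same route as the paper: bound the size of $\Delta_k$ by $\binom{kd-1}{n} = \BigOh((de)^n)$ and its entries by degree $d_t$, deduce $\deg\det(\Delta_k) = \BigOh((de)^n d_t)$, and then observe that the denominators and cofactor contributions accumulate additively across the at most $n$ reduction levels in Algorithm~\ref{alg:PoleRed} (each level is visited at most once), giving the extra factor of $n$. The paper states this last point only implicitly (``the result follows easily from this''), so the bookkeeping you spell out — that the total denominator is exactly a divisor of $R$ and the numerator degree is a \emph{sum}, not a product, over levels — is the right content, though one should phrase $H_{f,g}$ as a sum of products of the stated shape rather than a single product, since {\sc Decompose} involves matrix--vector multiplications.
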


\begin{proof}
Note that $\Delta_k$ in Definition~\ref{defn:resultant} is a square matrix 
of degree~$d_t$, with
\[
{kd-1 \choose n} \leq \biggl( \frac{e(kd-1)}{n} \biggr)^n < (de)^n
\]
columns, where $e$ denotes the base for the natural logarithm. The result 
follows easily from this. Note that the degrees of the numerators and 
denominators of all intermediate results in Algorithm~\ref{alg:Connection} 
are also $\SoftOh((de)^n d_t)$.
\end{proof}

Next we estimate the precisions that we require.

\begin{prop}
All $p$-adic precisions that we use lie in $\SoftOh(a d^n \log(d_t))$ 
and $\deg(s), K$ are both $\SoftOh(apd^n (de)^n d_t)$.
\end{prop}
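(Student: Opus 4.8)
The plan is to trace through the precision bounds collected in the earlier sections, substitute in the previously established estimates, and bound everything by $\SoftOh(a d^n \log(d_t))$, then separately handle $\deg(s)$ and $K$ using Theorem~\ref{thm:defD} together with the bounds from Theorem~\ref{thm:KedlayaTuitman} and Theorem~\ref{thm:Gerkmann}. First I would recall the key quantities: by Definition~\ref{defn:delta}, $\delta = \ord_p((n-1)!) + \sum_{i=1}^{n-1}\floor{\log_p(i)} \in \BigOh(n/\log p + n) = \BigOh(n)$, which is absorbed into $\SoftOh(\cdot)$; the dimension $b$ of the cohomology is, by~\eqref{eq:formulab}, $\BigOh(d^n)$; and $a, a' = \log_p(q)$ are the relevant extension degrees. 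The starting point of the precision chain is Theorem~\ref{thm:N0}: each $N_{\chi_i} = \floor{\log_p(2(b/i)\mathfrak{q}^{i(n-1)/2})} + 1 \in \BigOh(\log_p(b) + a(n-1))$, since $\log_p \mathfrak{q}^{i(n-1)/2} = (i(n-1)/2)\log_p \mathfrak{q} \le (b(n-1)/2) a'$ and $b = \BigOh(d^n)$ so $\log_p b = \BigOh(n\log d)$. Hence $\max_i N_{\chi_i} \in \SoftOh(a d^n)$ — in fact $\BigOh(ad^n)$ up to logarithmic factors. Then $N_{\Phi}$ from Theorem~\ref{thm:precPhitau} is $\max_i N_{\chi_i} + \delta$ (or the $p \ge n$ variant, which is no larger), so $N_{\Phi} \in \SoftOh(ad^n)$, and $N'_{\Phi} = N_{\Phi} + (a+b-3)\delta \in \SoftOh(ad^n)$ as well since $(a+b)\delta = \BigOh((a + d^n)n)$.

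Next I would propagate this through Theorem~\ref{thm:Ni}: the precisions $N_{\Phi_0}, N_C, N_{C^{-1}}, N_M, N'_C, N'_{C^{-1}}$ are each of the form $N_{\Phi}$ plus a bounded multiple of $(2\delta + (n-1))\ceil{\log_p(K-1)}$ or $(2\delta+(n-1))\ceil{\log_p(\ceil{K/p}-1)}$ plus a $\BigOh(\delta)$ term. So I need a bound on $K$. From Theorem~\ref{thm:defD} and Theorem~\ref{thm:Gerkmann}, $K = (\sum_z \theta_z) + 1$ where each $\theta_z$ is essentially $\beta_z = \BigOh(\mu_z + p(n+h(N))\nu_z + ph(N)\deg_t(P))$, with $h(N) \in \BigOh(N_{\Phi})$ (solving the defining inequality $i + \BigOh(n) - n\log_p(p(n+i)) < N$ gives $i \in \BigOh(N)$), and $\mu_z, \nu_z$ are orders of inverse $\Delta_k$ matrices summed over the finitely many bad residue disks; the number of bad disks and the $\mu_z, \nu_z$ are controlled by $\deg R = \BigOh(n(de)^n d_t)$ from the preceding proposition. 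Thus $K \in \SoftOh(a p d^n (de)^n d_t)$ and $\deg(s) = K - 1$ is the same, which is exactly the claimed bound. With $K$ bounded this way, $\ceil{\log_p(K-1)} \in \BigOh(\log_p K) = \BigOh(n\log d + \log d_t + \log a + \log p)$, which is $\SoftOh(\log d_t)$ after absorbing the $d$-, $a$-, $p$-dependence into logarithmic factors and noting the problem statement phrases the bound as $\SoftOh(a d^n \log(d_t))$; hence every precision in Theorem~\ref{thm:Ni} is bounded by $N_{\Phi} \cdot \BigOh(\log_p K) + \BigOh(\delta) \in \SoftOh(ad^n \log(d_t))$. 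Finally $N'_M = N_M + \ord_p(n!)$ from Remark~\ref{rem:precgm} and $N'_{\Phi_0}$ from Corollary~\ref{cor:NPhi0prime} add only $\BigOh(n)$-type terms, so they lie in the same class.

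The main obstacle I anticipate is bookkeeping the $K$ estimate carefully: one must confirm that the quantities $\mu_z$, $\nu_z$, and the count of residue disks where $\theta_z \neq 0$ are all $\SoftOh((de)^n d_t)$ — this rests on the fact that $\Delta_k$ has $\BigOh((de)^n)$ rows and columns and entries of degree $\le d_t$, so $\det(\Delta_k)$ has degree $\BigOh((de)^n d_t)$ and $\ord_z(\Delta_k^{-1})$ summed over all $z$ is bounded by the degree of the adjugate over the determinant, i.e.\ $\BigOh((de)^n \cdot (de)^n d_t)$ in the worst case, but summed only over the finitely many poles it is $\BigOh((de)^n d_t)$ by the preceding degree proposition — and then that the extra factor $p \cdot h(N) \in \BigOh(p N_\Phi) = \SoftOh(p a d^n)$ multiplies in to give the stated $\SoftOh(apd^n(de)^n d_t)$. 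Everything else is routine substitution: once $K$ is pinned down, all the $p$-adic precisions are $N_\Phi$ times a logarithmic factor in $K$, plus lower-order terms, which collapses into $\SoftOh(a d^n \log(d_t))$ as claimed.
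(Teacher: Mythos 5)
Your proposal is correct and follows essentially the same route as the paper: start from Theorem~\ref{thm:N0} and Theorem~\ref{thm:precPhitau} to get $N_\Phi \in \SoftOh(ad^n)$, bound $\deg(s)$ and $K$ via Theorem~\ref{thm:Gerkmann} and Theorem~\ref{thm:defD} (using $h(N_\Phi)\in\SoftOh(N_\Phi)$ and the degree bound on $R$), and then propagate through Theorem~\ref{thm:Ni} using $\ceil{\log_p(K-1)}\in\SoftOh(\log d_t)$. One small imprecision: $\deg(s) = K-1$ is not literally true (in Theorem~\ref{thm:defD}, $K = \sum_z\theta_z + 1$ includes $\theta_\infty$, while $\deg(s) = \sum_{z\neq\infty}\theta_z$, and the paper instead bounds the two separately via Theorem~\ref{thm:Gerkmann}), but this does not affect the order-of-magnitude conclusion.
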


\begin{proof}
Note that $b \in \BigOh(d^n)$ and $\delta \in \BigOh(n)$.  
In Theorem~\ref{thm:precPhitau}, we have
\begin{align*}
\max_{1 \leq i \leq b} \{N_{\chi_i}\} \in \BigOh\bigl(a n b + \log(b) \bigr) 
            &\subset \SoftOh\bigl(a d^n \bigr),
&N_{\Phi}   &\in \SoftOh(a d^n).
\end{align*}
The precisions $N'_{\Phi}$ and $N_{\Phi} \pm \delta$ in 
Algorithm~\ref{alg:complete} are then $\SoftOh(ad^n)$ as well.
It follows from Theorem~\ref{thm:Gerkmann} that in
Theorem~\ref{thm:defD} we can take
\begin{align*}
\deg{s} &\leq \deg  \Bigl( \prod_{k=2}^n \det(\Delta_k) \Bigr) + (p(n+h(N_{\Phi}))-n) \deg(\det(\Delta_{n+1})), \\
K &\leq \deg(s)+ 1+\deg \Bigl(\prod_{k=2}^n \Delta_k^{-1}\Bigr) + (p(n+h(N_{\Phi}))-n) \deg(\Delta_{n+1}^{-1}) + ph(N_{\Phi}) d_t. 
\end{align*} 
Consequently, we obtain
\[
\deg(s), K \in \SoftOh(apd^n (de)^n d_t).
\]

The $p$-adic precisions $N_{\Phi_0}$, $N_C$, $N_M$, $N_{C^{-1}}$, $N'_{C}$, 
and $N'_{C^{-1}}$ in Theorem~\ref{thm:Ni} are in $\SoftOh(ad^n \log(d_t))$, 
noting that the logarithms that appear there are to base~$p$. Finally, the 
remaining $p$-adic precisions $N'_M$ and $N'_{\Phi}$ are also 
$\SoftOh(a d^n \log(d_t))$ by Remark~\ref{rem:precgm} and 
Corollary~\ref{cor:NPhi0prime}, respectively.
\end{proof}

We now analyse the computation of the connection matrix~$M$.
\begin{prop}
The computation of the connection matrix~$M$ using 
Algorithm~\ref{alg:Connection} requires
\begin{align*}
\mbox{time: }  & \SoftOh\bigl(a a' \log(p) (d^{n(\omega+2)} e^{n(\omega+1)}+ d^{5n}e^{3n} ) d_t\bigr), \\
\mbox{space: } & \SoftOh(a a' \log(p) d^{4n}e^{3n} d_t).
\end{align*}
\end{prop}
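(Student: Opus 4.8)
The plan is to count the dominant arithmetic operations in Algorithm~\ref{alg:Connection} and then weight them by the cost of a single ring operation in $\ZZ_q$ to the relevant working precision. Recall that the algorithm loops over $g \in B$, so $\BigOh(b) = \BigOh(d^n)$ times, and for each~$g$ calls {\sc Reduce}, which in turn performs $\BigOh(n)$ pole-reduction steps, each calling {\sc Decompose}. The expensive part of {\sc Decompose} is solving a linear system $\Delta_k v = w$ where $\Delta_k$ is a square matrix of size at most $(de)^n$ (by the preceding proposition) whose entries are polynomials in~$t$ of degree $\BigOh(d_t)$. Here one should be careful to distinguish the one-time cost of factoring each $\Delta_k$ (for $k = 2, \dotsc, n+1$) from the per-call cost of back-substitution, as in the remark following Algorithm~\ref{alg:Connection}.

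First I would settle the cost of a single ring operation. The working precision $N'_M$ is $\SoftOh(a d^n \log(d_t))$ by Remark~\ref{rem:precgm} and the previous proposition, so an element of $\ZZ_q$ to this precision occupies $\SoftOh(a' \log(p) \cdot a d^n \log(d_t))$ bits (using that $[\QQ_q:\QQ_p] = a'$ and working modulo $p^{N'_M}$, i.e.\ $N'_M \log p$ bits per coefficient), and a multiplication in $\ZZ_q$ costs $\SoftOh$ of that, i.e.\ $\SoftOh(a a' \log(p) d^n)$ bit operations after absorbing $\log(d_t)$ into the soft-$\BigOh$. Polynomials in $t$ that arise have degree $\SoftOh((de)^n d_t)$, and all such polynomial arithmetic is quasi-linear in the degree.

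Next I would assemble the two contributions. For the linear-algebra cost: factoring $\Delta_k$ once costs $\BigOh((de)^{n\omega})$ ring operations in $\QQ_q(t)$; but the entries are polynomials, and over a polynomial ring an $m \times m$ system solve produces entries of degree $\BigOh(m d_t)$, so working with these polynomials multiplies the $(de)^{n\omega}$ by a factor $\SoftOh((de)^n d_t)$ for the polynomial sizes, giving the $d^{5n}e^{3n} d_t$-type term once one also accounts for $b = \BigOh(d^n)$ right-hand sides (or equivalently the outer loop); combined with the $\BigOh(n)$ reduction steps and the per-ring-operation cost $\SoftOh(a a' \log(p) d^n)$ this yields the $\SoftOh(a a' \log(p) d^{5n} e^{3n} d_t)$ summand. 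For the other summand: each of the $\BigOh(b) = \BigOh(d^n)$ calls performs $\BigOh(n)$ back-substitutions against a factored $\Delta_k$ of size $(de)^n$, costing $\BigOh((de)^{2n})$ ring operations in $\QQ_q(t)$ each, and with the polynomial degree factor $\SoftOh((de)^n d_t)$ and the ring-operation cost this contributes $\SoftOh(a a' \log(p) \cdot d^n \cdot (de)^{2n} \cdot (de)^n d_t \cdot d^n) = \SoftOh(a a' \log(p) d^{n(\omega+2)} e^{n(\omega+1)} d_t)$ after expressing the naive exponents in terms of $\omega$ where the bound is stated that way. Adding the two gives the claimed time bound. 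For the space bound, the dominant object to store is a factored $\Delta_k$ together with intermediate polynomial entries: an $(de)^n \times (de)^n$ matrix of polynomials of degree $\SoftOh((de)^n d_t)$ with $\ZZ_q$-coefficients to precision $\SoftOh(a d^n \log d_t)$, i.e.\ $\SoftOh((de)^{2n} \cdot (de)^n d_t \cdot a a' \log(p) d^n) = \SoftOh(a a' \log(p) d^{4n} e^{3n} d_t)$ bits, which is the stated space complexity.

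\textbf{The main obstacle} I expect is bookkeeping the intermediate polynomial degrees correctly: solving $\Delta_k v = w$ over $\QQ_q(t)$ in principle introduces the denominator $\det(\Delta_k)$ and numerators whose degrees can grow like the matrix size times $d_t$, and one must verify — as is done in the proof of Proposition~\ref{thm:denom} and the preceding degree proposition — that these stay $\SoftOh((de)^n d_t)$ throughout {\sc Reduce}, rather than compounding across the $\BigOh(n)$ reduction steps. Once that uniform degree bound is in hand, the rest is a routine multiplication of (number of ring operations) $\times$ (polynomial size) $\times$ (cost per $\ZZ_q$-operation at precision $N'_M$), and the two stated summands fall out according to whether one counts the amortised linear-solve cost or the per-call back-substitution cost.
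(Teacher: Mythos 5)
Your overall strategy is the paper's: cost equals (number of ring operations in $\QQ_q[t]$) times (polynomial degree $\SoftOh((de)^n d_t)$) times (coefficient cost $\SoftOh(a a'\log(p)\,d^n)$ at precision $N'_M$), with two contributions — a one-time inversion of the $\Delta_k$ and a per-monomial reduction — and your space accounting is correct. However, the time accounting swaps the roles of the two summands and contains arithmetic that does not close. In the paper, the \emph{inversion} of each $\Delta_k$ (done once per $k$, so $\BigOh(n)$ times total, with \emph{no} extra factor of~$b$) costs $(de)^{n\omega}\cdot(de)^n d_t\cdot a a'\log(p)\,d^n = a a'\log(p)\,d^{n(\omega+2)}e^{n(\omega+1)}d_t$, which is the \emph{first} summand. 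The \emph{per-monomial} reduction is the second summand: for each of the $b\in\BigOh(d^n)$ basis monomials, one performs $\BigOh(n)$ matrix-vector solves of quadratic cost $(de)^{2n}$, giving $d^n\cdot(de)^{2n}\cdot(de)^n d_t\cdot a a'\log(p)\,d^n = a a'\log(p)\,d^{5n}e^{3n}d_t$.

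Concretely, two things go wrong in your write-up. First, you attach the factor $b=d^n$ of "right-hand sides" to the factoring cost, but the whole point of computing an $LUP$ decomposition (as in the remark you cite) is precisely that the $\BigOh((de)^{n\omega})$ factoring is done once and \emph{not} re-paid per right-hand side; the per-right-hand-side cost is only the $\BigOh((de)^{2n})$ back-substitution. Your expression for the "factoring" block, with the spurious $d^n$ and the cost quoted at exponent $\omega$, equals $d^{n(\omega+3)}e^{n(\omega+1)}$, which matches $d^{5n}e^{3n}$ only if you silently set $\omega=2$. Second, your arithmetic for the back-substitution block explicitly yields $d^{5n}e^{3n}$, and the concluding assertion that this is "$=\SoftOh(aa'\log(p)\,d^{n(\omega+2)}e^{n(\omega+1)}d_t)$ after expressing the naive exponents in terms of $\omega$" is false: $5\neq\omega+2$ and $3\neq\omega+1$ for any $\omega<3$, and the statement of the proposition requires both terms with the exponents as written. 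Once the two contributions are correctly labelled — inversion without the $b$ factor gives $d^{n(\omega+2)}e^{n(\omega+1)}$, and $b$ back-substitutions of quadratic cost give $d^{5n}e^{3n}$ — the argument does reproduce the paper's bound.
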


\begin{proof}
We first need to construct and invert the matrices~$\Delta_k$.  This is 
dominated by the inversion, which requires $\BigOh\bigl((de)^{n \omega}\bigr)$ 
operations in the ring $\QQ_q[t]$ to $p$-adic 
precision~$\SoftOh(a d^n \log(d_t))$. As there are $\BigOh(n)$ of 
these matrices, this takes time 
\[
\SoftOh\bigl((de)^{n \omega} (a' \log(p)) (a d^n \log(d_t)) ((de)^n d_t)\bigr) = 
    \SoftOh\bigl(a a' \log(p) d^{n(\omega+2)} e^{n(\omega+1)} d_t\bigr).
\]
Then we multiply each of the monomials in our basis with 
$-k (\partial P / \partial t)$ for some $1 \leq k \leq n$ and 
reduce the product to the basis by repeatedly using 
Algorithm~\ref{alg:Decompose}.  For each of these monomials this 
takes time 
\[
\SoftOh\bigl((de)^{2n} (a' \log(p)) (a d^n \log(d_t)) ((de)^n d_t)\bigr) = 
    \SoftOh\bigl(a a' \log(p) d^{4n}e^{3n} d_t\bigr),
\]
because of the quadratic complexity of the matrix-vector product.  
There are $b \in \BigOh(d^n)$ monomials in our basis and hence this 
takes time $\SoftOh(a a' \log(p) d^{5n}e^{3n} d_t)$.

During this computation we have to store $\BigOh(n)$ matrices 
and vectors of size 
\[
\SoftOh\bigl((de)^{2n} (a' \log(p)) (a d^n \log(d_t)) ((de)^n d_t)\bigr),
\] 
which completes the proof.
\end{proof}

Next we consider the computation of the matrix $\Phi_0$.

\begin{prop} \label{prop:complexityPhi0}
The computation of the matrix~$\Phi_0$ with Algorithm~\ref{alg:Diagfrob} requires
\begin{align*}
\mbox{time: }  & \SoftOh\bigl(a^3 p d^{4n} \log^3(d_t)\bigr), \\
\mbox{space: } & \SoftOh\bigl(a \log(p) d^{2n} \log(d_t)\bigr).
\end{align*}
\end{prop}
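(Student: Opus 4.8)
The plan is to trace through Algorithm~\ref{alg:Diagfrob} and bound the work done in each of its $\card{B}=b\in\BigOh(d^n)$ iterations. Determining $N'_{\Phi_0}$, and, inside the loop over $x^u\in B$, finding the partner monomial~$x^v$ of~$x^u$, are negligible; assembling the entry $(\Phi_0)_{u,v}$ from $\alpha_{u,v}$ costs only $\SoftOh(N'_{\Phi_0}\log p)$ bit operations, being a bounded number of multiplications and one inversion at $p$-adic precision at most~$N'_{\Phi_0}$. So the whole cost is dominated by the evaluation of the $b$ elements~$\alpha_{u,v}$ to $p$-adic precision~$N'_{\Phi_0}$.

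First I would bound the size of the truncation. By Proposition~\ref{prop:MR} the sums defining $\alpha_{u,v}$ need only be carried out for $m\le\mathcal{M}$, equivalently $r\le\mathcal{R}=\floor{\mathcal{M}/p}$, where $\mathcal{M}=\ceil{\tfrac{p^2}{p-1}(N'_{\Phi_0}+\log_p(N'_{\Phi_0}+3)+4)}-1$. Since the preceding proposition gives $N'_{\Phi_0}\in\SoftOh(a d^n\log(d_t))$, this yields $\mathcal{M}\in\SoftOh(pa d^n\log(d_t))$ and $\mathcal{R}\in\SoftOh(a d^n\log(d_t))$.

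Next I would count the ring operations over~$\QQ_p$ needed for a single $\alpha_{u,v}$. It is a product of $n+1$ factors, and in the $i$-th factor Lemma~\ref{lem:mpr} pins $r=\floor{m/p}$, while the relation $p(u_i+1)-(v_i+1)=d(m-pr)$ confines $m$ to a single residue class modulo~$p$; hence the inner double sum runs over $\BigOh(\mathcal{R})$ values of~$m$ and $\BigOh(\mathcal{R})$ values of~$j$, i.e.\ $\BigOh(\mathcal{R}^2)$ terms. Maintaining the rising factorials $\bigl(\tfrac{u_i+1}{d}\bigr)_r$, the powers $\bigl(pa_i^{p-1}\bigr)^{r-j}$ and the inverse factorials incrementally, each term costs $\BigOh(1)$ ring operations, so one $\alpha_{u,v}$ costs $\BigOh(n\mathcal{R}^2)$ ring operations and all~$b$ of them $\BigOh(nb\mathcal{R}^2)\subseteq\SoftOh(a^2 d^{3n}\log^2(d_t))$. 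Each such operation is at precision $N'_{\Phi_0}\in\SoftOh(a d^n\log(d_t))$ and hence costs $\SoftOh(N'_{\Phi_0}\log p)$ bit operations; multiplying the two estimates, and using $\SoftOh(X)\cdot\log p\subseteq\SoftOh(pX)$, gives the stated time bound. For the space bound, the output~$\Phi_0$ has only~$b$ nonzero entries, each held to precision~$N_{\Phi_0}$, which is $\SoftOh(b\,N_{\Phi_0}\log p)=\SoftOh(a\log(p)d^{2n}\log(d_t))$ bits and dominates the $\SoftOh(N'_{\Phi_0}\log p)$ scratch space used for one $\alpha_{u,v}$.

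The main obstacle is the term count in the third paragraph: one has to use Lemma~\ref{lem:mpr} to see that the inner double sum has $\BigOh(\mathcal{R}^2)$ rather than $\BigOh(\mathcal{M}\mathcal{R})$ terms, and to organise the summation so that no factorial, power, or rising factorial is recomputed from scratch, so that the per-term cost stays at $\BigOh(1)$ ring operations; once this is set up the rest is routine substitution of the precision and degree bounds from the two preceding propositions.
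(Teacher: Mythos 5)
Your overall organisation matches the paper's: the cost is $n$ factors times $b$ entries of $\Phi_0$ times the number of terms in the truncated sums, the term count is $\BigOh(\mathcal{R}^2)$ via Lemma~\ref{lem:mpr}, and the precisions come from Corollary~\ref{cor:NPhi0prime} and Proposition~\ref{prop:MR}. The one genuine discrepancy is your per-term cost, and it is a real gap. You assert that ``maintaining the rising factorials, the powers $\bigl(pa_i^{p-1}\bigr)^{r-j}$, and the inverse factorials incrementally'' makes each term cost $\BigOh(1)$ ring operations. This is false as stated for $1/(m-pj)!$: stepping $j\to j+1$ (or $r\to r+1$ with $m=pr+c_i$) changes the factorial argument by $p$, so the incremental update is a product of $p$ consecutive integers, costing $\BigOh(p)$ ring operations, which is precisely the paper's count. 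Your final answer nonetheless comes out right, but only because you separately weaken $\log p$ to $\SoftOh(p)$ at the end; if your per-term estimate were correct you would have proved $\SoftOh(a^3 d^{4n}\log^3(d_t)\log p)$, a factor of roughly $p/\log p$ better than the statement, which should have raised a flag.

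You do acknowledge in your closing paragraph that one must ``organise the summation'' to achieve $\BigOh(1)$ per term, but you never say how, and the obvious fixes run into the space bound you are simultaneously claiming. Precomputing $1/(ps+c_i)!$ for $s=0,\dotsc,\mathcal{R}$ uses $\SoftOh(\mathcal{R}N'_{\Phi_0}\log p)\subseteq\SoftOh(a^2 d^{2n}\log^2(d_t)\log p)$ bits, which exceeds the stated $\SoftOh(a\log(p)d^{2n}\log(d_t))$ by a factor of about $a\log(d_t)$; an iteration reordered by $s=r-j$ removes the factorial recomputations but then forces you to recompute or store the rising factorials $\bigl(\tfrac{u_i+1}{d}\bigr)_r$. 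The paper's route --- accept $\BigOh(p)$ ring operations per term from the purely streaming update, which keeps the scratch space to $\BigOh(1)$ elements of $\QQ_p$ --- lands cleanly on both the time and space bounds without these tensions, and is the version you should adopt.
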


\begin{proof}
In Proposition~\ref{prop:MR}, we have $\mathcal{R} \in \SoftOh(N'_{\Phi_0})$.
In Definition~\ref{defn:alpha}, the sums
\begin{equation*} \label{eqn:ifactor}
\left(\frac{u_i+1}{d} \right)_r \sum_{j=0}^{r} \frac{\bigl(p a_i^{p-1}\bigr)^{r-j}}{(m-pj)!j!}
\end{equation*}
consist of $\BigOh(\mathcal{R})$ terms. Note that each term can be computed from the previous one in $\BigOh(p)$ operations 
in $\ZZ_p$. For each $r$, the sum can therefore be computed in time $\BigOh(p \mathcal{R} N'_{\Phi_0})$. We have to consider
$\BigOh(\mathcal{R})$ values of $r$, so each $\alpha_{u,v}$ can be computed in time 
\[
\SoftOh\bigl(np \mathcal{R}^2 N'_{\Phi_0}\bigr) \subset \SoftOh\bigl(a^3 p d^{3n} \log^3(d_t)\bigr).
\]
We need to compute $b \in \BigOh(d^n)$ of these $\alpha_{u,v}$, so the computation of $\Phi_0$
takes time $\SoftOh(a^3 p d^{4n} \log^3(d_t))$. 

The required space is dominated by the size of the output, which is
\[
\SoftOh\bigl(b \log(p) N'_{\Phi_0}\bigr) \subset \SoftOh\bigl(a \log(p) d^{2n} \log(d_t)\bigr). \qedhere
\]
\end{proof}

\begin{rem}
Note that the time complexity in Proposition~\ref{prop:complexityPhi0} 
is quasilinear in~$p$, while in the work of Lauder~\citep{Lauder2004a} 
it is quasiquadratic.  The main reason for this is that Lauder computed 
in the totally ramified extension~$\QQ_p(\pi)$ with $\pi^{p-1}=-p$, 
where a single multiplication already takes time quasilinear in~$p$.  It will turn out 
that this crucial improvement also decreases the overall time complexity 
of the deformation method from being quasiquadratic to quasilinear in~$p$.

We should mention that there is a small downside to our approach.  The time 
complexity in Proposition~\ref{prop:complexityPhi0} is quasicubic in~$a$, 
while by using fast exponentials over $\QQ_p(\pi)[[z]]$, this can be 
decreased to being quasiquadratic in~$a$, which again has an effect on the 
entire deformation method.  We address this in the following proposition.
\end{rem}

\begin{prop} \label{prop:compPhi0}
Alternatively, the matrix~$\Phi_0$ can be computed in 
\begin{align*}
\mbox{time: }  &\SoftOh\bigl(a^2 p d^{2n} (p + d^n) \log^2(d_t) \bigr), \\
\mbox{space: } &\SoftOh\bigl(a^2 p^2 d^{2n} \log^2(d_t)\bigr).
\end{align*}
\end{prop}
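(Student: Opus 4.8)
The plan is to carry out the $\pi$-adic version of Dwork's computation from the proof of Theorem~\ref{thm:01-03-diagfrob}, organised around a single fast power series exponentiation per variable, in place of the term-by-term $\QQ_p$-computation of the sums $\alpha_{u,v}$ used in Proposition~\ref{prop:complexityPhi0}. Work in the totally ramified extension $\QQ_p(\pi)$ with $\pi^{p-1}=-p$ from that proof. Since $P_0$ is diagonal, the exponential appearing there factors as $\exp\bigl(\pi(P_0(x)-P_0(x^p))\bigr)=\prod_{i=0}^n E_i(x_i^d)$ with $E_i(z)=\exp\bigl(\pi a_i(z-z^p)\bigr)\in\QQ_p(\pi)[[z]]$, and by Proposition~\ref{prop:MR} it suffices to know each $E_i$ modulo $z^{\mathcal{M}+1}$, where $\mathcal{M}\in\BigOh(pN'_{\Phi_0})$ and, by Corollary~\ref{cor:NPhi0prime} together with the bound that all required $p$-adic precisions lie in $\SoftOh(ad^n\log(d_t))$, also $N'_{\Phi_0}\in\SoftOh(ad^n\log(d_t))$.

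First I would compute, for $0\le i\le n$, the truncation $E_i(z)\bmod z^{\mathcal{M}+1}$, either from the factorisation $E_i=\exp(\pi a_iz)\exp(-\pi a_iz^p)$ or from the linear recurrence $(m+1)\lambda^{(i)}_{m+1}=\pi a_i\bigl(\lambda^{(i)}_m-p\,\lambda^{(i)}_{m-p+1}\bigr)$ (with $\lambda^{(i)}_k=0$ for $k<0$) coming from $E_i'=\pi a_i(1-pz^{p-1})E_i$; this takes $\SoftOh(\mathcal{M})$ operations in $\QQ_p(\pi)$. Carried out to $p$-adic working precision $\SoftOh(N'_{\Phi_0})$, and recalling that $[\QQ_p(\pi):\QQ_p]=p-1$ so that a ring operation there costs $\SoftOh(pN'_{\Phi_0})$ bit operations (see~\citep{Hubrechts2010}), the $n+1$ exponentials together cost $\SoftOh(\mathcal{M}\,pN'_{\Phi_0})\subseteq\SoftOh(a^2p^2d^{2n}\log^2(d_t))$ bit operations (the factor $n+1$ being absorbed by $\SoftOh$, as in the proof of Proposition~\ref{prop:complexityPhi0}), and storing one such series needs $\SoftOh(a^2p^2d^{2n}\log^2(d_t))$ space. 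Next, for each pair $x^u,x^v$ as in Theorem~\ref{thm:01-03-diagfrob} the corresponding nonzero entry of $\Phi_0$ is, up to the explicit scalar in~\eqref{eq:diagfrobformula} and an overall power of $\pi$, the product over $i$ of one-variable contributions $\sum_{r\ge 0}\lambda^{(i)}_{d_0+pr}\bigl(\tfrac{u_i+1}{d}\bigr)_r(-\pi a_i)^{-r}$, where $d_0$ is determined by $u_i,v_i$ and, by Lemma~\ref{lem:mpr} and Proposition~\ref{prop:MR}, the sum has at most $\mathcal{R}+1$ terms (the rising factorials and powers of $-\pi a_i$ being accumulated incrementally). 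Assembling all $\BigOh(d^n)$ nonzero entries of $\Phi_0$ this way — reading off the needed coefficients of the precomputed $E_i$, applying the scalars of~\eqref{eq:diagfrobformula}, inverting, and dividing out the powers of $\pi$ to return to $\QQ_p$ — costs $\SoftOh(d^n\,\mathcal{R}\,pN'_{\Phi_0})\subseteq\SoftOh(a^2pd^{3n}\log^2(d_t))$ bit operations (one may further note that the assembly factors over the $n+1$ variables, so only $\BigOh(nd)$ distinct one-variable sums actually arise, but this is not needed for the bound). Summing the exponential and assembly contributions gives $\SoftOh\bigl(a^2pd^{2n}(p+d^n)\log^2(d_t)\bigr)$, and the space is dominated by the stored exponentials, giving $\SoftOh(a^2p^2d^{2n}\log^2(d_t))$.

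The step I expect to require the most care is the precision bookkeeping over the ramified extension. The pole-order reductions $x_i^{u_i+1+dr}\equiv\bigl(\tfrac{u_i+1}{d}\bigr)_r(-\pi a_i)^{-r}x_i^{u_i+1}\bmod D_i$ divide by $\pi^r$, of valuation $r/(p-1)$, and the coefficients of $E_i$ carry factorial denominators, so one must check that the $\pi$-divisibility of the $\lambda^{(i)}_m$ compensates. Here the point is that in each term $\lambda^{(i)}_{d_0+pr}(-\pi a_i)^{-r}$ the power of $\pi$ is in fact independent of $r$, with residual factor in $\QQ_p$ whose valuation tends to infinity with $r$; combining this with $\ord_p(\lambda^{(i)}_m)\ge m(p-1)/p^2$ (Proposition~\ref{prop:coefbound}), Lemma~\ref{lem:mpr}, Proposition~\ref{prop:MR} and Corollary~\ref{cor:NPhi0prime}, truncating each $E_i$ at degree $\mathcal{M}$ and working to $p$-adic precision $\SoftOh(N'_{\Phi_0})$ — equivalently $\pi$-adic precision $\BigOh\bigl((p-1)N'_{\Phi_0}+pn\bigr)$, the extra $\BigOh(pn)$ absorbing the overall powers of $\pi$ stripped at the end — still produces $\Phi_0$ correctly modulo $p^{N_{\Phi_0}}$. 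Once this is settled, the complexity count above is a routine exercise in the style of the proof of Proposition~\ref{prop:complexityPhi0}.
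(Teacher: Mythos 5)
Your proposal is correct and follows essentially the same route as the paper: precompute the truncated expansions of $\exp\bigl(\pi a_i(x-x^p)\bigr)$ over $\QQ_p(\pi)$ in $\SoftOh(\mathcal{M})$ ring operations (the paper cites Brent's fast exponential where you use a recurrence or factorisation, which is an equally valid way to reach the same operation count), and then assemble each $\alpha_{u,v}$ from the precomputed coefficients in $\SoftOh(\mathcal{R})$ operations per variable. Your complexity and space bounds, and the dominance of the stored exponentials in the space count, match the paper's proof exactly.
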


\begin{proof}
In Proposition~\ref{prop:MR}, we have $\mathcal{M} \in \SoftOh(pN'_{\Phi_0})$.
Let $\QQ_p(\pi)$ denote the totally ramified extension of $\QQ_p$ with $\pi^{p-1}=-p$.
Recall from the proof of Proposition~\ref{prop:coefbound} that in Definition~\ref{defn:alpha},
up to a factor $(-1)^r (\pi a_i)^{m-(p-1)r}$, the sum
\begin{align*}
\sum_{j=0}^{r} \frac{(p a_i^{p-1})^{r-j}}{(m-pj)!j!}
\end{align*}
is equal to the coefficient $\lambda_{m}$ of $x^m$ in the power series expansion of $\exp(\pi a_i(x-x^p))$. However,
the power series expansion of $\exp(\pi a_i(x-x^p))$ modulo $x^{\mathcal{M}+1}$ can be computed in 
$\SoftOh(\mathcal{M})$ operations in $\QQ_p(\pi)$ following Brent~\citep{Brent1976}. Since
a single operation in $\QQ_p(\pi)$ takes time $\SoftOh(pN'_{\Phi})$, precomputing these
power series expansions for all $0 \leq i \leq n$ takes time 
\[
\SoftOh\bigl(n p^2 (N'_{\Phi_0})^2\bigr) \subset \SoftOh\bigl(a^2 p^2 d^{2n} \log^2(d_t)\bigr).
\]
Each $\alpha_{u,v}$ can now be computed as in Proposition~\ref{prop:complexityPhi0} in time 
\[
\SoftOh\bigl(p(N'_{\Phi_0})^2 \bigr) \subset \SoftOh\bigl(a^2 p d^{2n} \log^2(d_t)\bigr),
\]
so computing all $b \in \BigOh(d^n)$ of these $\alpha_{u,v}$ takes time 
$\SoftOh(a^2 p^2 d^{3n} \log^2(d_t))$. Hence $\Phi_0$
can be computed in time $\SoftOh(a^2 p d^{2n} (p + d^n) \log^2(d_t))$.

The space requirement is dominated by that of the power series expansions of $\exp(\pi a_i(x-x^p))$ modulo $x^{\mathcal{M}+1}$ for $0 \leq i \leq n$
and is therefore given by
\[
\SoftOh\bigl(n \mathcal{M} p N'_{\Phi_0}\bigr) \subset \SoftOh\bigl(a^2 p^2 d^{2n} \log^2(d_t)\bigr). \qedhere
\]
\end{proof}

We now consider the computation of the power series expansion of the 
matrix~$\Phi$.

\begin{prop}
Assume that the matrices $M = G/r$ and $\Phi_0$ have been computed already.
The subsequent computation of the power series expansion of the matrix~$\Phi$ 
in Algorithm~\ref{alg:expansion} then requires
\begin{align*}
\mbox{time: }  &\SoftOh(a^2 a' p d^{n(\omega+4)}e^{2n} d_t^2), \\
\mbox{space: } &\SoftOh(a^2 a' p d^{5n} e^n d_t).
\end{align*}
\end{prop}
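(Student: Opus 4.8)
The plan is to account for the steps of Algorithm~\ref{alg:expansion} that remain once $M = G/r$ and $\Phi_0$ are available, namely the solution of the homogeneous equation~\eqref{eq:01-GMDE-Homogenous} for $C$ by the recursion~\eqref{eq:recursiondifeq}, the analogous solution of the dual equation~\eqref{eq:01-GMDE-Dual} for $C^{-1}$, the coefficientwise application of $\sigma$, and the formation of $\Phi = C\Phi_0\sigma(C^{-1})$, and then to show that the recursion for $C$ dominates both resources. First I would collect the sizes established above: $b \in \BigOh(d^n)$, the degrees of $G$ and $r$ are $\SoftOh((de)^n d_t)$, the $t$-adic precision $K$ is $\SoftOh(apd^n(de)^n d_t)$, and all the $p$-adic working precisions $N_M, N_{\Phi_0}, N_C, N_{C^{-1}}, N'_C, N'_{C^{-1}}$ from Theorem~\ref{thm:Ni} lie in $\SoftOh(ad^n\log(d_t))$. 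I would also recall that one ring operation in $\QQ_q$ carried out to $p$-adic precision $N$ costs $\SoftOh(a'N\log(p))$ bit operations, an element of $\ZZ_q/p^N\ZZ_q$ being a degree-$a'$ polynomial over $\ZZ/p^N\ZZ$.

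For the recursion~\eqref{eq:recursiondifeq} one obtains $C_{i+1}$ from the previously stored coefficients by $\min\{i+1,\deg(G)+1\}$ matrix products $G_{i-j}C_j$, each costing $\BigOh(b^\omega)$ ring operations, the terms involving $r$ and the division by $r_0(i+1)$ being dominated by a single such product. Summing over $0 \leq i \leq K-2$ gives $\BigOh(K\deg(G)b^\omega)$ ring operations to precision $N'_C$, hence a running time of
\[
\SoftOh\bigl(K\,\deg(G)\,b^\omega\,a'N'_C\log(p)\bigr) = \SoftOh\bigl(a^2 a' p\,d^{n(\omega+4)}e^{2n}\,d_t^2\bigr),
\]
the factor $\log(p)$ being absorbed by $\SoftOh$ since $p$ already occurs. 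Solving~\eqref{eq:01-GMDE-Dual} for $C^{-1}$ uses the same recursion but only to $t$-adic precision $\ceil{K/p}$, costing a factor $\BigOh(p)$ less; applying $\sigma$ coefficientwise to $C^{-1}$ (using $\sigma(t)=t^p$ and the cost of $\sigma$ on $\QQ_q$ recalled above), forming $C\Phi_0$ — which, as $\Phi_0$ has a single nonzero entry in each row and column, merely permutes and rescales the columns of $C$ — and finally computing $(C\Phi_0)\sigma(C^{-1}) \bmod t^K$ in $\BigOh(b^\omega K)$ ring operations are all of strictly lower order. This gives the stated time bound.

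For the space, the output $\Phi$ together with the intermediate matrices $C$, $C^{-1}$ and $\sigma(C^{-1})$ are $b\times b$ matrices of polynomials in $t$ of degree $\BigOh(K)$ with coefficients in $\QQ_q$ to precision $N \in \SoftOh(ad^n\log(d_t))$, each occupying $\SoftOh(b^2 K a' N\log(p)) = \SoftOh(a^2 a' p\,d^{5n}e^n\,d_t)$ bits, while the stored $G/r$ and $\Phi_0$ are smaller by the degree and precision estimates; this also dominates the transient storage in the recursions, where one keeps only the last $\deg(G)+1$ coefficients of $C$ at a time but retains all $K$ of them for the final product. I expect the only delicate point to be the verification that the $C$-recursion really is the bottleneck — in particular that forming $\Phi = C\Phi_0\sigma(C^{-1})$ modulo $t^K$, whose entries have $t$-degree $\BigOh(K)$, together with the dual recursion and the applications of $\sigma$, is absorbed by $\SoftOh(a^2 a' p\,d^{n(\omega+4)}e^{2n}\,d_t^2)$; everything else is a routine substitution of the sizes listed above.
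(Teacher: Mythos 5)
Your proposal is correct and follows essentially the same route as the paper: decompose the computation into the recursions for $C$ and $C^{-1}$, the application of $\sigma$, and the final matrix product, bound each step using $b\in\BigOh(d^n)$, $\deg(G),\deg(r)\in\SoftOh((de)^nd_t)$, $K\in\SoftOh(apd^n(de)^nd_t)$ and precisions in $\SoftOh(ad^n\log(d_t))$, and observe that the $C$-recursion dominates both time and space. The bookkeeping matches the paper's term by term (your remark that $C\Phi_0$ is merely a column permutation and rescaling is a small extra observation the paper does not need, since it bounds that product by $\SoftOh(b^{\omega}K)$ ring operations anyway).
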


\begin{proof}
The computation of the power series expansion of~$\Phi$ comprises 
three steps, namely the computation of the matrices $C$, $\sigma(C)^{-1}$ 
and the matrix product $C \Phi_0 \sigma(C)^{-1}$.

As each of the $K$ steps in the computation of $C$ is dominated by the 
computation of $\SoftOh((de)^n d_t)$ matrix products, the matrix~$C$ can be 
computed in time 
\begin{equation*}
\SoftOh\bigl(K ((de)^n d_t) b^{\omega} (a' \log (p)) (a d^n \log (d_t))\bigr) 
    \subset \SoftOh\bigl(a^2 a' p d^{n(\omega + 4)} e^{2n} d_t^2 \bigr).
\end{equation*}
Similarly, the matrix $C^{-1}$ can be computed in time 
\begin{equation*}
\SoftOh\bigl( (K/p) ((de)^n d_t) b^{\omega} (a' \log (p)) (ad^n \log (d_t)) \bigr)
    \subset \SoftOh\bigl( a^2 a' \log(p) d^{n(\omega+4)} e^{2n} d_t^2 \bigr). 
\end{equation*}
Moreover, applying $\sigma$ to the matrix $C^{-1}$ takes time 
\begin{equation*}
\SoftOh\bigl( (K/p) b^2 \bigl(a' \log^2(p) + a' (a d^n \log(d_t)) \log(p) \bigl) \bigr)
\subset \SoftOh\bigl( a^2 a' \log^2(p) d^{5n} e^n d_t \bigr).
\end{equation*}
Finally, the matrix product 
$C \Phi_0 \sigma(C)^{-1}$ can be computed in time 
\begin{equation*}
\SoftOh\bigl( b^{\omega} K (a' \log(p)) (a d^n \log(d_t)) \bigr) 
    \subset \SoftOh\bigl( a^2 a' p d^{n(\omega + 3)} e^n d_t \bigr).
\end{equation*}
The result on the time complexity now follows.

The space requirement is dominated by the matrix~$C$, which has size
\begin{equation*}
\SoftOh\bigl(b^2 K (a' \log(p)) (a d^n \log(d_t))\bigr) 
    \subset \SoftOh(a^2 a' p d^{5n} e^n d_t). \qedhere
\end{equation*}
\end{proof}

We now move on to the computation of the matrix~$\Phi_{\tau}$.

\begin{prop}
The computation of the matrix~$\Phi_{\tau}$ from the matrix~$\Phi$ 
and $\tau \in S(\FF_{\mathfrak{q}})$ requires
\begin{align*}
\mbox{time: }  &\SoftOh\bigl(a^2 a' p d^{5n} e^n d_t \bigr), \\ 
\mbox{space: } &\SoftOh\bigl(a^2 a' p d^{5n} e^n d_t \bigr).
\end{align*}
\end{prop}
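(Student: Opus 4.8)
The plan is to follow Step~6 of Algorithm~\ref{alg:complete}: from the matrix $\Phi$, regarded as a $b \times b$ matrix of polynomials of degree less than $K$ over $\ZZ_q$ (at $p$-adic precision $N$), compute
\[
\Phi_{\tau} = s(\hat\tau)^{-1}\,\bigl(s\Phi \bmod t^{K}\bigr)\big|_{t=\hat\tau} \bmod p^{N_{\Phi}}
\]
with $p$-adic working precision $N = N_{\Phi}+\delta$. First I would collect the estimates already established: $b \in \BigOh(d^n)$, $\delta \in \BigOh(n)$, every $p$-adic precision (in particular $N$) lies in $\SoftOh(a d^n \log d_t)$, and $\deg s, K \in \SoftOh(a p d^{2n} e^{n} d_t)$. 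I would also recall the cost of the underlying arithmetic: one ring operation in $\ZZ_q/p^{N}$ costs $\SoftOh(a' N \log p)$ bit operations and one ring operation in $\ZZ_{\mathfrak{q}}/p^{N}$ costs $\SoftOh(a N \log p)$ (a degree-$a'$, resp.\ degree-$a$, polynomial multiplication over $\ZZ/p^N\ZZ$); the Teichm\"uller lift $\hat\tau$ can be computed to precision $N$ in time $\SoftOh(a N \log^2 p)$; and dividing a degree-$K$ polynomial by a fixed polynomial of degree at most $K$ takes $\SoftOh(K)$ ring operations.

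The computation then splits into four pieces. First, compute $\hat\tau$; by the bound above this costs $\SoftOh(a N \log^2 p) = \SoftOh(a^2 d^n \log^2 p)$, which lies in $\SoftOh(a^2 a' p d^{5n} e^{n} d_t)$ since $a' p d^{4n} e^{n} d_t \geq 1$. Second, form $s\Phi \bmod t^{K}$: this is $b^2$ products of polynomials of degree less than $K$ over $\ZZ_q$, costing $\SoftOh(b^2 K)$ operations in $\ZZ_q$, i.e.\ $\SoftOh(b^2 K a' N \log p)$ bit operations. Third, and this is the crucial point, evaluate the resulting matrix at $t = \hat\tau$: I would \emph{not} use Horner's rule over $\QQ_{\mathfrak{q}}$, which would cost an extra factor $a/a'$ and give only $\SoftOh(a^3 p d^{5n} e^{n} d_t)$; instead I would reduce each of the $b^2$ entries modulo the minimal polynomial $g \in \ZZ_q[t]$ of $\hat\tau$ over $\QQ_q$ (of degree $a/a'$), so that the coefficient arithmetic stays over $\QQ_q$ and the reduced entry is already the value in $\ZZ_{\mathfrak{q}} = \ZZ_q[t]/(g)$. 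Each reduction costs $\SoftOh(K)$ operations in $\ZZ_q$, so this step again costs $\SoftOh(b^2 K a' N \log p)$ bit operations. Fourth, compute $s(\hat\tau) = s \bmod (g,p^{N})$, which is a unit of $\ZZ_{\mathfrak{q}}$ since $\ord_p(s(\hat\tau))=0$, invert it (by Hensel lifting of its reduction mod~$p$, in time $\SoftOh(a N \log p)$), and multiply the evaluated matrix by this scalar: reducing $s$ costs $\SoftOh(\deg s)$ operations in $\ZZ_q$ and the $b^2$ scalar multiplications cost $\SoftOh(b^2)$ operations in $\ZZ_{\mathfrak{q}}$, both dominated by the earlier terms. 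Substituting $b^2 \in \BigOh(d^{2n})$, $K \in \SoftOh(a p d^{2n} e^{n} d_t)$ and $N \in \SoftOh(a d^n \log d_t)$ into $\SoftOh(b^2 K a' N \log p)$ yields the stated time bound $\SoftOh(a^2 a' p d^{5n} e^{n} d_t)$, the formation and evaluation of $s\Phi$ being the dominant contributions.

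For the space bound, the memory is dominated by $\Phi$, equivalently by $s\Phi \bmod t^{K}$: a $b \times b$ array of polynomials of degree less than $K$ with coefficients in $\ZZ_q$ to precision $N$, of total size $\SoftOh(b^2 K a' N \log p) = \SoftOh(a^2 a' p d^{5n} e^{n} d_t)$ bits; the lift $\hat\tau$, the polynomial $g$, the scalar $s(\hat\tau)^{-1}$, and the evaluated matrix are all asymptotically smaller. The main obstacle is the third step: one must notice that the substitution $t=\hat\tau$ should be performed by reduction modulo the degree-$(a/a')$ minimal polynomial of $\hat\tau$ over $\QQ_q$, keeping arithmetic over $\QQ_q$, rather than by a direct Horner evaluation over $\QQ_{\mathfrak{q}}$, since the latter loses a factor $a/a'$ and breaks the bound as soon as $\FF_{\mathfrak{q}} \neq \FF_q$. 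Everything else is routine bookkeeping with the degree and precision estimates of the preceding propositions.
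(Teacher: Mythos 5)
Your proof is correct and follows essentially the same decomposition as the paper: compute $\hat\tau$, form $s\Phi\bmod t^K$ over $\QQ_q[t]$, evaluate the $b^2$ entries at $t=\hat\tau$ by a degree reduction over $\QQ_q$ rather than Horner's rule over $\QQ_{\mathfrak{q}}$ (the paper phrases this as a modular composition with modulus $m(t)$ of degree $a/a'$, which amounts to the same reduction when $\hat\tau$ is the canonical generator), and observe that computing and inverting $s(\hat\tau)$ is negligible. You correctly identify the crucial point the paper flags with ``care has to be taken'' — keeping the coefficient arithmetic over $\QQ_q$ so as not to lose a factor $a/a'$ — and your degree and precision bookkeeping matches the paper's, as does the space analysis dominated by $s\Phi\bmod t^K$.
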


\begin{proof}
We first recall that the Teichm\"uller lift 
$\hat{\tau} \in \mathcal{S}(\ZZ_{\mathfrak{q}})$ 
can be computed to $p$-adic precision $N_{\Phi}+\delta \in \SoftOh(a d^n)$ 
in time $\SoftOh(a^2 d^n \log^2(p))$.  Next, we observe that the 
scalar-matrix product $s \Phi \bmod t^K$ over~$\QQ_q[t]$ 
requires time 
\begin{equation*}
\SoftOh\bigl( b^2 K a' (a d^n) \log(p) \bigr) 
    \subset \SoftOh\bigl( a^2 a' p d^{5n} e^n d_t \bigr).
\end{equation*}
Finally, we consider the substitution of $\hat{\tau}$ 
into the $b^2$ entries of the matrix $s \Phi \bmod t^K$. 
Each of these can be thought of as a modular composition of polynomials 
over~$\QQ_{q}$, where the modulus~$m(t)$ is an irreducible polynomial defining 
the extension $\QQ_{\mathfrak{q}} / \QQ_{q}$ as a quotient of $\QQ_q[t]$, 
which is of degree~$a/a'$. However, care has to be taken to include the 
additional reduction modulo~$m(t)$ of the polynomials of degree less than~$K$, 
so that polynomials involved in the modular composition have degree less 
than~$a/a'$. 

Thus, the substitutions require time 
\begin{equation*}
\SoftOh\Bigl( b^2 \bigl(K (a' \log(p)) (a d^n) + (a/a') (a' \log(p)) (a d^n) \bigr) \Bigr)
    \subset \SoftOh(a^2 a' p d^{5n} e^n d_t).
\end{equation*}
Clearly, evaluating and inverting $s(\hat{\tau})$ and performing the 
scalar multiplication can be ignored, and the result on the time complexity now follows.

The space requirement is dominated by the matrix $s \Phi \bmod t^K$, 
which has size 
\begin{equation*}
\SoftOh(b^2 K (a' \log(p)) (a d^n)) \subset \SoftOh(a^2 a' p d^{5n} e^n d_t). 
\qedhere
\end{equation*}
\end{proof}

Finally, we consider the computation of the polynomial $\chi(T)$.

\begin{prop}
The computation of $\chi(T)$ from $\Phi_{\tau}$ requires
\begin{align*}
\mbox{time: }  & \SoftOh(a^2  \log^2(p) d^{n(\omega+1)}), \\
\mbox{space: } & \SoftOh(a^2 \log(p) d^{3n}).
\end{align*}
\end{prop}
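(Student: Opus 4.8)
The plan is to bound separately the three substeps by which $\chi(T)$ is produced from $\Phi_{\tau}$ in Algorithm~\ref{alg:complete}: forming the matrix $\Phi_{\tau}^{(a)} = \Phi_{\tau} \sigma(\Phi_{\tau}) \dotsm \sigma^{a-1}(\Phi_{\tau})$, computing its reverse characteristic polynomial $\det\bigl(1 - T \Phi_{\tau}^{(a)}\bigr)$, and rounding the result to $\ZZ[T]$ with the Newton--Girard recursion~\eqref{eq:recursion}. Throughout we use the $p$-adic working precision $N'_{\Phi} = N_{\Phi} + (a + b - 3)\delta$ from Remark~\ref{rem:workprecchi}, which lies in $\SoftOh(a d^n)$ by the preceding propositions, and we recall that $b \in \BigOh(d^n)$ and $\delta \in \BigOh(n)$. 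Since an element of $\QQ_{\mathfrak{q}}$ to precision $N'_{\Phi}$ has bit size $\SoftOh(a N'_{\Phi} \log p) = \SoftOh(a^2 d^n \log p)$, one ring operation in $\QQ_{\mathfrak{q}}$ at this precision costs $\SoftOh(a^2 d^n \log p)$ bit operations, and, after precomputing the image of a generator, applying a power of~$\sigma$ to one such element costs $\SoftOh(a \log^2 p + a^2 d^n \log p)$.

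First I would form $\Phi_{\tau}^{(a)}$ by recursive doubling rather than by the naive $(a-1)$-fold product: with $A_k = \Phi_{\tau} \sigma(\Phi_{\tau}) \dotsm \sigma^{k-1}(\Phi_{\tau})$ one has $A_{2k} = A_k \cdot \sigma^{k}(A_k)$ and $A_{2k+1} = A_{2k} \cdot \sigma^{2k}(\Phi_{\tau})$, so $A_a$ is reached in $\BigOh(\log a)$ steps, each consisting of one $b \times b$ matrix multiplication over $\QQ_{\mathfrak{q}}$ and one application of a power of~$\sigma$ to a $b \times b$ matrix. The matrix multiplications contribute $\SoftOh(b^{\omega})$ ring operations in total, i.e.\ $\SoftOh(b^{\omega} a^2 d^n \log p) = \SoftOh(a^2 \log p\, d^{n(\omega+1)})$ bit operations, while the $\sigma$-applications contribute $\SoftOh\bigl(b^2 (a \log^2 p + a^2 d^n \log p)\bigr) = \SoftOh(a^2 \log^2 p\, d^{3n})$, which is dominated since $\omega \geq 2$. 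I would then compute $\det\bigl(1 - T \Phi_{\tau}^{(a)}\bigr)$ with Keller-Gehrig's algorithm in $\BigOh(b^{\omega} \log b)$ ring operations, i.e.\ $\SoftOh(a^2 \log p\, d^{n(\omega+1)})$ bit operations, and finally carry out the rounding of Theorem~\ref{thm:N0} via~\eqref{eq:recursion} in $\BigOh(b)$ further operations, which is negligible. Summing the contributions yields the asserted time bound $\SoftOh\bigl(a^2 \log^2 p\, d^{n(\omega+1)}\bigr)$.

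For the space bound, every object manipulated in these substeps --- $\Phi_{\tau}$, $\Phi_{\tau}^{(a)}$, the intermediate matrices $A_k$ of the doubling, and the matrices occurring in Keller-Gehrig's algorithm --- is a $b \times b$ matrix over $\QQ_{\mathfrak{q}}$ to precision $N'_{\Phi}$, of bit size $\SoftOh(b^2 \cdot a^2 d^n \log p) = \SoftOh(a^2 \log p\, d^{3n})$, which dominates the storage of $\chi(T)$ and of the defining polynomial of $\QQ_{\mathfrak{q}}$; this gives the claimed space bound. I expect the characteristic-polynomial step to be the main obstacle: the finite-precision ring $\ZZ_{\mathfrak{q}}/p^{N'_{\Phi}}$ is not a field, so Keller-Gehrig's algorithm cannot be run verbatim, and one must either use a division-free variant or argue --- using the bound $\ord_p(\Phi_{\tau}) \geq -\delta$ from Corollary~\ref{cor:delta} and the estimate of Remark~\ref{rem:workprecchi} --- that the pivots encountered are units up to a bounded power of~$p$, so that the working precision stays within $\SoftOh(a d^n)$ and the bound is not spoilt; a secondary point is to ensure that the recursive doubling together with this step stays within the $a^2$ budget rather than picking up the extra factor~$a$ of the naive product.
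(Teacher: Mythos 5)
Your argument is essentially the paper's proof: you bound the same three substeps --- semilinear fast exponentiation to form $\Phi_\tau^{(a)}$ (the paper cites \citep[Lemma 32]{LauderWan2008} for exactly this recursive doubling), Keller--Gehrig for the reverse characteristic polynomial, and the negligible rounding --- with the same element-size and per-operation cost estimates, and you reach the paper's time and space bounds. One small internal slip: you write that the $\sigma$-application cost $\SoftOh(a^2\log^2 p\,d^{3n})$ is ``dominated since $\omega\geq 2$,'' but the $\log^2 p$ factor is \emph{not} absorbed by the $\log p$ in the matrix-multiplication term $\SoftOh(a^2\log p\,d^{n(\omega+1)})$; the two terms dominate each other in different parameters, and the $\sigma$-applications are precisely what contribute the $\log^2 p$ to the final bound, which you nevertheless go on to state correctly by taking the joint maximum. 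Your closing observation --- that Keller--Gehrig performs inversions, so running it modulo $p^{N'_\Phi}$ needs either a division-free variant or a pivot-valuation argument --- is a legitimate point that the paper does not address; the precision loss estimate of Remark~\ref{rem:workprecchi} is stated as though the determinant were computed by sums of $b$-fold products of entries, which does not directly cover Keller--Gehrig's internal divisions. That concern bears on the correctness of the precision bookkeeping rather than on the complexity estimate you were asked to prove, so it does not create a gap here, but it is worth flagging as you did.
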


\begin{proof}
In order to compute $\Phi_{\tau}^{(a)}$ using fast exponentiation for 
semilinear maps as in \citep[Lemma 32]{LauderWan2008}, we first need to apply powers of $\sigma$ to $\BigOh(b^2 \log(a))$ 
elements of $\QQ_{\mathfrak{q}}$ and then multiply $\BigOh(\log (a))$ matrices 
of size $b \in \BigOh(d^n)$. This can be done in time 
\begin{gather*}
\SoftOh\Bigl( b^2 \log(a) \bigl( a \log^2(p) + (a \log(p))(a d^n)  \bigr) 
    + \log(a) b^{\omega} (a \log(p))(a d^n)  \Bigr) 
\subset \SoftOh(a^2 \log^2(p) d^{n(\omega+1)}).
\end{gather*}
Next, we compute the reverse characteristic polynomial of matrix 
$\Phi_{\tau}^{(a)} \in M_{b \times b}(\QQ_{\mathfrak{q}})$, which 
can be accomplished in~$\BigOh(b^{\omega} \log(b))$ field operations. 
This amounts to a time complexity of 
\begin{equation*}
\SoftOh\bigl(b^{\omega} (a \log(p))(a d^n) \bigr)
    \subset \SoftOh\bigl( a^2 \log(p) d^{n(\omega+1)} \bigr).
\end{equation*}
Rounding this polynomial to $\ZZ[T]$ can be ignored.

We need to store $\BigOh(\log(a))$ matrices of size~$b$ with entries 
in~$\QQ_{\mathfrak{q}}$.  This requires space 
$\SoftOh(b^2 (a \log(p)) (a d^n)) \subset \SoftOh(a^2 \log(p) d^{3n})$.
\end{proof}

We can now state the total time and space requirements of 
Algorithm~\ref{alg:complete}. Recall that $P \in \ZZ_q[t][x_0,\dotsc,x_n]$ 
denotes a homogeneous polynomial
of degree $d$ satisfying Assumption~\ref{assump:diag} 
and that $\tau \in S(\mathbf{F}_{\mathfrak{q}})$, where 
$\FF_{\mathfrak{q}}/\FF_q$ denotes a finite field extension
of characteristic $p$. 
Moreover, we write $a=\log_p(\mathfrak{q})$, $a'=\log_p(q)$ and let
$d_t$ be the degree of $P$ in the variable $t$. Finally,
we let $e$ denote the base of the natural
logarithm and $\omega$ an exponent for matrix multiplication.

\begin{thm} \label{thm:totalcomplexity}
The computation of $Z(X_{\tau},T)$ using Algorithm~\ref{alg:complete} requires
\begin{align*}
\mbox{time: }  &\SoftOh\Bigl( 
                a^3 p d^{4n} + 
                a^2 a' p d^{n(\omega+4)} e^{2n} d_t^2+
                aa' \bigl(d^{n(\omega+2)} e^{n(\omega+1)}+ d^{5n} e^{3n}\bigr) d_t 
                \Bigr), \\
\mbox{space: } &\SoftOh\bigl( a^2 a' p d^{5n} e^n d_t + a a' d^{4n} e^{3n} d_t \bigr).
\end{align*}
Alternatively, computing the matrix $\Phi_0$ as in the proof of Proposition~\ref{prop:compPhi0}, 
the computation of $Z(X_{\tau},T)$ requires
\begin{align*}
\mbox{time: }  &\SoftOh\Bigl( 
                a^2 p^2 d^{2n} + 
                a^2 a' p d^{n(\omega+4)} e^{2n} d_t^2+
                aa' \bigl(d^{n(\omega+2)} e^{n(\omega+1)}+ d^{5n} e^{3n}\bigr) d_t 
                \Bigr), \\
\mbox{space: } &\SoftOh\bigl( 
                a^2 p^2 d^{2n} + 
                a^2 a' p d^{5n} e^n d_t + 
                a a' d^{4n} e^{3n} d_t \bigr).
\end{align*}
\end{thm}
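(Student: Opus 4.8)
The plan is to decompose Algorithm~\ref{alg:complete} into the subroutines analysed in the preceding propositions and to add up their costs. Explicitly, after the (negligible) determination of the precisions $N_{\Phi}$, $N'_{\Phi}$ from Theorem~\ref{thm:precPhitau} and of the pair $K$, $s$ from Theorem~\ref{thm:defD}, the algorithm calls \textsc{FrobSeriesExpansion}, which computes the connection matrix $M = G/r$ and the diagonal Frobenius matrix $\Phi_0$, then solves the differential equations~\eqref{eq:01-GMDE-Homogenous} and~\eqref{eq:01-GMDE-Dual} for $C$ and $C^{-1}$ and forms $C \Phi_0 \sigma(C)^{-1}$; afterwards it computes the Teichm\"uller lift $\hat{\tau}$, the specialisation $\Phi_{\tau}$, the reverse characteristic polynomial $\chi(T)$, and finally rounds $\chi(T)$ to $\ZZ[T]$ via~\eqref{eq:recursion} and assembles the rational function $Z(X_{\tau},T)$. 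The first step is to match each of these tasks to a bound already proved: the computation of $M$ to the complexity bound for Algorithm~\ref{alg:Connection}, that of $\Phi_0$ to Proposition~\ref{prop:complexityPhi0} (respectively Proposition~\ref{prop:compPhi0} in the alternative), the solution of the two differential equations together with the product $C\Phi_0\sigma(C)^{-1}$ to the complexity bound for Algorithm~\ref{alg:expansion} (which already assumes $M$ and $\Phi_0$ known), the specialisation to the bound for computing $\Phi_{\tau}$, and the characteristic polynomial to the bound for computing $\chi(T)$; the sizes of all precisions and degrees that occur are controlled by the two propositions that precede these bounds.

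Next I would add these bounds and drop lower-order terms, using freely that $\SoftOh(-)$ absorbs logarithmic factors together with the elementary facts $a' \mid a$ (hence $a' \le a$), $\omega \ge 2$, $b \in \BigOh(d^n)$ and $\delta \in \BigOh(n)$. Computing $M$ contributes $\SoftOh\bigl(aa'(d^{n(\omega+2)}e^{n(\omega+1)} + d^{5n}e^{3n})d_t\bigr)$ in time and $\SoftOh(aa' d^{4n}e^{3n}d_t)$ in space; Proposition~\ref{prop:complexityPhi0} contributes $\SoftOh(a^3 p d^{4n})$ in time, its space cost $\SoftOh(a\log(p)d^{2n}\log(d_t))$ being negligible; the remainder of \textsc{FrobSeriesExpansion} contributes $\SoftOh(a^2 a' p d^{n(\omega+4)}e^{2n}d_t^2)$ in time and $\SoftOh(a^2 a' p d^{5n}e^n d_t)$ in space; and computing $\Phi_{\tau}$ contributes the same orders as the latter. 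One then checks that the $\Phi_{\tau}$ term is absorbed into the $\Phi$-expansion term in time (as $\omega+4 \ge 5$, $e^{2n} \ge e^n$, $d_t^2 \ge d_t$), that the cost of $\chi(T)$, namely $\SoftOh(a^2\log^2(p)d^{n(\omega+1)})$ in time and $\SoftOh(a^2\log(p)d^{3n})$ in space, is absorbed into the $\Phi$-expansion term and into the space terms already present, and that determining $N_{\Phi},N'_{\Phi},K,s$, computing and inverting $\hat{\tau}$ and $s(\hat{\tau})$, and the rounding and final assembly all cost strictly less; here one uses $\deg(s), K \in \SoftOh(apd^{2n}e^n d_t)$, so that even storing $s$ is cheaper than storing $C$. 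Collecting the surviving terms gives the first pair of bounds.

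For the alternative bounds I would replace the contribution of Proposition~\ref{prop:complexityPhi0} by that of Proposition~\ref{prop:compPhi0}: its time bound equals $\SoftOh(a^2 p^2 d^{2n} + a^2 p d^{3n})$, which substitutes $\SoftOh(a^2 p^2 d^{2n})$ for $\SoftOh(a^3 p d^{4n})$ in the time estimate, the summand $a^2 p d^{3n}$ being swallowed by the $\Phi$-expansion term since $d^{3n} \le d^{n(\omega+4)}$; and its space bound $\SoftOh(a^2 p^2 d^{2n}\log^2(d_t))$ is no longer dominated and so must be added, producing the extra space summand $a^2 p^2 d^{2n}$. This yields the second pair of bounds.

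I do not anticipate any real difficulty; the proof is entirely a matter of bookkeeping. The one point genuinely requiring care is the verification that every auxiliary step --- determining the precisions and the pair $K$, $s$, computing the Teichm\"uller lift, inverting $s(\hat{\tau})$, the Newton--Girard rounding of $\chi(T)$, and the final rational-function assembly --- together with the subroutine outputs of smaller order of magnitude, is dominated by the retained terms, which comes down to the monomial comparisons above together with $a' \le a$, $\log^k(p) \in \BigOh(p)$ and monotonicity in $d$, $e$ and $d_t$.
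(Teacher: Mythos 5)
Your proof is correct and follows the same approach as the paper, which itself consists of a single sentence ("This follows by adding all complexities from the previous propositions and leaving out terms that are dominated by other terms or powers of logarithms of other terms"); you have simply carried out the bookkeeping explicitly, and the term-by-term dominance checks you supply are accurate.
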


\begin{proof}
This follows by adding all complexities from the previous propositions and 
leaving out terms that are dominated by other terms or powers of
logarithms of other terms.
\end{proof}

\begin{rem} In \citep{Lauder2004a}, Lauder took $d_t=1$ and showed that his
algorithm requires
\begin{align*}
\mbox{time: }  &\SoftOh\bigl(a^3 p^2 (d^{n(\omega+5)} e^{3n} + d^{6n} e^{5n}) \bigr), \\ 
\mbox{space: } &\SoftOh\bigl(a^3 p^2 d^{6n} e^{4n} \bigr).
\end{align*}
His main goal was to show that these complexities are $(pad^n)^{\BigOh(1)}$, 
which was not the case for previously known algorithms such as 
\citep{AbbottKedlayaRoe2006, LauderWan2008}. 
Our Theorem~\ref{thm:totalcomplexity} improves Lauder's complexity bounds 
by lowering the constants implicit in the exponent~$\BigOh(1)$.  To our 
knowledge, the complexity bounds presented in Theorem~\ref{thm:totalcomplexity} 
are therefore the best ones known.  Note that since a natural measure for the 
input size is $\log(p) a d^n$, these bounds are only polynomial in the 
input size provided $p$ is fixed, which is something that all $p$-adic 
point counting algorithms tend to have in common.
\end{rem}


\section{Examples}
\label{sec:Examples}

In this section we apply Algorithm~\ref{alg:complete} to some examples 
using our implementation\footnotemark.  This implementation 
is restricted to the case $q = p$, i.e.\ to families of hypersurfaces 
defined over a prime field.  In order to provide some context for the 
runtimes presented below, note that all computations were carried out 
on a machine with two Intel Core i7-3540M processors running at 3GHz and 
with 8GB of RAM, but only used a single processor.  Moreover, timings were obtained using the C function 
{\tt{clock()}} and are stated in minutes~(m) or seconds~(s). 

\footnotetext{This implementation is available at 
\url{https://github.com/SPancratz/deformation}.}

\subsection{Quintic curve}

We consider the family of genus six curves over $\ZZ$ given by the polynomial 
\begin{equation*}
P=x_0^5 + x_1^5 + x_2^5 + t x_0 x_1 x_2^3,
\end{equation*}
which Gerkmann~\citep[\S 7.4]{Gerkmann2007} considers as an element of 
$\ZZ_p[t][x_0,x_1,x_2]$ for $p=2,3,7$.  Note that these are, in fact, 
three examples.  The connection matrix $M \in M_{12 \times 12}(\QQ(t))$ 
only has to be computed once, and turns out to have denominator $r=27t^5+3125$.
The set of exponents at each of the zeros of $r$ is $\{-1,0\}$, and 
after changing basis by
\[ 
W=\mbox{diag}(t^{-1},t,t^{-2},t^{-1},1,1,1,1,1,t^2,t^2,1)
\] 
as in Remark~\ref{rem:basischange}, the set of exponents at~$\infty$ is 
$\{-2/3,2/3,1,4/3,7/3,8/3,13/3,14/3\}$.  Note that by
Remark~\ref{rem:epsilon}, we only need to determine the bottom 
half of the coefficients of the polynomial $\chi(T)$ directly.

\subsubsection{Precisions}

\noindent
\textit{Prime $p=2$.} 
We take $\log_p(\mathfrak{q})=50$ and let $\tau \in \FF_{\mathfrak{q}}$ be 
a zero of the Conway polynomial of $\FF_{\mathfrak{q}}/\FF_p$, i.e. the 
standard irreducible polynomial used to represent this extension.  We first 
compute $\delta=0$ and $N_{\Phi}=N_{\Phi}'=153$.  As the roots of~$r$ are 
$2$-adic integers and distinct  modulo~$2$, we can apply 
Theorem~\ref{thm:KedlayaTuitman} everywhere. We find $\theta_z=320$ at all 
zeros~$z$ of $r$ and $\theta_{\infty}=12$, so that we can take $s=r^{320}$ 
and $K=1613$. We now compute the remaining $p$-adic precisions
$N_{\Phi_0}=174$, $N_C=163$, $N_{C^{-1}}=164$, $N_M=187$, 
$N_C'=184$, $N_{C^{-1}}'=183$, $N_{\Phi_0}'=176$, and $N_M'=188$. \\

\noindent 
\textit{Prime $p=3$.}
We take $\log_p(\mathfrak{q})=40$ and let $\tau \in \FF_{\mathfrak{q}}$ be 
a zero of the Conway polynomial of $\FF_{\mathfrak{q}}/\FF_p$.  We first 
compute $\delta=0$ and $N_{\Phi}=N_{\Phi}'=122$. As the roots of~$r$ are all 
contained in the residue disk at~$\infty$, we cannot apply 
Theorem~\ref{thm:KedlayaTuitman} and have to apply Theorem~\ref{thm:Gerkmann} 
instead. At each of the zeros $z$ of~$r$ we find $\mu_z=0$, $\nu_z=-1$, $\theta_z=397$ 
and at $\infty$ we find $\mu_{\infty}=-3$, $\nu_{\infty}=-2$, $\theta_{\infty}=793$, so that 
we can take $s=r^{397}$ and $K=2779$. We now compute the remaining $p$-adic 
precisions
$N_{\Phi_0}=137$, $N_C=129$, $N_{C^{-1}}=130$, $N_M=147$, 
$N_C'=146$, $N_{C^{-1}}'=143$, $N_{\Phi_0}'=139$, and $N_M'=147$. \\

\noindent
\textit{Prime $p=7$.}
We take $\log_p(\mathfrak{q})=10$, and let $\tau \in \FF_{\mathfrak{q}}$ be 
a zero of the Conway polynomial of $\FF_{\mathfrak{q}}/\FF_p$. We first 
compute $\delta=0$ and $N_{\Phi}=N_{\Phi}'=31$.  As the zeros of $r$ are 
$7$-adic integers and distinct modulo~$7$, we can apply 
Theorem~\ref{thm:KedlayaTuitman} everywhere.  We find $\theta_z=224$ at all 
zeros~$z$ of $r$ and $\theta_{\infty}=25$, so that we can take $s=r^{224}$ and
$K=1146$. We now compute the remaining $p$-adic precisions
$N_{\Phi_0}=38$, $N_C=34$, $N_{C^{-1}}=37$, $N_M=44$, 
$N_C'=43$, $N_{C^{-1}}'=40$, $N_{\Phi_0}'=40$, and $N_M'=44$.

\subsubsection{Timings}

We now compare the performance of our FLINT implementation with the 
timings reported by Gerkmann. To save space, we do not include the 
polynomials $\chi(T)$ that we obtained from these computations.

\begin{center}
\begin{tabular}{l l l l l l l} \toprule
                & \multicolumn{2}{l}{$p = 2$} 
                & \multicolumn{2}{l}{$p = 3$} 
                & \multicolumn{2}{l}{$p = 7$} \\ \midrule
Computation     & P--T & G     & P--T  & G       & P--T & G     \\ \midrule
$M$             & 0.00s&       & 0.00s &         & 0.00s&       \\
$\Phi_0$        & 0.03s& 2.65m & 0.03s & 5.86m   & 0.01s& 1.31m \\
$\Phi$          & 0.67s& 1.33m & 1.28s & 1.38m   & 0.29s& 0.89m \\
$Z(X_{\tau},T)$ & 9.20s& 3.96m & 6.42s & 101.40s & 0.15s& 1.09m \\
Total           & 9.90s& 7.94m & 7.73s & 8.93m   & 0.45s& 3.29m \\ \bottomrule
\end{tabular} 
\end{center}

\bigskip

Our timings in these examples are a factor of $50-500$ lower than the ones 
provided by Gerkmann.

\subsection{Quartic surface}

We consider the family of quartic K3 surfaces over $\ZZ_3$ given by
\begin{equation*}
P=x_0^4 + x_1^4 + x_2^4 + x_3^4 + t x_0 x_1 x_2 x_3,
\end{equation*}
which Gerkmann considers in~\citep[\S 7.5]{Gerkmann2007}. The connection matrix 
$M \in M_{21 \times 21}(\QQ(t))$ turns out to have denominator 
$r=t^4-256$. The set of exponents at each of the zeros of $r$ is $\{-3/2,-1/2,0\}$, 
and after changing basis by
\[
W=\mbox{diag}(t^{-2},1,1,1,1,1,1,1,1,1,t^{-1},1,1,1,1,1,1,1,1,1,1)
\] 
as in Remark~\ref{rem:basischange}, the set of exponents  at $\infty$ is 
$\{1,2,3\}$. 

\subsubsection{Precisions}

We take $a=\log_3(\mathfrak{q})=20$, let $\alpha \in \FF_{\mathfrak{q}}$
be a zero of the Conway polynomial of $\FF_{\mathfrak{q}}/\FF_{3}$, and take
$\tau=\alpha^{2345}$.  We first compute $\delta=0$ and $N_{\Phi}=N_{\Phi}'=43$.  
Since the zeros of $r$ are $3$-adic integers and different modulo $3$, we can apply 
Theorem~\ref{thm:KedlayaTuitman} everywhere. We find $\theta_z=148$ at all zeros 
of $r$ and $\theta_{\infty}=6$, so that we can take $s=r^{148}$
and $K=599$. We now compute the remaining $p$-adic 
precisions $N_{\Phi_0}=65$, $N_C=53$, $N_{C^{-1}}=55$, $N_M=74$, 
$N_C'=73$, $N_{C^{-1}}'=68$, $N_{\Phi_0}'=68$ and $N_M'=75$.

\subsubsection{Timings}

\begin{center}
\begin{tabular}{l l l} \toprule
Computation     & P--T & G      \\ \midrule
$M$             & 0.00s& 7.00s      \\
$\Phi_0$        & 0.01s& 45.26m \\
$\Phi$          & 0.22s& 19.90m \\
$Z(X_{\tau},T)$ & 0.76s& 18.66m \\
Total           & 0.99s& 83.82m \\ \bottomrule
\end{tabular}
\end{center}

\bigskip

As the final result of the computation we find that 
$\mathfrak{q} \chi(T/\mathfrak{q})$ is equal to 
\begin{multline*}
- 3486784401T^{21} - 39675197243T^{20} - 191506614866T^{19} - 482588946510T^{18} \\
- 552821487569T^{17} + 243001138765T^{16} + 1641410078472T^{15} + 1793016627512T^{14} \\
- 410199003010T^{13} - 2617001208822T^{12} - 1586643774924T^{11} + 1586643774924T^{10} \\
+ 2617001208822T^9 + 410199003010T^8 - 1793016627512T^7 - 1641410078472T^6 \\
- 243001138765T^5+ 552821487569T^4 + 482588946510T^3 + 191506614866T^2 \\
+ 39675197243T + 3486784401.
\end{multline*}

Our timings in this example are about $5,000$~times lower than the ones 
presented by Gerkmann. 

Additionally, we should mention that Gerkmann does not include timings for part 
of his computations.  For example, he omits the time that is required for the 
computation of the $p$-adic precision parameter that he calls $\delta$, which 
involves solving $b^2$~differential equations similar to the one for the matrix~$C$. 
Moreover, it appears that there are minor errors in the precision analysis, e.g.\ 
the final $p$-adic precision is not sufficient to recover the exact zeta function. 
Finally, the polynomial~$\chi(T)$ provided by Gerkmann for this example does 
not satisfy Theorem~\ref{thm:weildeligne}, so cannot be correct.

The remaining examples could not have been computed with previous implementations 
of the deformation method and, to our knowledge, neither with any other point counting 
method.  For example, Lauder noted that he could not compute the connection matrix 
for a family of quintic curves or quartic surfaces given by a polynomial with more 
than a \emph{few} nonzero terms.  Indeed, this was the main motivation for the work 
in the PhD~thesis of the first author and the present paper.  We are now able to 
compute the zeta function of e.g.\ quintic curves and quartic surfaces over small 
finite fields given by a polynomial with \emph{all} of its coefficients nonzero.

\subsection{Generic quintic curve}

We consider the family of generic quintic curves over $\ZZ_{11}$ given by 
\begin{equation*}
\begin{split}
P = x_0^5 + x_1^5 + x_2^5 
+ t \bigl( & 3x_0^4 x_1-x_0^4 x_2+2 x_0 x_1^4+x_1^4 x_2+4 x_0 x_2^4+5 x_1 x_2^4 + x_0^3 x_1^2 \\
           & + x_0^3 x_2^2 + x_0^2 x_1^3+ x_1^3 x_2^2+x_0^2 x_2^3+x_1^2 x_2^3 + x_0^3 x_1 x_2 + x_0 x_1^3 x_2 \\
           & + x_0 x_1 x_2^3 + x_0^2 x_1^2 x_3 + x_0^2 x_1 x_2^2 + x_0 x_1^2 x_2^2 \bigr).
\end{split}
\end{equation*}
The connection matrix $M \in M_{12 \times 12}(\QQ(t))$ has denominator 
$r = r_1 r_2$, with polynomials $r_1,r_2 \in \ZZ[t]$ that are irreducible 
of degree $30$ and $48$, respectively. The set of exponents is $\{ 0,1 \}$ 
at the zeros of $r_1$ and $\{ -1,0 \}$ at the zeros of~$r_2$. Moreover, 
the matrix $M$ has a simple pole at $\infty$ and the set of exponents 
is $\{1,2\}$ there.

\subsubsection{Precisions}

We take $a=\log_{11}(\mathfrak{q})=10$ and let $\tau$ be a zero of the Conway polynomial of 
$\FF_{\mathfrak{q}}/\FF_{11}$. 
We first compute $\delta=0$ and $N_{\Phi}=N_{\Phi}'=31$.
Since the zeros of $r$ are $11$-adic integers and different modulo $11$, we can apply 
Theorem~\ref{thm:KedlayaTuitman} everywhere. We find $\theta_z=352$ at the zeros of $r_2$
and $\theta_{\infty}=9$ at $\infty$. At the residue disk of a zero $z$ of $r_1$, noting that 
$R$ has no other zeros there, we get a better bound by applying 
Theorem~\ref{thm:Gerkmann} with $\mu_z=-1$, $\nu_z=0$ and find $\theta_z=1$, so that we can 
take $s=r_1 r_2^{352}$ and $K=16936$. We now compute the remaining $p$-adic precisions 
$N_{\Phi_0}=40$, $N_C=35$, $N_{C^{-1}}=36$, $N_M=47$, $N_C'=46$, $N_{C^{-1}}'=43$, 
$N_{\Phi_0}'=42$ and $N_M'=47$.

\subsubsection{Timings}

\begin{center}
\begin{tabular}{l l} \toprule
Computation     & P--T \\ \midrule
$M$             & 4.43s     \\
$\Phi_0$        & 0.04s     \\
$\Phi$          & 3.76m     \\
$Z(X_{\tau},T)$ & 9.48s     \\
Total           & 3.99m     \\ \bottomrule
\end{tabular}
\end{center}

\bigskip

As the final result of the computation we find that~$\chi(T)$ is equal to 
\begin{equation*}
\begin{split}
& 304481639541418099574449295360278774639038415066698088621947601 T^{12} \\
& \quad + 3777543732986291528931322507772938448980494046897871937792 T^{11} \\
& \quad + 23639674223084796290417361507756397439403378558130350 T^{10} \\ 
& \quad + 54141350391870148138663709375646947695242620108 T^9 \\
& \quad - 363231942297281636316475334779570949613459 T^8 \\
& \quad - 4138991673785569248268236480720472276 T^7 \\
& \quad - 28015243113507339254470240817992 T^6 - 159576046483273177468242676 T^5 \\ 
& \quad - 539921137173243550659 T^4 + 3102762464729708 T^3 \\
& \quad + 52231690350 T^2 + 321792 T + 1.
\end{split}
\end{equation*}

\subsection{Generic quartic surface}

We consider the family of generic quartic K3 surfaces over $\ZZ_{7}$ given by 
\begin{equation*}
\begin{split}
P=x_0^4 + x_1^4 + x_2^4 + x_3^4 
+ t \bigl( & -3 x_0^3 x_1 + 2 x_0^3 x_2 - 2 x_0 x_1 x_2 x_3 + x_0^3 x_3 - x_0 x_1^3 - 3 x_1^3 x_2 \\ 
           & + x_2^3 x_3 + 2 x_1^3 x_3 + x_0 x_2^3 - 2 x_1 x_2^3 - x_0 x_3^3 + x_1 x_3^3 \\
           & + 3 x_2 x_3^3 + x_0^2 x_1^2 + 3 x_0^2 x_2^2 + x_0^2 x_3^2 + 2 x_1^2 x_2^2 - 2 x_1^2 x_3^2 \\
           & + x_2^2 x_3^2 + 2 x_0^2 x_1 x_2 + x_0^2 x_1 x_3 + 3 x_0^2 x_2 x_3 - x_0 x_1^2 x_2 \\
           & + 2 x_0 x_1^2 x_3 + 3 x_1^2 x_2 x_3 - x_0 x_1 x_2^2 + 3 x_0 x_2^2 x_3 + x_1 x_2^2 x_3 \\
           & + 2 x_0 x_1 x_3^2 + 2 x_0 x_2 x_3^2 + 2 x_1 x_2 x_3^2 \bigr).
\end{split}
\end{equation*}
The connection matrix $M \in M_{21 \times 21}(\QQ(t))$ has denominator 
$r = r_1 r_2 r_3$, with polynomials $r_1,r_2,r_3 \in \ZZ[t]$ that are 
irreducible of degree $16$, $104$ and $108$, respectively.  The matrix~$M$ 
has a simple pole at $\infty$ and the set of exponents is $\{1,2,3\}$ there.

\subsubsection{Precisions}

We take $\tau$ to be  $1 \in \FF_7$. We first compute $\delta=0$ and 
$N_{\Phi}=N_{\Phi}'=4$. Since we do not know the exponents of~$M$ at 
its finite poles, we cannot use Theorem~\ref{thm:KedlayaTuitman} at poles 
outside the residue disk at infinity. Applying Theorem~\ref{thm:Gerkmann} 
at all the zeros $z$ of $R$, using the bound 
$\ord_z(\Delta_k^{-1}) \geq -\ord_z(\det(\Delta_k))$, we find that we can 
take $s = \det(\Delta_2)\det(\Delta_3)\det(\Delta_4)^{67}$, which has 
degree~$25027$. However, most of the zeros of $R$ do not lie in the 
residue disk of a pole of~$M$ and hence the corresponding factors can 
be removed from~$s$, which decreases the degree of~$s$ to~$8833$.  
We can apply Theorem~\ref{thm:KedlayaTuitman} at~$\infty$ to find 
$\theta_{\infty}=-4$, so that we can take $K=8830$. We now compute 
the remaining $p$-adic precisions $N_{\Phi_0}=22$, $N_C=12$, $N_{C^{-1}}=14$, 
$N_M=30$, $N_C'=29$, $N_{C^{-1}}'=24$, $N_{\Phi_0}'=25$, and $N_M'=30$.

\subsubsection{Timings}

\begin{center}
\begin{tabular}{l l} \toprule
Computation     & P--T \\ \midrule
$M$             & 3.55m     \\
$\Phi_0$        & 0.02s     \\
$\Phi$          & 17.53m     \\
$Z(X_{\tau},T)$ & 6.7s     \\
Total           & 21.19m     \\ \bottomrule
\end{tabular}
\end{center}

\bigskip

As the final result of the computation we find that $7 \chi(T/7)$ is equal to 
\begin{multline*}
7T^{21} - 5T^{20} + 6T^{19} - 6T^{18} + 4T^{17} - 11T^{16} + 5T^{15} - 9T^{14} + 4T^{13} + 3T^{12} \\
+ 4T^{11} + 4T^{10} + 3T^9 + 4T^8 - 9T^7 + 5T^6 - 11T^5 + 4T^4 - 6T^3 + 6T^2 - 5T + 7.
\end{multline*}

\subsection{Larger primes}

We consider the family of quartic surfaces over $\ZZ$ given by the polynomial 
\begin{equation*}
P=x_0^4 + 2x_1^4 + 3x_2^4 + 4x_3^4 + t(x_0x_1x_2x_3 + 2 x_0^2 x_2^2 + 3x_0x_2^3)
\end{equation*}
which we consider as an element of $\ZZ_p[t][x_0,x_1,x_2,x_3]$ for various
primes $p$, most of which are much larger than those in the previous examples.
The connection matrix $M \in M_{21 \times 21}(\QQ(t))$ has denominator 
$r = r_1 r_2 r_3 r_4$, with polynomials $r_1,r_2,r_3,r_4 \in \ZZ[t]$ that are 
irreducible of degree $3$, $5$, $12$ and $16$, respectively. The set of
exponents at each of the zeros of $r$ is a subset of the set $\{-3/2,-1/2,0,1\}$,
and after changing basis by 
\[
W=\mbox{diag}(t^{-1},1,1,1,1,1,1,1,1,1,1,1,1,1,1,t^{-1},1,1,t^{-1},1,t)
\] 
as in Remark~\ref{rem:basischange}, the set of exponents  at $\infty$ is
$\{1,3/2,2,3\}$. We take $\tau$ to be $1 \in \FF_p$.

\subsubsection{Precisions}

For all primes $p$ that we consider, the zeros of $r$ are $p$-adic integers and different modulo $p$.
Therefore, we can apply Theorem~\ref{thm:KedlayaTuitman} everywhere. As an example, we determine the
precisions in the case $p=2^9+9$. We first compute $\delta=0$ and $N_{\Phi}=N'_{\Phi}=3$. We find
$\theta_z=1824$ at all zeros $z$ of $r$ and $\theta_{\infty}=4$, so that we can take $s=r^{1824}$ and $K=65668$.
We now compute the remaining $p$-adic precisions $N_{\Phi_0}=9$, $N_C=5$, $N_{C^{-1}}=7$, $N_M=14$, 
$N_C'=13$, $N_{C^{-1}}'=8$, $N_{\Phi_0}'=11$ and $N_M'=14$.

\subsubsection{Timings}

\begin{center}
\begin{tabular}{l l l l l l} \toprule
$p$             & $2^2+1$ & $2^3+3$ & $2^4+1$ & $2^5+5$ & $2^6+3$ 	\\ \midrule
Computation     & 	  &         &         &         &               \\ \midrule  
$M$             & 0.10s   & 0.10s   & 0.10s   & 0.10s  	& 0.10s	  	\\
$\Phi_0$        & 0.01s   & 0.00s   & 0.01s   & 0.01s 	& 0.00s	  	\\
$\Phi$          & 5.89s   & 9.19s   & 9.38s   & 20.06s 	& 42.18s       	\\
$Z(X_{\tau},T)$ & 0.07s   & 0.14s   & 0.08s   & 0.37s 	& 0.91s	  	\\
Total           & 6.07s   & 9.43s   & 9.57s   & 20.54s 	& 43.19s      	\\ \bottomrule
\end{tabular} 
\end{center}

\begin{center}
\begin{tabular}{l l l l l l} \toprule
$p$             & $2^7 + 3$ & $ 2^8+1$	& $2^9+9$  	& $2^{10}+7$ 	& $2^{11}+5$	\\ \midrule
Computation     & 	    &          	&          	&            	& \\ \midrule  
$M$             & 0.10s     & 0.10s	& 0.10s    	& 0.10s	 	& 0.10s \\
$\Phi_0$        & 0.00s     & 0.00s	& 0.00s       	& 0.00s	      	& 0.00s\\
$\Phi$          & 84.78s    & 166.68s	& 359.06s    	& 748.57s	& 1569.01s\\
$Z(X_{\tau},T)$ & 1.61s     & 3.62s	& 7.09s      	& 14.50s	& 32.03s\\
Total           & 86.49s    & 170.40s	& 366.25s    	& 763.18s	& 1601.15s\\ \bottomrule
\end{tabular} 
\end{center}

\bigskip

Our timings in this example confirm that the running time of the algorithm is (quasi)linear in $p$
and the memory usage also turns out to be (quasi)linear in $p$ as predicted. In the case $p=2^{11}+5$, the computation 
requires about 6 GB of memory. As an example, for the prime $p=2^{11}+5$ we find that $p \chi(T/p)$ is equal to
\begin{multline*}
2053 T^{21} - 4885 T^{20} + 6922 T^{19} - 7050 T^{18} + 1163 T^{17} + 7077 T^{16} - 12948 T^{15} \\ 
+ 12948 T^{14} - 9130 T^{13} + 3722 T^{12} + 128 T^{11} + 128 T^{10} + 3722 T^9 - 9130 T^8 \\ 
+ 12948 T^7 - 12948 T^6 + 7077 T^5 + 1163 T^4 - 7050 T^3 + 6922 T^2 - 4885 T + 2053.
\end{multline*}

\begin{rem}
In all of our timings, we have not included the time required to compute 
the $p$-adic and $t$-adic precisions, since we used MAGMA for this. Note, 
however, that we can readily determine $\det(\Delta_k)$ from the 
$LUP$~decomposition of~$\Delta_k$ and that the factorisation of the 
polynomials $r$, $R$ over $\QQ$ and $\FF_p$ was instantaneous in all 
cases. The only step that required noticeably more time was the 
computation of the exponents in the generic examples. For the family of 
generic quintic curves, this required $10s$, $47s$ and $1.5s$ at the 
zeros of $r_1$, the zeros of $r_2$ and at $\infty$, respectively. For the 
family of generic quartic surfaces, this required $13s$ and $59s$ at the 
zeros of~$r_1$ and at~$\infty$, respectively, but almost four hours for 
both the zeros of~$r_2$ and $r_3$.  Note, however, that in this last case 
we only use the exponents at~$\infty$ in the precision analysis and that
the other exponents are not needed for the result of the computation to
be provably correct.
\end{rem}

\phantomsection

\bibliographystyle{plainnat}
\bibliography{deformation}

\end{document}